\def\l@part{\@tocline{0}{4pt plus2pt}{0pt}{}{\bfseries}}
\newcommand{\init}{\mathrm{init}}
\newcommand{\surj}{\twoheadrightarrow}
\newcommand{\Conf}{\mathrm{Conf}}
\newcommand{\SYM}{\widehat{\rm Sym}}
\DeclareMathOperator{\chr}{char}
\DeclareMathOperator{\Ob}{Ob}
\DeclareMathOperator{\ch}{ch}
\newcommand{\WO}{\mathbf{WO}}
\newcommand{\OI}{\mathbf{OI}}
\newcommand{\OS}{\mathbf{OS}}
\newcommand{\FA}{\mathbf{FA}}
\newcommand{\FI}{\mathbf{FI}}
\newcommand{\FS}{\mathbf{FS}}
\newcommand{\VI}{\mathbf{VI}}
\newcommand{\VA}{\mathbf{VA}}
\newcommand{\Set}{\mathbf{Set}}
\title[Gr\"obner methods for representations of combinatorial categories]{Gr\"obner methods for representations\\ of combinatorial categories}
\date{February 8, 2016}
\subjclass[2010]{%
05A15, %Exact enumeration problems, generating functions
13P10, %Gr\"obner bases; other bases for ideals and modules (e.g., Janet and border bases)
16P40, %Noetherian rings and modules
18A25, %Functor categories, comma categories
68Q70.%Algebraic theory of languages and automata
}
\author{Steven V Sam}
\address{Department of Mathematics, University of California, Berkeley, CA}
\curraddr{Department of Mathematics, University of Wisconsin, Madison, WI}
\email{\href{mailto:svs@math.wisc.edu}{svs@math.wisc.edu}}
\urladdr{\url{http://math.wisc.edu/~svs/}}
\author{Andrew Snowden}
\address{Department of Mathematics, University of Michigan, Ann Arbor, MI}
\email{\href{mailto:asnowden@umich.edu}{asnowden@umich.edu}}
\urladdr{\url{http://www-personal.umich.edu/~asnowden/}}
\thanks{SS was supported by a Miller research fellowship. AS was supported by NSF grant DMS-1303082.}
\begin{document}

\begin{abstract}
Given a category $\cC$ of a combinatorial nature, we study the following fundamental question: how do combinatorial properties of $\cC$ affect algebraic properties of representations of $\cC$? We prove two general results. The first gives a criterion for representations of $\cC$ to admit a theory of Gr\"obner bases, from which we obtain a criterion for noetherianity. The second gives a criterion for a general ``rationality'' result for Hilbert series of representations of $\cC$, and connects to the theory of formal languages.

Our work is motivated by recent work in the literature on representations of various specific categories. Our general criteria recover many of the results on these categories that had been proved by ad hoc means, and often yield cleaner proofs and stronger statements. For example: we give a new, more robust, proof that FI-modules (studied by Church, Ellenberg, and Farb), and certain generalizations, are noetherian; we prove the Lannes--Schwartz artinian conjecture from the study of generic representation theory of finite fields; we significantly improve the theory of $\Delta$-modules, introduced by Snowden in connection to syzygies of Segre embeddings; and we establish fundamental properties of twisted commutative algebras in positive characteristic.
\end{abstract}

\maketitle

\tableofcontents

\section{Introduction}

Informally, a {\bf combinatorial category} is a category $\cC$ whose objects are finite sets, possibly with extra structure, and whose morphisms are functions, possibly with extra structure. A {\bf representation} of a category over a ring $\bk$ is a functor to the category of $\bk$-modules. For simplicity of exposition, we assume that $\bk$ is a field in the introduction. Typically, a representation can be thought of as a sequence of representations of finite groups together with transition maps. Some examples of interest include:
\begin{itemize}[$\bullet$]
\item The category $\FI$ of finite sets with injections. A representation of this category can be thought of as a sequence $(M_n)_{n \ge 0}$, where $M_n$ is a representation of the symmetric group $S_n$, together with transition maps $M_n \to M_{n+1}$ satisfying certain compatibilities. This category appears in \cite{schwede} (where it is called I) in the context of stable homotopy groups, and in \cite{djament-vespa} (where it is called $\Theta$) in the context of stable homology of orthogonal and symplectic groups over finite fields. Algebraic properties of this category are studied in \cite{fimodules, fi-noeth}, where many examples of representations occurring in algebra and topology are discussed, and from a different point of view in \cite{symc1}. See \cite{farb-icm} for a survey of its uses and additional references.

\item Variants of $\FI$. In \cite{delta-mod}, modules over twisted commutative algebras are studied; these can be viewed (in certain cases) as representations of a category $\FI_d$ generalizing $\FI$. In \cite{wilson} analogs of $\FI$ for other classical Weyl groups are studied. In \cite{wiltshire} the category $\FA$ of finite sets with all maps is studied. The category $\Gamma$ of pointed finite sets (and its opposite), appears in \cite{pirashvili2} and \cite{richter} in the contexts of Hochschild homology and Goodwillie calculus.

\item The category $\FS_G$ of finite sets with $G$-surjections, $G$ being a finite group (see \S\ref{ss:G-sets} for the definition). Really, it is the opposite category that is of interest. A representation of $\FS_G^{\op}$ can be thought of as a sequence $(M_n)_{n \ge 0}$, where $M_n$ is a representation of the wreath product $S_n \wr G$, together with transition maps $M_n \to M_{n+1}$ satisfying certain compatibilities (quite different from those in the $\FI$ case). The theory of $\FS_G^{\op}$ representations, with $G$ a symmetric group, is closely tied to the theory of $\Delta$-modules studied in \cite{delta-mod}. Also, Pirashvili shows in \cite{pirashvili} that representations of $\FS^\op$ (with $G$ trivial) are equivalent to representations of $\Gamma^\op$ that send the one-point set to $0$.

\item The Brauer categories of \cite{infrank}, in which the objects are finite sets and the morphisms are certain Brauer diagrams. Representations of this category are equivalent to algebraic representations of the infinite orthogonal group.

\item The category $\VA_{\bF_q}$ of finite-dimensional vector spaces over a finite field $\bF_q$ with all linear maps. Representations of this category (in particular when $\bk = \bF_q$) have been studied in relation to algebraic K-theory, rational cohomology, and the Steenrod algebra, see \cite{kuhn:survey} for a survey and additional references. 
\end{itemize}
The referenced works use a variety of methods, often ad hoc, to study representations. However, one is struck by the fact that many of the results appear to be quite similar. For instance, each proves (or conjectures) a noetherianity result. This suggests that there are general principles at play, and leads to the subject of our paper:
\vskip .5\baselineskip
\begin{adjustwidth}{.55in}{.55in}
{\bf Main Problem.} Find practical combinatorial criteria for categories that imply interesting algebraic properties of their representations.
\end{adjustwidth}
\vskip .5\baselineskip
We give solutions to this problem for the algebraic properties of noetherianity and rationality of Hilbert series. Our criteria easily recover and strengthen most known results, and allow us to resolve some open questions. Without a doubt, they will be applicable to many categories not yet considered.

In the remainder of the introduction, we summarize our results and applications in more detail. See \S\ref{sec:intro-guide} for a guide for the paper.

\subsection{Noetherianity}

Given an object $x$ of $\cC$, define a representation $P_x$ of $\cC$ by $P_x(y)=\bk[\Hom(x,y)]$, i.e., $P_x(y)$ is the free $\bk$-module with basis $\Hom(x,y)$. We call $P_x$ the {\bf principal projective} at $x$. These representations take the place of free modules; in fact, one should think of $P_x$ as the free representation with one generator of degree $x$. A representation of $\cC$ is {\bf finitely generated} if it is a quotient of a finite direct sum of principal projective representations.

The category $\Rep_{\bk}(\cC)$ of representations of $\cC$ is called {\bf noetherian} if any subrepresentation of a finitely generated representation is again finitely generated. This is a fundamental property, and has played a crucial role in applications. The first main theoretical result of this paper is a combinatorial criterion on $\cC$ that ensures $\Rep_{\bk}(\cC)$ is noetherian, for any left-noetherian ring $\bk$. We now explain this criterion, and the motivation behind it.

We start by recalling a combinatorial proof of the Hilbert basis theorem (i.e., $\bk[x_1, \ldots, x_n]$ is a noetherian ring) using Gr\"obner bases. Pick an admissible order on the monomials, i.e., a well-order compatible with multiplication. Using the order, we can define initial ideals, and reduce the study of the ascending chain condition to monomial ideals. Now, the set of monomial ideals is naturally in bijection with the set of ideals in the poset $\bN^r$. Thus noetherianity of $\bk[x_1, \ldots, x_n]$ follows from noetherianity of the poset $\bN^r$ (Dickson's lemma), which is a simple combinatorial exercise: given infinitely many vectors $v_1, v_2, \ldots$ in $\bN^r$, there exists $i < j$ with $v_i \le v_j$, where $\le$ means coordinate-by-coordinate comparison.

To apply this method to $\Rep_{\bk}(\cC)$ we must first make sense of what ``monomials'' are. The {\bf monomials} are the basis elements $e_f \in P_x(y)$ for $f \colon x \to y$. A subrepresentation $M$ of $P_x$ is {\bf monomial} if $M(y)$ is spanned by the monomials it contains, for all $y \in \cC$.

We now carry over the combinatorial proof of the Hilbert basis theorem. For simplicity of exposition, we assume that $\cC$ is directed, i.e., if $f \colon x \to x$ then $f=\id_x$. For an object $x$ of $\cC$, we write $\vert \cC_x \vert$ for the set of isomorphism classes of morphisms $x \to y$. This can be thought of as the set of monomials in $P_x$. Suppose that $\cC$ satisfies the following condition:
\begin{itemize}
\item[(G1)] For each $x \in \cC$, the set $\vert \cC_x \vert$ admits an admissible order $\prec$, that is, a well-order compatible with post-composition, i.e., $f \prec f'$ implies $gf \prec gf'$ for all $g$.
\end{itemize}
Given a subrepresentation $M$ of a principal projective $P_x$, we can use $\prec$ to define the {\bf initial representation} $\init(M)$. This allows us to reduce the study of the ascending chain condition for subrepresentations of $P_x$ to monomial subrepresentations. Monomial subrepresentations are naturally in bijection with ideals in the poset $\vert \cC_x \vert$, where the order is defined by $f \le g$ if $g=hf$ for some $h$. (Note: $\prec$ and $\le$ are two different orders on $\vert \cC_x \vert$; the former is chosen, while the latter is canonical.) We now assume that $\cC$ satisfies one further condition:
\begin{itemize}
\item[(G2)] For each $x \in \cC$, the poset $\vert \cC_x \vert$ is noetherian.
\end{itemize}
Given this, ascending chains of monomial subrepresentations stabilize, and so $P_x$ is noetherian. One easily deduces from this that $\Rep_{\bk}(\cC)$ is a noetherian category.

The above discussion motivates one of the main definitions in this paper:

\begin{definition}
A directed category $\cC$ is {\bf Gr\"obner} if (G1) and (G2) hold.
\end{definition}

We can summarize the previous paragraph as: if $\cC$ is a directed Gr\"obner category then $\Rep_{\bk}(\cC)$ is noetherian. In the body of the paper, we give a definition of Gr\"obner that does not require directed. However, it still precludes non-trivial automorphisms, and  hence many of the categories of primary interest. This motivates a weakening of the above definition:

\begin{definition}
A category $\cC$ is {\bf quasi-Gr\"obner} if there is a Gr\"obner category $\cC'$ and an essentially surjective functor $\cC' \to \cC$ satisfying property~(F) (see Definition~\ref{def:propF}).
\end{definition}

Property~(F) is a finiteness condition that intuitively means $\cC'$ locally has finite index in $\cC$. Our main combinatorial criterion for noetherianity is the following theorem:

\begin{theorem}
If $\cC$ is quasi-Gr\"obner and $\bk$ is left-noetherian, then $\Rep_{\bk}(\cC)$ is noetherian.
\end{theorem}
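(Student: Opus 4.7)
The plan is to first prove the theorem when $\cC$ is Gr\"obner, then deduce the quasi-Gr\"obner case by pulling back along the functor $\cC' \to \cC$ supplied by the definition.

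For the Gr\"obner case, I would reduce the question to showing that each principal projective $P_x$ is a noetherian object, since every finitely generated representation is a quotient of a finite direct sum of such. Using the admissible order $\prec$ from (G1), I define, for each subrepresentation $M \subseteq P_x$, its initial representation $\init(M)$: on an object $y$, take $\init(M)(y)$ to be the $\bk$-submodule of $P_x(y)$ spanned by leading terms of elements of $M(y)$. A standard Gr\"obner-style argument shows that $M \mapsto \init(M)$ is strictly order-preserving on subrepresentations, so the ACC on arbitrary subrepresentations follows from the ACC on monomial subrepresentations. A monomial subrepresentation of $P_x$ is equivalent to a choice, for each $f \in |\cC_x|$, of a left-ideal $I_f \subseteq \bk$, subject to the compatibility $I_f \subseteq I_{gf}$ whenever $g$ is post-composable with $f$. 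Such compatible families satisfy the ACC by a Dickson-style induction combining the noetherianity of the poset $|\cC_x|$ from (G2) with the left-noetherianity of $\bk$.

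For the quasi-Gr\"obner case, let $\Phi \colon \cC' \to \cC$ be the essentially surjective functor with property~(F), and consider pullback $\Phi^* \colon \Rep_{\bk}(\cC) \to \Rep_{\bk}(\cC')$. This is exact, and essential surjectivity of $\Phi$ makes $\Phi^*$ faithful and injective on the lattice of subrepresentations of any fixed representation $N$: if $M_1 \subseteq M_2 \subseteq N$ and $\Phi^*M_1 = \Phi^*M_2$, then $M_1(y) = M_2(y)$ for every $y$ in the essential image of $\Phi$, hence for every $y \in \cC$ after transporting along isomorphisms. Consequently, if $\Phi^*N$ is noetherian in $\Rep_{\bk}(\cC')$, then $N$ is noetherian in $\Rep_{\bk}(\cC)$. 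It therefore suffices to show that $\Phi^*$ preserves finite generation, and since a finitely generated $\cC$-representation is a quotient of finitely many $P_x$'s, the task reduces to checking that $\Phi^* P_x$ is finitely generated in $\Rep_{\bk}(\cC')$ for each $x \in \cC$. This is precisely what property~(F) is designed to guarantee: informally, since $\cC'$ is ``locally of finite index'' in $\cC$, the pullback of a principal projective is dominated by a finite sum of principal projectives of $\cC'$.

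The main obstacle is the reduction to monomial subrepresentations in the first step, under the weaker hypotheses permitted here. Since $\bk$ is only left-noetherian rather than a field, the initial representation must track leading coefficients as well as leading monomials, and since the paper's general definition of Gr\"obner does not require $\cC$ to be directed, nontrivial automorphisms can permute potential leading terms, so the notion of ``leading'' must be chosen compatibly with $\prec$. Once the right setup is in place, strict monotonicity of $M \mapsto \init(M)$ and the bijection between monomial subrepresentations and compatible ideal systems in $|\cC_x|$ go through by essentially the familiar commutative-algebra template. The transfer step is then largely formal; the substantive combinatorial content has been packaged into the definition of property~(F).
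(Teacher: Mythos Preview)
Your proposal is correct and follows essentially the same route as the paper: reduce to principal projectives, pass to initial (monomial) subrepresentations via an admissible order, identify monomial subrepresentations with order-preserving maps $|\cC_x| \to \cI(\bk)$ and invoke noetherianity of the poset together with left-noetherianity of $\bk$, then transfer to the quasi-Gr\"obner case using that property~(F) makes $\Phi^*$ preserve finite generation and essential surjectivity makes it reflect noetherianity. One small correction to your ``obstacles'' paragraph: even in the non-directed definition, orderability of $S_x$ forces $\Aut(x)$ to be trivial (a well-order compatible with the group action can exist only for the trivial group), so there are in fact no nontrivial automorphisms to worry about.
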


This is a solution of an instance of the Main Problem: ``(quasi-)Gr\"obner'' is a purely combinatorial condition on $\cC$, which can be checked easily in practice, and the above theorem connects it to an important algebraic property of representations.

\begin{example}
Recall that $\FI$ is the category whose objects are finite sets and whose morphisms are injections. The automorphism groups in this category are symmetric groups, so $\FI$ is not  Gr\"obner. Define $\OI$ to be the category whose objects are totally ordered finite sets and whose morphisms are order-preserving injections. 
We show that $\OI$ is a Gr\"obner category (Theorem~\ref{oithm}) and the forgetful functor $\OI \to \FI$ shows that $\FI$ is quasi-Gr\"obner. In particular, we see that $\Rep_{\bk}(\FI)$ is noetherian for any left-noetherian ring $\bk$. See Remark~\ref{rmk:fi-noeth} for the history of this result and its generalizations.
\end{example}

The above example is a typical application of the theory of Gr\"obner categories: The main category of interest (in this case $\FI$) has automorphisms, and is therefore not Gr\"obner. One therefore adds extra structure (e.g., a total order) to obtain a more rigid category. One then shows that this rigidified category is Gr\"obner, which usually comes down to an explicit combinatorial problem (in this case, Dickson's lemma). Finally, one deduces that the original category is quasi-Gr\"obner, which is usually quite easy.

\subsection{Hilbert series}

A {\bf norm} on a category $\cC$ is a function $\nu \colon \vert \cC \vert \to \bN$, where $\vert \cC \vert$ is the set of isomorphism classes of $\cC$. Suppose that $\cC$ is equipped with a norm and $M$ is a representation of $\cC$ over a field $\bk$. We then define the {\bf Hilbert series} of $M$ by
\begin{displaymath}
\rH_M(t) = \sum_{x \in \vert \cC \vert} \dim_{\bk}{M(x)} \cdot t^{\nu(x)},
\end{displaymath}
when this makes sense, i.e., when the coefficient of $t^n$ is finite for all $n$. The second main theoretical result of this paper is a combinatorial condition on $(\cC, \nu)$ that ensures $\rH_M(t)$ has a particular form, for any finitely generated representation $M$.

The key idea is to connect to the theory of formal languages. Let $\Sigma$ be a finite set, and let $\Sigma^{\star}$ denote the set of all finite words in $\Sigma$ (i.e., the free monoid generated by $\Sigma$). A {\bf language} on $\Sigma$ is a subset of $\Sigma^{\star}$. Given a language $\cL$, we define its {\bf Hilbert series} by
\begin{displaymath}
\rH_{\cL}(t) = \sum_{w \in \cL} t^{\ell(w)},
\end{displaymath}
where $\ell(w)$ is the length of the word $w$. There are many known results on Hilbert series of languages: for example, regular languages have rational Hilbert series and unambiguous context-free languages have algebraic Hilbert series (Chomsky--Sch\"utzenberger theorem). We review these results, and establish some new ones, in \S \ref{s:lang}.

Let $\cP$ be a class of languages. A {\bf $\cP$-lingual structure} on $\cC$ at $x$ consists of a finite alphabet $\Sigma$ and an injection $i \colon \vert \cC_x \vert \to \Sigma^{\star}$ (i.e., a way of regarding monomials in $P_x$ as words) such that the following two conditions hold. First, for any $f \colon x \to y$ in $\vert \cC_x \vert$ we have $\nu(y)=\ell(i(f))$, that is, the norm of a monomial coincides with the length of the corresponding word. And second, if $S$ is a poset ideal of $\vert \cC_x \vert$ then the language $i(S)$ is of class $\cP$. We say that $\cC$ is {\bf $\cP$-lingual} if it admits a $\cP$-lingual structure at every object. A special case of our main result on Hilbert series is then:

\begin{theorem}
Suppose that $\cC$ is a $\cP$-lingual Gr\"obner category and let $M$ be a finitely generated representation of $\cC$. Then $\rH_M$ is a $\bZ$-linear combination of series of the form $\rH_{\cL}$, where each $\cL$ is a language of class $\cP$.
\end{theorem}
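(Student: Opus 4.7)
The plan is to bootstrap from principal projectives to arbitrary finitely generated representations using the Gr\"obner machinery, ultimately reducing the theorem to the $\cP$-lingual hypothesis applied to poset ideals.

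First, I would handle the principal projectives. The full set $\vert \cC_x \vert$ is itself a poset ideal (it is the whole poset), so by the $\cP$-lingual hypothesis $\cL_x := i(\vert \cC_x \vert)$ is a language of class $\cP$. Because a Gr\"obner category has no nontrivial automorphisms and the norm matches word length by definition of the $\cP$-lingual structure, one has
\[ \rH_{P_x}(t) \;=\; \sum_{[y]} \vert \Hom(x,y) \vert \, t^{\nu(y)} \;=\; \sum_{f \in \vert \cC_x \vert} t^{\ell(i(f))} \;=\; \rH_{\cL_x}(t), \]
so $\rH_{P_x}$ is already of the required form.

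Next I would extend to an arbitrary subrepresentation $N \subseteq P_x$. Using the admissible well-order from (G1), form the initial subrepresentation $\init(N)$; it is monomial, and a standard Gr\"obner argument (leading-monomial basis for each $N(y)$) shows $\dim_{\bk} \init(N)(y) = \dim_{\bk} N(y)$, hence $\rH_N = \rH_{\init(N)}$. Under the bijection between monomial subrepresentations and poset ideals, $\init(N)$ corresponds to some $T \subseteq \vert \cC_x \vert$, and the same unfolding as in the previous step yields $\rH_N = \rH_{i(T)}$, a single Hilbert series of a class-$\cP$ language.

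For a general finitely generated $M$, I would present $M$ as a cokernel of $\bigoplus_{j=1}^n P_{x_j}$ with kernel $K$, which is finitely generated because $\cC$ Gr\"obner implies $\Rep_{\bk}(\cC)$ is noetherian. By additivity of Hilbert series on short exact sequences, $\rH_M = \sum_j \rH_{P_{x_j}} - \rH_K$, so it suffices to control $\rH_K$ for any finitely generated subrepresentation $K$ of $\bigoplus_{j=1}^n P_{x_j}$. I would induct on $n$: writing $\bigoplus_{j=1}^n P_{x_j} = P_{x_1} \oplus Q$ with $Q := \bigoplus_{j \ge 2} P_{x_j}$ and letting $\pi$ be the projection to the first summand, the short exact sequence $0 \to K \cap Q \to K \to \pi(K) \to 0$ gives $\rH_K = \rH_{\pi(K)} + \rH_{K \cap Q}$; the first term falls under the previous step since $\pi(K) \subseteq P_{x_1}$, and the second under the induction hypothesis, with noetherianity ensuring both pieces remain finitely generated.

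The main obstacle is really the preparatory lemma that $\init(N)$ has the same Hilbert series as $N$ and corresponds to a poset ideal under the $\cP$-lingual dictionary; once that bookkeeping is in place, the theorem reduces to combining additivity of Hilbert series, noetherianity of $\Rep_{\bk}(\cC)$, and the single translation from poset ideals to class-$\cP$ languages supplied by the hypothesis.
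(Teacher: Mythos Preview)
Your proposal is correct and follows essentially the same route as the paper. The paper first proves (Theorem~\ref{lingthm}) that any subrepresentation $N$ of a principal projective has $\rH_N=\rH_{i(J)}$ for a class-$\cP$ language $i(J)$, via exactly the initial-subrepresentation argument you sketch; it then dispatches the general case in one line by observing that subrepresentations of principal projectives span the Grothendieck group of finitely generated representations. Your induction on the number of summands is precisely the content of that Grothendieck-group claim, written out explicitly, so the two proofs are the same up to packaging.
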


This too is a solution of an instance of the Main Problem: ``$\cP$-lingual'' is a purely combinatorial condition on $(\cC, \nu)$, which can be easily checked in practice, and the above theorem connects it to an important algebraic property of representations.

\begin{example}
Define a norm on $\OI$ by $\nu(x)=\# x$. We show that $(\OI, \nu)$ is $\cP$-lingual, where $\cP$ is the class of ``ordered languages'' introduced in \S \ref{ss:ordered}; we refer to Theorem~\ref{oithm} for the details. As a consequence, if $M$ is a finitely generated $\FI$-module then $\rH_M(t) = \frac{p(t)}{(1-t)^k}$ where $p(t)$ is a polynomial and $k \ge 0$, and so the function $n \mapsto \dim_{\bk} M([n])$ is eventually polynomial. See Remark~\ref{rmk:fi-history} for the history of this result and its generalizations.
\end{example}

\subsection{Applications}

We apply our theory to prove a large number of results about categories of interest. We mention three of these results here.

\subsubsection{Lannes--Schwartz artinian conjecture} \label{sec:intro-schwartz}

This conjecture, posed by Jean Lannes and Lionel Schwartz in the late 1980s, is equivalent to the assertion that $\Rep_{\bF_q}(\VA_{\bF_q})$ is noetherian, i.e., the principal projectives $P_V$ are noetherian for all $V \in \VA_{\bF_q}$. As originally stated, the conjecture
asserts the equivalent dual statement that the principal injectives are artinian. This first appeared in print as \cite[Conjecture B.12]{kuhnI}, and again in \cite[Conjecture 3.12]{kuhnII}. See \cite[Proposition 3.13]{kuhnII} for a number of equivalent formulations; one such formulation is that any finitely generated functor has a resolution by finitely generated projectives, so that Ext modules between finitely generated functors are finite dimensional. 

Previous work on this had been of two sorts. Papers \cite{DjamentFoncteurs, DjamentLeFoncteur, Djament-finite, KuhnInvariant, PowellArtinian, PowellStructure, PowellArtinianObjects} focus on the structure of $P_V$, and, in particular, show that $P_V$ is noetherian for $\dim V = 1$ and all $q$, and $\dim V \le 3$ and $q = 2$. In a different direction, Schwartz showed that any polynomial functor taking finite dimensional values has a resolution by finitely generated projectives. See \cite[Theorem 5.3.8]{schwartz} for the original proof, and \cite[Proposition 10.1]{FLS} for a slightly more general result of this sort.

The conjecture is a special case of Corollary~\ref{VI-noeth}. A similar (but distinct) proof of this conjecture appears in \cite{putman-sam}. Streamlined proofs of this conjecture based on these works can be found in \cite{djament-bourbaki, krause}.

\subsubsection{Syzygies of Segre embeddings}
In \cite{delta-mod}, the $p$-syzygies of all Segre embeddings (with any number of factors of any dimensions, but with $p$ fixed) are assembled into a single algebraic structure called a $\Delta$-module. The two main results of \cite{delta-mod} state that, in characteristic $0$, this structure is finitely generated and has a rational Hilbert series. Informally, these results mean that the $p$-syzygies of Segre embeddings admit a finite description.

We improve the results of \cite{delta-mod} in three ways. First, we show that the main theorems continue to hold in positive characteristic. We remark that the syzygies of the Segre embeddings are known to behave differently in positive characteristic (see \cite{hashimoto} for determinantal ideals, which for $2 \times 2$ determinants are special cases of Segre embeddings). While we only work with $\Delta$-modules over fields, one can work over $\bZ$ and show that for any given syzygy module, the type of torsion that appears is bounded.

Second, we greatly improve the rationality result for the Hilbert series and give an affirmative answer to \cite[Question~5]{delta-mod}. This yields strong constraints on how $p$-syzygies of the Segre embeddings vary with the number of factors.

Finally, we remove a technical assumption from \cite{delta-mod}: that paper only dealt with ``small'' $\Delta$-modules, whereas our methods handle all finitely generated $\Delta$-modules. This is useful for technical reasons: for instance, it shows that a finitely generated $\Delta$-module admits a resolution by finitely generated projective $\Delta$-modules. In particular, we find that $\Delta$-modules of syzygies are finitely presented, and so can be completely described in a finite manner.

It is worth emphasizing that this paper greatly simplifies the approach to $\Delta$-modules. The treatment in \cite{delta-mod} is abstract and complicated. Ours is much more elementary and direct.

\subsubsection{Twisted commutative algebras in positive characteristic}
A twisted commutative algebra (tca) is a graded algebra on which the symmetric group $S_n$ acts on the $n$th graded piece so that the multiplication is commutative up to a ``twist'' by the symmetric group. In characteristic $0$, one can use Schur--Weyl duality to describe tca's in terms of commutative algebras with an action of $\GL(\infty)$, and we have fruitfully exploited this to obtain many results \cite{delta-mod,symc1,expos,infrank}. This method is inapplicable in positive characteristic, so we know much less about tca's there. In \cite{fi-noeth}, the univariate tca $\bk \langle x \rangle$ was analyzed, for any ring $\bk$, and certain fundamental results (such as noetherianity) were established. We establish many of the same results for the multivariate tca $\bk\langle x_1, \ldots, x_d \rangle$; see \S \ref{ss:tca} for details.

\subsection{Relation to previous work}

\begin{itemize}[$\bullet$]
\item The idea of reducing noetherianity of an algebraic structure to that of a poset has been used before in different contexts. We highlight \cite{cohen} for an example in universal algebra and \cite{higman} for examples in abstract algebra. After writing a first draft of this article, Henning Krause notified us that the statement of Theorem~\ref{grobnoeth} for Gr\"obner categories appears in \cite[Theorem 3.1]{richter2}.

\item The notion of quasi-Gr\"obner formalizes the intuitive idea of ``breaking symmetry.'' A similar idea is used in \cite{DK-operad} to define Gr\"obner bases for symmetric operads by passing to the weaker structure of shuffle operads. This idea is used in \cite{KP-operad} to study Hilbert series of operads with well-behaved Gr\"obner bases.

\item A related topic (and one that serves as motivation for us) is the notion of ``noetherianity up to symmetry'' in multilinear algebra and algebraic statistics. We point to \cite{draismaeggermont, draismakuttler, hillarmartin, hillarsullivant} for some applications of noetherianity and to \cite{draisma-notes} for a survey and further references.

\item While preparing this article, we discovered that an essentially equivalent version of  Proposition~\ref{prop:secondorder-noeth} is proven in the proof of \cite[Proposition 7.5]{draismakuttler}. 

\item Another source of motivation is the topic of representation stability \cite{church-farb, farb-icm} and its relation to $\FI$-modules \cite{fimodules, fi-noeth}. The main advantage of our approach over the ones in \cite{fimodules, fi-noeth} is its flexibility. For example, it is remarked after \cite[Proposition 2.12]{fi-noeth} that their techniques cannot handle linear analogues of $\FI$; we handle these in Theorem~\ref{thm:VI-noeth}.
\end{itemize}

\subsection{Guide to the paper} \label{sec:intro-guide}

The paper is divided into two parts: theory and applications. 

The first part on theory begins with background results on noetherian posets (\S\ref{sec:posets}). The material on posets is standard and we have included proofs to make it self-contained; we use it for applications to noetherian conditions on representations of categories. 

In \S\ref{sec:rep-cat} we introduce basic terminology and properties of representations of categories and functors between them. We state criteria for noetherian conditions on representations which will be further developed in later sections. \S\ref{sec:noeth-grob} introduces and develops the main topic of this paper: Gr\"obner bases for representations of categories and (quasi-)Gr\"obner categories. We give a criterion for categories to admit a Gr\"obner basis theory and relate noetherian properties of representations to those of posets. 

\S\ref{s:lang} discusses formal languages and is a mix of review and new material on ordered languages. This material is essential for \S\ref{sec:ling}, which contains our applications to Hilbert series of representations of categories. Here we introduce the notion of lingual structures on categories and connect properties of Hilbert series with formal languages. 

The second part of the paper contains applications of the theory developed in the first part. \S\ref{sec:cat-inj} is about categories of finite sets and injective functions of different kinds. This has two sources of motivation: the theory of $\FI$-modules \cite{fimodules} and the theory of twisted commutative algebras \cite{expos}. We recover and strengthen known results on noetherianity and Hilbert series for these categories and related ones. 

\S\ref{sec:cat-surj} concerns categories of finite sets and surjective functions. These categories are much more complicated than their injective counterparts. A significant application of the results here is the proof of the Lannes--Schwartz artinian conjecture (discussed in \S\ref{sec:intro-schwartz}). 

\S\ref{sec:Delta-mod} is about applications to $\Delta$-modules (introduced by the second author in \cite{delta-mod}). We prove that finitely generated $\Delta$-modules are noetherian over any field, thus significantly improving the results of \cite{delta-mod} where it is only shown in characteristic $0$ under a ``smallness'' assumption. For Hilbert series of $\Delta$-modules, we affirmatively resolve and strengthen \cite[\S 6, Question 5]{delta-mod} by proving a stronger rationality result.

In \S\ref{sec:additional}, we mention some additional examples, some of which will appear in a different article for reasons of space. Finally, we end with some open problems in \S\ref{sec:problems}.

\subsection{Notation.}

\begin{itemize}
\item If $\Sigma$ is a set, let $\Sigma^\star$ denote the set of words in $\Sigma$, i.e., the free monoid generated by $\Sigma$. For $w \in \Sigma^{\star}$, let $\ell(w)$ denote the length of the word.
\item For a non-negative integer $n \ge 0$, set $[n] = \{1, \dots, n\}$, with the convention $[0] = \emptyset$.
\item Let $\bn$ be an element of $\bN^r$. We write $\vert \bn \vert$ for the sum of the coordinate of $\bn$ and $\bn!$ for $n_1! \cdots n_r!$. We let $[\bn]$ be the tuple $([n_1], \ldots, [n_r])$ of finite sets. Given variables $t_1, \ldots, t_r$, we let $\bt^{\bn}$ be the monomial $t_1^{n_1} \cdots t_r^{n_r}$.
\item $\SYM$ denotes the completion of $\Sym$ with respect to the homogeneous maximal ideal, i.e., $\SYM(V) = \prod_{n \ge 0} \Sym^n(V)$.
\end{itemize}

{\bf Acknowledgements.}
We thank Aur\'elien Djament, Benson Farb, Nicholas Kuhn, Andrew Putman, and Bernd Sturmfels for helpful discussions and comments. We also thank some anonymous referees for their careful reading of the paper and their invaluable suggestions which greatly improved the paper.

\part{Theory}

\section{Partially ordered sets} \label{sec:posets}

In this section, we state some basic definitions and properties of noetherian posets. This section can be skipped and referred back to as necessary since it serves a technical role only.

Let $X$ be a poset. Then $X$ satisfies the {\bf ascending chain condition} (ACC) if every ascending chain in $X$ stabilizes, i.e., given $x_1 \le x_2 \le \cdots$ in $X$ we have $x_i=x_{i+1}$ for $i \gg 0$. The {\bf descending chain condition} (DCC) is defined similarly. An {\bf anti-chain} in $X$ is a sequence $x_1, x_2, \ldots$ such that $x_i \nlet x_j$ for all $i \ne j$. An {\bf ideal} in $X$ is a subset $I$ of $X$ such that $x \in I$ and $x \le y$ implies $y \in I$. We write $\cI(X)$ for the poset of ideals of $X$, ordered by inclusion. For $x \in X$, the {\bf principal ideal} generated by $x$ is $\{y \mid y \ge x\}$. An ideal is {\bf finitely generated} if it is a finite union of principal ideals. The following result is standard.

\begin{proposition}
The following conditions on $X$ are equivalent:
\begin{enumerate}[\indent \rm (a)]
\item The poset $X$ satisfies DCC and has no infinite anti-chains.
\item Given a sequence $x_1, x_2, \ldots$ in $X$, there exists $i<j$ such that $x_i \le x_j$.
\item The poset $\cI(X)$ satisfies ACC.
\item Every ideal of $X$ is finitely generated.
\end{enumerate}
\end{proposition}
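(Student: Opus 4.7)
The plan is to prove a cycle of implications \textrm{(a)} $\Rightarrow$ \textrm{(b)} $\Rightarrow$ \textrm{(c)} $\Rightarrow$ \textrm{(d)} $\Rightarrow$ \textrm{(a)}. Note first that ideals here are upward closed, so for any $x \in X$ the principal ideal $\uparrow\!x = \{y : y \ge x\}$ is indeed an ideal, and a finite union of principal ideals is again an ideal.

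For \textrm{(a)} $\Rightarrow$ \textrm{(b)}, given a sequence $x_1, x_2, \ldots$ I would $3$-color each pair $\{i,j\}$ with $i < j$ by whether $x_i \le x_j$, $x_i > x_j$, or $x_i, x_j$ are incomparable, and apply the infinite Ramsey theorem to obtain an infinite monochromatic subset. A ``$\le$'' class instantly gives the desired pair; a ``$>$'' class yields an infinite strictly descending chain, violating DCC; and an ``incomparable'' class yields an infinite anti-chain. For \textrm{(b)} $\Rightarrow$ \textrm{(c)}, given a strictly ascending chain $I_1 \subsetneq I_2 \subsetneq \cdots$ of ideals, I would choose $x_i \in I_i \setminus I_{i-1}$ (with the convention $I_0=\emptyset$). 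If $i<j$ and $x_i \le x_j$, then $x_i \in I_i \subseteq I_{j-1}$ together with upward closure forces $x_j \in I_{j-1}$, contradicting the choice of $x_j$; so the sequence violates \textrm{(b)}.

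For \textrm{(c)} $\Rightarrow$ \textrm{(d)}, assume some ideal $I$ is not finitely generated. Pick $x_1 \in I$, and inductively choose $x_{n+1} \in I \setminus (\uparrow\!x_1 \cup \cdots \cup \uparrow\!x_n)$, which is nonempty since otherwise $I$ would be finitely generated. Setting $I_n = \uparrow\!x_1 \cup \cdots \cup \uparrow\!x_n$ produces a strictly ascending chain of ideals, contradicting \textrm{(c)}. For \textrm{(d)} $\Rightarrow$ \textrm{(a)}: given an infinite descending chain $x_1 > x_2 > \cdots$, the ideal $I = \bigcup_n \uparrow\!x_n$ has generators $y_1, \ldots, y_k$, each satisfying $y_j \ge x_{n_j}$, so $I \subseteq \uparrow\!x_N$ for $N = \max n_j$; but $x_{N+1} \in I$ with $x_{N+1} < x_N$ gives a contradiction. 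For an infinite anti-chain $\{x_n\}$, the same construction with $I = \bigcup_n \uparrow\!x_n$ forces every $x_m$ to lie above some $x_{n_j}$ from the finite generating set, contradicting incomparability.

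None of the individual steps is truly hard; the only step requiring any real thought is \textrm{(a)} $\Rightarrow$ \textrm{(b)}, which hinges on an application of the infinite Ramsey theorem (alternatively, one could use a minimal-bad-sequence argument, but that typically requires dependent choice and is less transparent here). All other implications are elementary and essentially formal, relying only on manipulating principal ideals and upward closure.
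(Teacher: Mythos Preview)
Your proof is correct. The paper does not actually prove this proposition; it simply declares the result ``standard'' and moves on, so there is no argument to compare against.

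A small remark on your (a) $\Rightarrow$ (b): invoking the infinite Ramsey theorem works, but it is heavier machinery than necessary. A more elementary route is to call an index $i$ \emph{terminal} if there is no $j>i$ with $x_i \le x_j$. If only finitely many indices are terminal, then past them every $x_i$ has a later $x_j \ge x_i$, and you are done. If infinitely many are terminal, restrict to those; among the corresponding $x_i$'s, use DCC (equivalently, every nonempty subset has a minimal element) to iteratively pick minimal elements from successive tails, and check that the resulting subsequence is an anti-chain. This avoids Ramsey entirely and stays within the elementary spirit of the section. Your cycle (b) $\Rightarrow$ (c) $\Rightarrow$ (d) $\Rightarrow$ (a) is exactly the standard argument and needs no change.
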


The poset $X$ is {\bf noetherian} if the above conditions are satisfied. Where we say ``$X$ is noetherian,'' one often sees ``$\le$ is a well-quasi-order'' in the literature. Similarly, where we say ``$X$ satisfies DCC'' one sees ``$\le$ is well-founded.''

\begin{proposition} \label{wqo-inf}
Let $X$ be a noetherian poset and let $x_1, x_2, \dots$ be a sequence in $X$. Then there exists an infinite sequence of indices $i_1 < i_2 < \cdots$ such that $x_{i_1} \le x_{i_2} \le \cdots$.
\end{proposition}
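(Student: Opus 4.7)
My plan is to use the equivalent characterization (b) of noetherianity given in the preceding proposition: for any sequence in $X$, there exist $i < j$ with $x_i \le x_j$. The idea is to extract an infinite ascending subsequence by iterating this property, after first cutting down to indices that "behave well."

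Call an index $i$ \emph{bad} if there is no $j > i$ with $x_i \le x_j$, and \emph{good} otherwise. The first step is to argue that only finitely many indices can be bad. Indeed, if there were infinitely many bad indices $i_1 < i_2 < \cdots$, then the subsequence $x_{i_1}, x_{i_2}, \ldots$ would be a sequence in $X$ with the property that $x_{i_k} \not\le x_{i_l}$ whenever $k < l$ (by badness of $i_k$), directly contradicting condition (b) of the preceding proposition applied to the noetherian poset $X$.

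Once we know that all sufficiently large indices are good, we construct the required subsequence recursively. Choose $i_1$ large enough that every index $\ge i_1$ is good. Since $i_1$ is good, there exists $i_2 > i_1$ with $x_{i_1} \le x_{i_2}$; since $i_2$ is also good, there exists $i_3 > i_2$ with $x_{i_2} \le x_{i_3}$; and so on. This produces the desired infinite chain $i_1 < i_2 < \cdots$ with $x_{i_1} \le x_{i_2} \le \cdots$.

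The only real subtlety is the good/bad dichotomy and invoking characterization (b) on the subsequence of bad indices; everything else is a direct recursion. I expect no significant obstacle, since the hard content is already encoded in the equivalence of (a)--(d) in the preceding proposition.
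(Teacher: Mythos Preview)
Your proof is correct and is essentially identical to the paper's own argument: the paper's set $I$ is exactly your set of bad indices, and the recursive construction of the chain from any index beyond $I$ is the same. The only difference is terminology.
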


\begin{proof}
Let $I$ be the set of indices such that $i \in I$ and $j > i$ implies that $x_i \nlet x_j$. If $I$ is infinite, then there is $i < i'$ with $i,i' \in I$ such that $x_i \le x_{i'}$ by definition of noetherian and hence contradicts the definition of $I$. So $I$ is finite; let $i_1$ be any number larger than all elements of $I$. Then by definition of $I$, we can find $x_{i_1} \le x_{i_2} \le \cdots$.
\end{proof}

\begin{proposition} \label{noethprod}
Let $X$ and $Y$ be noetherian posets. Then $X \times Y$ is noetherian.
\end{proposition}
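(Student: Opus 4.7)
The plan is to verify the characterization (b) from the preceding proposition: given any sequence $(x_1, y_1), (x_2, y_2), \ldots$ in $X \times Y$, I need to produce indices $i < j$ with $(x_i, y_i) \le (x_j, y_j)$, i.e., with $x_i \le x_j$ and $y_i \le y_j$ simultaneously.

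The key tool is Proposition~\ref{wqo-inf}, which strengthens the defining property of noetherian posets by producing an \emph{infinite} increasing subsequence rather than just a single comparable pair. First I would apply this proposition to the sequence $x_1, x_2, \ldots$ in the noetherian poset $X$, obtaining indices $i_1 < i_2 < \cdots$ with $x_{i_1} \le x_{i_2} \le \cdots$. Next I would consider the sequence $y_{i_1}, y_{i_2}, \ldots$ in the noetherian poset $Y$, and apply the defining property (b) (or Proposition~\ref{wqo-inf} again) to find $a < b$ with $y_{i_a} \le y_{i_b}$. Setting $i = i_a$ and $j = i_b$ then gives $i < j$ with $x_i \le x_j$ (from the infinite chain) and $y_i \le y_j$ (from the second application), so $(x_i, y_i) \le (x_j, y_j)$ in the product order.

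There is no real obstacle here: the argument is the standard two-step extraction that one uses in proofs of Higman's lemma and related well-quasi-order results. The only subtlety worth flagging is that one cannot simply apply property (b) in each coordinate separately and combine, because the comparable pairs in $X$ and in $Y$ need not occur at matching indices; this is precisely why the infinite-subsequence strengthening in Proposition~\ref{wqo-inf} is essential.
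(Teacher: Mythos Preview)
Your proposal is correct and is essentially identical to the paper's proof: apply Proposition~\ref{wqo-inf} to the $X$-coordinates to extract an infinite increasing subsequence, then use noetherianity of $Y$ on the corresponding $Y$-coordinates to find a comparable pair. Your closing remark about why separate coordinate-wise applications of condition~(b) would not suffice is a nice clarification not made explicit in the paper.
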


\begin{proof}
Let $(x_1, y_1), (x_2, y_2), \dots$ be an infinite sequence in $X \times Y$. Since $X$ is noetherian, there exists $i_1 < i_2 < \cdots$ such that $x_{i_1} \le x_{i_2} \le \cdots$ (Proposition~\ref{wqo-inf}). Since $Y$ is noetherian, there exists $i_j < i_{j'}$ such that $y_{i_j} \le y_{i_{j'}}$, and hence $(x_{i_j}, y_{i_j}) \le (x_{i_{j'}}, y_{i_{j'}})$.
\end{proof}

Let $X$ and $Y$ be posets and let $f \colon X \to Y$ be a function. We say that $f$ is {\bf order-preserving} if $x \le x'$ implies $f(x) \le f(x')$, and $f$ is {\bf strictly order-preserving} if $x \le x'$ is equivalent to $f(x) \le f(x')$. If $f$ is strictly order-preserving, then it is injective; we can thus regard $X$ as a subposet of $Y$. In particular, if $Y$ is noetherian then so is $X$.

Let $\cF=\cF(X,Y)$ be the set of all order-preserving functions $f \colon X \to Y$. We partially order $\cF$ by $f \le g$ if $f(x) \le g(x)$ for all $x \in X$.

\begin{proposition} \label{prop:poset-fns}
We have the following.
\begin{enumerate}[\indent \rm (a)]
\item If $X$ is noetherian and $Y$ satisfies ACC then $\cF$ satisfies ACC. 
\item If $\cF$ satisfies ACC and $X$ is non-empty then $Y$ satisfies ACC.
\item If $\cF$ satisfies ACC and $Y$ has two distinct comparable elements then $X$ is noetherian.
\end{enumerate}
\end{proposition}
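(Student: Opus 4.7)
The three parts are independent, so I plan to handle each by producing contradictions from strictly ascending chains. Throughout, the key leverage in (b) and (c) is that we can build interesting order-preserving $X \to Y$ by hand (constants in (b), step functions to an ideal in (c)), while (a) requires a pigeonhole/subsequence argument.

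For (a), I will argue by contradiction. Given a strictly ascending chain $f_1 < f_2 < \cdots$ in $\cF$, for each $n$ pick $x_n \in X$ witnessing the strict inequality, so that $f_n(x_n) < f_{n+1}(x_n)$. Since $X$ is noetherian, Proposition~\ref{wqo-inf} lets me extract indices $n_1 < n_2 < \cdots$ with $x_{n_1} \le x_{n_2} \le \cdots$. The plan is then to observe the chain
\[
f_{n_k}(x_{n_k}) < f_{n_k+1}(x_{n_k}) \le f_{n_{k+1}}(x_{n_k}) \le f_{n_{k+1}}(x_{n_{k+1}}),
\]
where the first inequality uses the witness, the second uses $f_{n_k+1} \le f_{n_{k+1}}$ (since $n_k + 1 \le n_{k+1}$), and the third uses that $f_{n_{k+1}}$ is order-preserving. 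Thus the sequence $f_{n_k}(x_{n_k})$ strictly ascends in $Y$, contradicting ACC. The main subtlety here is lining up the indices so that strict inequality at one point survives after passing to the subsequence.

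For (b), pick any $x_0 \in X$ (possible by nonemptiness). Given an ascending chain $y_1 \le y_2 \le \cdots$ in $Y$, define $f_n \colon X \to Y$ to be the constant function $y_n$. These are trivially order-preserving and $f_n \le f_{n+1}$, so ACC in $\cF$ forces the sequence $f_n(x_0) = y_n$ to stabilize.

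For (c), fix $y_0 < y_1$ in $Y$. If $X$ is not noetherian, choose by characterization (b) of noetherian posets a bad sequence $x_1, x_2, \ldots$ with $x_i \nleq x_j$ for all $i < j$. Let $I_n$ be the ideal generated by $x_1, \ldots, x_n$. I claim the inclusions $I_n \subsetneq I_{n+1}$ are strict: $x_{n+1} \in I_{n+1}$, but if $x_{n+1}$ were in $I_n$ we would have $x_i \le x_{n+1}$ for some $i \le n$, contradicting the badness of the sequence. Now set $f_n(x) = y_1$ if $x \in I_n$ and $f_n(x) = y_0$ otherwise. The upward-closure of $I_n$ and the fact $y_0 \le y_1$ make $f_n$ order-preserving, and $I_n \subseteq I_{n+1}$ gives $f_n \le f_{n+1}$; the strictness $I_n \subsetneq I_{n+1}$ evaluated at $x_{n+1}$ shows $f_n < f_{n+1}$, contradicting ACC in $\cF$. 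The main point of care here is just verifying that $f_n$ really is order-preserving, which relies on $I_n$ being an upward-closed subset of $X$.
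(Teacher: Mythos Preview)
Your proof is correct and follows essentially the same approach as the paper. In (a) you make the subsequence bookkeeping more explicit than the paper does (spelling out the step $f_{n_k+1}(x_{n_k}) \le f_{n_{k+1}}(x_{n_k})$), and in (c) you build the strictly ascending chain of ideals directly from a bad sequence rather than phrasing it as an order-embedding $\cI(X) \hookrightarrow \cF$ via $I \mapsto \chi_I$; but the underlying ideas (subsequence extraction in (a), constant functions in (b), characteristic functions of ideals in (c)) are identical.
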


\begin{proof}
(a) Suppose $X$ is noetherian and $\cF$ does not satisfy ACC. Let $f_1<f_2<\cdots$ be an ascending chain in $\cF$. For each $i$, choose $x_i \in X$ such that $f_i(x_i)<f_{i+1}(x_i)$. By passing to a subsequence we can assume $x_1 \le x_2 \le \cdots$ (Proposition~\ref{wqo-inf}). Let $y_i=f_i(x_i)$. Then $f_i(x_i)<f_{i+1}(x_i) \le f_{i+1}(x_{i+1})$ implies that $y_1<y_2<\cdots$, so $Y$ does not satisfy ACC.

(b) Now suppose $\cF$ satisfies ACC and $X$ is non-empty. Then $Y$ embeds into $\cF$ as the set of constant functions, and so $Y$ satisfies ACC. 

(c) Finally, suppose $\cF$ satisfies ACC and $Y$ contains elements $y_1<y_2$. Given an ideal $I$ of $X$, define $\chi_I \in \cF$ by
\begin{displaymath}
\chi_I(x) = \begin{cases} y_2 & x \in I \\ y_1 & x \not\in I \end{cases}.
\end{displaymath}
Then $I \mapsto \chi_I$ embeds $\cI(X)$ into $\cF$, and so $\cI(X)$ satisfies ACC, and so $X$ is noetherian.
\end{proof}

Given a poset $X$, let $X^\star$ be the set of finite words $x_1 \cdots x_n$ with $x_i \in X$. We define $x_1 \cdots x_n \le x'_1 \cdots x'_m$ if there exist $1 \le i_1 < \cdots < i_n \le m$ such that $x_j \le x'_{i_j}$ for $j=1,\dots,n$.

\begin{theorem}[Higman's lemma \cite{higman}] \label{thm:higman}
If $X$ is a noetherian poset, then so is $X^\star$.
\end{theorem}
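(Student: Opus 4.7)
The plan is to prove Higman's lemma by the classical Nash-Williams \emph{minimal bad sequence} argument. By the characterization of noetherianity (condition (b) of Proposition~\ref{wqo-inf}'s parent proposition), it suffices to rule out the existence of an infinite \textbf{bad} sequence $w_1,w_2,\ldots$ in $X^\star$, meaning $w_i \not\le w_j$ whenever $i<j$.

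Assume for contradiction that bad sequences exist. I construct a minimal one recursively: having chosen $w_1,\ldots,w_{k-1}$ so that they extend to some bad sequence, pick $w_k$ to be of minimal possible length in $\bN$ subject to $w_1,\ldots,w_k$ still extending to a bad sequence. Because empty words embed into everything, no $w_k$ is empty, so I can write $w_k = x_k u_k$ with $x_k \in X$ the first letter and $u_k \in X^\star$ the remaining suffix. Since $X$ is noetherian, Proposition~\ref{wqo-inf} furnishes indices $i_1 < i_2 < \cdots$ with $x_{i_1} \le x_{i_2} \le \cdots$.

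Now consider the hybrid sequence
\[
w_1,\, w_2,\, \ldots,\, w_{i_1-1},\, u_{i_1},\, u_{i_2},\, u_{i_3},\, \ldots .
\]
Its $i_1$-th term $u_{i_1}$ is strictly shorter than $w_{i_1}$, so by the minimality in the choice of $w_{i_1}$ this hybrid sequence cannot itself be bad; there must exist $a<b$ with the $a$-th term $\le$ the $b$-th term. I plan to derive a contradiction in each of three cases. If $a<b<i_1$, then $w_a \le w_b$ directly. If $a<i_1 \le b$, the $b$-th term is some $u_{i_k}$, and since $u_{i_k}$ is clearly a subword (suffix) of $w_{i_k}$, one has $w_a \le u_{i_k} \le w_{i_k}$. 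If $i_1 \le a<b$, the terms are $u_{i_s}$ and $u_{i_t}$ for some $s<t$; combined with $x_{i_s} \le x_{i_t}$, one checks directly from the definition of the Higman order that $w_{i_s} = x_{i_s}u_{i_s} \le x_{i_t}u_{i_t} = w_{i_t}$ by prepending the matching first letters. In every case the original sequence $w_1,w_2,\ldots$ fails to be bad, a contradiction.

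The main conceptual step is the minimal bad sequence construction; the three-case analysis afterwards is routine given the definitions. A small technical point worth highlighting is that the recursive choice of the minimal bad sequence uses dependent choice (standard in this literature), and that verifying $w_{i_s} \le w_{i_t}$ in Case~3 amounts to shifting the witnessing indices in the definition of the Higman order by one.
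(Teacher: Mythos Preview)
Your proof is correct and follows essentially the same Nash-Williams minimal bad sequence argument as the paper's own proof. The only cosmetic difference is that the paper argues directly that the hybrid sequence is bad (contradicting minimality), whereas you take the contrapositive and do an explicit three-case analysis; you also spell out why no $w_k$ can be empty, which the paper leaves implicit.
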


\begin{proof}
Suppose that $X^\star$ is not noetherian. We use Nash-Williams' theory of minimal bad sequences \cite{nashwilliams} to get a contradiction. A sequence $w_1,w_2,\ldots$ of elements in $X^\star$ is bad if $w_i \nlet w_j$ for all $i<j$. We pick a bad sequence minimal in the following sense: for all $i \ge 1$, among all bad sequences beginning with $w_1, \dots, w_{i-1}$ (this is the empty sequence for $i=1$), $\ell(w_i)$ is as small as possible. Let $x_i \in X$ be the first element of $w_i$ and let $v_i$ be the subword of $w_i$ obtained by removing $x_i$. By Proposition~\ref{wqo-inf}, there is an infinite sequence $i_1 < i_2 < \cdots$ such that $x_{i_1} \le x_{i_2} \le \cdots$. Then $w_1, w_2, \dots, w_{i_1-1}, v_{i_1}, v_{i_2}, \dots$ is a bad sequence because $v_{i_j} \le w_{i_j}$ for all $j$, and $v_{i_j} \le v_{i_{j'}}$ would imply that $w_{i_j} \le w_{i_{j'}}$. It is smaller than our minimal bad sequence, so we have reached a contradiction. Hence $X^\star$ is noetherian.
\end{proof}

\section{Representations of categories}
\label{sec:rep-cat}

This section introduces the main topic of this paper: representations of categories. Our goal is to lay out the main definitions and basic properties of representations and functors between categories of representations and to state some criteria for representations to be noetherian. More specifically, definitions are given in \S\ref{ss:repdefn}, properties of functors between categories are in \S\ref{sec:pullbackfunctor}, and tensor products of representations are discussed in \S\ref{sec:tensor-prod}.

\subsection{Basic definitions and results} \label{ss:repdefn}

Let $\cC$ be an essentially small category. We denote by $\vert \cC \vert$ the set of isomorphism classes in $\cC$. For an object $x$ of $\cC$, we let $\cC_x$ be the category of morphisms from $x$; thus the objects of $\cC_x$ are morphisms $x \to y$ (with $y$ variable), and the morphisms in $\cC_x$ are the obvious commutative triangles. We say that $\cC$ is {\bf directed} if every self-map in $\cC$ is the identity. If $\cC$ is directed then so is $\cC_x$, for any $x$. If $\cC$ is essentially small and directed, then $\vert \cC \vert$ is naturally a poset by defining $x \le y$ if there exists a morphism $x \to y$. We say that $\cC$ is $\Hom$-finite if all $\Hom$ sets are finite.

Fix a nonzero ring $\bk$ (not necessarily commutative) and let $\Mod_{\bk}$ denote the category of left $\bk$-modules. A {\bf representation} of $\cC$ (or a {\bf $\cC$-module}) over $\bk$ is a functor $\cC \to \Mod_{\bk}$. A map of $\cC$-modules is a natural transformation. We write $\Rep_{\bk}(\cC)$ for the category of representations, which is abelian. Let $M$ be a representation of $\cC$. By an {\bf element} of $M$ we mean an element of $M(x)$ for some object $x$ of $\cC$. Given any set $S$ of elements of $M$, there is a smallest subrepresentation of $M$ containing $S$; we call this the subrepresentation {\bf generated by} $S$. We say that $M$ is {\bf finitely generated} if it is generated by a finite set of elements. For a morphism $f \colon x \to y$ in $\cC$, we typically write $f_*$ for the given map of $\bk$-modules $M(x) \to M(y)$.

Let $x$ be an object of $\cC$. Define a representation $P_x$ of $\cC$ by $P_x(y)=\bk[\Hom(x,y)]$, i.e., $P_x(y)$ is the free left $\bk$-module with basis $\Hom(x,y)$. For a morphism $f \colon x \to y$, we write $e_f$ for the corresponding element of $P_x(y)$. If $M$ is another representation then $\Hom(P_x, M)=M(x)$. This shows that $\Hom(P_x,-)$ is an exact functor, and so $P_x$ is a projective object of $\Rep_{\bk}(\cC)$. We call it the {\bf principal projective} at $x$. A $\cC$-module is finitely generated if and only if it is a quotient of a finite direct sum of principal projective objects. 

An object of $\Rep_{\bk}(\cC)$ is {\bf noetherian} if every ascending chain of subobjects stabilizes; this is equivalent to every subrepresentation being finitely generated. The category $\Rep_{\bk}(\cC)$ is {\bf noetherian} if every finitely generated object in it is.

\begin{proposition} \label{cat-noeth}
The category $\Rep_{\bk}(\cC)$ is noetherian if and only if every principal projective is noetherian.
\end{proposition}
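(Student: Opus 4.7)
The plan is to prove the two implications separately, using only the general fact that a finitely generated $\cC$-module is, by definition, a quotient of a finite direct sum of principal projectives, plus two standard abelian-category facts about noetherian objects.

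For the easy direction, assume $\Rep_{\bk}(\cC)$ is noetherian. I would observe that for each object $x$, the principal projective $P_x$ is generated by the single element $e_{\id_x} \in P_x(x)$: indeed, any morphism $f \colon x \to y$ satisfies $f_*(e_{\id_x}) = e_f$, so every basis element of $P_x(y)$ lies in the subrepresentation generated by $e_{\id_x}$. Hence $P_x$ is finitely generated, and therefore noetherian by hypothesis.

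For the nontrivial direction, assume every $P_x$ is noetherian. I would first record two lemmas that are formal in any abelian category: (i) if $0 \to A \to B \to C \to 0$ is exact with $A$ and $C$ noetherian, then $B$ is noetherian, and hence by induction any finite direct sum of noetherian objects is noetherian; (ii) any quotient of a noetherian object is noetherian, because subobjects of the quotient pull back bijectively to subobjects of the original containing the kernel. Both statements are proved by unwinding the definition of ACC on subobjects, and I would include them in a sentence or two rather than invoking them as black boxes. Given any finitely generated $M \in \Rep_{\bk}(\cC)$, write $M$ as a quotient of $\bigoplus_{i=1}^n P_{x_i}$; by (i) this direct sum is noetherian, and then by (ii) so is $M$.

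Since the argument is entirely formal once the two lemmas are in hand, there is no real obstacle; the only point that needs attention is checking that the principal projective really is generated by $e_{\id_x}$, which amounts to the calculation $e_f = f_*(e_{\id_x})$ noted above. Everything else is a direct application of the definitions of ``finitely generated,'' ``noetherian object,'' and ``noetherian category.''
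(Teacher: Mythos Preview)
Your proof is correct and follows essentially the same route as the paper's: both directions rely on the fact that a finitely generated representation is a quotient of a finite direct sum of principal projectives, together with closure of noetherian objects under finite direct sums and quotients. You supply slightly more detail (the explicit generator $e_{\id_x}$ for $P_x$, and a sketch of the two abelian-category lemmas), whereas the paper invokes these as evident facts, but the structure is identical.
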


\begin{proof}
Obviously, if $\Rep_{\bk}(\cC)$ is noetherian then so is every principal projective. Conversely, suppose every principal projective is noetherian. Let $M$ be a finitely generated object. Then $M$ is a quotient of a finite direct sum $P$ of principal projectives. Since noetherianity is preserved under finite direct sums, $P$ is noetherian. And since noetherianity descends to quotients, $M$ is noetherian. This completes the proof.
\end{proof}

\subsection{Pullback functors} \label{sec:pullbackfunctor}

Let $\Phi \colon \cC \to \cC'$ be a functor. There is then a pullback functor on representations $\Phi^* \colon \Rep_{\bk}(\cC') \to \Rep_{\bk}(\cC)$. In this section, we study how $\Phi^*$ interacts with finiteness conditions. The following definition is of central importance:

\begin{definition} \label{def:propF}
We say that $\Phi$ satisfies {\bf property~(F)} (F for finite) if the following condition holds: given any object $x$ of $\cC'$ there exist finitely many objects $y_1, \ldots, y_n$ of $\cC$ and morphisms $f_i \colon x \to \Phi(y_i)$ in $\cC'$ such that for any object $y$ of $\cC$ and any morphism $f \colon x \to \Phi(y)$ in $\cC'$, there exists a morphism $g \colon y_i \to y$ in $\cC$ such that $f=\Phi(g) \circ f_i$.
\end{definition}

\begin{remark} \label{Fadjoint}
If $\Phi$ has a left adjoint $\Psi$ then it automatically satisfies property~(F): indeed, one can take $n=1$, $y_1=\Psi(x)$, and $f_1$ the unit $x \to \Phi(\Psi(x))$. We thank a referee for this remark.
\end{remark}

The following proposition is the motivation for introducing property~(F).

\begin{proposition} \label{propFfg}
A functor $\Phi \colon \cC \to \cC'$ satisfies property {\rm (F)} if and only if $\Phi^*$ takes finitely generated objects of $\Rep_{\bk}(\cC')$ to finitely generated objects of $\Rep_{\bk}(\cC)$.
\end{proposition}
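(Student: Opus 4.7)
The plan is to reduce both directions to a careful analysis of the pullback of a principal projective, since $\Phi^\ast$ is exact and additive, and finitely generated objects are exactly the quotients of finite direct sums of principal projectives. Concretely, $\Phi^\ast$ takes finitely generated objects to finitely generated objects if and only if $\Phi^\ast(P_x)$ is finitely generated for every $x \in \cC'$, so the claim reduces to an equivalence between property (F) and the finite generation of the $\Phi^\ast(P_x)$.

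For the forward direction, I would start from the identification
\[
\Phi^\ast(P_x)(y) \;=\; P_x(\Phi(y)) \;=\; \bk[\Hom_{\cC'}(x, \Phi(y))],
\]
and observe that for a morphism $g\colon y' \to y$ in $\cC$ the transition map sends $e_h \mapsto e_{\Phi(g)\circ h}$. Given data $y_1,\dots,y_n$ and $f_i\colon x \to \Phi(y_i)$ witnessing property (F), I claim the elements $e_{f_i} \in \Phi^\ast(P_x)(y_i)$ generate $\Phi^\ast(P_x)$. Indeed, every basis element $e_f \in \Phi^\ast(P_x)(y)$ satisfies $f = \Phi(g)\circ f_i$ for some $i$ and some $g\colon y_i \to y$, which means $e_f = g_\ast e_{f_i}$. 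Since the $e_f$ span each $\Phi^\ast(P_x)(y)$ over $\bk$, finite generation follows.

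For the converse, assume $\Phi^\ast(P_x)$ is finitely generated for every $x \in \cC'$. A generating set can be enlarged so that each generator is one of the basis elements $e_{f_i}$ for some $f_i\colon x \to \Phi(y_i)$, because any generator is a $\bk$-linear combination of such basis elements, and replacing a generator by the finite collection of basis elements appearing in it still yields a generating set. So there exist finitely many $f_i\colon x \to \Phi(y_i)$ whose associated $e_{f_i}$ generate $\Phi^\ast(P_x)$. Now for any $f\colon x \to \Phi(y)$, the element $e_f \in \Phi^\ast(P_x)(y)$ lies in the subrepresentation generated by $\{e_{f_i}\}$; unwinding the definition of ``generated by,'' this means $e_f$ can be written as a $\bk$-linear combination of elements of the form $(g)_\ast e_{f_i} = e_{\Phi(g)\circ f_i}$ for various morphisms $g\colon y_i \to y$. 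Because the $e_h$ form a $\bk$-basis of the free module $\bk[\Hom_{\cC'}(x,\Phi(y))]$, comparing coefficients of $e_f$ on both sides forces $f = \Phi(g)\circ f_i$ for at least one choice of $i$ and $g$ appearing in the expression. This is exactly property (F).

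The main subtlety I anticipate is the last step: passing from the \emph{linear-combination} description of ``subrepresentation generated by'' to the \emph{exact factorization} required by property (F). The key point that makes it work is that $\Phi^\ast(P_x)(y)$ is \emph{free} on $\Hom_{\cC'}(x,\Phi(y))$, so coefficient-comparison in a free $\bk$-module (with $\bk \neq 0$) lets us extract a single term of the form $e_{\Phi(g)\circ f_i}$ equal to $e_f$. Aside from this, the argument is essentially a bookkeeping translation between the combinatorics of the hom-sets and the module-theoretic notion of generation.
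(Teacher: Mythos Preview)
Your proof is correct and matches the paper's approach exactly for the forward direction: reduce to principal projectives, identify $\Phi^\ast(P_x)(y)=\bk[\Hom_{\cC'}(x,\Phi(y))]$, and show that the $e_{f_i}$ coming from property~(F) generate. The paper leaves the converse to the reader (remarking that it is not used), and your argument for it---replacing generators by the basis elements $e_{f_i}$ appearing in them, then using freeness of $\bk[\Hom_{\cC'}(x,\Phi(y))]$ over a nonzero ring to extract an exact factorization $f=\Phi(g)\circ f_i$---is precisely the intended completion.
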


\begin{proof}
Assume that $\Phi$ satisfies property (F). It suffices to show that $\Phi^*$ takes principal projectives to finitely generated representations. Thus let $P_x$ be the principal projective of $\Rep_{\bk}(\cC')$ at an object $x$. Note that $\Phi^*(P_x)(y)$ has for a basis the elements $e_f$ for $f \in \Hom_{\cC'}(x, \Phi(y))$. Let $f_i \colon x \to \Phi(y_i)$ be as in the definition of property (F). Then the $e_{f_i}$ generate $\Phi^*(P_x)$. The converse is left to the reader (and not used in this paper).
\end{proof}

\begin{proposition} \label{pullback-fg}
Suppose that $\Phi \colon \cC \to \cC'$ is an essentially surjective functor. Let $M$ be an object of $\Rep_{\bk}(\cC')$ such that $\Phi^*(M)$ is finitely generated {\rm (}resp.\ noetherian{\rm )}. Then $M$ is finitely generated {\rm (}resp.\ noetherian{\rm )}.
\end{proposition}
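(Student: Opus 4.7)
My plan is to handle the two statements separately but with the same basic device: essential surjectivity lets us transport elements and subrepresentations of $M$ at an arbitrary object $x \in \cC'$ to the object $\Phi(y)$ for some $y \in \cC$ where we know something, via an isomorphism $\alpha \colon \Phi(y) \xrightarrow{\sim} x$ together with the induced isomorphism $\alpha_* \colon M(\Phi(y)) \xrightarrow{\sim} M(x)$.

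For finite generation, I would suppose $\Phi^*(M)$ is generated by finitely many elements $m_1, \dots, m_n$ with $m_i \in \Phi^*(M)(y_i) = M(\Phi(y_i))$, and show that these same elements (viewed as elements of $M$ based at the objects $\Phi(y_i)$ of $\cC'$) generate $M$. Given $m \in M(x)$, essential surjectivity provides $y \in \cC$ and an isomorphism $\alpha \colon \Phi(y) \to x$ in $\cC'$. Then $\alpha^{-1}_* m \in M(\Phi(y)) = \Phi^*(M)(y)$, so it is a $\bk$-linear combination $\sum c_{i,j} \Phi(g_{i,j})_* m_i$ for morphisms $g_{i,j} \colon y_i \to y$ in $\cC$. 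Applying $\alpha_*$ and using functoriality yields $m = \sum c_{i,j} (\alpha \circ \Phi(g_{i,j}))_* m_i$, expressing $m$ as a $\bk$-linear combination of push-forwards of the $m_i$ along morphisms $\Phi(y_i) \to x$ of $\cC'$. Hence the $m_i$ generate $M$.

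For noetherianity, I would show that $N \mapsto \Phi^*(N)$ is an injective order-preserving map from the poset of subrepresentations of $M$ to that of $\Phi^*(M)$; an ascending chain in the former then maps to one in the latter, which stabilizes by hypothesis, forcing the original chain to stabilize. Order-preservation is clear, so the only content is injectivity: if $N \subseteq N' \subseteq M$ and $\Phi^*(N) = \Phi^*(N')$, i.e., $N(\Phi(y)) = N'(\Phi(y))$ for every $y \in \cC$, then for any $x \in \cC'$ choose $y$ and $\alpha \colon \Phi(y) \xrightarrow{\sim} x$; since $N$ and $N'$ are subfunctors, $\alpha_*$ carries $N(\Phi(y))$ onto $N(x)$ and $N'(\Phi(y))$ onto $N'(x)$, giving $N(x) = N'(x)$.

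I do not expect a real obstacle here; the content of the statement is simply that essential surjectivity prevents $\Phi^*$ from losing information. The only place where one has to be careful is the noetherian half, where the notation invites the mistake of thinking $N(x)$ is literally $N(\Phi(y))$ rather than its image under $\alpha_*$; writing the isomorphism transport explicitly makes this step routine.
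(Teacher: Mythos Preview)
Your proof is correct and rests on the same idea as the paper's: essential surjectivity lets one transport between $M(x)$ and $M(\Phi(y))$ via a chosen isomorphism $\alpha \colon \Phi(y) \xrightarrow{\sim} x$, so that information about $\Phi^*(M)$ determines $M$. The finite-generation halves are essentially identical; the paper phrases it slightly more abstractly (showing the subrepresentation of $M$ generated by the $m_i$ pulls back to all of $\Phi^*(M)$, hence equals $M$), while you write out the linear combination explicitly.

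For noetherianity the routes diverge slightly: you argue via ACC, showing $N \mapsto \Phi^*(N)$ is strictly order-preserving and hence reflects stabilization of chains, whereas the paper uses the ``every subrepresentation is finitely generated'' formulation and simply reapplies the first half (if $N \subseteq M$, then $\Phi^*(N) \subseteq \Phi^*(M)$ is finitely generated by hypothesis, so $N$ is finitely generated by what was just proved). The paper's route is marginally cleaner since it recycles the first statement; yours is equally valid and self-contained. One small wording point: you call the map ``injective'' but only verify this for comparable pairs $N \subseteq N'$; that is exactly what the chain argument needs (strict order-preservation), so the proof is fine, but the word ``injective'' slightly overstates what you prove.
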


\begin{proof}
Let $S$ be a set of elements of $\Phi^*(M)$. Let $S'$ be the corresponding set of elements of $M$. (Thus if $S$ contains $m \in \Phi^*(M)(y)$ then $S'$ contains $m \in M(\Phi(y))$.) If $N$ is a subrepresentation of $M$ containing $S'$ then $\Phi^*(N)$ is a subrepresentation of $\Phi^*(M)$ containing $S$. It follows that if $N$ (resp.\ $N'$) is the subrepresentation of $M$ (resp.\ $\Phi^*(M)$) generated by $S'$ (resp.\ $S$), then $N' \subset \Phi^*(N)$. Thus if $S$ generates $\Phi^*(M)$ then $\Phi^*(N)=\Phi^*(M)$, which implies $N=M$ since $\Phi$ is essentially surjective, i.e., $S$ generates $M$. In particular, if $\Phi^*(M)$ is finitely generated then so is $M$.

Now suppose that $\Phi^*(M)$ is noetherian. Given a subrepresentation $N$ of $M$, we obtain a subrepresentation $\Phi^*(N)$ of $\Phi^*(M)$. Since $\Phi^*(M)$ is noetherian, it follows that $\Phi^*(N)$ is finitely generated. Thus $N$ is finitely generated, and so $M$ is noetherian.
\end{proof}

\begin{corollary} \label{pullback}
Let $\Phi \colon \cC \to \cC'$ be an essentially surjective functor satisfying property~{\rm (F)} and suppose $\Rep_{\bk}(\cC)$ is noetherian. Then $\Rep_{\bk}(\cC')$ is noetherian.
\end{corollary}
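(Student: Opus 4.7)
The plan is to combine Proposition~\ref{propFfg} and Proposition~\ref{pullback-fg} essentially without any further work; the corollary is really just a packaging of these two results. Concretely, to show $\Rep_{\bk}(\cC')$ is noetherian it suffices, by the definition of noetherian category, to show that every finitely generated $M \in \Rep_{\bk}(\cC')$ is noetherian.

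First I would apply $\Phi^*$ to $M$. Since $\Phi$ satisfies property~(F), Proposition~\ref{propFfg} tells us that $\Phi^*(M)$ is a finitely generated object of $\Rep_{\bk}(\cC)$. By hypothesis $\Rep_{\bk}(\cC)$ is noetherian, so $\Phi^*(M)$ is in fact noetherian.

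Next I would invoke Proposition~\ref{pullback-fg}, using that $\Phi$ is essentially surjective: the noetherianity of $\Phi^*(M)$ transfers back to noetherianity of $M$. Since $M$ was an arbitrary finitely generated object of $\Rep_{\bk}(\cC')$, this shows that $\Rep_{\bk}(\cC')$ is noetherian.

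There is no real obstacle here: property~(F) was designed precisely to guarantee that $\Phi^*$ preserves finite generation, and essential surjectivity was designed to ensure that $\Phi^*$ reflects noetherianity. The only thing to be careful about is to cite both hypotheses in the right place (property~(F) for the ``finitely generated'' half, essential surjectivity for the ``noetherian'' half), but no new combinatorial input is required.
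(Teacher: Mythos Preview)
Your proposal is correct and follows exactly the same approach as the paper's proof: apply Proposition~\ref{propFfg} to get that $\Phi^*(M)$ is finitely generated, use noetherianity of $\Rep_{\bk}(\cC)$ to conclude $\Phi^*(M)$ is noetherian, then apply Proposition~\ref{pullback-fg} to deduce $M$ is noetherian.
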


\begin{proof}
Let $M$ be a finitely generated $\cC'$-module. Then $\Phi^*(M)$ is finitely generated by Proposition~\ref{propFfg}, and therefore noetherian, and so $M$ is noetherian by Proposition~\ref{pullback-fg}.
\end{proof}

The next two results follow from the definitions, so we omit their proofs.

\begin{proposition} \label{propFcomp}
Suppose $\Phi \colon \cC_1 \to \cC_2$ and $\Psi \colon \cC_2 \to \cC_3$ satisfy property~{\rm (F)}. Then the composition $\Psi \circ \Phi$ satisfies property~{\rm (F)}.
\end{proposition}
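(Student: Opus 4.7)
The plan is a direct two-step unfolding of the definition. Starting from an object $x$ of $\cC_3$, I would first apply property~(F) for $\Psi$ at $x$ to extract a finite list of objects $y_1, \dots, y_n$ in $\cC_2$ together with morphisms $f_i \colon x \to \Psi(y_i)$ in $\cC_3$ that ``control'' all maps out of $x$ into the image of $\Psi$.

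Next, for each $y_i$ separately, I would apply property~(F) for $\Phi$ at $y_i$ to get a finite list of objects $z_{i,1}, \dots, z_{i,m_i}$ of $\cC_1$ together with morphisms $g_{i,j} \colon y_i \to \Phi(z_{i,j})$ in $\cC_2$. The candidate witnessing family for $\Psi \circ \Phi$ at $x$ would then be the finite collection $\{z_{i,j}\}$ with the composite morphisms $h_{i,j} := \Psi(g_{i,j}) \circ f_i \colon x \to \Psi(\Phi(z_{i,j}))$ in $\cC_3$.

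The verification is then routine: given any $z \in \cC_1$ and any $h \colon x \to \Psi(\Phi(z))$ in $\cC_3$, viewing $h$ as a morphism from $x$ into $\Psi$ of the object $\Phi(z)$ produces by property~(F) for $\Psi$ an index $i$ and a morphism $\alpha \colon y_i \to \Phi(z)$ in $\cC_2$ with $h = \Psi(\alpha) \circ f_i$. Feeding $\alpha$ into property~(F) for $\Phi$ at $y_i$ yields an index $j$ and a morphism $\beta \colon z_{i,j} \to z$ in $\cC_1$ with $\alpha = \Phi(\beta) \circ g_{i,j}$. Applying $\Psi$ and composing with $f_i$ gives $h = \Psi(\Phi(\beta)) \circ h_{i,j}$, which is exactly the required factorization.

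There is no real obstacle here: the statement is essentially a bookkeeping exercise showing that property~(F) is a ``two-sided'' finiteness condition that composes. The only mildly delicate point is choosing the family for the composite to be indexed by pairs $(i,j)$ so that both factorizations can be threaded together; once that is set up, functoriality of $\Psi$ turns the composite factorization $\alpha = \Phi(\beta) \circ g_{i,j}$ into the required factorization of $h$ through $h_{i,j}$.
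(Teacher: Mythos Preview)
Your proof is correct and is precisely the direct unfolding of the definitions that the paper has in mind; indeed, the paper omits the proof entirely, remarking only that it ``follow[s] easily from the definitions.'' There is nothing to add.
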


\begin{proposition} \label{propGcancel}
Let $\Phi \colon \cC_1 \to \cC_2$ and $\Psi \colon \cC_2 \to \cC_3$ be functors. Suppose that $\Phi$ is essentially surjective and $\Psi \circ \Phi$ satisfies property~{\rm (F)}. Then $\Psi$ satisfies property~{\rm (F)}.
\end{proposition}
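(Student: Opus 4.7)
The plan is to unpack the definitions and use essential surjectivity of $\Phi$ to transport the witnesses provided by property~(F) for $\Psi \circ \Phi$ from $\cC_1$ to $\cC_2$. Fix an object $x$ of $\cC_3$. Apply property~(F) for $\Psi \circ \Phi$ at $x$ to obtain finitely many objects $z_1, \ldots, z_n$ of $\cC_1$ and morphisms $h_i \colon x \to \Psi(\Phi(z_i))$ in $\cC_3$ satisfying the required factorization property. The natural candidates to witness property~(F) for $\Psi$ at $x$ are then $y_i := \Phi(z_i)$ in $\cC_2$ with $f_i := h_i \colon x \to \Psi(y_i)$.

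Next I would verify the factorization property. Given $y \in \cC_2$ and $f \colon x \to \Psi(y)$ in $\cC_3$, essential surjectivity of $\Phi$ produces $z \in \cC_1$ together with an isomorphism $\alpha \colon \Phi(z) \to y$ in $\cC_2$. Applying $\Psi$, we obtain an isomorphism $\Psi(\alpha) \colon \Psi(\Phi(z)) \to \Psi(y)$; set $h := \Psi(\alpha)^{-1} \circ f$, a morphism $x \to \Psi(\Phi(z))$ in $\cC_3$. By property~(F) for $\Psi \circ \Phi$, there is some index $i$ and a morphism $g \colon z_i \to z$ in $\cC_1$ with $h = \Psi(\Phi(g)) \circ h_i$. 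Then $g' := \alpha \circ \Phi(g) \colon y_i \to y$ is a morphism in $\cC_2$, and unwinding gives $f = \Psi(\alpha) \circ h = \Psi(\alpha) \circ \Psi(\Phi(g)) \circ h_i = \Psi(g') \circ f_i$, exactly as required.

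There is no real obstacle here; the only subtlety is remembering that essential surjectivity gives an isomorphism between $y$ and some $\Phi(z)$ rather than an equality, so one must absorb that isomorphism into the factoring morphism $g'$. Once that is handled, the argument is a direct chase through the definition of property~(F).
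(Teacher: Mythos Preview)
Your proof is correct and is precisely the straightforward unwinding of the definitions; the paper itself omits the proof, remarking only that the result follows from the definitions, so your argument is exactly what is intended.
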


\subsection{Tensor products} \label{sec:tensor-prod}

Suppose $\bk$ is a commutative ring. Given representations $M$ and $N$ of categories $\cC$ and $\cD$ over $\bk$, we define their {\bf external tensor product}, denoted $M \boxtimes N$, to be the representation of $\cC \times \cD$ given by $(x, y) \mapsto M(x) \otimes_{\bk} N(y)$. One easily sees that if $M$ and $N$ are finitely generated then so is $M \boxtimes N$. If $\cC=\cD$ then we define the {\bf pointwise tensor product} of $M$ and $N$, denoted $M \odot N$, to be the representation of $\cC$ given by $x \mapsto M(x) \otimes_{\bk} N(x)$. The two tensor products are related by the identity $M \odot N = \Delta^*(M \boxtimes N)$, where $\Delta \colon \cC \to \cC \times \cC$ is the diagonal functor. From this discussion, we have the following definition and result.

\begin{definition} \label{def:catF} 
A category $\cC$ satisfies {\bf property~(F)} if the diagonal functor $\Delta \colon \cC \to \cC \times \cC$ satisfies property~(F). 
\end{definition}

\begin{proposition} \label{pointwise}
If $\cC$ satisfies property~{\rm (F)} then the pointwise tensor product of finitely generated representations is finitely generated.
\end{proposition}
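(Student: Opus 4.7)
The plan is to chain together two facts that are already in place: (i) the external tensor product of finitely generated representations is finitely generated, and (ii) pullback along a functor with property~(F) preserves finite generation (Proposition~\ref{propFfg}). Since the pointwise tensor product is expressed as the composite $M \odot N = \Delta^*(M \boxtimes N)$, and since $\Delta$ is assumed to satisfy property~(F), the statement should reduce to a one-line combination of these facts.

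More concretely, I would proceed as follows. First, given finitely generated representations $M, N \in \Rep_{\bk}(\cC)$, I would invoke the remark immediately preceding the statement that $M \boxtimes N$ is a finitely generated object of $\Rep_{\bk}(\cC \times \cC)$. The justification is routine: if $\{m_i \in M(x_i)\}$ and $\{n_j \in N(y_j)\}$ are finite generating sets, then $\{m_i \otimes n_j \in (M \boxtimes N)(x_i, y_j)\}$ generates $M \boxtimes N$, because for any pair of morphisms $(f \colon x_i \to x,\; g \colon y_j \to y)$ in $\cC \times \cC$ we have $(f,g)_*(m_i \otimes n_j) = f_*(m_i) \otimes g_*(n_j)$, and products of elements of this form span $M(x) \otimes_{\bk} N(y)$.

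Second, I would apply Proposition~\ref{propFfg} to the diagonal functor $\Delta \colon \cC \to \cC \times \cC$, which by the hypothesis that $\cC$ satisfies property~(F) (in the sense of Definition~\ref{def:catF}) is a functor satisfying property~(F). Hence $\Delta^*(M \boxtimes N)$ is finitely generated in $\Rep_{\bk}(\cC)$. Using the identity $M \odot N = \Delta^*(M \boxtimes N)$ recorded in the paragraph preceding the definition, this gives the desired conclusion.

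There is no real obstacle here; the proof is essentially a two-step citation, with the only content being the quick verification that $M \boxtimes N$ is finitely generated when $M$ and $N$ are. The proposition is really just packaging the definition of property~(F) for a single category into a statement about tensor products, so the main work was already done in setting up Definition~\ref{def:catF} and Proposition~\ref{propFfg}.
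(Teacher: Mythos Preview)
Your proof is correct and is exactly the argument the paper has in mind: the paper writes ``From this discussion, we have the following definition and result,'' meaning precisely the chain $M \odot N = \Delta^*(M \boxtimes N)$ together with finite generation of $M \boxtimes N$ and Proposition~\ref{propFfg} applied to $\Delta$. Your writeup simply spells out what the paper leaves implicit.
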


\begin{remark} 
If $\cC$ has coproducts then it automatically satisfies property~(F), as the coproduct provides a left adjoint to the diagonal (see Remark~\ref{Fadjoint}).
\end{remark}

\section{Noetherianity and Gr\"obner categories}
\label{sec:noeth-grob}

This section introduces another main topic of this paper: Gr\"obner bases for representations of categories. We first discuss monomial representations in \S\ref{ss:monomial}. Definitions and basic properties of Gr\"obner bases and initial representations are given in \S\ref{ss:grobner} and we state a Gr\"obner-theoretic approach to proving the noetherian property. In \S\ref{ss:grob-cat}, we introduce the notions of Gr\"obner and quasi-Gr\"obner categories, which are those categories for which the formalism of the first section can be applied. 

\subsection{Monomial representations} \label{ss:monomial}

Recall that $\bk$ is a general ring. Let $\cC$ be an essentially small category and let $\Set$ be the category of sets. Fix a functor $S \colon \cC \to \Set$, and let $P=\bk[S]$, i.e., $P(x)$ is the free $\bk$-module on the set $S(x)$. 

We begin by defining a partially ordered set $\vert S \vert$ associated to $S$, which is one of the main combinatorial objects of interest. A subfunctor of $S$ is {\bf principal} if it is generated by a single element. (Here we use ``element'' and ``generated'' as with representations of $\cC$.) We define $\vert S \vert$ to be the set of principal subfunctors of $S$, partially ordered by reverse inclusion. We can describe this poset more concretely as follows, at least when $\cC$ is small. Set $\wt{S}=\bigcup_{x \in \cC} S(x)$. Given $f \in S(x)$ and $g \in S(y)$, define $f \le g$ if there exists $h \colon x \to y$ with $h_*(f)=g$. Define an equivalence relation on $\wt{S}$ by $f \sim g$ if $f \le g$ and $g \le f$. The poset $\vert S \vert$ is the quotient of $\wt{S}$ by $\sim$, with the induced partial order.

Given $f \in S(x)$, we write $e_f$ for the corresponding element of $P(x)$. An element of $P$ is a {\bf monomial} if it is of the form $\lambda e_f$ for some $\lambda \in \bk$ and $f \in S(x)$. A subrepresentation $M$ of $P$ is {\bf monomial} if $M(x)$ is spanned by the monomials it contains, for all objects $x$.

We now classify the monomial subrepresentations of $P$ in terms of $\vert S \vert$. Given $f \in \wt{S}$, let $I_M(f) = \{\lambda \in \bk \mid \lambda e_f \in M\}$. Then $I_M(f)$ is an ideal of $\bk$. If $f \le g$ then $I_M(f) \subset I_M(g)$. In particular, $I_M(f)=I_M(g)$ if $f \sim g$. Let $\cI(\bk)$ be the poset of left-ideals in $\bk$ and let $\cM(P)$ be the poset of monomial subrepresentations of $P$ (ordered by inclusion). Given $M \in \cM(P)$, we have constructed an order-preserving function $I_M \colon \vert S \vert \to \cI(\bk)$, i.e., an element of $\cF(\vert S \vert, \cI(\bk))$ (see \S\ref{sec:posets}).

\begin{proposition} \label{monclass}
The map $I \colon \cM(P) \to \cF(\vert S \vert, \cI(\bk))$ is an isomorphism of posets.
\end{proposition}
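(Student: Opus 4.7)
The plan is to construct an explicit inverse $\Phi\colon \cF(\vert S\vert, \cI(\bk)) \to \cM(P)$ to $I$ and verify that both maps respect the partial orders and compose to identities.

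First I would define $\Phi$. Given an order-preserving function $\varphi\colon \vert S\vert \to \cI(\bk)$, let $M_\varphi$ be the subfunctor of $P$ whose value at $x$ is the $\bk$-span of the set $\{\lambda e_f : f \in S(x),\ \lambda \in \varphi([f])\}$, where $[f]$ denotes the class of $f$ in $\vert S\vert$. I need to check three things: (i) $M_\varphi(x)$ is a $\bk$-submodule of $P(x)$, which is immediate because $\varphi([f])$ is a left-ideal of $\bk$ for each $f$ (so the set of allowable $\lambda$'s closes under $\bk$-scaling and addition within each $e_f$-line); (ii) $M_\varphi$ is closed under the $\cC$-action: given $h\colon x\to y$ and a generating element $\lambda e_f \in M_\varphi(x)$, we have $h_*(\lambda e_f) = \lambda e_{h_*(f)}$, and by the very definition of $\le$ on $\wt S$ we have $f \le h_*(f)$, hence $[f] \le [h_*(f)]$ and therefore $\varphi([f]) \subset \varphi([h_*(f)])$ since $\varphi$ is order-preserving, so $\lambda \in \varphi([h_*(f)])$ and the image lies in $M_\varphi(y)$; (iii) $M_\varphi$ is monomial by construction.

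Next I would show $I\circ \Phi = \id$ and $\Phi\circ I = \id$. For the first, unwinding definitions, $I_{M_\varphi}(f) = \{\lambda \in \bk : \lambda e_f \in M_\varphi\}$ which is exactly $\varphi([f])$ (a small check is that if $\lambda e_f$ is in the span of the generating monomials then in fact $\lambda \in \varphi([f])$, but this is clear since distinct $e_f$'s for $f \in S(x)$ are linearly independent in $P(x)$). For the second, given a monomial subrepresentation $M$, the space $M_{I_M}(x)$ is by definition the span of all $\lambda e_f$ with $\lambda e_f \in M$, i.e., the span of the monomials in $M(x)$; because $M$ is monomial this equals $M(x)$.

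The order-preservation in both directions is essentially tautological: $M \subset N$ in $\cM(P)$ implies $\lambda e_f \in M \Rightarrow \lambda e_f \in N$, so $I_M(f) \subset I_N(f)$ for every $f$; conversely $\varphi \le \psi$ pointwise implies every generator of $M_\varphi$ lies in $M_\psi$. The only real point of vigilance throughout is the interplay between the order on $\vert S\vert$ (reverse inclusion of principal subfunctors, equivalently the quotient of $\le$ on $\wt S$) and the left-ideal/order-preserving hypothesis on $\varphi$; this is exactly what forces $M_\varphi$ to be a subrepresentation, and it is the one step I would write out most carefully. Everything else is a straightforward unwinding of definitions.
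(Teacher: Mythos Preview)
Your proof is correct and follows essentially the same approach as the paper: construct the inverse by sending an order-preserving function $\varphi$ to the monomial subrepresentation $M_\varphi(x) = \sum_{f \in S(x)} \varphi([f])\, e_f$, then check that both maps are order-preserving and mutually inverse. You simply spell out in more detail the verification that $M_\varphi$ is closed under the $\cC$-action, which the paper leaves implicit.
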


\begin{proof}
Suppose that for every $f \in \vert S \vert$ we have a left-ideal $I(f)$ of $\bk$ such that for $f \le g$ we have $I(f) \subseteq I(g)$. We then define a monomial subrepresentation $M \subseteq P$ by $M(x)=\sum_{f \in S(x)} I(f) e_f$. This defines a function $\cF(\vert S \vert, \cI(\bk)) \to \cM(P)$ inverse to $I$. It is clear from the constructions that $I$ and its inverse are order-preserving, and so $I$ is an isomorphism of posets.
\end{proof}

\begin{corollary} \label{lem:mon-noeth}
The following are equivalent {\rm (}assuming $P$ is non-zero{\rm )}.
\begin{enumerate}[\indent \rm (a)]
\item Every monomial subrepresentation of $P$ is finitely generated.
\item The poset $\cM(P)$ satisfies ACC.
\item The poset $\vert S \vert$ is noetherian and $\bk$ is left-noetherian.
\end{enumerate}
\end{corollary}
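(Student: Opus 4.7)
The plan is to reduce everything to the poset isomorphism provided by Proposition~\ref{monclass} and then invoke Proposition~\ref{prop:poset-fns}. The equivalence (a)$\iff$(b) is the standard ``finitely generated vs.\ ACC'' dichotomy applied to monomial subrepresentations. In one direction, if every monomial subrepresentation is finitely generated and $M_1 \subseteq M_2 \subseteq \cdots$ is an ascending chain in $\cM(P)$, then the union $M = \bigcup_i M_i$ is again monomial (a union of monomial subrepresentations is monomial) and therefore finitely generated; since each generator lies in some $M_{i}$, the chain stabilizes. In the other direction, assuming ACC on $\cM(P)$, I would build generators of a given monomial $M$ one at a time: starting from the zero subrepresentation, at each stage pick a monomial $\lambda e_f \in M$ not already in the subrepresentation generated so far. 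Each such choice strictly enlarges the monomial subrepresentation generated, so the process must terminate by ACC, and the resulting generating set is finite. (The key observation here is that since $M$ is spanned by monomials, if $M$ is not yet generated by a given set then it contains a monomial outside what that set generates, so we can always continue the construction with a monomial generator.)

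For (b)$\iff$(c) I would invoke the poset isomorphism $\cM(P) \cong \cF(\vert S\vert, \cI(\bk))$ of Proposition~\ref{monclass} and then apply the three parts of Proposition~\ref{prop:poset-fns}. Note that $\cI(\bk)$ always has two distinct comparable elements, namely $0 \subsetneq \bk$ (since $\bk$ is a nonzero ring), and the assumption $P \ne 0$ is exactly the assumption that $\vert S\vert$ is nonempty (because $P(x) = \bk[S(x)]$ for the nonzero ring $\bk$, so $P \ne 0$ iff some $S(x)$ is nonempty iff $\vert S\vert \ne \emptyset$).

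With these observations in hand, (c)$\Rightarrow$(b) is immediate from Proposition~\ref{prop:poset-fns}(a) (noting that $\bk$ left-noetherian is precisely ACC on $\cI(\bk)$). Conversely, assuming (b), Proposition~\ref{prop:poset-fns}(b) applied to the nonempty poset $\vert S\vert$ yields ACC on $\cI(\bk)$, i.e.\ $\bk$ is left-noetherian, while Proposition~\ref{prop:poset-fns}(c) applied to the pair $0 < \bk$ in $\cI(\bk)$ yields noetherianity of $\vert S\vert$. Combining these gives (c).

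I do not expect a serious obstacle here; the only subtlety worth stating carefully is that ``monomial subrepresentation is finitely generated'' and ``monomial subrepresentation is generated by finitely many monomials'' coincide, which is what makes the equivalence with ACC on $\cM(P)$ go through. The other ingredient to be mindful of is the use of the nondegeneracy hypotheses ($\bk \ne 0$ and $P \ne 0$) to ensure both sides of the function-poset machinery from Proposition~\ref{prop:poset-fns} are nontrivial.
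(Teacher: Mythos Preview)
Your proposal is correct and follows exactly the paper's approach: the equivalence (a)$\iff$(b) is the standard argument (which you have spelled out), and (b)$\iff$(c) follows from the isomorphism $\cM(P)\cong\cF(\vert S\vert,\cI(\bk))$ of Proposition~\ref{monclass} together with Proposition~\ref{prop:poset-fns}, using precisely the nondegeneracy observations you record ($\vert S\vert\ne\emptyset$ since $P\ne 0$, and $0\subsetneq\bk$ in $\cI(\bk)$).
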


\begin{proof}
The equivalence of (a) and (b) is standard, while the equivalence of (b) and (c) follows from Propositions~\ref{monclass} and~\ref{prop:poset-fns}. (Note: $\vert S \vert$ is non-empty if $P \ne 0$, and $\cI(\bk)$ contains two distinct comparable elements, namely the zero and unit ideals.)
\end{proof}

\subsection{Gr\"obner bases} \label{ss:grobner}

Continue the notation of the previous section. The purpose of this section is to develop a theory of Gr\"obner bases for $P$, and use this theory to give a combinatorial criterion for $P$ to be noetherian.

To connect arbitrary subrepresentations of $P$ to monomial subrepresentations, we need a theory of monomial orders. Let $\WO$ be the category of well-ordered sets and strictly order-preserving functions. There is a forgetful functor $\WO \to \Set$. An {\bf ordering} on $S$ is a lifting of $S$ to $\WO$. More concretely, an ordering on $S$ is a choice of well-order on $S(x)$, for each $x \in \cC$, such that for every morphism $x \to y$ in $\cC$ the induced map $S(x) \to S(y)$ is strictly order-preserving. We write $\preceq$ for an ordering; $S$ is {\bf orderable} if it admits an ordering.

Suppose $\preceq$ is an ordering on $S$. Given non-zero $\alpha = \sum_{f \in S(x)} \lambda_f e_f$ in $P(x)$, we define the {\bf initial term} of $\alpha$, denoted $\init(\alpha)$, to be $\lambda_g e_g$, where $g=\max_{\preceq} \{ f \mid \lambda_f \ne 0 \}$. The {\bf initial variable} of $\alpha$, denoted $\init_0(\alpha)$, is $g$. Now let $M$ be a subrepresentation of $P$. We define the {\bf initial representation} of $M$, denoted $\init(M)$, as follows: $\init(M)(x)$ is the $\bk$-span of the elements $\init(\alpha)$ for non-zero $\alpha \in M(x)$. The name is justified by the following result.

\begin{proposition}
Notation as above, $\init(M)$ is a monomial subrepresentation of $P$.
\end{proposition}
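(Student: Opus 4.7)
The plan is to verify two things: that $\init(M)(x)$ is spanned by monomials (which is essentially tautological from the definition), and that $\init(M)$ is closed under the action of morphisms in $\cC$, i.e., for every $h \colon x \to y$, we have $h_*(\init(M)(x)) \subseteq \init(M)(y)$. Only the second point has content.

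For the first point, $\init(M)(x)$ is by construction the $\bk$-span of elements of the form $\init(\alpha) = \lambda_g e_g$, each of which is a monomial. So $\init(M)(x)$ is monomial in the sense of \S\ref{ss:monomial}.

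For the second point, I would take a generator $\init(\alpha) = \lambda_g e_g$ of $\init(M)(x)$, where $\alpha \in M(x)$ is nonzero and $g = \init_0(\alpha) \in S(x)$, and aim to show that $h_*(\init(\alpha)) = \init(h_*(\alpha))$. Writing $\alpha = \sum_{f \in S(x)} \lambda_f e_f$, we have $h_*(\alpha) = \sum_f \lambda_f e_{h_*(f)}$. The key input is that the ordering $\preceq$ on $S$ makes the induced function $S(x) \to S(y)$ strictly order-preserving; in particular it is injective. Injectivity implies that the $e_{h_*(f)}$ for distinct $f$ are distinct basis elements of $P(y)$, so $h_*(\alpha) \neq 0$ and $\init(h_*(\alpha))$ is well-defined. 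Strict order-preservation implies that $h_*(g)$ is still the largest element of $\{h_*(f) : \lambda_f \neq 0\}$, so $\init(h_*(\alpha)) = \lambda_g e_{h_*(g)} = h_*(\lambda_g e_g) = h_*(\init(\alpha))$, which lies in $\init(M)(y)$ since $h_*(\alpha) \in M(y)$ is nonzero. Then $\bk$-linearity extends this to all of $\init(M)(x)$.

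The only potential subtlety is that $M(x)$ could contain nonzero elements $\alpha$ whose image $h_*(\alpha)$ is zero, which would prevent us from computing $\init(h_*(\alpha))$. This is precisely what the strict order-preservation (and thus injectivity) of the transition maps $S(x) \to S(y)$ rules out, so the definition of ``ordering'' in terms of the category $\WO$ is exactly tailored to make this argument run without obstruction.
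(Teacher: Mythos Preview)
Your proof is correct and follows essentially the same approach as the paper: both verify that $h_*(\init(\alpha)) = \init(h_*(\alpha))$ by using that the transition maps $S(x) \to S(y)$ are strictly order-preserving, hence injective, so that the maximal term of $\alpha$ is sent to the maximal term of $h_*(\alpha)$. Your added remark making explicit why $h_*(\alpha) \neq 0$ is a nice clarification that the paper leaves implicit.
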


\begin{proof}
Let $\alpha = \sum_{i=1}^n \lambda_i e_{f_i}$ be an element of $M(x)$ with each $\lambda_i$ non-zero, ordered so that $f_i \prec f_1$ for all $i>1$. Thus $\init(\alpha)=\lambda_1 e_{f_1}$. Let $g \colon x \to y$ be a morphism. Then $g_*(\alpha)=\sum_{i=1}^n \lambda_i e_{g_*(f_i)}$. Since $g_* \colon S(x) \to S(y)$ is strictly order-preserving, we have $g_*(f_i) \prec g_*(f_1)$ for all $i>1$. Thus $\init(g_*(\alpha))=\lambda_1 e_{gf_1}$, or, in other words, $\init(g_*(\alpha))=g_*(\init(\alpha))$. This shows that $g_*$ maps $\init(M)(x)$ into $\init(M)(y)$, and so $\init(M)$ is a subrepresentation of $P$. That it is monomial follows immediately from its definition.
\end{proof}

\begin{proposition} \label{initeq}
If $N \subseteq M$ are subrepresentations of $P$ and $\init(N)=\init(M)$, then $M=N$.
\end{proposition}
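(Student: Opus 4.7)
The plan is to prove the contrapositive by transfinite descent: if $N \subsetneq M$, then at some object $x$ there is an element $\alpha \in M(x) \setminus N(x)$, and we produce a contradiction by exhibiting another such element with strictly smaller initial variable, appealing to the well-order on $S(x)$.

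First, I would fix an object $x$ and pick, using the well-order $\preceq$ on $S(x)$, an element $\alpha \in M(x) \setminus N(x)$ whose initial variable $g := \init_0(\alpha)$ is as small as possible. Write $\init(\alpha) = \lambda_g e_g$ with $\lambda_g \neq 0$. The key step is to construct $\beta \in N(x)$ with $\init(\beta) = \init(\alpha) = \lambda_g e_g$. Since $\lambda_g e_g \in \init(M)(x) = \init(N)(x)$, and $\init(N)(x)$ is the $\bk$-linear span of elements $\init(\gamma)$ for nonzero $\gamma \in N(x)$, we may write $\lambda_g e_g = \sum_i c_i \init(\gamma_i)$ with $c_i \in \bk$ and $\gamma_i \in N(x)$. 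Using that $\{e_f\}_{f \in S(x)}$ is a $\bk$-basis of $P(x)$ and comparing coefficients, only the indices $i$ for which $\init_0(\gamma_i) = g$ contribute to the coefficient of $e_g$, and the remaining terms must cancel. Restricting to those indices and setting $\beta = \sum_{i : \init_0(\gamma_i) = g} c_i \gamma_i \in N(x)$, the definition of initial term ensures $\beta = \lambda_g e_g + (\text{terms with variable } \prec g)$, so $\init(\beta) = \lambda_g e_g = \init(\alpha)$.

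With $\beta$ in hand, consider $\alpha - \beta \in M(x)$. If $\alpha - \beta = 0$, then $\alpha = \beta \in N(x)$, a contradiction. Otherwise, the leading $e_g$-terms cancel, so $\init_0(\alpha - \beta) \prec g$. Moreover, $\alpha - \beta \notin N(x)$, since otherwise $\alpha = (\alpha - \beta) + \beta$ would lie in $N(x)$. Thus $\alpha - \beta \in M(x) \setminus N(x)$ has initial variable strictly less than $g$, contradicting the minimality of $g$. This forces $M(x) = N(x)$ for every $x$, and hence $M = N$.

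The main subtlety is the extraction of the single element $\beta \in N(x)$ with $\init(\beta) = \init(\alpha)$: one must be careful that a $\bk$-linear combination of elements $\gamma_i \in N(x)$ whose initial terms span $\lambda_g e_g$ actually has $\lambda_g e_g$ as its own initial term, rather than producing unwanted cancellation at $g$. Restricting to the summands with $\init_0(\gamma_i) = g$ and using that all lower-order terms strictly precede $g$ under $\preceq$ resolves this; everything else is formal, relying only on the defining property that morphisms in $\cC$ act by strictly order-preserving maps on $S$, which was already used in the preceding proposition.
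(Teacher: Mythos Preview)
Your proof is correct and follows essentially the same approach as the paper's: pick $\alpha \in M(x) \setminus N(x)$ with minimal initial variable, find $\beta \in N(x)$ with $\init(\beta) = \init(\alpha)$, and derive a contradiction from $\alpha - \beta$. The paper simply asserts the existence of such a $\beta$ in one line, whereas you carefully justify it by restricting to the summands with $\init_0(\gamma_i) = g$; this is a useful elaboration of a step the paper elides, not a different route.
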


\begin{proof}
Assume that $M(x) \ne N(x)$ for $x \in \cC$. Let $K \subset S(x)$ be the set of all elements which appear as the initial variable of some element of $M(x) \setminus N(x)$. Then $K \ne \emptyset$, so has a minimal element $f$ with respect to $\preceq$. Pick $\alpha \in M(x) \setminus N(x)$ with $\init_0(\alpha)=f$. By assumption, there exists $\beta \in N(x)$ with $\init(\alpha)=\init(\beta)$. But then $\alpha-\beta \in M(x) \setminus N(x)$, and $\init_0(\alpha-\beta) \prec \init_0(\alpha)$, a contradiction. Thus $M=N$.
\end{proof}

Let $M$ be a subrepresentation of $P$. A set $\fG$ of elements of $M$ is a {\bf Gr\"obner basis} of $M$ if $\{ \init(\alpha) \mid \alpha \in \fG \}$ generates $\init(M)$. Note that $M$ has a finite Gr\"obner basis if and only if $\init(M)$ is finitely generated. As usual, we have:

\begin{proposition} \label{prop:GB-generates}
Let $\fG$ be a Gr\"obner basis of $M$. Then $\fG$ generates $M$.
\end{proposition}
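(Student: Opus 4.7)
The plan is to reduce the statement to Proposition~\ref{initeq} via the standard Gr\"obner basis dichotomy: if two subrepresentations have the same initial representation and one is contained in the other, they are equal. So let $N \subseteq M$ be the subrepresentation generated by $\fG$. It suffices to show that $\init(N) = \init(M)$.

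The inclusion $\init(N) \subseteq \init(M)$ is immediate from the definition of initial representation, since any non-zero $\alpha \in N(x)$ is in particular in $M(x)$. For the reverse inclusion, observe that for every $\alpha \in \fG$, the element $\init(\alpha)$ lies in $\init(N)$, because $\alpha$ itself lies in $N$. Since $\init(N)$ is a subrepresentation of $P$ (by the preceding proposition), it contains the subrepresentation generated by $\{\init(\alpha) : \alpha \in \fG\}$. By the definition of Gr\"obner basis, that subrepresentation equals $\init(M)$, giving $\init(M) \subseteq \init(N)$.

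Combining both inclusions yields $\init(N) = \init(M)$, and Proposition~\ref{initeq} then forces $N = M$, so $\fG$ generates $M$. There is no real obstacle here; the only subtlety worth flagging is that ``generates $\init(M)$'' in the definition of a Gr\"obner basis means generation as a subrepresentation (not merely as a $\bk$-submodule of some $P(x)$), which is exactly what we need in order to enclose the generating set inside the subrepresentation $\init(N)$.
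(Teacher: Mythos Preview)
Your proof is correct and follows essentially the same approach as the paper: define $N$ as the subrepresentation generated by $\fG$, verify $\init(N)=\init(M)$, and invoke Proposition~\ref{initeq}. Your version is simply more explicit about both inclusions and about the meaning of ``generates'' in the definition of a Gr\"obner basis, but the argument is the same.
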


\begin{proof}
Let $N \subseteq M$ be the subrepresentation generated by $\fG$. Then $\init(N)$ contains $\init(\alpha)$ for all $\alpha \in \fG$, and so $\init(N)=\init(M)$. Thus $M=N$ by Proposition~\ref{initeq}.
\end{proof}

We now come to our main result, which follows from the above discussion:

\begin{theorem} \label{grob-pp}
Suppose $\bk$ is left-noetherian, $S$ is orderable, and $\vert S \vert$ is noetherian. Then every subrepresentation of $P$ has a finite Gr\"obner basis. In particular, $P$ is a noetherian object of $\Rep_{\bk}(\cC)$.
\end{theorem}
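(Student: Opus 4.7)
The plan is to assemble the pieces that the preceding subsections have already prepared: the classification of monomial subrepresentations (Proposition~\ref{monclass}), the ACC characterization of monomial subrepresentations (Corollary~\ref{lem:mon-noeth}), the fact that $\init(M)$ really is a monomial subrepresentation, and the rigidity Proposition~\ref{initeq} that says $M$ is determined by $\init(M)$ inside a containing subrepresentation. First I would invoke Corollary~\ref{lem:mon-noeth}: since $\bk$ is left-noetherian and $\vert S\vert$ is noetherian, every monomial subrepresentation of $P$ is finitely generated and the poset $\cM(P)$ satisfies ACC. This is the combinatorial input that does all the real work.

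Next, to produce a finite Gr\"obner basis of an arbitrary subrepresentation $M \subseteq P$, I would consider $\init(M)$, which is a monomial subrepresentation of $P$ (using the ordering $\preceq$ supplied by orderability of $S$). By the previous step, $\init(M)$ is finitely generated. The key small observation is that the monomials in $\init(M)(x)$ are exactly the initial terms $\init(\alpha)$ for $\alpha \in M(x)$ (plus zero), since $\init(M)$ is spanned by such initial terms and is monomial. Consequently a finite $\bk$-module generating set of the monomial subrepresentation $\init(M)$ may be chosen of the form $\init(\alpha_1),\dots,\init(\alpha_n)$ for some $\alpha_1,\dots,\alpha_n \in M$; by definition these $\alpha_i$ form a finite Gr\"obner basis of $M$. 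Proposition~\ref{prop:GB-generates} then gives, as a bonus, that $\alpha_1,\dots,\alpha_n$ generate $M$, so $M$ is finitely generated.

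For the ``in particular'' clause, I would argue directly with ascending chains. Given $M_1 \subseteq M_2 \subseteq \cdots$ in $P$, the induced chain $\init(M_1) \subseteq \init(M_2) \subseteq \cdots$ lies in $\cM(P)$, which satisfies ACC, so $\init(M_i)=\init(M_{i+1})$ for $i$ sufficiently large. Proposition~\ref{initeq} applied to $M_i \subseteq M_{i+1}$ then upgrades this to $M_i = M_{i+1}$, so the original chain stabilizes. Hence $P$ is noetherian.

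There is no real obstacle here; all the substantive work has been done in constructing the Gr\"obner formalism and in Corollary~\ref{lem:mon-noeth}. The only point that deserves a moment of care is the bookkeeping in the second paragraph: that ``$\init(M)$ finitely generated as a subrepresentation'' can be refined to ``finitely generated by elements of the form $\init(\alpha_i)$,'' which is what the definition of a Gr\"obner basis requires and which is immediate because $\init(M)$ is monomial and spanned by initial terms.
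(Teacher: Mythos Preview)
Your proposal is correct and follows precisely the route the paper intends: the paper's own proof is the single sentence ``follows from the above discussion,'' and your three paragraphs are exactly a careful unpacking of that discussion (Corollary~\ref{lem:mon-noeth} for ACC on monomial subrepresentations, the sentence ``$M$ has a finite Gr\"obner basis if and only if $\init(M)$ is finitely generated'' stated just before the theorem, and Proposition~\ref{prop:GB-generates} or Proposition~\ref{initeq} for the noetherianity conclusion). The one point you flag for care---that generators of the monomial subrepresentation $\init(M)$ can be chosen of the form $\init(\alpha_i)$ with $\alpha_i\in M$---is exactly the content the paper compresses into that ``Note that\ldots'' sentence, and your justification (monomials in $\init(M)(x)$ are initial terms of elements of $M(x)$) is correct.
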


\begin{remark} \label{rmk:Spairs}
We have not discussed the important topic of algorithms for Gr\"obner bases. See \cite[Chapter 15]{eisenbud} for an exposition of this theory for modules over polynomial rings. Two important results are Buchberger's criterion using S-pairs for determining if a set of elements is a Gr\"obner basis, and Schreyer's extension of this idea to calculate free resolutions. These ideas can be extended to our settings and will be developed in future work. 
\end{remark}

\subsection{Gr\"obner categories} \label{ss:grob-cat}

Let $\cC$ be an essentially small category. For an object $x$, let $S_x \colon \cC \to \Set$ be the functor given by $S_x(y)=\Hom(x, y)$. Note that $P_x=\bk[S_x]$.

\begin{definition} \label{def:grobner-cat}
We say that $\cC$ is {\bf Gr\"obner} if, for all objects $x$, the functor $S_x$ is orderable and the poset $\vert S_x \vert$ is noetherian. We say that $\cC$ is {\bf quasi-Gr\"obner} if there exists a Gr\"obner category $\cC'$ and an essentially surjective functor $\cC' \to \cC$ satisfying property~(F).
\end{definition}

The following theorem is one of the two main theoretical results of this paper. It connects the purely combinatorial condition ``(quasi-)Gr\"obner'' with the algebraic condition ``noetherian'' for representations.

\begin{theorem} \label{grobnoeth}
Let $\cC$ be a quasi-Gr\"obner category. Then for any left-noetherian ring $\bk$, the category $\Rep_{\bk}(\cC)$ is noetherian.
\end{theorem}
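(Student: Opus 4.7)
The proof is essentially an assembly of the machinery built up in \S\ref{sec:rep-cat} and \S\ref{sec:noeth-grob}, so I would proceed in two steps according to the two clauses of the definition of quasi-Gr\"obner.

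First I would handle the case in which $\cC$ itself is Gr\"obner. Fix any object $x$ of $\cC$. By definition, the functor $S_x \colon \cC \to \Set$, $y \mapsto \Hom_\cC(x,y)$, is orderable and the poset $\vert S_x \vert$ is noetherian. Since $\bk$ is left-noetherian, Theorem~\ref{grob-pp} applied to $S = S_x$ (so that $P = \bk[S_x] = P_x$) tells us that the principal projective $P_x$ is a noetherian object of $\Rep_\bk(\cC)$. As $x$ was arbitrary, every principal projective is noetherian, and Proposition~\ref{cat-noeth} then yields that $\Rep_\bk(\cC)$ is noetherian.

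Next I would reduce the quasi-Gr\"obner case to the Gr\"obner case. By definition there exists a Gr\"obner category $\cC'$ together with an essentially surjective functor $\Phi \colon \cC' \to \cC$ satisfying property~(F). The previous paragraph shows that $\Rep_\bk(\cC')$ is noetherian. Now Corollary~\ref{pullback}, whose hypotheses are precisely essential surjectivity plus property~(F) for $\Phi$ and noetherianity of $\Rep_\bk(\cC')$, concludes that $\Rep_\bk(\cC)$ is noetherian.

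In effect, the theorem is a one-line consequence of the tools already assembled: Theorem~\ref{grob-pp} supplies noetherianity of the principal projectives in the rigid Gr\"obner case (its proof being the real combinatorial content, via the initial-representation construction and the classification of monomial subrepresentations through $\cF(\vert S_x \vert, \cI(\bk))$), Proposition~\ref{cat-noeth} lifts this from principal projectives to all finitely generated representations, and Corollary~\ref{pullback} transfers the conclusion along the rigidification functor $\Phi$. Accordingly there is no genuine obstacle at this stage; the only thing worth double-checking is that the intended application of Theorem~\ref{grob-pp} matches its hypotheses verbatim, i.e.\ that $P_x$ really is of the form $\bk[S]$ for an orderable $S$ with noetherian $\vert S \vert$, which is immediate from the definition of a Gr\"obner category.
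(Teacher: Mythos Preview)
Your proposal is correct and follows exactly the same approach as the paper: first use Theorem~\ref{grob-pp} and Proposition~\ref{cat-noeth} to establish noetherianity in the Gr\"obner case, then invoke Corollary~\ref{pullback} along the functor $\Phi \colon \cC' \to \cC$ to transfer noetherianity to the quasi-Gr\"obner case. The only difference is that your write-up is more expansive in verifying hypotheses, which is harmless.
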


\begin{proof}
First suppose that $\cC$ is a Gr\"obner category. Then every principal projective of $\Rep_{\bk}(\cC)$ is noetherian, by Theorem~\ref{grob-pp}, and so $\Rep_{\bk}(\cC)$ is noetherian by Proposition~\ref{cat-noeth}.

Now suppose that $\cC$ is quasi-Gr\"obner, and let $\Phi \colon \cC' \to \cC$ be an essentially surjective functor satisfying property~(F), with $\cC'$ Gr\"obner. Then $\Rep_{\bk}(\cC')$ is noetherian, by the previous paragraph, and so $\Rep_{\bk}(\cC)$ is noetherian by Corollary~\ref{pullback}.
\end{proof}

\begin{remark}
If the functor $S_x$ is orderable, then the group $\Aut(x)$ admits a well-order compatible with the group operation, and is therefore trivial. Thus, in a Gr\"obner category, there are no non-trivial automorphisms.
\end{remark}

The definition of Gr\"obner is rather abstract. We now give a more concrete reformulation when $\cC$ is a directed category, which is the version we will apply in practice. Let $x$ be an object of $\cC$. An {\bf admissible order} on $\vert \cC_x \vert$ is a well-order $\preceq$ satisfying the following additional condition: given $f,f' \in \Hom(x,y)$ with $f \prec f'$ and $g \in \Hom(y,z)$, we have $gf \prec gf'$.

\begin{proposition} \label{prop:admissible-order}
If $\cC$ is a directed category, then $|\cC_x| \cong |S_x|$ as posets for all objects $x$. In particular, $\cC$ is Gr\"obner if and only if for all $x$ the set $\vert \cC_x \vert$ admits an admissible order and is noetherian as a poset {\rm (}conditions {\rm (G1)} and {\rm (G2)} from the introduction{\rm )}.
\end{proposition}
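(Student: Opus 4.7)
The plan is to establish the poset isomorphism first, and then deduce the characterization of Gr\"obner categories as a formal consequence.

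For the first claim, I would write down a natural bijection $|\cC_x| \to |S_x|$ and check that it preserves order. Recall that $|S_x|$ is the quotient of $\widetilde{S_x} = \bigsqcup_y \Hom(x, y)$ by the equivalence in which $f \sim g$ means there exist $h, h'$ with $hf = g$ and $h'g = f$. The key observation is that in a directed category this is exactly the condition for $f, g$ to be isomorphic in $\cC_x$: the compositions $h'h$ and $hh'$ are self-maps, hence identities by directedness, so $h$ and $h'$ are mutually inverse isomorphisms in $\cC$ inducing an isomorphism in $\cC_x$. The underlying sets are therefore identified, and both posets have the same order relation $[f] \le [f']$ iff $\exists\, h$ with $hf = f'$, so the bijection is a poset isomorphism.

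For the second claim, the noetherian conditions on $|\cC_x|$ and $|S_x|$ coincide by the poset isomorphism, so it remains to show that $S_x$ is orderable if and only if $|\cC_x|$ admits an admissible order. The direction admissible $\Rightarrow$ orderable is a routine restriction: in a directed category, distinct morphisms $x \to y$ yield distinct iso classes (any self-iso $h \colon y \to y$ in $\cC_x$ must be the identity), so each $\Hom(x, y)$ embeds in $|\cC_x|$ and an admissible order restricts to a well-order $\preceq_y$. The admissibility condition then makes each post-composition map injective and order-preserving, hence strictly order-preserving on these totally ordered sets.

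For the converse, the task is to glue the local well-orders $\preceq_y$ into a single admissible well-order on $|\cC_x|$. I would fix a skeleton $\cC_0 \subseteq \cC$ together with a chosen isomorphism $\eta_y \colon y \to y_0$ to the representative $y_0 \in \Ob(\cC_0)$ of each object, and well-order $\Ob(\cC_0)$ by some $\sqsubset$ via the axiom of choice (valid because $\cC$ is essentially small). Under the identification $|\cC_x| \cong \bigsqcup_{y_0 \in \Ob(\cC_0)} \Hom(x, y_0)$, take the lexicographic well-order: $(y_0, f) \prec (y_0', f')$ iff $y_0 \sqsubset y_0'$, or $y_0 = y_0'$ and $f \prec_{y_0} f'$. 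Admissibility follows from orderability of $S_x$ after transporting the well-orders via the $\eta_y$'s, since for $g \colon y \to z$ the induced map $\Hom(x, y_0) \to \Hom(x, z_0)$ (composition with $\eta_z g \eta_y^{-1}$) is strictly order-preserving. The main obstacle is precisely this gluing step, which requires a global choice beyond the local data of orderability; once the well-order on $\Ob(\cC_0)$ is fixed, the verification of admissibility is a direct calculation.
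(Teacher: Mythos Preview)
Your proof is correct and follows essentially the same approach as the paper: identify the underlying sets of $|\cC_x|$ and $|S_x|$ via directedness, restrict an admissible order to each $\Hom(x,y)$ for one direction, and glue the local well-orders lexicographically over a set of isomorphism-class representatives for the other. Your use of explicit isomorphisms $\eta_y$ to a skeleton and your verification of admissibility via the transport $\eta_z g \eta_y^{-1}$ spell out what the paper leaves as ``one easily verifies,'' but the argument is the same.
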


\begin{proof}
It suffices to treat the case where $\cC$ is small. Let $x$ be an object of $\cC$. The sets $\Ob(\cC_x)$ and $\wt{S}_x$ are equal: both are the set of all morphisms $x \to y$. In $\vert \cC_x \vert$, two morphisms $f$ and $g$ are identified if $g=hf$ for some isomorphism $h$. In $\vert S_x \vert$, two morphisms $f$ and $g$ are identified if there are morphisms $h$ and $h'$ such that $g=hf$ and $f=h'g$. Since $\cC$ is directed, $h$ and $h'$ are necessarily isomorphisms. So $\vert \cC_x \vert$ and $\vert S_x \vert$ are the same quotient of $\Ob(\cC_x)=\wt{S}_x$. The orders on each are defined in the same way, and thus the two are isomorphic posets. Thus $\vert \cC_x \vert$ is noetherian if and only if $\vert S_x \vert$ is.

Now let $\preceq$ be an admissible order on $\vert \cC_x \vert$. Since $\cC$ is directed, the natural map $S_x(y) \to \vert \cC_x \vert$ is an injection. We define a well-ordering on $S_x(y)$ by restricting $\preceq$ to it. One readily verifies that this defines an ordering of $S_x$.

Finally, suppose that $\preceq$ is an ordering on $S_x$. Let $\cC_0$ be a set of isomorphism class representatives for $\cC$. Since $\cC$ is directed, the natural map $\coprod_{y \in \cC_0} S_x(y) \to \vert \cC_x \vert$ is a bijection. Choose an arbitrary well-ordering $\preceq$ on $\cC_0$. Define a well-order $\preceq$ on $\coprod_{y \in \cC_0} S_x(y)$ as follows. If $f \colon x \to y$ and $g \colon x \to z$ then $f \preceq g$ if $y \prec z$, or $y=z$ and $f \prec g$ as elements of $S_x(y)$. One easily verifies that this induces an admissible order on $\vert \cC_x \vert$.
\end{proof}

The following two results follow easily from the definitions, so we omit their proofs.

\begin{proposition} \label{grobprod}
The cartesian product of finitely many {\rm (}quasi-{\rm )}Gr\"obner categories is {\rm (}quasi-{\rm )}Gr\"obner.
\end{proposition}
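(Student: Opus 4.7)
The plan is to reduce to the case of a product of two categories by induction on the number of factors, and then handle the two cases (Gr\"obner and quasi-Gr\"obner) separately. For the Gr\"obner case, let $\cC = \cC_1 \times \cC_2$ with each $\cC_i$ Gr\"obner, and fix an object $x = (x_1, x_2)$. The key identification is that
\[
S_x(y_1, y_2) \;=\; \Hom_{\cC}(x,(y_1,y_2)) \;=\; S_{x_1}(y_1) \times S_{x_2}(y_2),
\]
functorially in $(y_1,y_2)$, so $S_x$ factors through $S_{x_1} \times S_{x_2}$.

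From here I would verify the two Gr\"obner conditions for $S_x$. For orderability, suppose $\preceq_1$ and $\preceq_2$ are orderings on $S_{x_1}$ and $S_{x_2}$; I would well-order each $S_x(y_1,y_2)$ lexicographically with $\preceq_1$ taking precedence. A morphism $(g_1,g_2)\colon (y_1,y_2) \to (y_1',y_2')$ induces $(g_1)_* \times (g_2)_*$, which is strictly order-preserving in the lex order: a strict increase in the first coordinate remains strict after applying $(g_1)_*$, while an equality in the first coordinate is preserved and the strict increase in the second is preserved by $(g_2)_*$. For noetherianity of $|S_x|$, I would check that $|S_x| = |S_{x_1}| \times |S_{x_2}|$ as posets: the preorder on $\wt S_x$ is given by $(f_1,f_2) \le (g_1,g_2)$ iff there exists $(h_1,h_2)$ with $(h_i)_*(f_i)=g_i$, i.e., iff $f_i \le g_i$ for each $i$; the equivalence relation and partial order thus split coordinatewise. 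Then Proposition~\ref{noethprod} gives the result.

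For the quasi-Gr\"obner case, let $\Phi_i \colon \cC_i' \to \cC_i$ be an essentially surjective functor from a Gr\"obner category satisfying property (F). The product $\Phi_1 \times \Phi_2 \colon \cC_1' \times \cC_2' \to \cC_1 \times \cC_2$ is essentially surjective (essential surjectivity is trivially stable under products), and its source is Gr\"obner by the previous paragraph, so it suffices to verify property (F). Given $(x_1,x_2)$ in the target, apply property (F) for $\Phi_1$ to produce finitely many $y_i^{(1)}$ and morphisms $f_i^{(1)} \colon x_1 \to \Phi_1(y_i^{(1)})$, and similarly $y_j^{(2)}$ and $f_j^{(2)}$ for $\Phi_2$. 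I claim the finite collection of objects $(y_i^{(1)}, y_j^{(2)})$ together with the morphisms $(f_i^{(1)}, f_j^{(2)})$ witnesses property (F) for $\Phi_1\times\Phi_2$: an arbitrary morphism $(f_1,f_2)\colon (x_1,x_2)\to (\Phi_1(z_1),\Phi_2(z_2))$ decomposes coordinatewise as $f_1=\Phi_1(g_1)\circ f_i^{(1)}$ and $f_2=\Phi_2(g_2)\circ f_j^{(2)}$, hence as $(\Phi_1\times\Phi_2)(g_1,g_2)\circ (f_i^{(1)}, f_j^{(2)})$.

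Nothing here looks subtle; the only step that requires genuine care is the orderability argument in the Gr\"obner case, where one must pick the right well-order on the product and confirm that lexicographic order interacts correctly with the coordinatewise action of morphisms. Once one commits to lex order, everything else is bookkeeping; induction on the number of factors then finishes the proof.
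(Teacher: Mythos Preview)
Your proof is correct. The paper omits the proof entirely, stating only that the result follows easily from the definitions; your argument is precisely the natural verification that unpacks this, and every step (lexicographic ordering for orderability, the identification $|S_x| \cong |S_{x_1}| \times |S_{x_2}|$ combined with Proposition~\ref{noethprod}, and the coordinatewise check of property~(F)) is sound.
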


\begin{proposition} \label{potgrob}
Suppose that $\Phi \colon \cC' \to \cC$ is an essentially surjective functor satisfying property~{\rm (F)} and $\cC'$ is quasi-Gr\"obner. Then $\cC$ is quasi-Gr\"obner.
\end{proposition}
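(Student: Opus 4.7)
The plan is to unwind the definition of quasi-Gr\"obner for $\cC'$ and then compose the witnessing functors. By hypothesis, since $\cC'$ is quasi-Gr\"obner, there exists a Gr\"obner category $\cC''$ together with an essentially surjective functor $\Psi \colon \cC'' \to \cC'$ satisfying property~(F). I claim that the composition $\Phi \circ \Psi \colon \cC'' \to \cC$ exhibits $\cC$ as quasi-Gr\"obner.

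First I would verify essential surjectivity of $\Phi \circ \Psi$. Given an object $z$ of $\cC$, essential surjectivity of $\Phi$ produces an object $y \in \cC'$ with $\Phi(y) \cong z$, and then essential surjectivity of $\Psi$ produces $x \in \cC''$ with $\Psi(x) \cong y$, so that $(\Phi \circ \Psi)(x) \cong z$. This step is entirely routine.

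Next I would verify that $\Phi \circ \Psi$ satisfies property~(F). This is immediate from Proposition~\ref{propFcomp}, which states exactly that property~(F) is closed under composition. Combined with the fact that $\cC''$ is itself Gr\"obner, this shows that $\cC''$ together with $\Phi \circ \Psi$ witnesses $\cC$ as quasi-Gr\"obner, completing the argument.

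There is essentially no obstacle here: the proposition is a formal consequence of the two closure properties already recorded, namely closure of essential surjectivity under composition (trivial) and closure of property~(F) under composition (Proposition~\ref{propFcomp}). The content of the result is really just that the notion of quasi-Gr\"obner is stable under pushforward along essentially surjective property~(F) functors, which is built into its definition.
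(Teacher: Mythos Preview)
Your proof is correct and is exactly the argument the paper has in mind; indeed, the paper omits the proof entirely, remarking that it follows easily from the definitions, and your composition-of-witnesses argument using Proposition~\ref{propFcomp} is the intended one.
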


\subsection{Other properties} 
We end this section with some useful combinatorial properties.

\begin{definition} \label{propS}
A functor $\Phi \colon \cC' \to \cC$ satisfies {\bf property~(S)} (S for sub) if the following condition holds: if $f \colon x \to y$ and $g \colon x \to z$ are morphisms in $\cC'$ and there exists  $\wt{h} \colon \Phi(y) \to \Phi(z)$ such that $\Phi(g)=\wt{h} \Phi(f)$ then there exists $h \colon y \to z$ such that $g=hf$. A subcategory $\cC' \subset \cC$ satisfies property~(S) if the inclusion functor does.
\end{definition}

\begin{proposition} \label{grobsub}
Let $\Phi \colon \cC' \to \cC$ be a faithful functor satisfying property~{\rm (S)} and suppose $\cC$ is Gr\"obner. Then $\cC'$ is Gr\"obner.
\end{proposition}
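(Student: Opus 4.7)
Fix an object $x$ of $\cC'$. By Definition~\ref{def:grobner-cat}, we must show that $S_x\colon\cC'\to\Set$ is orderable and that the poset $\vert S_x\vert$ is noetherian. The plan is to compare $S_x$ with $S_{\Phi(x)}\colon\cC\to\Set$, which by hypothesis is orderable with noetherian poset of principal subfunctors, by means of the map
\begin{displaymath}
\Phi_*\colon \widetilde{S_x}\longrightarrow \widetilde{S_{\Phi(x)}},\qquad (f\colon x\to y)\longmapsto (\Phi(f)\colon\Phi(x)\to\Phi(y)).
\end{displaymath}
Faithfulness of $\Phi$ makes $\Phi_*$ injective on each $\Hom$-set, and functoriality shows $f\le g$ in $\widetilde{S_x}$ implies $\Phi_*(f)\le\Phi_*(g)$ in $\widetilde{S_{\Phi(x)}}$. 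The converse implication is exactly what property~(S) provides: if $\Phi(g)=\widetilde h\,\Phi(f)$ for some $\widetilde h$, then $g=hf$ for some $h$ in $\cC'$. Combining these two directions, $\Phi_*$ is compatible with the equivalence relation $\sim$ used to define $\vert S_x\vert$ and $\vert S_{\Phi(x)}\vert$, and descends to a strictly order-preserving injection $\vert S_x\vert\hookrightarrow\vert S_{\Phi(x)}\vert$.

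A strictly order-preserving injection into a noetherian poset is again noetherian (as noted after Proposition~\ref{noethprod}), so $\vert S_x\vert$ is noetherian.

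For orderability, let $\preceq$ denote a chosen ordering on $S_{\Phi(x)}$. For each object $y$ of $\cC'$, the injection
\begin{displaymath}
\Phi_*\colon S_x(y)=\Hom_{\cC'}(x,y)\hookrightarrow \Hom_{\cC}(\Phi(x),\Phi(y))=S_{\Phi(x)}(\Phi(y))
\end{displaymath}
pulls back the well-order on $S_{\Phi(x)}(\Phi(y))$ to a well-order on $S_x(y)$; I will again write $\preceq$ for this order. To verify compatibility, let $k\colon y\to z$ be a morphism in $\cC'$ and $f,f'\in S_x(y)$. Then, by the definition of the pulled-back order and functoriality,
\begin{displaymath}
f\prec f' \iff \Phi(f)\prec\Phi(f') \iff \Phi(k)_*(\Phi(f))\prec\Phi(k)_*(\Phi(f')) \iff kf\prec kf',
\end{displaymath}
where the middle equivalence uses that $\preceq$ is an ordering on $S_{\Phi(x)}$ and $\Phi(k)_*$ is therefore strictly order-preserving. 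Hence $k_*\colon S_x(y)\to S_x(z)$ is strictly order-preserving, so $\preceq$ is an ordering on $S_x$.

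The substantive content of the proof is the identification of $\vert S_x\vert$ with a subposet of $\vert S_{\Phi(x)}\vert$; this is where property~(S) is essential, since without it one only gets an order-preserving (but not strictly order-preserving) map, which need not reflect noetherianity. All other steps are formal consequences of faithfulness and functoriality.
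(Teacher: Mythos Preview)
Your proof is correct and follows essentially the same approach as the paper's: both use property~(S) and faithfulness to show that $\Phi$ induces a strictly order-preserving map $\vert S_x\vert \to \vert S_{\Phi(x)}\vert$, then transfer noetherianity and the ordering along it. Your treatment of orderability is more explicit than the paper's (which simply notes that $S_x$ is a subfunctor of $S_{\Phi(x)}\vert_{\cC'}$ via faithfulness and that the ordering restricts), but the content is the same.
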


\begin{proof}
Let $x$ be an object of $\cC'$. We first claim that the natural map $i \colon \vert S_x \vert \to \vert S_{\Phi(x)} \vert$ induced by $\Phi$ is strictly order-preserving. Indeed, let $f \colon x \to y$ and $g \colon x \to z$ be elements of $\vert S_x \vert$ such that $i(f) \le i(g)$. Then there exists $\wt{h} \colon \Phi(y) \to \Phi(z)$ such that $\wt{h} \Phi(f)=\Phi(g)$. By property~(S), there exists $h \colon y \to z$ such that $hf=g$. Thus $f \le g$, establishing the claim. It follows from this, and the noetherianity of $\vert S_{\Phi(x)} \vert$, that $\vert S_x \vert$ is noetherian. Finally, an ordering on $S_{\Phi(x)}$ obviously induces one on $S_{\Phi(x)} \vert_{\cC'}$, and this restricts to one on $S_x$. (Note that $S_x$ is a subfunctor of $S_{\Phi(x)} \vert_{\cC'}$ since $\Phi$ is faithful.)
\end{proof}

We end with a combinatorial criterion for $\Rep_\bk(\cC)$ to have Krull dimension $0$. Assume that $\bk$ is a field. Given $M \in \Rep_{\bk}(\cC)$, define $M^\vee \in \Rep_{\bk}(\cC^{\op})$ by $M^\vee(x) = \hom_\bk(M(x), \bk)$.

\begin{definition} \label{defn:propD}
We say that $\cC$ satisfies {\bf property~(D)} (D for dual) if the following condition holds: for every object $x$ of $\cC$ there exist finitely many objects $\{y_i\}_{i \in I}$ and finite subsets $S_i$ of $\Hom(x,y_i)$ such that if $f \colon x \to z$ is any morphism then there exists $i \in I$, a morphism $g \colon z \to y_i$, and $\sigma_i \in S_i$ such that $g_*^{-1}(\{\sigma_i\})=\{f\}$, where $g_*$ is the map $\Hom(x, z) \to \Hom(x, y_i)$.
\end{definition}

\begin{proposition} \label{prop:propD}
Suppose that $\cC$ is $\Hom$-finite and satisfies property~{\rm (D)}, and $\bk$ is a field. Then for any object $x$, the representation $P_x^{\vee}$ of $\cC^{\op}$ is finitely generated.
\end{proposition}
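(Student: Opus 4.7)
The plan is to show directly that $P_x^\vee$ is generated by an explicit finite collection of dual basis elements supplied by property~(D). Since $\cC$ is $\Hom$-finite, for each object $z$ the space $P_x(z)=\bk[\Hom(x,z)]$ is finite-dimensional, so $P_x^\vee(z)$ has the dual basis $\{e_f^\ast : f \in \Hom(x,z)\}$, where $e_f^\ast$ is the delta function at $f$. It thus suffices to exhibit a finite set of such delta functions whose generated subrepresentation contains every $e_f^\ast$.

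The natural candidate, using the data $\{y_i\}_{i\in I}$ and $S_i \subset \Hom(x,y_i)$ provided by property~(D), is the finite set $\fG = \{e_{\sigma}^\ast \in P_x^\vee(y_i) : i \in I,\ \sigma \in S_i\}$. The first step is to unwind what it means to act on such an element by a morphism $g^{\op}\colon y_i \to z$ of $\cC^{\op}$: the induced map $P_x^\vee(y_i)\to P_x^\vee(z)$ is the transpose of $g_\ast\colon P_x(z)\to P_x(y_i)$, $e_h \mapsto e_{gh}$. A short calculation then shows
\[
(g^{\op})_\ast(e_\sigma^\ast) \;=\; \sum_{h \in g_\ast^{-1}(\sigma)} e_h^\ast \quad \in P_x^\vee(z).
\]

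The key step is then to apply property~(D) to an arbitrary morphism $f\colon x \to z$ to obtain $i \in I$, $\sigma_i \in S_i$, and $g\colon z \to y_i$ with $g_\ast^{-1}(\sigma_i) = \{f\}$. Plugging this into the formula above gives $(g^{\op})_\ast(e_{\sigma_i}^\ast) = e_f^\ast$, so $e_f^\ast$ lies in the subrepresentation of $P_x^\vee$ generated by $\fG$. Since $z$ and $f$ were arbitrary and the $e_f^\ast$ span $P_x^\vee(z)$, this subrepresentation equals $P_x^\vee$, proving finite generation.

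I expect no serious obstacle here: the argument is essentially a bookkeeping exercise, and the statement of property~(D) has been crafted precisely so that the preimage condition $g_\ast^{-1}(\sigma_i) = \{f\}$ collapses the sum in the formula for $(g^{\op})_\ast(e_{\sigma_i}^\ast)$ to a single term. The only point that deserves care is making sure variances are tracked correctly when passing between $\cC$ and $\cC^{\op}$; the $\Hom$-finiteness hypothesis enters only to guarantee that the dual basis description of $P_x^\vee(z)$ is available, so that generating the delta functions is the same as generating the whole representation.
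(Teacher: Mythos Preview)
Your proof is correct and follows essentially the same approach as the paper's own proof: both identify the finite set $\{e_\sigma^\ast : i \in I,\ \sigma \in S_i\}$ as generators and use the preimage condition from property~(D) to hit every $e_f^\ast$. The paper's version is much terser and omits the explicit computation of $(g^{\op})_\ast(e_\sigma^\ast)$ that you carry out, so your write-up is in fact a cleaner and more self-contained account of the same argument.
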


\begin{proof}
The space $P_x(y)$ has a basis $\{e_f\}$. We let $e_f^*$ be the dual basis of $P_x^{\vee}(y)$. We note that the $e_f^*$ generate $P_x^{\vee}$ as a representation. Let $y_i$ and $S_i$ be as in the definition of property~(D). As we range over $i \in I$ and $\sigma \in S_i$, the elements $e_{\sigma}^* \in \bk[\Hom(x,y_i)]$ generate $P_x^\vee$.
\end{proof}

\begin{proposition} \label{dim0}
Assume that $\bk$ is a field. Suppose that $\cC$ is $\Hom$-finite and satisfies property~{\rm (D)}, and that $\cC$ and $\cC^{\op}$ are both quasi-Gr\"obner. Then $\Rep_{\bk}(\cC)$ has Krull dimension $0$, that is, every finitely generated representation of $\cC$ has finite length.
\end{proposition}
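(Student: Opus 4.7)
The plan is to prove that every finitely generated $M \in \Rep_\bk(\cC)$ is both noetherian and artinian, hence of finite length. Noetherianity is immediate from Theorem~\ref{grobnoeth} applied to the quasi-Gr\"obner category $\cC$, so the real content is artinianity. Since every finitely generated $M$ is a quotient of some $\bigoplus_{i=1}^n P_{x_i}$, and artinianity passes to finite direct sums and to quotients, it suffices to show that each principal projective $P_x$ is artinian.

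To get artinianity of $P_x$, I would pass to $\cC^{\op}$ via the duality functor $(-)^\vee \colon \Rep_\bk(\cC) \to \Rep_\bk(\cC^{\op})$ defined by $M^\vee(y) = \Hom_\bk(M(y), \bk)$. Because $\cC$ is $\Hom$-finite, the space $P_x(y) = \bk[\Hom(x,y)]$ is finite-dimensional for every $y$, so the canonical map $P_x \to (P_x^\vee)^\vee$ is an isomorphism. Applying objectwise the standard correspondence $N(y) \mapsto N(y)^{\perp} \subseteq P_x(y)^{*}$ between subspaces and annihilators then yields an inclusion-reversing bijection between subrepresentations of $P_x$ and subrepresentations of $P_x^\vee$. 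The only point to check is naturality: if $f \colon z \to y$ is a morphism in $\cC$ and $\varphi \in N(y)^\perp$, then for $n \in N(z)$ we have $(f^*\varphi)(n) = \varphi(f_*n) = 0$ since $f_* n \in N(y)$, so $N^\perp$ is a sub of $P_x^\vee$ over $\cC^{\op}$. Consequently $P_x$ is artinian if and only if $P_x^\vee$ is noetherian.

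It remains to prove that $P_x^\vee$ is noetherian as a representation of $\cC^{\op}$. This is where property~(D) enters: by Proposition~\ref{prop:propD}, $P_x^\vee$ is finitely generated in $\Rep_\bk(\cC^{\op})$. Since $\cC^{\op}$ is quasi-Gr\"obner by hypothesis, Theorem~\ref{grobnoeth} gives that $\Rep_\bk(\cC^{\op})$ is noetherian, so every finitely generated object there, including $P_x^\vee$, is noetherian. Therefore $P_x$ is artinian, hence so is every finitely generated $M$, and combined with the noetherianity from the first paragraph this gives finite length.

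The argument is essentially a formal assembly of the preceding results, so I do not expect a serious obstacle. The one step that requires genuine verification is the order-reversing bijection between subrepresentations of $P_x$ and $P_x^\vee$ under duality; this is the only place that uses $\Hom$-finiteness in an essential way, and it is precisely the bridge that converts the noetherianity statement for $\cC^{\op}$ (guaranteed by Theorem~\ref{grobnoeth}) into an artinianity statement for $\cC$.
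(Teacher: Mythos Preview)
Your proof is correct and follows essentially the same approach as the paper's: reduce to principal projectives, use property~(D) to get finite generation of $P_x^\vee$, and use quasi-Gr\"obner of $\cC^{\op}$ to conclude $P_x^\vee$ is noetherian, hence $P_x$ is artinian. You are more explicit than the paper about the order-reversing bijection between subrepresentations of $P_x$ and of $P_x^\vee$ (which is the implicit step behind the paper's one-line passage from ACC on $P^\vee$ to DCC on $P$), but the argument is the same.
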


\begin{proof} 
The property of having finite length is preserved under finite direct sums and quotients, so it suffices to prove that each principal projective $P$ has finite length. By Proposition~\ref{prop:propD}, $P^\vee$ is finitely generated. Since $\cC$ and $\cC^\op$ are quasi-Gr\"obner, both $P$ and $P^\vee$ satisfy the ascending chain condition (Theorem~\ref{grobnoeth}). So $P$ also satisfies the descending chain condition and has finite length.
\end{proof}

\section{Formal languages}
\label{s:lang}

In this section we give basic definitions and results on formal languages. In particular, we define a few classes of formal languages (regular, ordered, unambiguous context-free) which will be used in this paper along with results on their generating functions. We believe that the material in \S\ref{ss:ordered} on ordered languages is new. The rest of the material is standard.

The reader can consult \cite{hopcroftullman} for a general reference on formal languages.

\subsection{Generalities} \label{sec:lang-gen}

Fix a finite set $\Sigma$ (which we also call an {\bf alphabet}). A {\bf language} on $\Sigma$ is a subset of $\Sigma^{\star}$. Let $\cL$ and $\cL'$ be two languages. The {\bf union} of $\cL$ and $\cL'$, denoted $\cL \cup \cL'$, is their union as subsets of $\Sigma^{\star}$. The {\bf concatenation} of $\cL$ and $\cL'$ is $\cL \cL' = \{ww' \mid w \in \cL,\ w' \in \cL'\}$. The {\bf Kleene star} of $\cL$ is $\cL^{\star} = \{w_1\cdots w_n \mid w_1,\ldots,w_n \in \cL\}$, i.e., the submonoid (under concatenation) of $\Sigma^{\star}$ generated by $\cL$.

Let $\Sigma$ be an alphabet. A {\bf norm} on $\Sigma$ is a monoid homomorphism $\nu \colon \Sigma^{\star} \to \bN^I$, for some finite set $I$ which is induced by a function $\Sigma \to I$.
A norm on a language $\cL$ over $\Sigma$ is a function $\cL \to \bN^I$ which is the restriction of a norm on $\Sigma$. Let $\bt=(t_i)_{i \in I}$ be indeterminates. The {\bf Hilbert series} of $\cL$ (with respect to $\nu$) is
\begin{displaymath}
\rH_{\cL, \nu}(\bt) = \sum_{w \in \cL} \bt^{\nu(w)},
\end{displaymath}
when this makes sense (i.e., the coefficient of $\bt^{\bn}$ is finite for all $\bn$). The coefficient of $\bt^{\bn}$ in $\rH_{\cL, \nu}(\bt)$ is the number of words of norm $\bn$. We omit  $\nu$ from the notation when possible.

Every language admits a canonical norm over $\bN$, namely, the length function $\ell \colon \cL \to \bN$. The coefficient of $t^n$ in $\rH_{\cL, \ell}$ counts the number of words of length $n$ in $\cL$. This series is the {\bf generating function} of $\cL$ in the literature. 

We say that a norm $\nu$ is {\bf universal} if the map $\Sigma \to I$ is injective. We say that the corresponding Hilbert series is {\bf universal}. Any Hilbert series of $\cL$ (under any norm) can be expressed as the image of a universal Hilbert series of $\cL$ under a homomorphism $\bN^I \to \bN^J$.

\subsection{Regular languages}

The set of {\bf regular languages} on $\Sigma$ is the smallest set of languages on $\Sigma$ containing the empty language and the singleton languages $\{c\}$ (for $c \in \Sigma$), and closed under finite union, concatenation, and Kleene star. A {\bf deterministic finite-state automata} (DFA) for $\Sigma$ is a tuple $(Q, T, \sigma, \cF)$ consisting of:
\begin{itemize}
\item A finite set $Q$, the set of {\bf states}.
\item A function $T \colon \Sigma \times Q \to Q$, the {\bf transition table}.
\item A state $\sigma \in Q$, the {\bf initial state}.
\item A subset $\cF \subset Q$, the set of {\bf final states}.
\end{itemize}
Fix a DFA. We write $\alpha \stackrel{c}{\to} \beta$ to indicate $T(c,\alpha)=\beta$. Given a word $w=w_1\cdots w_n$, we write $\alpha \stackrel{w}{\to} \beta$ if there are states $\alpha=\alpha_0,\ldots,\alpha_n=\beta$ such that
\begin{displaymath}
\alpha_0 \stackrel{w_1}{\to} \alpha_1 \stackrel{w_2}{\to} \cdots \stackrel{w_{n-1}}{\to} \alpha_{n-1} \stackrel{w_n}{\to} \alpha_n.
\end{displaymath}
Intuitively, we think of $\alpha \stackrel{w}{\to} \beta$ as saying that the DFA starts in state $\alpha$, reads the word $w$, and ends in the state $\beta$. We say that the DFA {\bf accepts} the word $w$ if $\sigma \stackrel{w}{\to} \tau$ with $\tau$ a final state. The set of accepted words is called the language {\bf recognized} by the DFA.
The following results are standard (see \cite[Ch.~2]{hopcroftullman} and \cite[Theorem 4.7.2]{stanley-EC1}, for example).

\begin{theorem} 
A language is regular if and only if it is recognized by some DFA.
\end{theorem}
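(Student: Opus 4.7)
The plan is to prove both directions separately, since each uses different techniques: the ``regular $\Rightarrow$ DFA'' direction is a closure argument, while ``DFA $\Rightarrow$ regular'' is an explicit construction of a regular expression from the transition data.

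For the forward direction, I would first enlarge the model slightly by introducing nondeterministic finite-state automata (NFAs), where $T$ is a relation $\Sigma \times Q \to 2^Q$ rather than a function, and show via the subset construction that every NFA-recognizable language is DFA-recognizable (the states of the DFA are subsets of $Q$, and acceptance is witnessed by a subset meeting $\cF$). Then I would verify inductively that the class of NFA-recognizable languages contains the base cases (the empty language and each singleton $\{c\}$, which need only one or two states) and is closed under the three regular operations. Closure under union is immediate by taking the disjoint union of two NFAs with a fresh initial state; closure under concatenation is obtained by gluing the final states of the first NFA to the initial state of the second via $\varepsilon$-transitions (or direct transitions duplicating the incoming edges); closure under Kleene star is obtained by adding $\varepsilon$-loops from the final states back to the initial state and declaring the initial state final. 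Combining this with subset construction gives the forward direction.

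For the reverse direction, fix a DFA $(Q, T, \sigma, \cF)$ and enumerate $Q = \{q_1, \dots, q_n\}$. I would define $\cL^k_{ij} \subseteq \Sigma^\star$ to be the set of words $w$ such that $q_i \stackrel{w}{\to} q_j$ via a path whose intermediate states lie in $\{q_1, \dots, q_k\}$, and prove by induction on $k$ that each $\cL^k_{ij}$ is regular. The base case $k=0$ is trivial: $\cL^0_{ij}$ is a finite set of letters (plus the empty word if $i=j$). The inductive step uses the decomposition
\[
\cL^k_{ij} = \cL^{k-1}_{ij} \cup \cL^{k-1}_{ik} \bigl( \cL^{k-1}_{kk} \bigr)^{\star} \cL^{k-1}_{kj},
\]
which is regular by the induction hypothesis and the closure of regular languages under union, concatenation, and Kleene star. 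The language recognized by the DFA is then $\bigcup_{q_j \in \cF} \cL^n_{\sigma, j}$, a finite union of regular languages, hence regular.

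The main obstacle is bookkeeping, not conceptual difficulty: the closure proofs on the NFA side require care in handling $\varepsilon$-transitions (or, if one prefers to avoid $\varepsilon$'s, in correctly duplicating transitions across the glue), and the subset construction must be carefully stated so that accepting subsets are exactly those meeting $\cF$. The inductive formula for $\cL^k_{ij}$ also requires verifying that every accepting path really does admit the claimed decomposition according to the maximal index of an intermediate state visited. Once these routine checks are done, both directions assemble without further difficulty, and the theorem follows.
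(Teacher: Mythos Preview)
Your proposal is a correct and standard proof of Kleene's theorem: the forward direction via NFAs and the subset construction, and the reverse via the dynamic-programming recursion on $\cL^k_{ij}$. There is nothing to fault in the argument.

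However, the paper does not actually prove this statement. It is recorded as a standard result with the parenthetical ``(see \cite[Ch.~2]{hopcroftullman} and \cite[Theorem 4.7.2]{stanley-EC1}, for example)'' and no proof is given. So there is no ``paper's own proof'' to compare against; the authors simply cite it. Your sketch is essentially the textbook argument found in the first of those references, and in particular your $\cL^k_{ij}$ recursion is exactly the one the paper later adapts (in Lemma~\ref{ord-conv}) to show that ordered DFAs recognize ordered languages.
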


\begin{theorem} \label{reghilb} 
If $\cL$ is regular then $\rH_{\cL, \nu}(\bt)$ is a rational function of $\bt$, for any norm $\nu$.
\end{theorem}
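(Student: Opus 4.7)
The plan is to use the DFA characterization of regular languages together with a transfer matrix argument. Since $\cL$ is regular, pick a DFA $(Q, T, \sigma, \cF)$ recognizing $\cL$. Define a $Q \times Q$ matrix $A$ with entries in $\bZ[\bt]$ by
\[
A_{\alpha, \beta} = \sum_{c \in \Sigma,\ \alpha \stackrel{c}{\to} \beta} \bt^{\nu(c)}.
\]
A straightforward induction on $n$ shows that $(A^n)_{\alpha, \beta} = \sum_{w} \bt^{\nu(w)}$, where the sum is over words $w$ of length exactly $n$ with $\alpha \stackrel{w}{\to} \beta$. Hence
\[
\rH_{\cL, \nu}(\bt) = \sum_{\tau \in \cF} \sum_{n \ge 0} (A^n)_{\sigma, \tau}.
\]

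Next I would identify this with a rational function. Because the norm $\nu$ is induced by a set map $\Sigma \to I$, every letter $c$ has $\nu(c) = e_{i(c)}$, a standard basis vector in $\bN^I$. Thus each entry of $A$ is a sum of monomials $t_{i(c)}$, in particular a polynomial with zero constant term. It follows that $(I - A)\vert_{\bt = 0} = I$, so $\det(I - A)$ is a polynomial with constant term $1$, and $I - A$ is invertible in the ring of $Q \times Q$ matrices over the $(\bt)$-adic completion $\bZ[[\bt]]$. The geometric series $\sum_{n \ge 0} A^n$ converges $(\bt)$-adically (each summand lies in $\bt^n \cdot \mathrm{Mat}_Q(\bZ[[\bt]])$) and equals $(I - A)^{-1}$. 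By Cramer's rule, every entry of $(I - A)^{-1}$ is of the form $p(\bt)/\det(I - A)$ with $p \in \bZ[\bt]$, hence is a rational function. Summing over $\tau \in \cF$ yields the rationality of $\rH_{\cL, \nu}(\bt)$.

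The only technical point is justifying that the formal identity $\sum_{n \ge 0} A^n = (I - A)^{-1}$ holds in a sense strong enough to transfer rationality back to the Hilbert series, and this is handled cleanly by working in the $(\bt)$-adic topology thanks to the observation that $\nu$ sends each letter to a nonzero basis vector. I do not anticipate serious obstacles beyond this bookkeeping; the argument is essentially the standard transfer matrix method (as in \cite[Theorem 4.7.2]{stanley-EC1}), adapted from length-grading to the finer $\bN^I$-grading provided by $\nu$.
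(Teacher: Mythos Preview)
Your proposal is correct and is precisely the standard transfer matrix argument the paper has in mind: the paper does not give its own proof of Theorem~\ref{reghilb} but simply cites \cite[Theorem 4.7.2]{stanley-EC1}, which is exactly this computation, and the paper later reuses the same matrix $A$ and the identity $\sum_{k \ge 0} A^k = (1-A)^{-1}$ in its proof of Theorem~\ref{ordhilb}. Your extra care about the $(\bt)$-adic convergence (via the observation that each $\nu(c)$ is a nonzero basis vector, so $A$ has vanishing constant term) is appropriate and fills in the one detail the cited references leave implicit in the multivariate setting.
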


\subsection{Ordered languages}
\label{ss:ordered}

The set of {\bf ordered languages} on $\Sigma$ is the smallest set of languages on $\Sigma$ that contains the singleton languages and the languages $\Pi^{\star}$, for $\Pi \subseteq \Sigma$, and that is closed under finite union and concatenation. We have not found this class of languages considered in the literature.

A DFA is {\bf ordered} if the following condition holds: if $\alpha$ and $\beta$ are states and there exist words $u$ and $w$ with $\alpha \stackrel{u}{\to} \beta$ and $\beta \stackrel{w}{\to} \alpha$, then $\alpha = \beta$. The states of an ordered DFA admit a natural partial order by $\alpha \le \beta$ if there exists a word $w$ with $\alpha \stackrel{w}{\to} \beta$.

We prove two theorems about ordered languages. The first is the following.

\begin{theorem}
A language is ordered if and only if it is recognized by some ordered DFA.
\end{theorem}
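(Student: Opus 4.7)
The plan is to establish each direction of the equivalence separately. The forward direction---that an ordered DFA recognizes an ordered language---is a clean induction along the state poset. The reverse direction is more involved, and I would proceed by induction on the construction of the ordered language, with concatenation as the main technical challenge.

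For the forward direction, I fix an ordered DFA $(Q, T, \sigma, \cF)$ and, for each state $q$, let $\cL_q \subseteq \Sigma^\star$ denote the language accepted starting from $q$. Writing $\Pi_q = \{c \in \Sigma : T(c, q) = q\}$ for the ``self-loop alphabet'' at $q$, every word accepted from $q$ is either entirely in $\Pi_q^\star$ (in which case $q$ must be final) or has the form $ucv$ with $u \in \Pi_q^\star$, $c \in \Sigma \setminus \Pi_q$, and $v$ accepted from $T(c, q) \ne q$. This yields the recursion
\begin{displaymath}
\cL_q = [q \in \cF] \, \Pi_q^\star \,\cup\, \bigcup_{c \in \Sigma \setminus \Pi_q} \Pi_q^\star \, \{c\} \, \cL_{T(c, q)}.
\end{displaymath}
Orderedness forces $T(c, q) > q$ strictly in the state poset for $c \notin \Pi_q$. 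Since this poset is finite, reverse induction starting from the maximal states (for which the union is empty) shows that each $\cL_q$ is a finite union of concatenations of singletons and $\Pi^\star$-languages, and hence ordered. Applying this at $q = \sigma$ yields the implication.

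For the reverse direction, I would induct on the recursive construction of an ordered language. The base cases are explicit small ordered DFAs: a three-state DFA for a singleton $\{c\}$ (initial, accept, reject-trap), and a two-state DFA for $\Pi^\star$ (accepting state with self-loops on $\Pi$, reject-trap on $\Sigma \setminus \Pi$). Finite union is handled by the product construction; a cycle $(q_1, q_2) \stackrel{u}{\to} (q_1', q_2') \stackrel{w}{\to} (q_1, q_2)$ in the product projects to cycles in each factor, and orderedness of each $M_i$ then forces the coordinates to agree. For concatenation, given ordered DFAs $M_1, M_2$ recognizing $\cL_1, \cL_2$, I would build the standard $\epsilon$-NFA obtained by linking each final state of $M_1$ to $\sigma_2$ and then determinize via the subset construction, obtaining states of the form $(q_1, X_2)$ with $q_1 \in Q_1$ and $X_2 \subseteq Q_2$. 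To verify orderedness, suppose $(q_1, X_2) \stackrel{u}{\to} (q_1', X_2') \stackrel{w}{\to} (q_1, X_2)$. Orderedness of $M_1$ forces $q_1 = q_1'$, and in fact forces $M_1$ to sit at $q_1$ throughout reading $u$ and $w$, so every letter of $u$ and $w$ is a self-loop at $q_1$. The $X_2$-coordinate then evolves under $T_2$ together with conditional addition of $\sigma_2$ at each step (depending on whether $q_1 \in \cF_1$); iterating the cycle, the map $T_2(uw, \cdot)$ stabilizes on $X_2$ by finiteness and then collapses by orderedness of $M_2$, forcing $X_2 = X_2'$.

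The concatenation case is the main obstacle, since the subset construction can generically introduce cycles even when the underlying NFA is ordered. The plan is to show that the rigidity imposed by orderedness of $M_1$ and $M_2$---specifically, that any $(uw)$-cycle in the combined DFA must degenerate to self-loops of $M_1$ at a single state---is strong enough to kill cycles in the $X_2$-coordinate as well, via the iteration-plus-orderedness argument for $M_2$. Making this bookkeeping precise (in particular, tracking how copies of $\sigma_2$ introduced at different times along $u$ and $w$ interact under $T_2$) is the core technical work.
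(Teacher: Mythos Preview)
Your forward direction (ordered DFA $\Rightarrow$ ordered language) is correct and in fact cleaner than the paper's argument: the paper uses the Kleene--McNaughton--Yamada recursion on $\cL_{i,j}^k$ and observes that $\cL_{k,k}^{k-1} \subseteq \Sigma$ by orderedness, whereas your peel-off-the-self-loop recursion on $\cL_q$ gets there more directly.

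Your reverse direction has a genuine gap in the concatenation step. The subset construction on the $\epsilon$-NFA for $\cL_1\cL_2$ need \emph{not} be ordered, even when $M_1$ and $M_2$ are. Here is a concrete counterexample. Take $\Sigma=\{p,q\}$; let $M_1$ have a single final state with all self-loops (so $\cL_1=\Sigma^\star$); let $M_2$ have states $a,b,c$ with $a=\sigma_2$, transitions $a\stackrel{p}{\to}b$, $b\stackrel{p}{\to}c$, $c\stackrel{p}{\to}c$, $a\stackrel{q}{\to}c$, $b\stackrel{q}{\to}c$, $c\stackrel{q}{\to}c$. Both are ordered. In your product DFA, since $q_1\in\cF_1$, the $X_2$-coordinate evolves by $X\mapsto T_2(r,X)\cup\{a\}$. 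One computes
\[
(q_1,\{a,c\}) \;\stackrel{p}{\longrightarrow}\; (q_1,\{a,b,c\}) \;\stackrel{q}{\longrightarrow}\; (q_1,\{a,c\}),
\]
and $(q_1,\{a,c\})$ is reachable from the initial state $(q_1,\{a\})$ via $q$. So the determinized automaton has a nontrivial cycle. Your ``iterate and collapse by orderedness of $M_2$'' argument breaks precisely because $\sigma_2$ is reinjected at every step: the evolution of $X_2$ is $X\mapsto T_2(v,X)\cup F_v(\emptyset)$, and the second term lets $X_2$ oscillate even though $T_2$ alone cannot cycle.

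The paper sidesteps this entirely by \emph{not} proving closure under general concatenation. Since every ordered language is a finite union of concatenations of singletons and $\Pi^\star$'s, it suffices to show that ``recognized by an ordered DFA'' is preserved under union and under concatenation \emph{on the right by a singleton} $\{c\}$ or by $\Pi^\star$. For these two special right factors one can write down the ordered DFA by hand (adding one or two states to $M_1$), and no subset construction is needed. If you want to salvage your inductive approach, replace the general concatenation step by these two restricted ones.
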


\begin{proof}
If $\cL$ is an ordered language then we can write $\cL=\cL_1 \cup \cdots \cup \cL_n$, where each $\cL_i$ is a concatenation of singleton languages and languages of the form $\Pi^{\star}$ with $\Pi \subset \Sigma$. By Lemmas~\ref{ord-concat1} and~\ref{ord-concat2} below, each $\cL_i$ is recognized by an ordered DFA. Thus by Lemma~\ref{ord-union} below, $\cL$ is recognized by an ordered DFA.
The converse is proved in Lemma~\ref{ord-conv}.
\end{proof}

\begin{lemma} \label{ord-union}
Suppose that $\cL$ and $\cL'$ are languages recognized by ordered DFA's. Then $\cL \cup \cL'$ is recognized by an ordered DFA.
\end{lemma}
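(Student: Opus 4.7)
The plan is to use the standard product construction for DFAs, and then verify that the ordered property is preserved. Let $(Q, T, \sigma, \cF)$ and $(Q', T', \sigma', \cF')$ be ordered DFAs recognizing $\cL$ and $\cL'$ respectively. I would form the product DFA on state set $Q \times Q'$, with transition function $T''(c, (\alpha, \alpha')) = (T(c, \alpha), T'(c, \alpha'))$, initial state $(\sigma, \sigma')$, and final states $(\cF \times Q') \cup (Q \times \cF')$. The standard check shows that this DFA recognizes precisely $\cL \cup \cL'$: a word $w$ is accepted by the product iff $(\sigma, \sigma') \stackrel{w}{\to} (\tau, \tau')$ with $(\tau, \tau')$ a final state, which by definition of the transition means both components evolve independently in the factor DFAs, and the final condition is exactly that one of them lands in the respective accepting set.

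The key step is verifying that the product DFA is ordered. Suppose we have states $(\alpha, \alpha')$ and $(\beta, \beta')$ and words $u, w$ such that $(\alpha, \alpha') \stackrel{u}{\to} (\beta, \beta')$ and $(\beta, \beta') \stackrel{w}{\to} (\alpha, \alpha')$ in the product. Unraveling the definition of $T''$, this gives $\alpha \stackrel{u}{\to} \beta$ and $\beta \stackrel{w}{\to} \alpha$ in the first DFA, forcing $\alpha = \beta$ by its ordered property; and similarly $\alpha' = \beta'$ in the second. Hence $(\alpha, \alpha') = (\beta, \beta')$, confirming that the product DFA is ordered.

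There is essentially no obstacle here: the lemma is a routine product construction, and the ordered condition is coordinatewise, so it is preserved automatically. The only minor point worth noting is that the construction takes two ordered DFAs to a single ordered DFA recognizing the union, so iterating it handles the finite unions needed in the proof of the main theorem.
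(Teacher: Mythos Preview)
Your proof is correct and follows essentially the same approach as the paper: both use the standard product DFA construction with final states $(\cF \times Q') \cup (Q \times \cF')$, and both verify the ordered property by projecting to the two coordinates and applying the ordered hypothesis on each factor.
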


\begin{proof}
Suppose $(Q, T, \sigma, \cF)$ is an ordered DFA recognizing $\cL$ and $(Q', T', \sigma', \cF')$ is an ordered DFA recognizing $\cL'$. Let $Q'' = Q \times Q'$. Define $T'' \colon \Sigma \times Q'' \to Q''$ by
\begin{displaymath}
T''(c, (\alpha, \alpha'))=(T(c,\alpha),T'(c,\alpha')).
\end{displaymath}
Let $\sigma''=(\sigma,\sigma')$, and $\cF''=(\cF \times Q') \cup (Q \times \cF')$. Then $(Q'', T'', \sigma'', \cF'')$ is a DFA recognizing $\cL \cup \cL'$. Since $(\alpha,\alpha') \stackrel{w}{\to} (\beta,\beta')$ if and only if $\alpha \stackrel{w}{\to} \beta$ and $\alpha' \stackrel{w}{\to} \beta'$, this DFA is ordered.
\end{proof}

\begin{lemma} \label{ord-uniq-final}
If $\cL$ is a non-empty language recognized by an ordered DFA then we can write $\cL$ as a finite union of languages recognized by an ordered DFA with a single final state.
\end{lemma}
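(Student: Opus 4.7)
The plan is to decompose $\cL$ according to which final state is reached. Let $(Q, T, \sigma, \cF)$ be an ordered DFA recognizing $\cL$. For each $\tau \in \cF$, define
\[
\cL_\tau = \{ w \in \Sigma^\star \mid \sigma \stackrel{w}{\to} \tau \}.
\]
I would first observe that $\cL = \bigcup_{\tau \in \cF} \cL_\tau$, since a word is accepted precisely when its run ends in some final state. Since $\cL$ is non-empty, this is a finite (non-empty) union.

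Next I would argue that each $\cL_\tau$ is recognized by an ordered DFA with a single final state. The obvious candidate is the DFA $(Q, T, \sigma, \{\tau\})$, obtained from the original by keeping the same state set, transition table, and initial state, but replacing the set of final states with $\{\tau\}$. By construction, this DFA accepts exactly $\cL_\tau$, and it has a single final state. Crucially, the ``ordered'' condition on a DFA depends only on $Q$ and $T$ (and not on $\sigma$ or $\cF$): it says that if $\alpha \stackrel{u}{\to} \beta$ and $\beta \stackrel{w}{\to} \alpha$ then $\alpha = \beta$. Therefore the modified DFA is still ordered.

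There is essentially no obstacle here, since no part of the ordered property involves $\cF$. The only minor point to be careful about is that the decomposition is into finitely many pieces, which is immediate because $\cF \subseteq Q$ and $Q$ is finite. Combining these observations yields the desired expression of $\cL$ as a finite union of languages, each recognized by an ordered DFA with a single final state.
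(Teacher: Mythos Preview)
Your proof is correct and follows essentially the same approach as the paper: decompose $\cL$ by final state reached and recognize each piece by the DFA $(Q,T,\sigma,\{\tau\})$, noting that the ordered condition depends only on $Q$ and $T$. Your write-up is slightly more explicit about why the modified DFA remains ordered and why the union is finite, but the argument is the same.
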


\begin{proof}
Suppose $\cL$ is recognized by the ordered DFA $(Q, T, \sigma, \cF)$. Enumerate $\cF$ as $\{\tau_1, \ldots, \tau_n\}$. Let $\cL_i$ be the set of words $w \in \cL$ for which $\sigma \stackrel{w}{\to} \tau_i$. Then $\cL$ is clearly the union of the $\cL_i$. Moreover, $\cL_i$ is recognized by the ordered DFA $(Q, T, \sigma, \{\tau_i\})$.
\end{proof}

\begin{lemma} \label{ord-concat1}
Let $\cL$ be a language recognized by an ordered DFA and let $\cL'=\{c\}$ be a singleton language. Then the concatentation $\cL \cL'$ is recognized by an ordered DFA.
\end{lemma}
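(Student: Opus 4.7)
My plan has two main steps: first reduce to the case that $\cL$ is recognized by an ordered DFA with a single final state, then modify that DFA by adjoining one new state that serves as the new accepting state.

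For the reduction, I first apply Lemma~\ref{ord-uniq-final} to write $\cL$ as a finite union of languages $\cL_1, \ldots, \cL_n$, each recognized by an ordered DFA with a unique final state. Since concatenation distributes over union, $\cL \cdot \{c\} = \bigcup_i (\cL_i \cdot \{c\})$, and Lemma~\ref{ord-union} reduces the problem to showing each $\cL_i \cdot \{c\}$ is recognized by an ordered DFA. So I may assume $\cL$ is recognized by an ordered DFA $M = (Q, T, \sigma, \{\tau\})$ with a single final state $\tau$.

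Next I construct a DFA $M' = (Q \cup \{\tau'\}, T', \sigma, \{\tau'\})$ by adjoining a fresh state $\tau'$, and defining $T'$ as follows: $T'(a, q) = T(a, q)$ for $q \neq \tau$ and all $a$; $T'(a, \tau) = T(a, \tau)$ for $a \neq c$, while $T'(c, \tau) = \tau'$, rerouting the $c$-transition out of $\tau$; and the transitions $T'(a, \tau')$ out of the new state are defined so that $M'$ from $\tau'$ continues to track, in a way that keeps the language correct, what $M$ would do from $T(c, \tau)$. In particular, if $T(c, \tau) = \tau$ then $\tau'$ self-loops on $c$, so repeated $c$'s appended to a word in $\cL$ remain accepted; for other characters, the outgoing transitions from $\tau'$ are arranged to go to states that are strictly above $\tau$ in the partial order induced by $M$.

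I then verify two things. For the language check, I trace that $M'$ reaches $\tau'$ exactly on inputs of the form $wc$ with $w \in \cL$: the only entry to $\tau'$ is the new transition $\tau \to^c \tau'$, and $M$ reaches $\tau$ precisely on inputs in $\cL$. For the ordered check, I extend the partial order on $Q$ by placing $\tau'$ strictly above $\tau$ and argue that no non-self cycle involves $\tau'$: a cycle would have to pass from $\tau'$ back to $\tau$, but by orderedness of $M$ any state reachable from $\tau$ other than $\tau$ itself cannot return to $\tau$, so the transitions from $\tau'$ are confined to the ``above $\tau$'' region and cannot return to $\tau$ or $\tau'$.

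The main obstacle will be defining the transitions out of $\tau'$ carefully enough to preserve both the language and the ordered property in the delicate case where $T(c, \tau) = \tau$, i.e.\ when $c$ is already a self-loop at $\tau$ and $\tau$ may have additional self-loops on other characters. A naive construction that routes $\tau'$ back through $\tau$ on such characters would create a non-self cycle $\tau \rightleftarrows \tau'$, violating orderedness; avoiding this requires a case analysis on the self-loop structure at $\tau$ and possibly directing certain outgoing transitions from $\tau'$ to a fresh dead state rather than back into $Q$.
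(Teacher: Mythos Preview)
Your reduction via Lemmas~\ref{ord-uniq-final} and~\ref{ord-union} and the idea of adjoining a single state $\tau'$ match the paper's construction. The gap is exactly where you locate it: the case $T(c,\tau)=\tau$ when $\tau$ carries a self-loop on some other letter $d\ne c$. No choice of $T'(d,\tau')$ works. If $T'(d,\tau')=\tau'$ you wrongly accept $vcd$ for $v\in\cL$; if $T'(d,\tau')$ is a dead state, or any state from which $\tau$ is unreachable, you wrongly reject $vcdc\in\cL\{c\}$; and if $T'(d,\tau')=\tau$ (the only route back to $\tau'$) you create the cycle $\tau\stackrel{c}{\to}\tau'\stackrel{d}{\to}\tau$, destroying orderedness.

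This obstruction is intrinsic, not an artifact of your construction: the lemma as stated is false. Take $\Sigma=\{a,c,d\}$ and $\cL=a\{c,d\}^*$, recognized by an ordered DFA whose unique final state $\tau$ has self-loops on both $c$ and $d$. Then $\cL\{c\}=a\{c,d\}^*c$. In \emph{any} DFA for this language the states reached after $ac,\,acd,\,acdc,\,acdcd,\ldots$ alternate between accepting and rejecting and are linked by single-letter transitions; finiteness then forces a cycle through at least two distinct states, so no ordered DFA recognizes $\cL\{c\}$. The paper's own proof disposes of this case by asserting ``$\cL=\cL\cL'$ when $T(c,\tau)=\tau$,'' which is equally incorrect in this example, since $a\in\cL$ but $a\notin\cL\{c\}$.
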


\begin{proof}
Suppose first that $\cL$ is recognized by the ordered DFA $(Q, T, \sigma, \cF)$, where $\cF=\{\tau\}$ has a single element. If $T(c,\tau)=\tau$, then nothing needs to be changed since $\cL=\cL \cL'$. Suppose then that $T(c,\tau)= \rho \ne \tau$. Let $Q'=Q \amalg \{\tau'\}$, and define a transition function $T'$ as follows. First, $T'(b, \alpha)=T(b,\alpha)$ if $\alpha \in Q$, unless $\alpha=\tau$ and $b=c$. We define $T'(c,\tau)=\tau'$. Finally, we define $T'(b,\tau')=\rho$, for any $b$. Then $(Q', T', \sigma, \{\tau'\})$ is an ordered DFA recognizing $\cL \cL'$. For the general case, use Lemmas~\ref{ord-union} and \ref{ord-uniq-final}.
\end{proof}

\begin{lemma} \label{ord-concat2}
Let $\cL$ be a language recognized by an ordered DFA and let $\cL'=\Pi^{\star}$ for some subset $\Pi$ of $\Sigma$. Then the concatentation $\cL \cL'$ is recognized by an ordered DFA.
\end{lemma}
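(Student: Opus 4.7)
The plan is to combine a reduction with an explicit construction. By Lemma~\ref{ord-uniq-final}, write $\cL = \cL_1 \cup \cdots \cup \cL_n$ where each $\cL_i$ is recognized by an ordered DFA with a unique final state. Since $\cL \Pi^\star = \bigcup_i \cL_i \Pi^\star$, Lemma~\ref{ord-union} reduces the claim to the case of a single summand, so I will assume $(Q, T, \sigma, \{\tau\})$ is an ordered DFA recognizing $\cL$.

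The key structural observation is that if $T(c, \tau) = \beta$ with $\beta \ne \tau$, then $\beta$ cannot reach $\tau$: for $\beta \stackrel{w}{\to} \tau$ together with $\tau \stackrel{c}{\to} \beta$ would contradict the ordering axiom unless $\beta = \tau$. Write $\Sigma_\tau = \{c \in \Sigma : T(c, \tau) = \tau\}$ for the letters that self-loop at $\tau$. The observation shows that every $\cL$-accepting run, once it first arrives at $\tau$, stays at $\tau$ using only letters of $\Sigma_\tau$ thereafter. Using this, I would construct the DFA for $\cL \Pi^\star$ by adjoining (if necessary) an accepting state $\tau'$ and a rejecting trap $\rho$, and defining the transition function $T'$ to agree with $T$ on all $\alpha \ne \tau$, and at $\tau$ to be given by $T'(c, \tau) = \tau$ if $c \in \Sigma_\tau$, $T'(c, \tau) = \tau'$ if $c \in \Pi \setminus \Sigma_\tau$, and $T'(c, \tau) = \rho$ otherwise; at $\tau'$ we set $T'(c, \tau') = \tau'$ for $c \in \Pi$ and $T'(c, \tau') = \rho$ otherwise; and $\rho$ is absorbing. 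The final states are $\{\tau, \tau'\}$.

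It remains to check that this DFA is ordered and recognizes $\cL \Pi^\star$. Orderedness is immediate: transitions on $Q \setminus \{\tau\}$ are unchanged (so no new cycles appear there), the new outgoing transitions from $\tau$ land in $\{\tau, \tau', \rho\}$, and both $\tau'$ and $\rho$ only transition within $\{\tau', \rho\}$, so no cycle can involve two distinct states. For the language, given $w = uv$ with $u \in \cL$ and $v \in \Pi^\star$, the structural observation ensures that the new DFA run on $u$ coincides with the original one (it never takes an altered transition at $\tau$), reaching $\tau$; reading $v$ then either keeps us at $\tau$ or transitions to $\tau'$ on the first $\Pi \setminus \Sigma_\tau$-letter, where $\Pi$-letters self-loop, so $w$ is accepted. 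Conversely, any accepting run either stays in $Q$ and ends at $\tau$, giving $w \in \cL \subseteq \cL \Pi^\star$, or first enters $\tau'$ at some step, and splitting $w$ at that step exhibits it as a product in $\cL \cdot \Pi^\star$. The main obstacle is this correctness verification, in particular ensuring that altering $T(c, \tau)$ for $c \notin \Sigma_\tau$ does not cause the new DFA to reject a word in $\cL \Pi^\star$; this is exactly where the structural observation that $\tau$ cannot be revisited once left by a non-self-loop letter is indispensable.
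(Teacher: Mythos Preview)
Your proposal is correct and follows essentially the same approach as the paper: both reduce via Lemmas~\ref{ord-uniq-final} and~\ref{ord-union} to the single-final-state case, and both give the identical construction (your $\Sigma_\tau$ is the paper's $\Delta$, and the new states $\tau'$, $\rho$ with their transitions match exactly). The paper simply asserts ``one easily verifies'' the correctness and orderedness, whereas you spell out the verification, including the key structural observation that $\tau$ cannot be revisited after a non-self-loop transition---this observation is precisely what makes the verification go through, and it is good that you isolated it explicitly.
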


\begin{proof}
Suppose first that $\cL$ is accepted by the ordered DFA $(Q, T, \sigma, \cF)$, where $\cF=\{\tau\}$. Let $Q'=Q \amalg \{\tau',\rho\}$, and define a transition function $T'$ as follows. First, $T'(c,\alpha)=T(c,\alpha)$ for $\alpha \in Q \setminus \{\tau\}$. Let $\Delta$ be the set of elements $c \in \Sigma$ such that $T(c,\tau)=\tau$. We define
\begin{displaymath}
T'(c,\tau) = \begin{cases}
\tau & \text{if } c \in \Delta \\
\tau' & \text{if } c \in \Pi \setminus \Delta \\
\rho & \text{if } c \not\in \Pi \cup \Delta
\end{cases}, \qquad
T'(c,\tau') = \begin{cases}
\tau' & \text{if } c \in \Pi \\
\rho & \text{if } c \not\in \Pi
\end{cases}.
\end{displaymath}
Finally, we define $T'(c,\rho)=\rho$ for all $c \in \Sigma$. One easily verifies that $(Q', T', \sigma, \{\tau, \tau'\})$ is an ordered DFA accepting $\cL \cL'$. The deduction of the general case from this special case proceeds exactly as the corresponding argument in the proof of Lemma~\ref{ord-concat1}.
\end{proof}

\begin{lemma} \label{ord-conv}
The language recognized by an ordered automata is ordered.
\end{lemma}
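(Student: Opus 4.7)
The plan is to prove, by induction on the partial-order structure of the DFA, that for every pair of states $\alpha, \beta$ the language $\cL_{\alpha,\beta} = \{ w \in \Sigma^\star : \alpha \stackrel{w}{\to} \beta\}$ is ordered. The lemma then follows since the language recognized by $(Q, T, \sigma, \cF)$ equals $\cL = \bigcup_{\tau \in \cF} \cL_{\sigma, \tau}$, a finite union of ordered languages.

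The crucial consequence of the ordered hypothesis is that if $\alpha \stackrel{w}{\to} \alpha$, then every intermediate state of the computation is simultaneously $\geq \alpha$ and $\leq \alpha$ in the partial order on states, and hence equal to $\alpha$. Writing $\Pi_\alpha = \{ c \in \Sigma : T(c,\alpha) = \alpha\}$ for the self-loop alphabet, this yields $\cL_{\alpha,\alpha} = \Pi_\alpha^{\star}$, which is ordered by definition. For $\alpha \neq \beta$, I would decompose each $w \in \cL_{\alpha, \beta}$ uniquely as $w = u c v$, where $u \in \Pi_\alpha^{\star}$ is the longest prefix keeping the machine in state $\alpha$, $c$ is the first letter producing a transition to some $\gamma := T(c, \alpha) \neq \alpha$, and $v \in \cL_{\gamma, \beta}$. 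This gives the recursion
$$\cL_{\alpha, \beta} = \bigcup_{\substack{c \in \Sigma \\ T(c,\alpha) \neq \alpha}} \Pi_\alpha^{\star} \cdot \{c\} \cdot \cL_{T(c,\alpha), \beta}.$$

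The induction is on the cardinality of the set of states reachable from $\alpha$. The ordered hypothesis rules out nontrivial directed cycles, so each $\gamma = T(c, \alpha) \neq \alpha$ appearing on the right satisfies $\gamma > \alpha$ strictly, and hence strictly fewer states are reachable from $\gamma$ than from $\alpha$. Thus each $\cL_{\gamma, \beta}$ is ordered by the inductive hypothesis, and the displayed identity presents $\cL_{\alpha, \beta}$ as a finite union (interpreted as $\emptyset$ when the index set is empty, using that the empty union is ordered) of concatenations of ordered languages, hence ordered.

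The main obstacle is the justification of the unique decomposition $w = u c v$ together with the setup of a well-founded induction. Both rely essentially on the ordered hypothesis, through the observation that once the computation leaves a state $\alpha$ it can never return, which is what guarantees that the recursion strictly decreases the inductive measure and that the initial $\Pi_\alpha^\star$ segment is uniquely determined.
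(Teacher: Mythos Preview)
Your argument is correct. Both proofs hinge on the same observation—that in an ordered DFA the only way to return to a state is never to leave it, so the Kleene star arises only as $\Pi_\alpha^{\star}$ for a letter set $\Pi_\alpha \subseteq \Sigma$—but they organize the induction differently. The paper follows the classical Kleene/McNaughton--Yamada scheme: it linearly extends the partial order on states, introduces auxiliary languages $\cL_{i,j}^k$ of words taking $\alpha_i$ to $\alpha_j$ through intermediate states in $\{\alpha_1,\dots,\alpha_k\}$, and uses the recursion $\cL_{i,j}^k = \cL_{i,k}^{k-1}(\cL_{k,k}^{k-1})^{\star}\cL_{k,j}^{k-1} \cup \cL_{i,j}^{k-1}$, noting that the ordered hypothesis forces $\cL_{k,k}^{k-1} \subseteq \Sigma$. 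Your approach instead peels off the maximal initial self-loop segment and the first genuine transition, inducting on the number of reachable states; this is a bit more direct and avoids the auxiliary $k$-parameter, while the paper's version makes the analogy with the standard regular-language proof more transparent. One small point worth making explicit: both arguments treat the empty language as ordered (yours via the empty union, the paper's via ``a subset of $\Sigma$ is an ordered language''); this is consistent with reading ``closed under finite union'' as including the nullary case.
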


\begin{proof}
Let $(Q,T,\sigma,\cF)$ be an ordered automata, and let $\cL$ be the language it recognizes. We show that $\cL$ is ordered, following the proof of \cite[Thm.~2.4]{hopcroftullman}. Enumerate the states $Q$ as $\{\alpha_1, \ldots, \alpha_s\}$ in such a way that if $\alpha_i \le \alpha_j$ then $i \le j$. Let $\cL_{i,j}^k$ be the set of words $w=w_1 \cdots w_n$ such that
\begin{displaymath}
\alpha_i=\beta_0 \stackrel{w_1}{\to} \beta_1 \stackrel{w_2}{\to} \cdots \stackrel{w_n}{\to} \beta_n = \alpha_j \quad \text{ with } \quad \beta_1, \ldots, \beta_{n-1} \in \{\alpha_1, \ldots, \alpha_k \}.
\end{displaymath}
In other words, $w \in \cL_{i,j}^k$ if it induces a transition from $\alpha_i$ to $\alpha_j$ via intermediate states of the form $\alpha_{\ell}$ with $\ell \le k$. We prove by induction on $k$ that each $\cL_{i,j}^k$ is an ordered language. To begin with, when $k=0$ no intermediate states are allowed (i.e., $n=1$), and so $\cL_{i,j}^k$ is a subset of $\Sigma$, and therefore an ordered language. For $k \ge 1$, we have
\begin{displaymath}
\cL_{i,j}^k = \cL_{i,k}^{k-1} (\cL_{k,k}^{k-1})^{\star} \cL_{k,j}^{k-1} \cup \cL_{i,j}^{k-1}.
\end{displaymath}
Since there is no way to transition from $\alpha_k$ to $\alpha_i$ with $i<k$, any word in $\cL_{k,k}^{k-1}$ must have length $1$. Thus $\cL_{k,k}^{k-1}$ is a subset of $\Sigma$, and so $(\cL_{k,k}^{k-1})^{\star}$ is an ordered language. The above formula then establishes inductively that $\cL_{i,j}^k$ is ordered for all $k$.

Let $\cF=\{\alpha_j\}_{j \in J}$ be the set of final states, and let $\sigma=\alpha_i$ be the initial state. Then $\cL=\bigcup_{j \in J} \cL_{i,j}^s$, and is therefore an ordered language.
\end{proof}

The following is our second main result about ordered languages. To state it, we need one piece of terminology: we say that a subset $\Pi$ of $\Sigma$ is {\bf repeatable} with respect to some language $\cL$ if there exist $w$ and $w'$ in $\Sigma^{\star}$ such that $w \Pi^{\star} w' \subset \cL$.

\begin{theorem} \label{ordhilb}
Suppose $\cL$ is an ordered language with a norm $\nu$. Let $\Pi_1, \ldots, \Pi_r$ be the repeatable subsets of $\Sigma$, and let $\lambda_i = \sum_{c \in \Pi_i} \bt^{\nu(c)}$. Then $\rH_{\cL}(\bt) = f(\bt)/g(\bt)$ where $f(\bt)$ and $g(\bt)$ are polynomials, and $g(\bt)$ factors as $\prod_{i=1}^r(1-\lambda_i)^{e_i}$ where $e_i \ge 0$.
\end{theorem}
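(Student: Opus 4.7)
The plan is to combine the ordered DFA characterization just proved with the classical transfer-matrix method. Given an ordered language $\cL$ with norm $\nu$, I would fix an ordered DFA $(Q, T, \sigma, \cF)$ recognizing $\cL$. After deleting any state that is not reachable from $\sigma$, or from which no final state can be reached---operations that preserve both $\cL$ and the ordered-DFA property---I may assume every $\alpha \in Q$ satisfies $\sigma \stackrel{u}{\to} \alpha \stackrel{w}{\to} \tau$ for some $u, w \in \Sigma^\star$ and some $\tau \in \cF$. I then choose a linear extension of the partial order $\le$ on $Q$ and index states accordingly.

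Next I would form the weighted transition matrix $M$ with entries
\[
M_{\alpha,\beta} \;=\; \sum_{c \in \Sigma,\; T(c,\alpha) = \beta} \bt^{\nu(c)}.
\]
Because $T(c,\alpha) = \beta$ forces $\alpha \le \beta$ in an ordered DFA, $M$ is upper-triangular in the chosen order, and its diagonal entries are exactly $M_{\alpha,\alpha} = \lambda_{\Pi_\alpha}$, where $\Pi_\alpha = \{c \in \Sigma : T(c,\alpha) = \alpha\}$ is the self-loop alphabet at $\alpha$. Counting accepting paths by length gives
\[
\rH_\cL(\bt) \;=\; \sum_{\tau \in \cF} \sum_{n \ge 0} (M^n)_{\sigma,\tau} \;=\; \sum_{\tau \in \cF} \bigl((I - M)^{-1}\bigr)_{\sigma,\tau};
\]
the geometric series converges as a formal power series because $\nu$ is induced by a function $\Sigma \to I$, so every $\nu(c)$ is a nonzero basis vector of $\bN^I$ and the entries of $M^n$ have valuation at least $n$. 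By Cramer's rule, $\rH_\cL(\bt) = f(\bt)/\det(I-M)$ for some polynomial $f$, and the upper-triangular shape gives
\[
\det(I - M) \;=\; \prod_{\alpha \in Q} \bigl(1 - \lambda_{\Pi_\alpha}\bigr).
\]

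Finally, I would verify that every $\Pi_\alpha$ so produced is a repeatable subset of $\Sigma$. Since $\alpha$ survives pruning, there exist $u, w$ and $\tau \in \cF$ with $\sigma \stackrel{u}{\to} \alpha \stackrel{w}{\to} \tau$, and self-loops may be inserted freely at $\alpha$, so $u\Pi_\alpha^\star w \subseteq \cL$; in the degenerate case $\Pi_\alpha = \emptyset$ the corresponding factor is $1 - 0 = 1$ and contributes nothing. Grouping equal $\Pi_\alpha$'s and padding with exponent $0$ for repeatable $\Pi_i$ that do not appear yields the desired form $g(\bt) = \prod_{i=1}^r (1 - \lambda_i)^{e_i}$.

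The \emph{main obstacle}, such as it is, is conceptual rather than technical: one must recognize that the ordered-DFA hypothesis is precisely the combinatorial condition that triangularizes the transition matrix in a basis whose diagonal entries are the $\lambda_{\Pi_\alpha}$. Once this is in place, the determinant identity essentially writes itself and the rest is bookkeeping about repeatability and reindexing. A subsidiary point of care is justifying the geometric-series expansion for $(I-M)^{-1}$, which works because norms take nonzero values on individual letters.
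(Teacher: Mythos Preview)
Your proposal is correct and follows essentially the same approach as the paper: both pass to an ordered DFA, form the upper-triangular transition matrix, express $\rH_\cL$ via $(I-M)^{-1}$ and Cramer's rule, and read off the denominator as a product of $(1-\lambda_{\Pi_\alpha})$ over states. The only cosmetic difference is that you prune states not lying on an accepting path before forming the matrix, whereas the paper keeps all reachable states, labels those that can reach a final state ``prefinal,'' and relies on the block-triangular structure (no transitions from non-prefinal to prefinal states) to see that the non-prefinal factors cancel; your pruning makes this cancellation explicit and is arguably cleaner.
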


\begin{proof}
Choose an ordered DFA recognizing $\cL$. We can assume that every state is $\ge$ the initial state. Call a state $\alpha$ {\bf prefinal} if there exists a final state $\tau$ such that $\alpha \le \tau$.  Enumerate the states as $\alpha_1, \ldots, \alpha_s$ such that the following conditions hold: (1) $\alpha_1$ is the initial state; (2) if $\alpha_i \le \alpha_j$ then $i \le j$; and (3) there exists $n$ such that $\{\alpha_1, \ldots, \alpha_n\}$ is the set of prefinal states. Let $A$ be the $s \times s$ matrix $A_{i,j}=\sum_{\substack{\alpha_i \stackrel{w}{\to} \alpha_j \\ \ell(w)=1}} \bt^{\nu(w)}$; then $(A^k)_{i,j}=\sum_{\substack{\alpha_i \stackrel{w}{\to} \alpha_j \\ \ell(w)=k}} \bt^{\nu(w)}$.
Then $A$ is upper-triangular and the $i$th diagonal entry is $\sum_{c \in \Pi} \bt^{\nu(c)}$, where $\Pi$ is the set of letters $c$ which induce a transition from $\alpha_i$ to itself. If $\alpha_i$ is prefinal then $\Pi$ is one of the $\Pi_j$'s, and so the diagonal entry at $(i,i)$ is one of the $\lambda_j$'s. If $\alpha_{t_1}, \ldots, \alpha_{t_s}$ are the final states then 
\begin{displaymath}
\rH_{\cL}(\bt) = \sum_{i=0}^s \sum_{k \ge 0} (A^k)_{1,t_i}= \sum_{i=1}^s \frac{(-1)^{1 + t_i} \det(1-A : t_i, 1)}{\det(1-A)},
\end{displaymath}
where the notation $(1-A: t_i, 1)$ means that we remove the $t_i$th row and first column from $1-A$. It is clear that this is of the stated form.
\end{proof}

\begin{corollary} \label{ordhilbcor}
Suppose $\cL$ is an ordered language. Then $\rH_{\cL,\ell}(t)$ can be written in the form $f(t)/g(t)$  where $f(t)$ and $g(t)$ are polynomials, and $g(t)$ factors as $\prod_{a=1}^r (1-at)^{e(a)}$ where $e(a) \ge 0$ and $r$ is the cardinality of the largest repeatable subset of $\Sigma$ with respect to $\cL$.
\end{corollary}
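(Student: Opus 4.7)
\medskip
\noindent\textbf{Proof proposal.} The plan is to specialize Theorem~\ref{ordhilb} to the length norm $\nu = \ell$ and then regroup the factors of the denominator by cardinality. Under the length norm, every letter $c \in \Sigma$ satisfies $\bt^{\nu(c)} = t^{\ell(c)} = t$, so for each repeatable subset $\Pi_i \subseteq \Sigma$ we have
\begin{displaymath}
\lambda_i \;=\; \sum_{c \in \Pi_i} t \;=\; |\Pi_i| \cdot t.
\end{displaymath}
Thus Theorem~\ref{ordhilb} already gives $\rH_{\cL,\ell}(t) = f(t)/g(t)$ with $g(t) = \prod_{i=1}^{r'} (1 - |\Pi_i| t)^{e_i}$, where $\Pi_1,\ldots,\Pi_{r'}$ are the repeatable subsets.

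Next, I would collect factors with the same value of $|\Pi_i|$: for each integer $a \geq 1$, define
\begin{displaymath}
e(a) \;=\; \sum_{i\,:\,|\Pi_i| = a} e_i,
\end{displaymath}
which is a nonnegative integer. Then $g(t) = \prod_{a \geq 1} (1 - at)^{e(a)}$, up to the possible factor $(1-0\cdot t)^{e_0} = 1$ coming from the empty repeatable subset (which can simply be discarded). Since $e(a) = 0$ whenever no repeatable subset has cardinality $a$, the product is finite, and every exponent $e(a) > 0$ forces some $\Pi_i$ of size $a$ to exist, so $a$ is at most $r$, the cardinality of the largest repeatable subset. Setting the remaining $e(a)$ equal to $0$, we may write $g(t) = \prod_{a=1}^{r} (1-at)^{e(a)}$ with $e(a) \geq 0$, as desired.

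There is no real obstacle here beyond this bookkeeping; the whole content is already in Theorem~\ref{ordhilb}, and the corollary is just the observation that under the length norm the $\lambda_i$ depend only on $|\Pi_i|$, so the distinct linear factors of the denominator are indexed by the distinct sizes of repeatable subsets and in particular are bounded by $r$.
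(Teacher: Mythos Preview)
Your proof is correct and is exactly the intended derivation: the paper states this corollary without proof, and the only content is precisely the specialization of Theorem~\ref{ordhilb} to the length norm (so that each $\lambda_i$ becomes $|\Pi_i|\,t$) followed by regrouping the denominator factors by cardinality. Your handling of the empty repeatable subset and the bookkeeping with $e(a)$ is clean and complete.
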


\subsection{Unambiguous context-free languages}

In this paper, this section will only be used in \S\ref{sec:nonregular-lang}. Let $\Sigma$ be a finite alphabet, which we also call {\bf terminal} symbols. Let $N$ be another finite set, disjoint from $\Sigma$, which we call the {\bf non-terminal} symbols. A {\bf production rule} is $n \to P(n)$ where $n \in N$ and $P(n)$ is a word in $\Sigma \cup N$. A {\bf context-free grammar} is a tuple $(\Sigma, N, \cP, n_0)$, where $\Sigma$ and $N$ are as above, $\cP$ is a finite set of production rules, and $n_0$ is a distinguished element of $N$. Given such a grammar, a word $w$ in $\Sigma$ is a {\bf valid $n$-expression} if there is a production rule $n \to P(n)$, where $P(n)=c_1 \cdots c_r$, and a decomposition $w=w_1 \cdots w_r$ (with $r>1$), such that $w_i$ is a valid $c_i$-expression if $c_i$ is non-terminal, and $w_i=c_i$ if $c_i$ is terminal. The language {\bf recognized} by a grammar is the set of valid $n_0$-expressions, and a language of this form is called a {\bf context-free language}. A grammar is {\bf unambiguous} if for each valid $n$-expression $w$ the production rule $n \to P(n)$ and decomposition of $w$ above is unique. An {\bf unambiguous context-free language} is one defined by such a grammar. See \cite[Definition 6.6.4]{stanley-EC2} for an alternative description.

Given a coefficient field $K$, a formal power series $F(t) \in K[\![t]\!]$ is {\bf algebraic} if there exist polynomials $p_0(t), \dots, p_d(t) \in K[t]$ such that $\sum_{i=0}^d p_i(t) F(t)^i = 0$. See \cite[\S 6.1]{stanley-EC2} for more information.

\begin{theorem}[Chomsky--Sch\"utzenberger \cite{chomsky}] \label{thm:CS-UCF}
Let $\cL$ be an unambiguous context-free language equipped with a norm $\nu$. Then $\rH_{\cL,\nu}(\bt)$ is an algebraic function.
\end{theorem}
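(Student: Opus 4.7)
The plan is to associate a Hilbert series to each non-terminal symbol, derive a polynomial system relating these series from the grammar, and conclude algebraicity by elimination theory.

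Fix an unambiguous context-free grammar $(\Sigma, N, \cP, n_0)$ recognizing $\cL$. For each non-terminal $n \in N$, let $\cL_n \subseteq \Sigma^\star$ denote the set of valid $n$-expressions, so in particular $\cL = \cL_{n_0}$. I would first check that $F_n(\bt) := \sum_{w \in \cL_n} \bt^{\nu(w)}$ is a well-defined formal power series: the condition $r>1$ on each production means that expanding $n_0$ recursively into a word of length $\leq L$ terminates after boundedly many rewrite steps, so only finitely many $w \in \cL_n$ have a given norm.

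Next I would extract the polynomial system. By the definition of ``valid $n$-expression'' together with the unambiguity hypothesis, each $w \in \cL_n$ arises from a unique production $(n \to c_1 \cdots c_r) \in \cP$ and a unique decomposition $w = w_1 \cdots w_r$ with $w_i = c_i$ if $c_i \in \Sigma$ and $w_i \in \cL_{c_i}$ if $c_i \in N$. Summing weights over this bijection, and using that $\nu$ is multiplicative under concatenation, gives
\[
F_n(\bt) = \sum_{(n \to c_1 \cdots c_r) \in \cP} \; \prod_{i=1}^r g_{c_i}(\bt), \qquad g_c(\bt) = \begin{cases} \bt^{\nu(c)} & c \in \Sigma \\ F_c(\bt) & c \in N. \end{cases}
\]
Enumerating $N = \{m_1, \ldots, m_k\}$, this is a system $F_{m_j} = Q_j(F_{m_1}, \ldots, F_{m_k}, \bt)$ with $Q_j$ a polynomial in $\bZ_{\geq 0}[x_1, \ldots, x_k, \bt]$, and with the crucial property that each monomial of $Q_j$ involving any $x_i$ either also carries a positive power of some $t_\alpha$ or has total $x$-degree $\geq 2$ (this is the algebraic reflection of the $r > 1$ condition).

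Finally I would conclude algebraicity. The system $x_j = Q_j(\vec x, \bt)$ defines an algebraic subvariety $V \subset \bA^{k+|\bt|}$, and $(F_{m_1}(\bt), \ldots, F_{m_k}(\bt))$ is a formal power series point on $V$. Successively eliminating $x_{m_1}, \ldots, \widehat{x_{n_0}}, \ldots, x_{m_k}$ by taking resultants produces a polynomial $P(x_{n_0}, \bt) \in \bZ[x_{n_0}, \bt]$ satisfied by $F_{n_0}$. The main technical obstacle is showing $P \not\equiv 0$, i.e.\ that the elimination ideal is nontrivial. The standard way to handle this is to observe that the map $\vec x \mapsto \vec Q(\vec x, \bt)$ is contracting in the $(\bt)$-adic topology on $\bZ[[\bt]]^k$, so Newton iteration exhibits $(F_{m_j})$ as the unique formal power series fixed point; the subring of $\bZ[[\bt]]$ consisting of algebraic series is Henselian and closed under such fixed-point constructions, so each $F_{m_j}$ is algebraic over $\bZ(\bt)$. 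Details along these lines are carried out in \cite[\S 6.1, 6.6]{stanley-EC2}, whose machinery I would cite rather than reprove. Specializing to $j$ with $m_j = n_0$ yields the theorem.
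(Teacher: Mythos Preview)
Your proposal is correct and follows the same underlying approach as the paper: derive a polynomial system from the grammar and invoke \cite[\S 6.6]{stanley-EC2} for algebraicity. The only difference is packaging---the paper simply cites \cite[Theorem 6.6.10]{stanley-EC2} as a black box (applied to the universal norm, with unambiguity forcing all multiplicities to be $1$), whereas you unpack the polynomial-system mechanism explicitly before citing Stanley for the fixed-point/elimination step.
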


\begin{proof}
Let $\cL$ be a context-free language and pick a grammar that recognizes $\cL$. Consider the modified Hilbert series $\sum_{w \in \cL} a_w \bt^{\nu(w)}$ where $\nu$ is a universal norm and $a_w$ is the number of ways that $w$ can be built using the production rules. This is an algebraic function \cite[Theorem 6.6.10]{stanley-EC2}. If $\cL$ is unambiguous, then we can pick a grammar so that $a_w=1$ for all $w \in \cL$, and we obtain the usual Hilbert series.
\end{proof}

\section{Hilbert series and lingual categories}
\label{sec:ling}

In this section we use the theory of formal languages (discussed in \S\ref{s:lang}) together with the Gr\"obner techniques in \S\ref{sec:noeth-grob} to study Hilbert series of representations of quasi-Gr\"obner categories. Our goal is to introduce norms, Hilbert series, lingual structures, and lingual categories and to prove a few basic properties about lingual categories.

\subsection{Normed categories and Hilbert series}

Let $\cC$ be an essentially small category. A {\bf norm} on $\cC$ is a function $\nu \colon \vert \cC \vert \to \bN^I$, where $I$ is a finite set. A {\bf normed category} is a category equipped with a norm. Fix a category $\cC$ with a norm $\nu$ with values in $\bN^I$. As in \S\ref{sec:lang-gen}, we let $\{t_i\}_{i \in I}$ be indeterminates. Let $M$ be a representation of $\cC$ over a field $\bk$. We define the {\bf Hilbert series} of $M$ as
\begin{displaymath}
\rH_{M, \nu}(\bt) = \sum_{x \in \vert \cC \vert} \dim_{\bk}{M(x)} \cdot \bt^{\nu(x)},
\end{displaymath}
when this makes sense. We omit the norm $\nu$ from the notation when possible.

\subsection{Lingual structures}

We return to the set-up of \S \ref{ss:monomial}: let $S \colon \cC \to \Set$ be a functor, and let $P=\bk[S]$. However, we now also assume that $\cC$ is directed and normed over $\bN^I$. We define a norm on $\vert S \vert$ as follows: given $f \in \vert S \vert$, let $\wt{f} \in S(x)$ be a lift, and put $\nu(f)=\nu(x)$. This is well-defined because $\cC$ is ordered: if $\wt{f}' \in S(y)$ is a second lift then $x$ and $y$ are necessarily isomorphic. Let $\cP$ be a class of languages (e.g., regular languages).

\begin{definition}
A {\bf lingual structure} on $\vert S \vert$ is a pair $(\Sigma, i)$ consisting of a finite alphabet $\Sigma$ normed over $\bN^I$ and an injection $i \colon \vert S \vert \to \Sigma^{\star}$ compatible with the norms, i.e., such that $\nu(i(f))=\nu(f)$. A {\bf $\cP$-lingual structure} is a lingual structure satisfying the following additional condition: for every poset ideal $J$ of $\vert S \vert$, the language $i(J)$ is of class $\cP$.
\end{definition}

\begin{theorem} \label{lingthm}
Suppose $\cC$ is directed, $S$ is orderable, $\vert S \vert$ is noetherian, and $\vert S \vert$ admits a $\cP$-lingual structure. Let $M$ be a subrepresentation of $P$. Then $\rH_M(\bt)$ is of the form $\rH_{\cL}(\bt)$, where $\cL$ is a language of class $\cP$.
\end{theorem}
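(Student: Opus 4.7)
The plan is to combine the Gr\"obner basis machinery from Theorem~\ref{grob-pp} with the classification of monomial subrepresentations from Proposition~\ref{monclass}, then transport the resulting Hilbert series along the lingual injection.

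First I would verify the Hilbert series identity $\rH_M(\bt) = \rH_{\init(M)}(\bt)$. For each object $x$, fix the well-order on $S(x)$ coming from the ordering on $S$. By a standard Gaussian elimination with respect to the initial variable, one produces a $\bk$-basis $\{\alpha_i\}$ of $M(x)$ whose initial variables $\{\init_0(\alpha_i)\} \subseteq S(x)$ are pairwise distinct; these initial variables then span $\init(M)(x)$ (by the argument in Proposition~\ref{initeq} one even sees they are linearly independent, since any element of $\init(M)(x)$ is a sum of initial terms of elements of $M(x)$, which one then rewrites using the $\alpha_i$). Hence $\dim_{\bk} M(x) = \dim_{\bk} \init(M)(x)$ for every $x$, giving the claim.

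Next I would identify $\init(M)$ combinatorially. Since $\init(M)$ is monomial and $\bk$ is a field, Proposition~\ref{monclass} (using that $\cI(\bk) = \{0,\bk\}$) shows that $\init(M)$ corresponds to an ideal $J \subseteq \vert S \vert$, namely the set of (classes of) those $f$ with $e_f \in \init(M)$. Because $\cC$ is directed, the argument in Proposition~\ref{prop:admissible-order} shows that distinct elements of $S(x)$ remain distinct in $\vert S \vert$ and that elements coming from non-isomorphic $x,y$ are never identified; fixing a set $\cC_0$ of isomorphism class representatives, we obtain $\vert S \vert = \coprod_{x \in \cC_0} S(x)$. By definition of the norm on $\vert S \vert$, every $f \in S(x)$ has norm $\nu(x)$, so
\[
\rH_{\init(M)}(\bt) \;=\; \sum_{x \in \cC_0} \#\{f \in S(x) : f \in J\}\, \bt^{\nu(x)} \;=\; \sum_{j \in J} \bt^{\nu(j)}.
\]

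Finally I would apply the $\cP$-lingual structure. Let $(\Sigma, i)$ be a $\cP$-lingual structure on $\vert S \vert$. Since $i$ is an injection compatible with norms, the language $\cL := i(J) \subseteq \Sigma^{\star}$ satisfies
\[
\rH_{\cL,\nu}(\bt) \;=\; \sum_{w \in i(J)} \bt^{\nu(w)} \;=\; \sum_{j \in J} \bt^{\nu(j)} \;=\; \rH_M(\bt).
\]
By the definition of a $\cP$-lingual structure, $\cL = i(J)$ is of class $\cP$, completing the proof. The only real content is the first step (matching dimensions of $M$ and $\init(M)$); once one has that, everything else is bookkeeping through the bijections already established.
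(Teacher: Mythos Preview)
Your proof is correct and follows essentially the same three-step outline as the paper: pass to $\init(M)$, identify it with an ideal $J \subseteq |S|$, and read off the Hilbert series via the lingual injection. The only difference is in the first step: where you match $\dim_{\bk} M(x) = \dim_{\bk} \init(M)(x)$ by Gaussian elimination (building a basis of $M(x)$ with distinct initial variables), the paper instead filters $P(x)$ by $F^f P(x) = \operatorname{span}\{e_g : g \preceq f\}$ and observes that the associated graded of $M(x)$ is exactly $\init(M)(x)$. Both arguments are standard and equivalent; the filtration version is slightly cleaner conceptually, while yours is more explicit. One minor remark: your parenthetical appeal to Proposition~\ref{initeq} for linear independence of the $e_g$'s is unnecessary, since distinct $e_g$'s are already linearly independent in $P(x)$ by construction.
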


\begin{proof}
Choose an ordering on $S$ and let $N$ be the initial representation of $M$. Given $f \in S(x)$, let $F^f P(x)$ be the span of the elements $e_g$ with $g \le f$. Then $F^{\bullet} P(x)$ is a filtration on $P(x)$ and induces one on $M(x)$. The associated graded of $M(x)$ is exactly $N(x)$, and so $M(x)$ and $N(x)$ have the same dimension. Thus $M$ and $N$ have the same Hilbert series. Let $J \subset \vert S \vert$ be the set of elements $f$ for which $e_f$ belongs to $N$. Then $N$ and the language $i(J)$ have the same Hilbert series.
\end{proof}

\begin{remark}
We abbreviate ``ordered,'' ``regular,'' and ``unambiguous context-free'' to O, R, and UCF. So an R-lingual structure is one for which $i(I)$ is always regular.
\end{remark}

\begin{remark}
If $\vert S \vert$ admits a lingual structure then $S(x)$ is finite for all $x$. Indeed, given $\bn \in \bN^I$, let $\Sigma^\star_{\bn}$ denote the set of words of norm $\bn$; this is a finite set since all such words have length $\vert \bn \vert$. Since $i$ maps $S(x)$ injectively into $\Sigma^{\star}_{\nu(x)}$, we see that $S(x)$ is finite.
\end{remark}

\subsection{Lingual categories} \label{ss:lingual-cat}

Recall that $S_x$ denotes the functor $\Hom_\cC(x, -)$.

\begin{definition}
A directed normed category $\cC$ is {\bf $\cP$-lingual} if $\vert S_x \vert$ admits a $\cP$-lingual structure for all objects $x$.
\end{definition}

The following theorem is the second main theoretical result of this paper. It connects the purely combinatorial condition ``$\cP$-lingual'' with the algebraic invariant ``Hilbert series.''

\begin{theorem} \label{hilbthm}
Let $\cC$ be a $\cP$-lingual Gr\"obner category and let $M$ be a finitely generated representation of $\cC$. Then $\rH_M(\bt)$ is a $\bZ$-linear combination of series of the form $\rH_{\cL}(\bt)$ where $\cL$ is a language of class $\cP$. In particular,
\begin{itemize}
\item If $\cP=\mathrm{R}$, then $\rH_M(\bt)$ is a rational function of the $t_i$.
\item If $\cP=\mathrm{O}$, then $\rH_M(\bt) = f(\bt)/g(\bt)$, where $f(\bt)$ is a polynomial in the $t_i$ and $g(\bt) = \prod_{j=1}^n (1-\lambda_j)$ and each $\lambda_j$ is an $\bN$-linear combination of the $t_i$.
\item If $\cP=\mathrm{UCF}$, then $\rH_M(\bt)$ is an algebraic function of the $t_i$.
\end{itemize}
\end{theorem}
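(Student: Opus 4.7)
The plan is to reduce the statement to a finite number of applications of Theorem~\ref{lingthm}, each to a subrepresentation of a single principal projective, and then read off the three specializations from the Hilbert series theorems of \S\ref{s:lang}.

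Since $M$ is finitely generated, there is a surjection $\pi \colon P \twoheadrightarrow M$ with $P = \bigoplus_{i=1}^n P_{x_i}$ a finite direct sum of principal projectives. Writing $K = \ker(\pi)$, additivity of Hilbert series on short exact sequences over a field gives $\rH_M = \rH_P - \rH_K = \sum_{i=1}^n \rH_{P_{x_i}} - \rH_K$. To decompose $\rH_K$, I would filter $P$ by $P^{(j)} = \bigoplus_{i > j} P_{x_i}$, so that $P^{(i-1)}/P^{(i)} \cong P_{x_i}$; intersecting this filtration with $K$ produces successive quotients $K_i := (K \cap P^{(i-1)})/(K \cap P^{(i)})$ that inject into $P_{x_i}$, whence $\rH_K = \sum_i \rH_{K_i}$.

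For each $i$, Theorem~\ref{lingthm} applies both to $P_{x_i}$ (viewed as a subrepresentation of itself) and to $K_i \hookrightarrow P_{x_i}$. Its hypotheses are delivered by the assumptions on $\cC$: being Gr\"obner gives orderability of $S_{x_i}$ together with noetherianity of $\vert S_{x_i}\vert$, while being $\cP$-lingual gives the required $\cP$-lingual structure on $\vert S_{x_i}\vert$. This yields languages $\cL_i$ and $\cL_i'$ of class $\cP$ with $\rH_{P_{x_i}} = \rH_{\cL_i}$ and $\rH_{K_i} = \rH_{\cL_i'}$, so $\rH_M = \sum_i \rH_{\cL_i} - \sum_i \rH_{\cL_i'}$, as asserted.

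The three specializations then follow by combining this decomposition with the Hilbert series results of \S\ref{s:lang} and observing that the relevant function classes are closed under $\bZ$-linear combinations. For $\cP = \mathrm{R}$, Theorem~\ref{reghilb} gives each $\rH_{\cL_i}, \rH_{\cL_i'}$ rational, and rational functions form a ring. For $\cP = \mathrm{UCF}$, Theorem~\ref{thm:CS-UCF} gives each summand algebraic, and algebraic power series form a ring. For $\cP = \mathrm{O}$, Theorem~\ref{ordhilb} writes each summand as $f(\bt)/\prod_j (1-\lambda_j)$ with $\lambda_j$ an $\bN$-linear combination of the $t_k$, and clearing to a common denominator preserves this shape. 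The one substantive step beyond bookkeeping is the filtration argument that replaces $K \subset \bigoplus P_{x_i}$ by a finite list of subrepresentations of individual $P_{x_i}$, since Theorem~\ref{lingthm} is only stated for subrepresentations of a single $\bk[S]$.
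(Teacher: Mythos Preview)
Your proof is correct and follows the same strategy as the paper. The paper's proof is terser: it simply observes that subrepresentations of principal projectives span the Grothendieck group of finitely generated representations, and then invokes Theorem~\ref{lingthm} and the language-theoretic results of \S\ref{s:lang}. Your filtration argument is precisely what one does to verify that spanning statement concretely, writing $[M] = \sum_i [P_{x_i}] - \sum_i [K_i]$ with each term a subrepresentation of a single principal projective.
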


\begin{proof}
If $M$ is a subrepresentation of a principal projective then the first statement follows from Theorem~\ref{lingthm}. As such representations span the Grothendieck group of finitely generated representations, the first statement holds.

The remaining statements follow from the various results about Hilbert series of $\cP$-languages: in the regular case, Theorem~\ref{reghilb}; in the ordered case, Theorem~\ref{ordhilb}; and in the unambiguous context-free case, the relevant result is Theorem~\ref{thm:CS-UCF}.
\end{proof}

Suppose $\cC_1$ is normed over $\bN^{I_1}$ and $\cC_2$ is normed over $\bN^{I_2}$. We give $\cC=\cC_1 \times \cC_2$ the structure of a normed category over $\bN^I$, where $I=I_1 \amalg I_2$, by $\nu(x,y)=\nu(x)+\nu(y)$. Here we have identified $\vert \cC \vert$ with $\vert \cC_1 \vert \times \vert \cC_2 \vert$. We call $\cC$, with this norm, the {\bf product} of $\cC_1$ and $\cC_2$.

\begin{proposition} \label{prodling}
Let $\cC_1$ and $\cC_2$ be $\cP$-lingual normed categories. Suppose that the posets $\vert \cC_{1,x} \vert$ and $\vert \cC_{2,y} \vert$ are noetherian for all $x$ and $y$ and that $\cP$ is stable under finite unions and concatenations of languages on disjoint alphabets. Then $\cC_1 \times \cC_2$ is also $\cP$-lingual.
\end{proposition}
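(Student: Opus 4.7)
The plan is to construct a $\cP$-lingual structure on $\vert S_{(x,y)} \vert$ by concatenating given $\cP$-lingual structures on $\vert S_x \vert$ and $\vert S_y \vert$, exploiting the crucial observation that principal ideals in a product poset decompose as products of principal ideals.

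First, I would identify $\vert S_{(x,y)} \vert$ with $\vert S_x \vert \times \vert S_y \vert$ as normed posets. Since $\Hom_{\cC_1 \times \cC_2}((x,y), (a,b)) = \Hom(x,a) \times \Hom(y,b)$ and both categories are directed (hence so is their product), one checks directly that $(f,g) \le (f',g')$ in $\vert S_{(x,y)} \vert$ is equivalent to $f \le f'$ in $\vert S_x \vert$ together with $g \le g'$ in $\vert S_y \vert$. The product norm on $\cC_1 \times \cC_2$ translates into $\nu(f,g) = \nu(f) + \nu(g)$ on the product poset.

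Next, fix $\cP$-lingual structures $(\Sigma_1, i_1)$ and $(\Sigma_2, i_2)$ on $\vert S_x \vert$ and $\vert S_y \vert$ respectively. I would form the disjoint union $\Sigma = \Sigma_1 \amalg \Sigma_2$, normed by the two given maps into $\bN^{I_1} \amalg \bN^{I_2} \subset \bN^{I}$, and define $i \colon \vert S_x \vert \times \vert S_y \vert \to \Sigma^{\star}$ by concatenation, $i(f,g) = i_1(f) \cdot i_2(g)$. Injectivity follows from the fact that any word in $\Sigma^{\star}$ admits at most one decomposition as a prefix in $\Sigma_1^{\star}$ followed by a suffix in $\Sigma_2^{\star}$, combined with the injectivity of each $i_j$. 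Norm compatibility is immediate.

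The main step, and the only non-trivial one, is verifying that for every poset ideal $J$ of $\vert S_x \vert \times \vert S_y \vert$, the language $i(J)$ belongs to $\cP$. Under the identification of Proposition~\ref{prop:admissible-order}, the posets $\vert S_x \vert$ and $\vert S_y \vert$ are noetherian; hence so is the product poset by Proposition~\ref{noethprod}, and consequently $J$ is a finite union of principal ideals. A principal ideal at $(f_0, g_0)$ in the product is exactly $\{f : f \ge f_0\} \times \{g : g \ge g_0\}$, the product of two principal ideals. Under $i$ it maps to the concatenation $i_1(\{f \ge f_0\}) \cdot i_2(\{g \ge g_0\})$; both factors lie in $\cP$ by hypothesis on the individual lingual structures, and since $\Sigma_1, \Sigma_2$ are disjoint, the concatenation lies in $\cP$ by the stability hypothesis. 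Taking the union over the finitely many generating principal ideals of $J$ and applying closure of $\cP$ under finite unions yields $i(J) \in \cP$, completing the construction.
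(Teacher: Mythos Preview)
Your proof is correct and follows essentially the same approach as the paper's: build the lingual structure on the product via the disjoint-union alphabet and concatenation, then use noetherianity of the product poset (Proposition~\ref{noethprod}) to reduce an arbitrary ideal to a finite union of principal ideals, each of which factors as a product and hence maps to a concatenation of two $\cP$-languages on disjoint alphabets. The only cosmetic difference is that you explicitly spell out the injectivity argument for $i$ via unique $\Sigma_1^{\star}$-prefix/$\Sigma_2^{\star}$-suffix decomposition, whereas the paper simply asserts that the composite map is ``clearly injective.''
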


\begin{proof}
Keep the notation from the previous paragraph. Let $x$ be an object of $\cC_1$, and let $i_1 \colon \vert \cC_{1,x} \vert \to \Sigma_1^{\star}$ be a $\cP$-lingual structure at $x$. Similarly, let $y$ be an object of $\cC_2$, and let $i_2 \colon \vert \cC_{2,y} \vert \to \Sigma_2^{\star}$ be a $\cP$-lingual structure at $y$. Let $\Sigma=\Sigma_1 \amalg \Sigma_2$, normed over $\bN^I$ in the obvious manner. Then the following diagram commutes:
\begin{displaymath}
\xymatrix{
\vert \cC_{(x,y)} \vert \ar@{=}[r] &
\vert \cC_{1,x} \vert \times \vert \cC_{2,y} \vert \ar[r]^-{i_1 \times i_2} \ar[d] & \Sigma_1^{\star} \times \Sigma_2^{\star} \ar[r] \ar[d] & \Sigma^{\star} \ar[d] \\
& \bN^{I_1} \oplus \bN^{I_2} \ar@{=}[r] & \bN^{I_1} \oplus \bN^{I_2} \ar@{=}[r] & \bN^I }
\end{displaymath}
The top right map is concatenation of words. We let $i \colon \vert \cC_{(x,y)} \vert \to \Sigma^*$ be the composition along the first line, which is clearly injective. We claim that this is a $\cP$-lingual structure on $\vert \cC_{(x,y)} \vert$. The commutativity of the above diagram shows that it is a lingual structure. Now suppose $S$ is an ideal of $\vert \cC_{(x,y)} \vert$. Since this poset is noetherian (Proposition~\ref{noethprod}), it is a finite union of principal ideals $S_1, \ldots, S_n$. Each $S_j$ is of the form $T_j \times T_j'$, where $T_j$ is an ideal of $\cC_{1,x}$ and $T_j'$ is an ideal of $\cC_{2,y}$. By assumption, the languages $i_1(T_j)$ and $i_2(T'_j)$ satisfy property $\cP$. Regarding $i_1(T_j)$ and $i_2(T'_j)$ as languages over $\Sigma$, the language $i(S_j)$ is their concatentation, and therefore satisfies $\cP$. Finally, $i(S)$ satisfies $\cP$, as it is the union of the $i(S_j)$.
\end{proof}

\part{Applications}

\section{Categories of injections} \label{sec:cat-inj}

In this first applications section, we study the category $\FI$ of finite sets and injective functions along with generalizations and variations. The main results are listed in \S\ref{sec:OIFI-defn}, applications to twisted commutative algebras are in \S\ref{ss:tca}, and complementary results are listed in \S\ref{sec:FI-additional}.

\subsection{The categories $\OI_d$ and $\FI_d$} \label{sec:OIFI-defn}

Let $d$ be a positive integer. Define $\FI_d$ to be the following category. The objects are finite sets. Given two finite sets $S$ and $T$, a morphism $S \to T$ is a pair $(f,g)$ where $f \colon S \to T$ is an injection and $g \colon T \setminus f(S) \to \{1,\ldots, d\}$ is a function. Define $\OI_d$ to be the ordered version of $\FI_d$: its objects are totally ordered finite sets and its morphisms are pairs $(f,g)$ with $f$ an order-preserving injection (no condition is placed on $g$). We norm $\FI_d$ and $\OI_d$ over $\bN$ by $\nu(x)=\# x$. When $d=1$, we will write $\FI$ and $\OI$ instead of $\FI_1$ and $\OI_1$.

Let $\Sigma=\{0,\ldots,d\}$, and let $\cL$ be the language on $\Sigma$ consisting of words $w_1 \cdots w_r$ in which exactly $n$ of the $w_i$ are equal to $0$. Partially order $\cL$ using the subsequence order, i.e., if $s \colon [i] \to \Sigma$ and $t \colon [j] \to \Sigma$ are words then $s \le t$ if there exists $I \subseteq [j]$ such that $s=t \vert_I$.

\begin{lemma} \label{lem:oi-ordered}
The poset $\cL$ is noetherian and every ideal is an ordered language.
\end{lemma}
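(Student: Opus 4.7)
My plan is to handle the two claims separately, reducing both to structural facts about $\Sigma^\star$.

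For noetherianity of $\cL$, I would view the alphabet $\Sigma=\{0,1,\ldots,d\}$ as an antichain poset (no two distinct letters comparable). Then the subsequence order of Higman's lemma coincides with the one described before the statement, so $\Sigma^\star$ is noetherian by Theorem~\ref{thm:higman}. Since $\cL$ is a subposet of $\Sigma^\star$ (the inclusion is a strictly order-preserving injection), noetherianity of $\cL$ follows immediately.

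For the second claim, the noetherianity just proved shows every ideal is a finite union of principal ideals, and ordered languages are closed under finite unions by definition, so it suffices to treat a single principal ideal. Fix $s=s_1\cdots s_k \in \cL$ and let $k_0$ be the number of $0$'s occurring in $s$. The principal ideal generated by $s$ consists of all $t \in \cL$ containing $s$ as a subsequence. My plan is to stratify by how the extra zeros are distributed: for each composition $\mathbf{a}=(a_0,\ldots,a_k)$ of $n-k_0$ into $k+1$ nonnegative parts (of which there are finitely many), let
\[
L_{\mathbf{a}} \;=\; \bigl\{\, u_0\, s_1\, u_1\, s_2 \cdots s_k\, u_k \;:\; u_i \in \Sigma^\star \text{ has exactly } a_i \text{ zeros}\,\bigr\}.
\]
The principal ideal equals $\bigcup_{\mathbf{a}} L_{\mathbf{a}}$: any $t$ in the principal ideal decomposes along an embedding of $s$ into one of these forms, and conversely every $L_{\mathbf{a}}$ sits inside the ideal.

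The remaining point is to exhibit each $L_{\mathbf{a}}$ as an ordered language. Setting $\Pi=\{1,\ldots,d\}\subset \Sigma$, the set of words in $\Sigma^\star$ with exactly $a$ zeros is the concatenation $\Pi^\star \{0\}\Pi^\star \{0\}\cdots\{0\}\Pi^\star$ with $a$ copies of $\{0\}$, which is built from singleton languages and $\Pi^\star$ using only concatenation; hence it is ordered. Then $L_{\mathbf{a}}$ is the concatenation of these ordered languages (for $u_0,\ldots,u_k$) with the singleton languages $\{s_i\}$ for the letters of $s$, so $L_{\mathbf{a}}$ is ordered. Taking the finite union over $\mathbf{a}$ gives the principal ideal as an ordered language, completing the proof. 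The only mildly delicate point is verifying that the parametrization by compositions $\mathbf{a}$ really covers the principal ideal — possible multiple embeddings of $s$ into $t$ are not an obstacle since we only need the union to equal the ideal, not a disjoint decomposition.
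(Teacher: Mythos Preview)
Your proof is correct and, once unwound, is essentially the paper's proof. The key observation you seem to have missed is that since $s \in \cL$, the number of zeros $k_0$ in $s$ is exactly $n$; hence $n-k_0=0$ and the only composition is $\mathbf{a}=(0,\ldots,0)$. Your stratification therefore collapses to the single language
\[
L_{(0,\ldots,0)} \;=\; \Pi^\star\, s_1\, \Pi^\star\, s_2 \cdots s_k\, \Pi^\star,
\]
which is precisely the description the paper gives for the principal ideal generated by $s$. The paper's argument for why this equals the principal ideal is the same as yours specialized to this case: any embedding of $s$ into $t\in\cL$ must carry the $n$ zeros of $s$ bijectively onto the $n$ zeros of $t$, so the intervening words $u_i$ lie in $\Pi^\star$. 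So your approach is not genuinely different---just written with an extra layer of generality that evaporates. You could streamline by noting $k_0=n$ up front and dropping the composition bookkeeping.
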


\begin{proof}
Noetherianity is an immediate consequence of Higman's lemma (Theorem~\ref{thm:higman}). Let $w=w_1\cdots w_n$ be a word in $\cL$. Then the ideal generated by $w$ is the language
\begin{displaymath}
\Pi^{\star} w_1 \Pi^{\star} w_2 \cdots \Pi^{\star} w_n \Pi^{\star},
\end{displaymath}
where $\Pi=\Sigma \setminus \{0\}$, and is therefore ordered. As every ideal is a finite union of principal ideals, and a finite union of ordered languages is ordered, the result follows.
\end{proof}

Our main result about $\OI_d$ is the following theorem.

\begin{theorem} \label{oithm}
The category $\OI_d$ is $\rO$-lingual and Gr\"obner.
\end{theorem}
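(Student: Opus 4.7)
The plan is to encode morphisms out of a fixed object as words in a $(d+1)$-letter alphabet, identifying $|S_x|$ with the poset $\cL$ from Lemma~\ref{lem:oi-ordered}, and then deduce both the ordering and the lingual structure from that identification.

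Fix an object $x$ of $\OI_d$, say with $|x| = n$. Given a morphism $\phi = (f,g) \colon x \to y$ with $|y| = m$, define the word $w_\phi \in \Sigma^\star$ of length $m$ by setting the $i$-th letter to $0$ if $i \in f(x)$ and to $g(i) \in \{1,\ldots,d\}$ otherwise. This gives a bijection between $\Hom_{\OI_d}(x,y)$ and the set of words of length $m$ containing exactly $n$ zeros; in particular, it identifies $\wt{S}_x$ (taken over isomorphism class representatives) with $\cL$. The norm matches on the nose, since $\nu(y) = |y| = \ell(w_\phi)$.

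Next I would check that, under this bijection, the canonical order on $|S_x|$ (which coincides with the poset structure on $|\cC_x|$ from Proposition~\ref{prop:admissible-order}, using that $\OI_d$ is directed) matches the subsequence order on $\cL$. A direct calculation shows that if $h \colon y \to z$ corresponds to a word $u$ with $m$ zeros, then $h \circ \phi$ corresponds to the word obtained from $u$ by substituting the letters of $w_\phi$ in order into the zero positions of $u$. Hence $w_\phi$ always embeds into $w_{h\phi}$ as a subsequence, and conversely, any subsequence embedding $w_\phi \hookrightarrow w_\psi$ with matching support of zeros determines such an $h$ witnessing $\phi \le \psi$. Thus $|S_x| \cong \cL$ as posets. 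By Lemma~\ref{lem:oi-ordered}, $|S_x|$ is noetherian, and every ideal of $|S_x|$ maps to an ordered language in $\Sigma^\star$, yielding an $\rO$-lingual structure at $x$.

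Finally, I would produce an ordering of $S_x$ by restricting the lexicographic order on words of fixed length (with some fixed total order on $\Sigma$, say $0 < 1 < \cdots < d$); this is a well-order on each $S_x(y)$ since those are finite. To verify orderability I would check that for any $h \colon y \to z$, the induced map $S_x(y) \to S_x(z)$ is strictly order-preserving: if $w_\phi$ and $w_\psi$ are distinct elements of $S_x(y)$, then $w_{h\phi}$ and $w_{h\psi}$ agree at the non-zero positions of the word $u$ of $h$ and differ precisely at the zero positions, where they restrict to $w_\phi$ and $w_\psi$; hence the first lexicographic difference is preserved. This gives condition (G1), while the previous paragraph gives (G2) and the $\rO$-lingual structure. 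I expect no serious obstacle here --- the only mildly delicate step is checking that the canonical poset structure on $|S_x|$ really is the subsequence order (as opposed to some coarser relation coming from the labels on the complement), but the explicit composition formula makes this transparent.
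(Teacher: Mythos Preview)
Your proposal is correct and follows essentially the same approach as the paper: encode morphisms out of $x=[n]$ as length-$m$ words over $\Sigma=\{0,\ldots,d\}$ with exactly $n$ zeros, identify $\vert \cC_x \vert$ with the poset $\cL$ of Lemma~\ref{lem:oi-ordered}, read off noetherianity and the $\rO$-lingual structure from that lemma, and use the lexicographic order for admissibility. You supply more detail than the paper does (the explicit substitution description of composition and the verification that post-composition preserves lexicographic order), but the strategy is identical.
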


\begin{proof}
It is clear that $\cC=\OI_d$ is directed. Let $n$ be a non-negative integer, and regard $x=[n]$ as an object of $\cC$. 

Pick $(f,g) \in \Hom_\cC([n], [m])$. Define $h \colon [m] \to \Sigma$ to be the function which is 0 on the image of $f$, and equal to $g$ on the complement of the image of $f$. One can recover $(f,g)$ from $h$ since $f$ is required to be order-preserving and injective. This construction therefore defines an isomorphism of posets $i \colon \vert \cC_x \vert \to \cL$. It follows that $\vert \cC_x \vert$ is noetherian. Furthermore, the lexicographic order on $\cL$ (using the standard order on $\Sigma$) is easily verified to restrict to an admissible order on $\vert \cC_x \vert$. Thus $\cC$ is Gr\"obner. 

For $f \in \vert \cC_x \vert$ we have $\vert f \vert=\ell(i(f))$, and so $(\Sigma, i)$ is a lingual structure on $\vert \cC_x \vert$ if we norm words in $\Sigma^\star$ by their length. Finally, an ideal of $\vert \cC_x \vert$ gives an ordered language over $\Sigma$, and so this is an O-lingual structure.
\end{proof}

\begin{remark} \label{rmk:OI1-poly}
The results about $\OI$ can be made more transparent with the following observation: the set of order-increasing injections $f \colon [n] \to [m]$ is naturally in bijection with monomials in $x_0, \dots, x_n$ of degree $m-n$ by assigning the monomial $\bm_f = \prod_{i=0}^n x_i^{f(i+1) - f(i) - 1}$ using the convention $f(0)=0$ and $f(n+1)=m+1$. Given $g \colon [n] \to [m']$, there is a morphism $h \colon [m] \to [m']$ with $g = hf$ if and only if $\bm_f$ divides $\bm_g$. Thus the monomial subrepresentations of $P_n$ are in bijection with monomial ideals in the polynomial ring $\bk[x_0,\dots,x_n]$. 
\end{remark}

\begin{theorem} \label{fithm}
The forgetful functor $\Phi \colon \OI_d \to \FI_d$ satisfies property~{\rm (F)}. In particular, $\FI_d$ is quasi-Gr\"obner.
\end{theorem}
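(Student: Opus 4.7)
The plan is as follows. First, essential surjectivity of $\Phi$ is immediate: every finite set admits a total order, so each isomorphism class of $\FI_d$ is hit. Given this, together with the fact that $\OI_d$ is Gr\"obner (Theorem~\ref{oithm}), Proposition~\ref{potgrob} reduces the whole statement to verifying property~(F) for $\Phi$.

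To check property~(F), fix an object $x$ of $\FI_d$ and, without loss of generality, identify $x$ with $[n]$. I will take as witnessing data the finite family indexed by the symmetric group $S_n$: for each $\sigma \in S_n$, let $y_\sigma$ be $[n]$ with its standard order (an object of $\OI_d$), and let $f_\sigma \colon [n] \to \Phi(y_\sigma)=[n]$ be the morphism $(\sigma,\varnothing)$ of $\FI_d$ (the underlying injection is the bijection $\sigma$, so the label function on the empty complement is vacuous). These are finitely many, as required.

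Now suppose $f=(\alpha,\beta) \colon [n] \to \Phi(y)$ is an arbitrary morphism of $\FI_d$, with $y$ a totally ordered finite set, $\alpha \colon [n] \to y$ an injection, and $\beta \colon y \setminus \alpha([n]) \to [d]$. The order on $y$ restricts to an order on the image $\alpha([n])$, and there is a unique $\sigma \in S_n$ such that $\alpha \circ \sigma^{-1} \colon [n] \to y$ is order-preserving (namely, $\sigma^{-1}(i)$ is the preimage under $\alpha$ of the $i$-th smallest element of $\alpha([n])$). Define $g = (\alpha \circ \sigma^{-1},\beta)$; since $\alpha \circ \sigma^{-1}$ is order-preserving, $g$ is a genuine morphism $y_\sigma \to y$ in $\OI_d$.

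The remaining step is a direct composition check in $\FI_d$: since $f_\sigma = (\sigma,\varnothing)$ has empty complement, the composition rule gives $\Phi(g) \circ f_\sigma = (\alpha \circ \sigma^{-1} \circ \sigma,\beta) = (\alpha,\beta) = f$, as desired. This establishes property~(F) and completes the proof. I don't anticipate any real obstacle here; the only genuine content is the observation that pre-composition with a suitable permutation of the source straightens an arbitrary injection into an ordered one, which is exactly what lets an $\FI_d$-morphism be factored through a fixed, finite family of $\OI_d$-structured morphisms.
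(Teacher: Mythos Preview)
Your proof is correct and follows essentially the same approach as the paper: both take the witnessing family for property~(F) to be the $n!$ permutations $\sigma \in S_n$ viewed as morphisms $[n] \to [n]$, and both factor an arbitrary $\FI_d$-morphism as a permutation followed by an order-preserving injection. Your version is simply more explicit about the bookkeeping with the labeling $\beta$ and the composition check.
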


\begin{proof}
Let $x=[n]$ be a given object of $\FI_d$. If $y$ is any totally ordered set, then any morphism $f \colon x \to y$ can be factored as $x \stackrel{\sigma}{\to} x \stackrel{f'}{\to} y$, where $\sigma$ is a permutation and $f'$ is order-preserving. It follows that we can take $y_1, \ldots, y_{n!}$ to all be $[n]$, and $f_i \colon x \to \Phi(y_i)$ to be the $i$th element of the symmetric group $S_n$ (under any enumeration). This establishes the claim. Since $\OI_d$ is Gr\"obner, this shows that $\FI_d$ is quasi-Gr\"obner.
\end{proof}

\begin{corollary} \label{fi-noeth}
If $\bk$ is left-noetherian then $\Rep_{\bk}(\FI_d)$ is noetherian.
\end{corollary}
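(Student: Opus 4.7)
The plan is to simply combine two results already established in the paper. Theorem~\ref{fithm} asserts that $\FI_d$ is quasi-Gr\"obner, via the forgetful functor $\OI_d \to \FI_d$ which satisfies property~(F) and whose source, $\OI_d$, is Gr\"obner by Theorem~\ref{oithm}. On the other hand, Theorem~\ref{grobnoeth} states that for any quasi-Gr\"obner category $\cC$ and any left-noetherian ring $\bk$, the category $\Rep_{\bk}(\cC)$ is noetherian. Applying this with $\cC = \FI_d$ gives the corollary directly.

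So the entire proof is a one-line invocation: $\FI_d$ is quasi-Gr\"obner by Theorem~\ref{fithm}, hence $\Rep_\bk(\FI_d)$ is noetherian by Theorem~\ref{grobnoeth}. There is no obstacle here; all the nontrivial content has already been packaged into the preceding two theorems. The combinatorial work (checking Dickson's lemma-style noetherianity of $\vert \OI_{d,x} \vert$ via Higman's lemma on the language $\cL$, and verifying property~(F) by factoring an arbitrary injection through a permutation of $[n]$) was carried out in the proofs of Theorems~\ref{oithm} and~\ref{fithm}, and the passage from combinatorics to noetherianity of representations is exactly the content of Theorem~\ref{grobnoeth} (proved via Gr\"obner bases in Theorem~\ref{grob-pp} plus the pullback formalism of Corollary~\ref{pullback}).
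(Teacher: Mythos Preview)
Your proposal is correct and matches the paper's approach exactly: the corollary is stated without proof in the paper because it follows immediately from Theorem~\ref{fithm} ($\FI_d$ is quasi-Gr\"obner) and Theorem~\ref{grobnoeth} (quasi-Gr\"obner implies noetherian over any left-noetherian ring). Your elaboration of what underlies those two theorems is accurate but not needed for the corollary itself.
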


\begin{remark} \label{rmk:fi-noeth}
This result was first proved for $\bk$ a field of characteristic $0$ in \cite[Thm.~2.3]{delta-mod}, though the essential idea goes back to Weyl; see also \cite[Proposition 9.2.1]{expos}. It was reproved independently for $d=1$ (and $\bk$ a field of characteristic $0$) in \cite{fimodules}. It was then proved for $d=1$ and $\bk$ an arbitrary ring in \cite{fi-noeth}. The result is new if $d>1$ and $\bk$ is not a field of characteristic $0$. 
\end{remark}

\begin{corollary} \label{fihilb}
Let $M$ be a finitely generated $\FI_d$-module over a field $\bk$. The Hilbert series
\begin{displaymath}
\rH_M(t) = \sum_{n \ge 0} \dim_\bk{M([n])} \cdot t^n
\end{displaymath}
is of the form $f(t)/g(t)$, where $f(t)$ and $g(t)$ are polynomials and $g(t)$ factors as $\prod_{j=1}^d (1-jt)^{e_j}$ where $e_j \ge 0$. In particular, if $d=1$ then $n \mapsto \dim_\bk M([n])$ is eventually polynomial.
\end{corollary}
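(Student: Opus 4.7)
The plan is to reduce the assertion about $\FI_d$ to the Gr\"obner/O-lingual setting established for $\OI_d$ in Theorem~\ref{oithm}, and then read the precise form of the denominator off the structure of the lingual data.

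First I would pull $M$ back along the forgetful functor $\Phi \colon \OI_d \to \FI_d$. Since $\Phi$ is essentially surjective and satisfies property~(F) (Theorem~\ref{fithm}), Proposition~\ref{propFfg} implies that $\Phi^*(M)$ is a finitely generated $\OI_d$-module. Both $\OI_d$ and $\FI_d$ have isomorphism classes indexed by $\bN$ via cardinality, and the norm agrees with cardinality on both sides, so for each $n \ge 0$ we have $\dim_\bk \Phi^*(M)([n]) = \dim_\bk M([n])$. Hence $\rH_{\Phi^*(M)}(t) = \rH_M(t)$, and it suffices to establish the claimed form of the Hilbert series for finitely generated $\OI_d$-modules.

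Next I would invoke Theorem~\ref{hilbthm} in the O-lingual case: since $\OI_d$ is an O-lingual Gr\"obner category (Theorem~\ref{oithm}), $\rH_{\Phi^*(M)}(t)$ is a $\bZ$-linear combination of Hilbert series of ordered languages arising from the lingual data on the posets $\vert S_{[n]}\vert$. Each such series, by Theorem~\ref{ordhilb} (or Corollary~\ref{ordhilbcor}), has the form $f(t)/g(t)$ with $g(t) = \prod_a (1-at)^{e(a)}$, where $a$ ranges over cardinalities of repeatable subsets of the alphabet $\Sigma = \{0,1,\ldots,d\}$ for some ideal in $\vert S_{[n]}\vert$. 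The main step is to identify these repeatable subsets: recall from the proof of Theorem~\ref{oithm} that the ideal $i(\vert S_{[n]}\vert) \subset \Sigma^\star$ consists of words with exactly $n$ occurrences of $0$, and by Lemma~\ref{lem:oi-ordered} every ideal is a finite union of principal ideals of the form $\Pi^\star w_1 \Pi^\star w_2 \cdots w_n \Pi^\star$ with $\Pi = \Sigma \setminus \{0\} = \{1,\ldots,d\}$. Any repeatable subset $\Pi'$ for such a language must avoid the letter $0$ (since inserting additional $0$'s increases the $0$-count and leaves the language), so $\Pi' \subseteq \{1,\ldots,d\}$ and hence $|\Pi'| \le d$. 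Collecting the resulting factors by size yields $g(t) = \prod_{j=1}^d (1-jt)^{e_j}$.

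Finally, for the $d=1$ case, $g(t) = (1-t)^{e_1}$, so partial-fraction expansion of $f(t)/(1-t)^{e_1}$ shows that $\dim_\bk M([n])$ agrees with a polynomial in $n$ for $n$ sufficiently large. The main conceptual obstacle is really just the bookkeeping in the third step — controlling which subsets of $\Sigma$ can actually be repeatable inside an ideal language — but this is immediate from the explicit description of ideals given in Lemma~\ref{lem:oi-ordered}, so the proof is essentially a direct application of the general machinery.
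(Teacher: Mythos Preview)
Your proof is correct and follows essentially the same route as the paper: pull back along $\Phi \colon \OI_d \to \FI_d$, note that Hilbert series are preserved, invoke the O-lingual Gr\"obner machinery (Theorems~\ref{oithm} and~\ref{hilbthm}), and then sharpen the denominator by observing that no repeatable subset of $\Sigma = \{0,\ldots,d\}$ can contain $0$. Your justification for this last point (inserting extra $0$'s forces the word out of the language of words with exactly $n$ zeros) is more explicit than the paper's, which simply asserts that $\{1,\ldots,d\}$ is the largest repeatable subset and cites Corollary~\ref{ordhilbcor}.
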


\begin{proof}
Let $\Phi \colon \OI_d \to \FI_d$ be the forgetful functor. Then 
$\rH_M(t) = \rH_{\Phi^*(M)}(t)$, so the statement follows from Theorems~\ref{oithm} and~\ref{hilbthm} except that it only guarantees that $g(t) = \prod_{j=1}^r (1-jt)^{e_j}$ for some $r$. As we saw in the proof of Theorem~\ref{oithm}, for each set $x=[n]$, the lingual structure on $|S_x|$ (in the notation of \S\ref{ss:lingual-cat}) is built on the set $\Sigma = \{0,\dots,d\}$ and $\{1,\dots,d\}$ is the largest repeatable subset, so we can refine this statement using Corollary~\ref{ordhilbcor}.
\end{proof}

\begin{remark} \label{rmk:fi-history}
This result has a history similar to Corollary~\ref{fi-noeth}. It was first proved for $\chr(\bk)=0$ in \cite[Thm.~3.1]{delta-mod}. It was then independently reproved for $d=1$ and $\chr(\bk)=0$ in \cite{fimodules}. Refinements and deeper properties were discovered for these Hilbert series in \cite{symc1} (see for example \cite[\S 6.8]{symc1}).  It was then proved for $d=1$ and any $\bk$ in \cite{fi-noeth}. The result is new if $d>1$ and $\bk$ has positive characteristic.
\end{remark}

\begin{example}
The paper \cite{fimodules} gives many examples of $\FI$-modules appearing ``in nature.'' We now give some examples of $\FI_d$-modules. First a general construction. Let $M_1, \dots, M_d$ be $\FI$-modules over a commutative ring $\bk$. We define $N = M_1 \otimes \cdots \otimes M_d$ to be the following $\FI_d$-module: on sets $S$, it is defined by
\[
N(S) = \bigoplus_{S = S_1 \amalg \cdots \amalg S_d} M_1(S_1) \otimes \cdots \otimes M_d(S_d).
\]
Given a morphism $(f \colon S \to T, g \colon T \setminus f(S) \to [d])$ in $\FI_d$, let $T_i = g^{-1}(i)$ and define $N(S) \to N(T)$ to be the sum of the maps
\[
M_1(S_1) \otimes \cdots \otimes M_d(S_d) \to M_1(S_1 \amalg T_1) \otimes \cdots \otimes M_d(S_d \amalg T_d).
\]
If $M_1, \dots, M_d$ are finitely generated then the same is true for $N$; there is an obvious generalization where we allow each $M_i$ to be a $\FI_{n_i}$-module and $N$ is a $\FI_{n_1 + \cdots + n_d}$-module.

Suppose now, for simplicity, that $\bk$ is a field. For a topological space $Y$ and finite set $S$, let $\Conf_S(Y)$ be the space of injective functions $S \to Y$. For any $r \ge 0$, we get an $\FI$-module $S \mapsto \rH^r(\Conf_S(Y); \bk)$, which we denote by $\rH^r(\Conf(Y))$. Let $X_1, \dots, X_d$ be connected topological spaces, and let $X$ be their disjoint union. The K\"unneth theorem gives
\begin{displaymath}
\rH^r(\Conf_S(X); \bk) = \bigoplus_{p_1+\cdots+p_d = r} \bigoplus_{S = S_1 \amalg \cdots \amalg S_d} \rH^{p_1}(\Conf_{S_1}(X_1); \bk) \otimes \cdots \otimes \rH^{p_d}(\Conf_{S_d}(X_d); \bk).
\end{displaymath}
In particular, we can write
\[
\rH^r(\Conf(X)) = \bigoplus_{p_1+ \cdots + p_d = r} \rH^{p_1}(\Conf(X_1)) \otimes \cdots \otimes \rH^{p_d}(\Conf(X_d)),
\]
so that we can endow $\rH^r(\Conf(X))$ with the structure of an $\FI_d$-module. Under reasonable hypotheses on the $X_i$, each $\rH^r(\Conf(X_i))$ is a finitely generated $\FI$-module (see \cite[Theorem E]{fi-noeth}), and so $\rH^r(\Conf(X))$ will be finitely generated as an $\FI_d$-module.
\end{example}

\subsection{Twisted commutative algebras}
\label{ss:tca}

We now discuss the relationship between $\FI_d$ and twisted commutative algebras. For this part, we assume $\bk$ is commutative. 

\begin{definition}
A {\bf twisted commutative algebra} (tca) over $\bk$ is an associative and unital graded $\bk$-algebra $A=\bigoplus_{n \ge 0} A_n$ equipped with an action of $S_n$ on $A_n$ such that:
\begin{enumerate}
\item the multiplication map $A_n \otimes A_m \to A_{n+m}$ is $(S_n \times S_m)$-equivariant (where we use the standard embedding $S_n \times S_m \subset S_{n+m}$ for the action on $A_{n+m}$); and 
\item given $x \in A_n$ and $y \in A_m$ we have $xy=(yx)^{\tau}$, where $\tau=\tau_{m,n} \in S_{n+m}$ is defined by
\begin{displaymath}
\tau(i) = \begin{cases} i+n & \text{if } 1 \le i \le m, \\ i-m & \text{if } m+1 \le i \le n+m. \end{cases}
\end{displaymath}
This is the ``twisted commutativity'' condition. 
\end{enumerate}

An {\bf $A$-module} is a graded $A$-module $M=\bigoplus_{n \ge 0} M_n$ (in the usual sense) equipped with an action of $S_n$ on $M_n$ such that multiplication $A_n \otimes M_m \to M_{n+m}$ is $(S_n \times S_m)$-equivariant.
\end{definition}

\begin{example}
Let $x_1, \ldots, x_d$ be indeterminates, each regarded as having degree~1. We define a tca $A=\bk\langle x_1, \ldots, x_d \rangle$, the {\bf polynomial tca} in the $x_i$, as follows. As a graded $\bk$-algebra, $A$ is just the non-commutative polynomial ring in the $x_i$. The $S_n$-action on $A_n$ is the obvious one: on monomials it is given by $\sigma(x_{i_1} \cdots x_{i_n})=x_{i_{\sigma(1)}} \cdots x_{i_{\sigma(n)}}$.
\end{example}

We now give a more abstract way to define tca's, which clarifies some constructions. Define a representation of $S_{\ast}$ over $\bk$ to be a sequence $M=(M_n)_{n \ge 0}$, where $M_n$ is a representation of $S_n$ over $\bk$. Given $S_{\ast}$-representations $M$ and $N$, we define an $S_{\ast}$-representation $M \otimes N$ by
\begin{displaymath}
(M \otimes N)_n=\bigoplus_{i+j=n} \Ind_{S_i \times S_j}^{S_n} (M_i \otimes_{\bk} N_j).
\end{displaymath}
There is a symmetry of the tensor product obtained by switching the order of $M$ and $N$ and conjugating $S_i \times S_j$ to $S_j \times S_i$ in $S_n$ via $\tau_{i,j}$. This gives the category $\Rep_{\bk}(S_{\ast})$ of $S_{\ast}$-representations the structure of a symmetric abelian tensor category. The general notions of commutative algebra and module in such a category specialize to tca's and their modules.

\begin{example}
For an integer $n \ge 0$ let $\bk \langle n \rangle$ be the $S_{\ast}$-representation which is the regular representation in degree $n$ and 0 in all other degrees. The tca $\bk\langle x_1, \ldots, x_d \rangle$ can then be identified with the symmetric algebra on the object $\bk\langle 1 \rangle^{\oplus d}$. The symmetric algebra on $\bk\langle n \rangle$ is poorly understood for $n>1$ (although some results are known for $n=2$).
\end{example}

\begin{remark}
If $\bk$ is a field of characteristic $0$ then $\Rep_{\bk}(S_{\ast})$ is equivalent to the category of polynomial representations of $\GL(\infty)$ over $\bk$, as symmetric abelian tensor categories. Under this equivalence, tca's correspond to commutative associative unital $\bk$-algebras equipped with a polynomial action of $\GL(\infty)$. For example, the polynomial tca $\bk \langle x \rangle$ corresponds to the polynomial ring $\bk[x_1, x_2, \ldots]$ in infinitely many variables, equipped with its usual action of $\GL(\infty)$. This point of view has been exploited by us \cite{delta-mod,expos,symc1,infrank} to establish properties of tca's in characteristic $0$.
\end{remark}

\begin{proposition} \label{FI-tca}
Let $A=\bk\langle x_1, \ldots, x_d \rangle$. Then $\Mod_A$ is equivalent to $\Rep_{\bk}(\FI_d)$.
\end{proposition}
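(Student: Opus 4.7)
The plan is to construct mutually inverse functors $F \colon \Rep_{\bk}(\FI_d) \to \Mod_A$ and $G \colon \Mod_A \to \Rep_{\bk}(\FI_d)$. For $F$: given $M \in \Rep_{\bk}(\FI_d)$, set $F(M)_n = M([n])$ with the $S_n$-action coming from $\Aut_{\FI_d}([n]) = S_n$. For each $\mathbf{i} = (i_1, \ldots, i_m) \in [d]^m$, let $\phi_\mathbf{i} \colon [n] \to [m+n]$ denote the morphism in $\FI_d$ whose underlying injection is $j \mapsto m+j$ and whose labeling of the complement $[m]$ sends $k$ to $i_k$. Since the monomials $x_{i_1} \cdots x_{i_m}$ form a $\bk$-basis of $A_m$, we define the multiplication $A_m \otimes_\bk F(M)_n \to F(M)_{m+n}$ by $(x_{i_1}\cdots x_{i_m}) \otimes \xi \mapsto M(\phi_\mathbf{i})(\xi)$ and extend linearly. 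The unit, associativity, and $(S_m \times S_n)$-equivariance axioms reduce to direct identities in $\FI_d$ among the $\phi_\mathbf{i}$'s and automorphisms of $[n]$ or $[m+n]$; the key facts are $\phi_{()} = \mathrm{id}$ and the concatenation identity $\phi_\mathbf{i} \circ \phi_\mathbf{j} = \phi_{\mathbf{i}\mathbf{j}}$.

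For $G$: given $N \in \Mod_A$, set $G(N)([n]) = N_n$. For a morphism $(f, g) \colon [n] \to [m]$ in $\FI_d$, write $[m] \setminus f([n]) = \{j_1 < \cdots < j_{m-n}\}$, put $a_{f,g} = x_{g(j_1)} \cdots x_{g(j_{m-n})} \in A_{m-n}$, and let $\tau_{f,g} \in S_m$ be the permutation sending $k \mapsto j_k$ for $k \le m-n$ and $m-n+k \mapsto f(k)$ for $k \le n$. Define $G(N)((f,g))(\xi) := \tau_{f,g} \cdot (a_{f,g} \cdot \xi)$. The crucial step is functoriality: given composable morphisms $(f, g) \colon [n] \to [m]$ and $(f', g') \colon [m] \to [\ell]$ with composite $(f'', g'')$, one pushes $\tau_{f,g}$ past $a_{f', g'}$ using the $(S_{\ell-m} \times S_m)$-equivariance of the $A$-action, combines the two monomial factors via associativity, and then compares $a_{f', g'} \cdot a_{f, g}$ with $a_{f'', g''}$. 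The two differ by a shuffle in $S_{\ell-n}$, and the twisted-commutativity axiom of $A$ is precisely what converts this shuffle into an element of $S_\ell$ acting on $N_\ell$; together with the other permutations, the resulting element of $S_\ell$ matches $\tau_{f'', g''}$.

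Finally, $F \circ G \cong \mathrm{id}$ and $G \circ F \cong \mathrm{id}$ follow from the observation that the morphisms $\phi_\mathbf{i}$ satisfy $a_{\phi_\mathbf{i}} = x_{i_1}\cdots x_{i_m}$ and $\tau_{\phi_\mathbf{i}} = \mathrm{id}$, so the two constructions cancel on the generating data. The main obstacle is the combinatorial bookkeeping in the functoriality verification for $G(N)$: the twisted-commutativity axiom of $A$ enters essentially to reconcile the canonical increasing ordering underlying $a_{f'', g''}$ with the naive concatenation $a_{f', g'} a_{f, g}$ coming from the intermediate composition.
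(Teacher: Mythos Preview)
Your proposal is correct and follows essentially the same approach as the paper: build the $A$-action on $M([n])$ from the standard morphisms $\phi_{\mathbf{i}}$, and in the reverse direction factor an arbitrary $\FI_d$-morphism as a $\phi_{\mathbf{i}}$ followed by an automorphism of the target. The paper states this reverse direction tersely (``every morphism $[m]\to[n+m]$ is a composition of the injections we defined above with an automorphism of $[n+m]$''), whereas you write out $\tau_{f,g}$ and $a_{f,g}$ explicitly and sketch the functoriality check; the content is the same.

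One small remark on your bookkeeping: the step where $a_{f',g'}\,a_{f,g}$ and $a_{f'',g''}$ differ by a shuffle in $S_{\ell-n}$ is handled not by the twisted-commutativity axiom of $A$ itself, but by the $(S_{\ell-n}\times S_n)$-equivariance of the module action together with the explicit description of the $S_{\ell-n}$-action on monomials in $A_{\ell-n}$. Twisted commutativity of $A$ is a consequence of that same $S_*$-action, so your attribution is not wrong, but the cleaner ingredient to cite is equivariance of the multiplication map.
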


\begin{proof}
Pick $M \in \Rep_\bk(\FI_d)$ and let $M_m = M([m])$. Then $M_m$ has an action of $S_m$. To define $A_n \otimes M_m \to M_{n+m}$, it is enough to define how $x_{i_1} \cdots x_{i_n}$ acts by multiplication for each $(i_1, \dots, i_n)$. Let $(f,g) \colon [m] \to [n+m]$ be the morphism in $\FI_d$ where $f \colon [m] \to [n+m]$ is the injection $i \mapsto n+i$ and $g \colon [n] \to [d]$ is the function $g(j) = i_j$ and let the multiplication map be the induced map $M_{(f,g)} \colon M_m \to M_{n+m}$. By definition this is $(S_n \times S_m)$-equivariant and associativity follows from associativity of composition of morphisms in $\FI_d$.

It is easy to reverse this process: given an $A$-module $M$, we get a functor defined on the full subcategory of $\FI_d$ on objects of the form $[n]$ (note that every morphism $[m] \to [n+m]$ is a composition of the injections we defined above with an automorphism of $[n+m]$). To extend this to all of $\FI_d$, pick a total ordering on each finite set to identify it with $[n]$. The two functors we have defined are quasi-inverse to each other.
\end{proof}

\begin{remark}
In particular, the category of $\FI$-modules studied in \cite{fimodules, fi-noeth} is equivalent to the category of modules over the univariate tca $\bk\langle x \rangle$.
\end{remark}

There is an obvious notion of generation for a set of elements in a tca or a module over a tca. We say that a tca is {\bf noetherian} if every subrepresentation of a finitely generated module is again finitely generated.

\begin{corollary}
A tca over a noetherian ring $\bk$ finitely generated in degree $1$ is noetherian.
\end{corollary}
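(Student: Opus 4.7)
The plan is to reduce to the case of the free polynomial tca $\bk\langle x_1,\ldots,x_d\rangle$ and then invoke Proposition~\ref{FI-tca} together with Corollary~\ref{fi-noeth}. Concretely, if $A$ is generated in degree one by elements $y_1,\ldots,y_d \in A_1$, then the universal property of the free tca produces a surjective tca homomorphism $\varphi \colon B \twoheadrightarrow A$, where $B=\bk\langle x_1,\ldots,x_d\rangle$, sending $x_i$ to $y_i$. I will use $\varphi$ to pull back $A$-modules to $B$-modules.

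Next, I would observe that the restriction functor $\varphi^* \colon \Mod_A \to \Mod_B$ is a fully faithful embedding of abelian categories whose essential image consists exactly of those $B$-modules annihilated by $\ker(\varphi)$. Under this embedding, submodules correspond to submodules, and finite generation as an $A$-module is equivalent to finite generation as a $B$-module: any $A$-generating set is a $B$-generating set via $\varphi$, and any $B$-generating set for an $A$-module is automatically an $A$-generating set since $\varphi$ is surjective.

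Now I would apply Proposition~\ref{FI-tca}, which identifies $\Mod_B$ with $\Rep_{\bk}(\FI_d)$. Since $\bk$ is left-noetherian, Corollary~\ref{fi-noeth} tells us that $\Rep_{\bk}(\FI_d)$ is a noetherian abelian category, hence so is $\Mod_B$. Given a finitely generated $A$-module $M$ and a submodule $N \subseteq M$, pulling back via $\varphi$ gives a finitely generated $B$-module $\varphi^*(M)$ with submodule $\varphi^*(N)$; the latter is finitely generated over $B$ by noetherianity, and therefore $N$ is finitely generated over $A$ by the previous paragraph.

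There is essentially no obstacle here beyond setting up the reduction carefully: the substance of the argument has already been established in Proposition~\ref{FI-tca} and Corollary~\ref{fi-noeth}. The only minor subtlety is that the hypothesis ``finitely generated in degree $1$'' should be interpreted as the existence of finitely many degree-one elements generating $A$ as a tca, so that the map from the free polynomial tca is surjective; this will be made explicit at the start of the proof.
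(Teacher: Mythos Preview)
Your proposal is correct and follows essentially the same approach as the paper: the paper's proof is the single line ``A tca finitely generated in degree $1$ is a quotient of $\bk\langle x_1, \ldots, x_d \rangle$ for some $d$,'' leaving implicit precisely the reduction-via-restriction-along-a-surjection and the appeal to Proposition~\ref{FI-tca} and Corollary~\ref{fi-noeth} that you have spelled out in detail.
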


\begin{proof}
A tca finitely generated in degree $1$ is a quotient of $\bk\langle x_1, \ldots, x_d \rangle$ for some $d$.
\end{proof}

\begin{remark}
The same techniques also prove that a twisted graded-commutative algebra finitely generated in degree $1$ is noetherian.
\end{remark}

Suppose $\bk$ is a vector space. We define the {\bf Hilbert series} of a module $M$ over a tca by
\begin{displaymath}
\rH_M'(t) = \sum_{n \ge 0} \dim_{\bk}M([n]) \frac{t^n}{n!}.
\end{displaymath}
As the equivalence in Proposition~\ref{FI-tca} is clearly compatible with Hilbert series, we obtain:

\begin{corollary}
Let $M$ be a finitely generated $\bk \langle x_1, \ldots, x_d \rangle$-module. Then $\rH'_M(t) \in \bQ[t,e^t]$.
\end{corollary}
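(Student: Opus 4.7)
The plan is to reduce to the ordinary Hilbert series via Proposition~\ref{FI-tca} and then apply a Laplace/Borel-type transform argument. By Proposition~\ref{FI-tca}, modules over $A = \bk\langle x_1, \ldots, x_d \rangle$ correspond to $\FI_d$-modules, and under this equivalence $\rH_M'(t)$ is obtained from the ordinary $\FI_d$-Hilbert series $\rH_M(t) = \sum_n \dim_\bk M([n])\, t^n$ by applying the $\bQ$-linear map $\Lambda \colon t^n \mapsto t^n/n!$. So it suffices to show that $\Lambda$ sends every rational function of the form produced by Corollary~\ref{fihilb} into $\bQ[t, e^t]$.

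By Corollary~\ref{fihilb}, we may write $\rH_M(t) = f(t)/\prod_{j=1}^d (1-jt)^{e_j}$ with $f(t) \in \bQ[t]$. Applying partial fractions over $\bQ$, there is a polynomial $p(t) \in \bQ[t]$ and constants $c_{j,k} \in \bQ$ such that
\begin{displaymath}
\rH_M(t) = p(t) + \sum_{j=1}^d \sum_{k=1}^{e_j} \frac{c_{j,k}}{(1-jt)^k}.
\end{displaymath}
Since $1/(1-jt)^k = \sum_{n \ge 0} \binom{n+k-1}{k-1} j^n t^n$, this shows that $\dim_\bk M([n])$ agrees, for $n$ sufficiently large, with a $\bQ$-linear combination of expressions of the form $P_{j,k}(n)\, j^n$ where each $P_{j,k}$ is a polynomial in $n$ of degree $k-1$.

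The key computational step is the identity
\begin{displaymath}
\sum_{n \ge 0} n^k\, \frac{u^n}{n!} \;=\; Q_k(u)\, e^u,
\end{displaymath}
where $Q_k \in \bZ[u]$ is defined inductively by $Q_0 = 1$ and $Q_{k+1}(u) = u\,(Q_k(u) + Q_k'(u))$; this follows by repeatedly applying the operator $u\,\tfrac{d}{du}$ to $e^u = \sum_n u^n/n!$. Setting $u = jt$ and summing, we deduce that $\Lambda$ sends each $1/(1-jt)^k$ into $\bQ[t]\cdot e^{jt}$, up to the finite polynomial correction coming from the low-degree terms where $\dim_\bk M([n])$ may disagree with the asymptotic formula. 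Since $e^{jt} = (e^t)^j \in \bQ[e^t]$, every such term lies in $\bQ[t, e^t]$. Combining with $\Lambda(p(t)) \in \bQ[t]$ and the finite correction gives $\rH_M'(t) \in \bQ[t, e^t]$, as desired.

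There is no real obstacle here: the only substantive input is Corollary~\ref{fihilb} controlling the denominator of $\rH_M(t)$, and the rest is bookkeeping with the Borel-type transform $\Lambda$. If anything delicate needs care, it is verifying that the exponentials appearing after transform are precisely $e^{jt}$ for $1 \le j \le d$, so that one may absorb them into $\bQ[e^t]$ rather than needing a larger ring.
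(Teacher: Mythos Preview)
Your proof is correct and follows exactly the approach the paper has in mind: use Proposition~\ref{FI-tca} to pass to $\FI_d$-modules, invoke Corollary~\ref{fihilb} to control the denominator of the ordinary Hilbert series, and then apply the Borel-type transform $t^n \mapsto t^n/n!$, which sends each $1/(1-jt)^k$ into $\bQ[t]\cdot e^{jt} \subset \bQ[t,e^t]$. The paper states this corollary without proof, treating the transform step as routine; one minor expository point is that your partial-fraction decomposition is exact for all $n$, so the only ``correction'' is $\Lambda(p(t)) \in \bQ[t]$ and no separate low-degree adjustment is needed.
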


\begin{remark}
Twisted Lie algebras, which are related structures, appear in \cite{barratt}, and twisted commutative algebras appear in \cite{ginzburg-schedler}.
\end{remark}

\subsection{Additional results} \label{sec:FI-additional}

\begin{proposition}
The category $\FI$ satisfies property~{\rm (F)} {\rm (}see Definition~\ref{def:catF}{\rm )}.
\end{proposition}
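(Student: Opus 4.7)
The plan is to verify Definition~\ref{def:catF} directly: given an object $(S,T)$ of $\FI\times\FI$, I will produce a finite collection of pairs $(\alpha_\tau,\beta_\tau)\colon(S,T)\to(U_\tau,U_\tau)$ such that every pair of injections $(\alpha,\beta)\colon(S,T)\to(U,U)$ factors through one of them. The key observation is that the isomorphism class of such a pair, together with the way $S$ and $T$ sit inside $U$, is essentially controlled by how $\alpha(S)$ and $\beta(T)$ overlap inside $U$; and for fixed $S,T$ there are only finitely many such overlap patterns.

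More precisely, first I would define an \emph{overlap type} for $(S,T)$ to be a triple $\tau=(S',T',\phi)$ consisting of subsets $S'\subseteq S$, $T'\subseteq T$, and a bijection $\phi\colon S'\to T'$. Since $S$ and $T$ are finite, there are only finitely many such types. For each type $\tau$, define $U_\tau$ to be the quotient of the disjoint union $S\amalg T$ by the equivalence relation that identifies $s\in S'$ with $\phi(s)\in T'$, and let $\alpha_\tau\colon S\to U_\tau$ and $\beta_\tau\colon T\to U_\tau$ be the maps induced by the inclusions of $S$ and $T$ into $S\amalg T$. Because $\phi$ is a bijection, these induced maps are injections, so $(\alpha_\tau,\beta_\tau)$ is a legitimate morphism in $\FI\times\FI$.

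Next, given an arbitrary pair of injections $\alpha\colon S\to U$ and $\beta\colon T\to U$, I would read off an overlap type from it: set $S'=\alpha^{-1}(\beta(T))$, $T'=\beta^{-1}(\alpha(S))$, and define $\phi\colon S'\to T'$ by $\phi(s)=\beta^{-1}(\alpha(s))$, which is well defined and bijective since $\alpha$ and $\beta$ are injections. For this $\tau$, define $g\colon U_\tau\to U$ by $g([s])=\alpha(s)$ for $s\in S$ and $g([t])=\beta(t)$ for $t\in T$; well-definedness amounts to the identity $\alpha(s)=\beta(\phi(s))$ for $s\in S'$, which holds by construction. Injectivity of $g$ follows from the fact that the only identifications among elements of $S\amalg T$ mapping to the same element of $U$ are exactly those imposed in the definition of $U_\tau$. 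By construction, $\alpha=g\circ\alpha_\tau$ and $\beta=g\circ\beta_\tau$, which is precisely the factorization required.

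The argument is essentially a finite case analysis driven by the fiber product structure of $\alpha(S)\cap\beta(T)\subseteq U$, and the only mild subtlety is keeping the bookkeeping of the bijection $\phi$ straight so that $g$ is simultaneously well defined and injective; I don't anticipate any serious obstacle beyond that. Note that Remark~\ref{Fadjoint} does not immediately apply, since $\Delta\colon\FI\to\FI\times\FI$ has no left adjoint (disjoint union is not a coproduct in $\FI$ because one cannot resolve the overlap issue above universally), so the direct verification is genuinely needed.
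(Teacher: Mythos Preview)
Your proof is correct and follows essentially the same approach as the paper: both factor an arbitrary pair $(\alpha,\beta)\colon(S,T)\to(U,U)$ through the union of images $\alpha(S)\cup\beta(T)$, and observe that there are only finitely many isomorphism classes of such factorizations since the intermediate set has cardinality at most $\#S+\#T$. The paper's version is terser, simply noting the cardinality bound, whereas you explicitly parametrize the isomorphism classes by overlap types $(S',T',\phi)$ and verify the factorization in detail; your closing remark about the non-applicability of the coproduct shortcut is also a nice addition not present in the paper.
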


\begin{proof}
Let $x$ and $x'$ be sets, and consider maps $f \colon x \to y$ and $f' \colon x' \to y$. Let $\ol{y}$ be the union of the images of $f$ and $f'$. Let $g \colon \ol{y} \to y$ be the inclusion, and write $\ol{f}$ for the map $x \to \ol{y}$ induced by $f$, and similarly for $\ol{f}'$. Then $f=g \circ \ol{f}$ and $f'=g \circ \ol{f}'$. Since $\# \ol{y} \le \# x + \# x'$, it follows that there are only finitely many choices for $(\ol{y}, \ol{f}, \ol{f}')$, up to isomorphism. This completes the proof.
\end{proof}

The next result follows from Proposition~\ref{pointwise} and reproves \cite[Proposition 2.61]{fimodules}.

\begin{corollary}
For any commutative ring $\bk$, the pointwise tensor product of finitely generated representations of $\FI$ is finitely generated.
\end{corollary}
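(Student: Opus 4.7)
The plan is to simply invoke the machinery already built up. The preceding proposition establishes that $\FI$ satisfies property~(F), which by Definition~\ref{def:catF} means that the diagonal functor $\Delta \colon \FI \to \FI \times \FI$ satisfies property~(F) in the sense of Definition~\ref{def:propF}. Given two finitely generated $\FI$-modules $M$ and $N$, their external tensor product $M \boxtimes N$ is a finitely generated representation of $\FI \times \FI$ (as noted in \S\ref{sec:tensor-prod}, this is immediate from the construction). Then by Proposition~\ref{propFfg} applied to $\Delta$, the pullback $\Delta^*(M \boxtimes N)$ is a finitely generated representation of $\FI$. Since $M \odot N = \Delta^*(M \boxtimes N)$ by definition, we are done.

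In fact this is precisely the content of Proposition~\ref{pointwise}, so the entire argument reduces to a one-line citation: combine Proposition~\ref{pointwise} with the preceding proposition. There is no real obstacle; all of the substantive work has been absorbed into the general framework of pullback functors and property~(F). The only combinatorial input is the observation already used in the preceding proposition, namely that two injections $f \colon x \to y$ and $f' \colon x' \to y$ into a common target factor through the union of their images, whose size is bounded by $\#x + \#x'$, leaving only finitely many possibilities up to isomorphism.
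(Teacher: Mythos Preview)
Your proposal is correct and matches the paper's approach exactly: the paper simply cites Proposition~\ref{pointwise} together with the preceding proposition that $\FI$ satisfies property~(F). Your expanded version just unpacks what Proposition~\ref{pointwise} says, which is fine but unnecessary.
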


\begin{remark}
The above corollary is false for $\FI_d$ for $d>1$. It follows that these categories do not satisfy property~(F). To see this, consider $M = P_1 \odot P_1$. Then $\dim_\bk M([n]) = (d^2)^n$, so the Hilbert series is $\rH_M(t) = (1-d^2 n)^{-1}$. By Corollary~\ref{fihilb}, the Hilbert series of a finitely generated $\FI_d$-module cannot have this form if $d>1$. 
\end{remark}

The next results concern the category $\FA$ of finite sets, with all functions as morphisms.

\begin{theorem}  \label{thm:set-noeth}
The category $\FA$ is quasi-Gr\"obner.
\end{theorem}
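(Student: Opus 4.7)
The plan is to exhibit $\FA$ as quasi-Gr\"obner by finding a Gr\"obner category $\cC'$ together with an essentially surjective functor $\Phi \colon \cC' \to \FA$ satisfying property~(F). I will take $\cC' = \OI$, which is Gr\"obner by Theorem~\ref{oithm}, and let $\Phi$ be the forgetful functor: it sends a totally ordered finite set to its underlying set, and an order-preserving injection to the same function regarded as a plain set map. Essential surjectivity of $\Phi$ is immediate, since every finite set admits a total order.

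The real task is to verify property~(F). Fix a finite set $x$ with $n = \#x$. For each $k \in \{1, \dots, n\}$ and each surjection $s \colon x \twoheadrightarrow [k]$ in $\FA$, I take $y_s = [k]$ (with its standard order, viewed as an object of $\OI$) and set $f_s = s \colon x \to \Phi(y_s)$. There are only finitely many ordered set partitions of $x$, so the collection $\{(y_s, f_s)\}$ is finite. Now suppose $y \in \OI$ is arbitrary and $f \colon x \to \Phi(y)$ is any morphism in $\FA$. Let $t_1 < t_2 < \cdots < t_k$ be the distinct values of $f$, enumerated in the order of $y$; define $s \colon x \twoheadrightarrow [k]$ by $s(a) = i$ iff $f(a) = t_i$, and define $g \colon [k] \to y$ by $g(i) = t_i$. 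Then $g$ is an order-preserving injection and hence a morphism of $\OI$, and $f = \Phi(g) \circ s$. Since $s$ appears as one of the $f_{s'}$ in our family, property~(F) is established, and $\FA$ is quasi-Gr\"obner by the definition.

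No step presents a serious obstacle: the essential observation is just that every map of finite sets has a canonical image factorization as a surjection followed by an injection, and once the target is totally ordered, the injection is automatically order-preserving. The one point worth noting is that, although the target $y$ is allowed to be arbitrarily large, the ``injective part'' of the factorization can always be absorbed into the morphism $g$, so the $f_s$'s need only range over the finitely many surjections out of $x$.
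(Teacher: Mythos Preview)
Your proof is correct and rests on the same image-factorization idea as the paper's. The only difference is that the paper factors through $\FI$: it shows the inclusion $\FI \to \FA$ satisfies property~(F) via the surjection--injection factorization, then invokes Proposition~\ref{potgrob} together with the fact that $\FI$ is already known to be quasi-Gr\"obner; you instead go directly from the Gr\"obner category $\OI$, which amounts to composing the paper's two steps ($\OI \to \FI$ and $\FI \to \FA$) into one.
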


\begin{proof}
We claim that the inclusion functor $\Phi \colon \FI \to \FA$ satisfies property~(F). Let $x$ be a set and let $f_i \colon x \to y_i$ be representatives for the isomorphism classes of surjective maps from $x$; these are finite in number. Any morphism $f \colon x \to y$ factors as $f=g \circ f_i$ for some $i$ and some injective map $g \colon y_i \to y$ which proves the claim. Since $\FI$ is quasi-Gr\"obner, the result now follows from Proposition~\ref{potgrob}.
\end{proof}

\begin{corollary} \label{cor:set-noeth}
If $\bk$ is left-noetherian then $\Rep_{\bk}(\FA)$ is noetherian.
\end{corollary}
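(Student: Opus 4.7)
The plan is to simply combine Theorem~\ref{thm:set-noeth}, which was just proved, with the main general noetherianity result Theorem~\ref{grobnoeth}. The latter asserts that for any quasi-Gr\"obner category $\cC$ and any left-noetherian ring $\bk$, the category $\Rep_{\bk}(\cC)$ is noetherian. Since $\FA$ has been established to be quasi-Gr\"obner, the corollary is an immediate specialization.

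Concretely, one would recall that Theorem~\ref{thm:set-noeth} produces the essentially surjective functor $\Phi \colon \FI \to \FA$ satisfying property~(F), and that $\FI$ is quasi-Gr\"obner (via the forgetful functor $\OI \to \FI$ from Theorem~\ref{fithm}, with $\OI$ Gr\"obner by Theorem~\ref{oithm}). Composing these essentially surjective property-(F) functors using Proposition~\ref{propFcomp}, one obtains an essentially surjective property-(F) functor $\OI \to \FA$ from the Gr\"obner category $\OI$, confirming that $\FA$ is quasi-Gr\"obner in the sense of Definition~\ref{def:grobner-cat}. Then Theorem~\ref{grobnoeth} applies directly.

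There is essentially no obstacle here, as all the machinery has been assembled: the substantive work went into Theorem~\ref{thm:set-noeth} (verifying property~(F) for $\FI \hookrightarrow \FA$ via the epi-mono factorization of arbitrary set maps) and into the general framework culminating in Theorem~\ref{grobnoeth}. The corollary is a one-line deduction and can be stated as: ``$\FA$ is quasi-Gr\"obner by Theorem~\ref{thm:set-noeth}, so the claim follows from Theorem~\ref{grobnoeth}.''
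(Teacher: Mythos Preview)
Your proposal is correct and takes essentially the same approach as the paper: the corollary is an immediate consequence of Theorem~\ref{thm:set-noeth} (that $\FA$ is quasi-Gr\"obner) together with Theorem~\ref{grobnoeth}. The paper does not even supply a separate proof, treating the statement as self-evident from these two results.
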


Suppose $\bk$ is a field. By what we have shown, the Hilbert series of a finitely generated $\FA$-module is of the form $f(t) / (1-t)^d$ for some polynomial $f$ and $d \ge 0$. Equivalently, the function $n \mapsto \dim_\bk M([n])$ is eventually polynomial. In fact, one can do better:

\begin{theorem} \label{thm:FA-poly}
If $\bk$ is a field and $M$ is a finitely generated $\FA$-module, then the function $n \mapsto \dim_\bk M([n])$ agrees with a polynomial for all $n > 0$. Equivalently, the Hilbert series of $M$ is of the form $f(t) / (1-t)^d$ where $\deg f \le d$.
\end{theorem}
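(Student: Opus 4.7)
The plan is to exploit the factorization of every morphism in $\FA$ as a surjection followed by an injection. For each principal projective $P_{[n]}^{\FA}$, define the image-size filtration by setting $F_k P_{[n]}^{\FA}(y)$ to be the $\bk$-span of the basis elements $e_f$ with $f\colon [n]\to y$ satisfying $|f([n])|\le k$; this is $\FA$-stable since post-composition cannot enlarge the image. The associated graded $\mathrm{gr}_k = F_k/F_{k-1}$ decomposes as a direct sum, indexed by set-partitions of $[n]$ into exactly $k$ blocks, of a single ``atomic'' $\FA$-module $Q_{[k]}$, where $Q_{[k]}(y) = \bk[\mathrm{Inj}([k],y)]$ and a morphism $g\colon y\to y'$ sends $e_\iota$ to $e_{g\circ\iota}$ if $g\circ\iota$ is still injective and to $0$ otherwise. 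A direct count gives
\[
\rH_{Q_{[k]}}(t) \;=\; \sum_{m\ge 0} \frac{m!}{(m-k)!}\,t^m \;=\; \frac{k!\,t^k}{(1-t)^{k+1}},
\]
whose numerator has degree strictly less than its denominator; in particular the theorem holds for each $Q_{[k]}$.

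For a finitely generated $\FA$-module $M$, choose a surjection $\varphi\colon P\twoheadrightarrow M$ with $P$ a finite direct sum of principal projectives, and set $F_k M := \varphi(F_k P)$. Each $\mathrm{gr}_k M$ is then a quotient of a finite direct sum of copies of $Q_{[k]}$, and since Hilbert series is additive along filtrations, $\rH_M = \sum_k \rH_{\mathrm{gr}_k M}$. The theorem therefore reduces to the statement that every subrepresentation $R\subseteq Q_{[k]}^{\oplus r}$ has Hilbert series of the form $f(t)/(1-t)^d$ with $\deg f\le d$.

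This last step is where the extra rigidity of the full $\FA$-action (beyond $\FI$) becomes essential: one can write down $\FI$-subrepresentations of $P_{[k]}^{\FI,\oplus r}$ whose Hilbert series violate the degree bound, and such examples are precisely those not closed under the surjection action of $\FA$. Since $Q_{[k]}([n]) = 0$ for $n < k$, we automatically have $R([n]) = 0$ for $n < k$, which forces the polynomial $p(n)$ describing $\dim_{\bk} R([n])$ in its eventual polynomial range to vanish at $n = 1,\ldots,k-1$; combined with the elementary bound $\dim_{\bk} R([n]) \le r\,k!\binom{n}{k}$ this constrains $p$ to have degree at most $k$. The core of the argument, and the main obstacle, is to show that $p(n)$ in fact matches $\dim_{\bk} R([n])$ already for all $n\ge k$ rather than merely asymptotically, so that $\dim_{\bk} R([n])$ agrees with $p(n)$ for every $n\ge 1$; equivalently $\rH_R(t) = f(t)/(1-t)^d$ with $\deg f\le d$. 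This is handled by analyzing $R([n])$ as an $S_n$-representation induced from $S_{n-k}\times S_k$, using the compatibility forced by all $\FA$-morphisms (not just $\FI$-morphisms) to show that the stable representation-theoretic pattern of $R$ is already established at $n = k$.
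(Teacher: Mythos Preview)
Your reduction is sound: the image-size filtration on $P_{[n]}^{\FA}$ is $\FA$-stable, the associated graded pieces do split as direct sums of copies of $Q_{[k]}$, and the Hilbert series computation for $Q_{[k]}$ is correct. So the problem genuinely reduces to showing that every $\FA$-subrepresentation $R\subseteq Q_{[k]}^{\oplus r}$ has $\dim_{\bk} R([n])$ polynomial for all $n\ge 1$.

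But that final step is not proved; it is only asserted. Two specific problems. First, the sentence ``$R([n])=0$ for $n<k$, which forces the polynomial $p(n)$ describing $\dim_{\bk}R([n])$ in its eventual polynomial range to vanish at $n=1,\ldots,k-1$'' is circular: you only know that $\dim_{\bk}R([n])=0$ for $n<k$, not that the eventual polynomial $p$ vanishes there. Showing that $p(n)=\dim_{\bk}R([n])$ for these small $n$ is precisely the content of the theorem. Second, the closing sentence (``analyzing $R([n])$ as an $S_n$-representation induced from $S_{n-k}\times S_k$, using the compatibility forced by all $\FA$-morphisms\ldots'') is a description of an intention, not an argument. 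You correctly identify that this is where the $\FA$-structure must be used beyond $\FI$, but you do not actually use it; no concrete mechanism is given by which a surjection in $\FA$ forces the dimension at $n$ to already match $p(n)$.

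The paper's proof bypasses this difficulty entirely with a difference-operator trick. Restricting to nonempty sets, define $(\Sigma M)(S)=M(S\amalg\{*\})$; because the inclusion $S\hookrightarrow S\amalg\{*\}$ \emph{splits} in $\FA$ (this is exactly where $\FA$ differs from $\FI$), the natural map $M\to\Sigma M$ is injective, so $\Delta M:=\Sigma M/M$ is an exact-in-surjections functor with $h_{\Delta M}(n)=h_M(n+1)-h_M(n)$. Since $h_{P_{[n]}}(m)=m^n$ is a polynomial, $\Delta^{n+1}P_{[n]}=0$; as $\Delta$ preserves surjections, any finitely generated $M$ is also annihilated by a power of $\Delta$, hence $h_M$ is a polynomial on $n\ge 1$. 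This argument is short and never needs to analyze subobjects of anything.
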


\begin{proof}
Let $\FA^\circ$ be the category of nonempty finite sets, so that every $\FA$-module gives an $\FA^\circ$-module by restriction (this notation will only be used in this proof). Introduce an operator $\Sigma$ on $\FA^\circ$-modules by $(\Sigma M)(S) = M(S \amalg \{*\})$. It is easy to see that $\Sigma M$ is a finitely generated $\FA^\circ$-module if the same is true for $M$. For every nonempty set $S$, the inclusion $S \to S \amalg \{*\}$ can be split, and so the map $M(S) \to (\Sigma M)(S)$ is an inclusion for all $S$. Define $\Delta M = \Sigma M / M$. Since $\Sigma$ is exact, it follows that $\Delta M \to \Delta M'$ is a surjection if $M \to M'$ is a surjection.

Define $h_M(n) = \dim_\bk M([n])$. Recall that a function $f \colon \bZ_{> 0} \to \bZ$ is a polynomial of degree $\le d$ if and only if the function $g(n) = f(n+1)-f(n)$ is a polynomial of degree $\le d-1$. Since $h_{\Delta M}(n) = h_M(n+1) - h_M(n)$, we get that $h_M$ is a polynomial function of degree $\le d$ if and only if $\Delta^{d+1} M = 0$. Pick a surjection $P \to M \to 0$ with $P$ a direct sum of principal projectives $P_S$. Since $h_{P_S}(n) = n^{|S|}$ is a polynomial, $P$ is annihilated by some power of $\Delta$. Since $\Delta$ preserves surjections, the same is true for $M$, and so $h_M(n)$ is a polynomial.
\end{proof}

\begin{remark}
\begin{enumerate}
\item The category of representations $\Rep_{\bk}(\FA)$ is studied in \cite{wiltshire} in the case that $\bk$ is a field of characteristic $0$, and Theorems~\ref{thm:FA-poly} and \ref{thm:FA-artinian} are proved.

\item In \cite{dougherty}, the functions $n \mapsto |F([n])|$ coming from functors $F \colon \FA \to \FA$ are classified.

\item The analogue of \cite[Proposition 4.10]{kuhnI} for $\FA$ holds with the same proof, i.e., if $\deg h_M(n) = r$, then the lattice of $\FA$-subrepresentations of $M$ is isomorphic to the lattice of $\bk[\End([r])]$-subrepresentations of $M([r])$. Using Theorem~\ref{thm:FA-poly}, this implies that finitely generated $\FA$-modules are finite length. We will give a different proof using Gr\"obner methods in Theorem~\ref{thm:FA-artinian}. \qedhere
\end{enumerate}
\end{remark}

\section{Categories of surjections} \label{sec:cat-surj}

In this section, we study $\FS^{\op}$, the (opposite of the) category of finite sets with surjective functions and variations, which behaves quite differently from the category of injective functions. The main results are stated in \S\ref{sec:OSFS-defn} and the proofs are in \S\ref{sec:proof-osthm}. The results on $\FS^{\op}$ are powerful enough to prove the Lannes--Schwartz artinian conjecture; this is done in \S\ref{sec:linear-cat}. We give some complementary results in \S\ref{sec:surj-additional}.

\subsection{The categories $\OS$ and $\FS$} \label{sec:OSFS-defn}

Define $\FS$ to be the category whose objects are non-empty finite sets and whose morphisms are surjective functions. We define an ordered version $\OS$ of this category as follows. The objects are totally ordered finite sets. A morphism $S \to T$ in $\OS$ is an {\bf ordered surjection}: a surjective map $f \colon S \to T$ such that for all $i<j$ in $T$ we have $\min{f^{-1}(i)} < \min{f^{-1}(j)}$. We norm $\FS$ and $\OS$ over $\bN$ by $\nu(x)=\# x$.

Our main result about $\OS$ is the following theorem, which is proved in the next section.

\begin{theorem} \label{osthm}
The category $\OS^\op$ is $\rO$-lingual and Gr\"obner.
\end{theorem}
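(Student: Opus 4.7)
The plan is to verify the two conditions of Proposition~\ref{prop:admissible-order} (admissible order and noetherianity of $\vert \cC_x \vert$) together with the O-lingual property. First, $\OS$ is directed: a self-surjection of a finite totally ordered set is an order-preserving bijection, hence the identity; consequently $\OS^{\op}$ is directed. Fix $x = [n]$. Each ordered surjection $\wt f \colon [m] \to [n]$ is encoded by its word $i(\wt f) = \wt f(1) \wt f(2) \cdots \wt f(m) \in [n]^\star$, and this encoding is injective on isomorphism classes with $|i(\wt f)| = m = \nu(y)$ matching the norm. Writing $1 = a_1 < a_2 < \cdots < a_n$ for the first-occurrence positions of $\wt f$, the word factors uniquely as $i(\wt f) = 1 \cdot s_1 \cdot 2 \cdot s_2 \cdots n \cdot s_n$ with $s_k \in \Sigma_k^\star$ for $\Sigma_k = \{1,\ldots,k\}$; this exhibits a bijection $\vert (\OS^{\op})_x \vert \cong \prod_{k=1}^n \Sigma_k^\star$.

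The crucial step is to show that the poset order on $\vert (\OS^{\op})_x \vert$ corresponds under this bijection to the product of plain subsequence orders. Unwinding the definition, $f \le g$ means there exists an ordered surjection $\wt h \colon [m'] \to [m]$ in $\OS$ with $\wt f \circ \wt h = \wt g$; the map $\phi(k) = \min \wt h^{-1}(k)$ is a strictly increasing injection $[m] \hookrightarrow [m']$ with $\wt g \circ \phi = \wt f$, and the ordered-surjection hypothesis on $\wt h$ forces $\phi$ to send the first-occurrence positions of $\wt f$ to those of $\wt g$. Tracking this on the tails shows $s_k$ is a subsequence of $t_k$ for every $k$; conversely, compatible subsequence embeddings on each tail assemble into an ordered surjection $\wt h$. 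By Higman's lemma (Theorem~\ref{thm:higman}) applied to each finite alphabet $\Sigma_k$ with discrete order, each $\Sigma_k^\star$ is noetherian under subsequence order, and by Proposition~\ref{noethprod} the product is noetherian, proving (G2).

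For (G1), order $\vert (\OS^{\op})_x \vert$ by word-length followed by lexicographic order on $[n]^\star$. The compatibility check within a common length $m$ runs as follows: if $w \prec w'$ first differ at position $k$ (with $w_k < w'_k$) and $\wt g \colon [M] \to [m]$ is an ordered surjection with word $g_1 \cdots g_M$, then for positions $j$ preceding the first occurrence $j^\ast$ of the value $k$ in $\wt g$, one has $g_j < k$ by the ordered-surjection property, so $w_{g_j} = w'_{g_j}$; at $j = j^\ast$ the composites become $w_k < w'_k$, so $w \circ \wt g \prec w' \circ \wt g$. For the O-lingual property, the principal ideal at $\wt f \leftrightarrow (s_1, \ldots, s_n)$ maps under $i$ to the language
\[
\{1\} L_1 \{2\} L_2 \cdots \{n\} L_n, \qquad L_k = \Sigma_k^\star s_{k,1} \Sigma_k^\star s_{k,2} \cdots s_{k,\ell_k} \Sigma_k^\star,
\]
which is a concatenation of singleton languages and languages $\Pi^\star$ with $\Pi = \Sigma_k \subseteq [n]$, hence ordered on the alphabet $[n]$. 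Since $\vert (\OS^{\op})_x \vert$ is noetherian, any ideal is a finite union of principal ones, and the class of ordered languages is closed under finite unions, so $i$ is an O-lingual structure.

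The main obstacle is the second step: the naive hope that the poset order coincides with the ordinary subsequence order on $[n]^\star$ (which would make noetherianity a direct application of Higman) is false. For example, $112 \not\le 1212$ in $\vert (\OS^{\op})_{[2]} \vert$ even though $112$ is a subsequence of $1212$, because the ordered-surjection requirement on $\wt h$ rules out the only candidate embedding. The decomposition by tails isolates precisely this obstruction and replaces the overall subsequence order with a product of subsequence orders on the smaller alphabets $\Sigma_k$, where Higman applies cleanly.
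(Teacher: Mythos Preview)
Your proof is correct and takes a genuinely different route from the paper's. The paper introduces a modified partial order on all of $\Sigma^\star$ (where $s \le t$ requires not only that $s$ be a subsequence of $t$ but also that every position of $t$ be preceded by an image-position carrying the same letter), proves this poset is noetherian via a Nash-Williams minimal-bad-sequence argument (Proposition~\ref{prop:secondorder-noeth}), and then observes that $\vert (\OS^{\op})_x \vert$ embeds strictly order-preservingly into it. Your approach instead exploits the first-occurrence decomposition $1\,s_1\,2\,s_2\cdots n\,s_n$ to identify $\vert (\OS^{\op})_x \vert$ with the product $\prod_{k=1}^n \Sigma_k^\star$ under the ordinary (discrete-letter) subsequence order; noetherianity then follows from Higman's lemma on each factor together with Proposition~\ref{noethprod}, and the O-lingual description of principal ideals is read off directly from this product structure. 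Your route is more elementary in that it sidesteps the bespoke Nash-Williams argument and reduces everything to Higman, while the paper's route establishes the auxiliary Proposition~\ref{prop:secondorder-noeth} as a standalone combinatorial fact about a larger poset. One small point: the converse direction of your poset identification (assembling per-tail subsequence embeddings into an honest ordered surjection $\wt h$) deserves a sentence of justification; the natural construction is to set $\wt h(p)$ to be the largest $q$ with $\phi(q)\le p$ and $\wt f(q)=\wt g(p)$, and one checks that $\min \wt h^{-1}(q)=\phi(q)$, so $\wt h$ is ordered.
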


\begin{theorem} \label{thm:FSop-PG}
The forgetful functor $\Phi \colon \OS^\op \to \FS^\op$ satisfies property~{\rm (F)}. In particular, the category $\FS^\op$ is quasi-Gr\"obner.
\end{theorem}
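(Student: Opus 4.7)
The plan is to verify property~(F) for $\Phi$ directly; combined with Theorem~\ref{osthm} and Proposition~\ref{potgrob}, this immediately yields the quasi-Gr\"obner conclusion. Since a morphism $x \to \Phi(y)$ in $\FS^{\op}$ is the same as a surjection $\Phi(y) \twoheadrightarrow x$ in $\FS$, and a morphism $y_i \to y$ in $\OS^{\op}$ is the same as an ordered surjection $y \twoheadrightarrow y_i$ in $\OS$, property~(F) for $\Phi$ unpacks into the following concrete statement: for every finite set $x$ there should exist finitely many totally ordered finite sets $y_1, \ldots, y_n$ and surjections $\alpha_i \colon \Phi(y_i) \twoheadrightarrow x$ such that every surjection $\beta \colon \Phi(y) \twoheadrightarrow x$ with $y$ totally ordered factors as $\beta = \alpha_i \circ \gamma$ for some ordered surjection $\gamma \colon y \twoheadrightarrow y_i$.

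To produce such data, I would set $m = \# x$, take $y_1 = \cdots = y_{m!} = [m]$ with its standard order, and let $\alpha_1, \ldots, \alpha_{m!}$ enumerate the $m!$ bijections $[m] \to x$. For an arbitrary surjection $\beta \colon y \twoheadrightarrow x$ with $y$ totally ordered, I would use $\beta$ itself to put a total order on $x$ by declaring $x' < x''$ iff $\min \beta^{-1}(x') < \min \beta^{-1}(x'')$. This order determines a unique order-preserving bijection $[m] \to x$, which is one of the $\alpha_i$, and I would then define $\gamma := \alpha_i^{-1} \circ \beta \colon y \to [m]$. The identity $\alpha_i \circ \gamma = \beta$ is automatic; the crucial point is that $\gamma$ is an \emph{ordered} surjection. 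Indeed, $\gamma^{-1}(k) = \beta^{-1}(\alpha_i(k))$, and since $\alpha_i$ is order-preserving with respect to the new order on $x$, any $k_1 < k_2$ in $[m]$ forces $\min \beta^{-1}(\alpha_i(k_1)) < \min \beta^{-1}(\alpha_i(k_2))$, i.e., $\min \gamma^{-1}(k_1) < \min \gamma^{-1}(k_2)$.

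For the final clause, $\Phi$ is essentially surjective because every finite set admits a total order, so Proposition~\ref{potgrob} applied to the Gr\"obner category $\OS^{\op}$ (Theorem~\ref{osthm}) shows that $\FS^{\op}$ is quasi-Gr\"obner. I do not expect any genuinely hard step: the main source of friction will just be keeping the opposite-category directions straight, since a morphism in $\OS^{\op}$ goes the other way from the corresponding ordered surjection in $\OS$. Once that bookkeeping is in place, the factorization itself is the natural observation that any surjection from a totally ordered set canonically totally orders its target in such a way that the surjection becomes order-preserving.
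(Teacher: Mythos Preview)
Your proposal is correct and takes essentially the same approach as the paper, which simply says the proof is ``similar to the proof of Theorem~\ref{fithm}'': one takes the $y_i$ to all be $[m]$ with $m=\#x$ and the $f_i$ to be the $m!$ bijections, then factors an arbitrary surjection through the appropriate permutation. Your write-up is in fact more explicit than the paper's, correctly carrying out the opposite-category bookkeeping and verifying that the resulting $\gamma$ is an ordered surjection via the min-fiber characterization.
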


\begin{proof}
Similar to the proof of Theorem~\ref{fithm}.
\end{proof}

\begin{corollary} \label{fs-noeth}
If $\bk$ is left-noetherian then $\Rep_{\bk}(\FS^{\op})$ is noetherian.
\end{corollary}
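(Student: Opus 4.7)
The plan is a direct combination of the two results immediately preceding the corollary. Theorem~\ref{thm:FSop-PG} asserts that $\FS^{\op}$ is quasi-Gr\"obner, and Theorem~\ref{grobnoeth} converts the combinatorial hypothesis ``quasi-Gr\"obner'' into the algebraic conclusion ``$\Rep_{\bk}(\cC)$ is noetherian'' for any left-noetherian ring $\bk$. Specializing $\cC = \FS^{\op}$ yields the corollary in a single step.

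To unpack the underlying mechanism: by Theorem~\ref{thm:FSop-PG} there is an essentially surjective functor $\Phi \colon \OS^{\op} \to \FS^{\op}$ satisfying property~(F), and by Theorem~\ref{osthm} the category $\OS^{\op}$ is Gr\"obner. Theorem~\ref{grob-pp} then gives that every principal projective of $\Rep_{\bk}(\OS^{\op})$ is noetherian, and Proposition~\ref{cat-noeth} promotes this to noetherianity of the whole category $\Rep_{\bk}(\OS^{\op})$. Finally, Corollary~\ref{pullback} transports noetherianity along $\Phi^{*}$: given any finitely generated $M \in \Rep_{\bk}(\FS^{\op})$, Proposition~\ref{propFfg} ensures that $\Phi^{*}(M)$ is finitely generated, hence noetherian, and Proposition~\ref{pullback-fg} then deduces that $M$ itself is noetherian.

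There is no real obstacle at this stage -- the corollary is formal once Theorem~\ref{thm:FSop-PG} is in hand. All of the substantive content has been front-loaded earlier in the paper: the genuinely difficult step is Theorem~\ref{osthm}, which requires identifying the right well-quasi-order on $\vert S_{x} \vert$ for $\OS^{\op}$ and packaging it as an O-lingual structure, while the verification of property~(F) for the forgetful functor $\OS^{\op} \to \FS^{\op}$ in Theorem~\ref{thm:FSop-PG} follows the same pattern used for $\FI$: any surjection can be precomposed with a permutation to become ordered, providing the required finite collection of factorizations.
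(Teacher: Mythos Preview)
Your proof is correct and matches the paper's approach exactly: the corollary is stated without proof in the paper because it is immediate from Theorem~\ref{thm:FSop-PG} ($\FS^{\op}$ is quasi-Gr\"obner) together with Theorem~\ref{grobnoeth}. Your unpacking of the mechanism is accurate and helpful, though more detailed than what the paper itself provides.
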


\begin{corollary} \label{cor:FSop-hilbert}
Let $M$ be a finitely generated $\FS^\op$-module defined over a field $\bk$. Then the Hilbert series
\begin{displaymath}
\rH_M(t) = \sum_{n \ge 0} \dim_\bk{M([n])} \cdot t^n
\end{displaymath}
has the form $f(t)/g(t)$, where $f(t)$ and $g(t)$ are polynomials, and $g(t)$ factors as $\prod_{j=1}^r (1-jt)^{e_j}$ for some $r$ and $e_j \ge 0$. If $M$ is generated in degree $\le d$, then we can take $r=d$.
\end{corollary}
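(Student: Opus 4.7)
The plan is to pull $M$ back along the forgetful functor $\Phi \colon \OS^\op \to \FS^\op$, reduce to computing Hilbert series of submodules of principal projectives in $\Rep_\bk(\OS^\op)$, and apply the refined ordered-language denominator bound from Corollary~\ref{ordhilbcor}.

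First I would note that each $[n]$ has a unique totally ordered refinement up to isomorphism, so $\Phi^*M$ is a finitely generated $\OS^\op$-module (Theorem~\ref{thm:FSop-PG} combined with Proposition~\ref{propFfg}) with $\rH_{\Phi^*M}(t) = \rH_M(t)$. Inspection of the proof of Theorem~\ref{thm:FSop-PG} (which takes each $y_i$ to be $x$ itself, with varying orderings) shows that if $M$ is generated in degree $\le d$, then $\Phi^*M$ is a quotient of $\bigoplus_i P_{x_i}$ in $\Rep_\bk(\OS^\op)$ with $\vert x_i \vert \le d$. Just as in the proof of Theorem~\ref{hilbthm}, Hilbert series is additive on short exact sequences and, by Corollary~\ref{fs-noeth} and induction on the length of a finite presentation, the class of $\Phi^*M$ in the Grothendieck group of finitely generated $\OS^\op$-modules is a $\bZ$-linear combination of classes of submodules of such $P_{x_i}$.

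Next, for each submodule $N \subseteq P_x$ with $x = [e]$ and $e \le d$, Theorem~\ref{lingthm} applied to the $\mathrm{O}$-lingual structure from Theorem~\ref{osthm} gives $\rH_N(t) = \rH_{\cL}(t)$ for some ordered language $\cL$ in the lingual alphabet $\Sigma_x$ associated to $x$. Corollary~\ref{ordhilbcor} then yields $\rH_N(t) = f_N(t) / \prod_{a=1}^{r_x} (1-at)^{e_N(a)}$, where $r_x$ is at most the cardinality of $\Sigma_x$. Combining the finitely many $\rH_N$ over a common denominator produces $\rH_M(t) = f(t)/\prod_{j=1}^{r}(1-jt)^{e_j}$ with $r \le \max_i \vert \Sigma_{x_i} \vert$.

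The main (and only non-routine) obstacle is to confirm that the lingual alphabet built in the proof of Theorem~\ref{osthm} for $x = [e]$ has cardinality exactly $e$, so that $r_x \le e$ and therefore $r \le d$ whenever $M$ is generated in degree $\le d$. The natural candidate encodes a morphism $[e] \to [m]$ in $\OS^\op$ --- equivalently, an ordered surjection $f \colon [m] \twoheadrightarrow [e]$ --- as the length-$m$ restricted-growth word $f(1)f(2)\cdots f(m)$ over the alphabet $\{1,\ldots,e\}$. This preserves the norm and gives an injection $\vert S_x \vert \hookrightarrow \{1,\ldots,e\}^{\star}$, so once Theorem~\ref{osthm} is established using this encoding the alphabet indeed has size $e$, and the denominator refinement follows.
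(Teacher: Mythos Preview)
Your proposal is correct and follows essentially the same approach as the paper: pull back along $\Phi \colon \OS^\op \to \FS^\op$, invoke the O-lingual Gr\"obner structure from Theorem~\ref{osthm} together with Theorem~\ref{hilbthm}, and extract the sharper denominator bound from Corollary~\ref{ordhilbcor} by observing that the alphabet for $x=[e]$ is $\{1,\ldots,e\}$. One small slip: where you cite Corollary~\ref{fs-noeth} to justify the Grothendieck-group reduction, you actually need noetherianity of $\Rep_\bk(\OS^\op)$, which follows from Theorem~\ref{osthm} and Theorem~\ref{grobnoeth}.
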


\begin{proof}
If $\Phi \colon \OS^\op \to \FS^\op$ is the forgetful functor, then $\rH_M(t) = \rH_{\Phi^*(M)}(t)$, so the result follows from Theorems~\ref{osthm} and~\ref{hilbthm}. To prove the last statement, we note that it follows from \S\ref{sec:proof-osthm} that for a finite set $x=[n]$, the lingual structure on $|S_x|$ (in the notation of \S\ref{ss:lingual-cat}) is built on the set $\{1,\dots,n\}$. Now use Corollary~\ref{ordhilbcor}.
\end{proof}

\begin{remark}
Using partial fraction decomposition, a function $f(t) / (\prod_{j=1}^r (1-jt)^{e_j})$ can be written as a sum $\sum_{j=1}^r f_j(t) / (1-jt)^{e_j}$ for polynomials $f_j(t) \in \bQ[t]$. Corollary~\ref{cor:FSop-hilbert} implies that if $M$ is a finitely generated $\FS^\op$-module over a field $\bk$, then there exist polynomials $p_1, \dots, p_r$ so that the function $n \mapsto \dim_\bk M([n])$ agrees with $\sum_{j=1}^r p_j(n) j^n$ for $n \gg 0$.
\end{remark}

The following more general form of the corollary will be used in \S\ref{sec:proof-delta-new} for the proof of Theorem~\ref{thm:delta-new}. 

\begin{corollary} \label{FShilb2}
Let $M$ be a finitely generated $(\FS^{\op})^r$-representation over a field $\bk$. Then the Hilbert series
\begin{displaymath}
\rH_M(t) = \sum_{\bn \in \bN^r} \dim_\bk{M([\bn])} \cdot \bt^\bn
\end{displaymath}
is a rational function $f(\bt)/g(\bt)$, where $g(\bt)$ factors as $\prod_{i=1}^r \prod_{j=1}^{\infty} (1-jt_i)^{e_{i,j}}$, where all but finitely many $e_{i,j}$ are $0$. Equivalently, the exponential Hilbert series
\begin{displaymath}
\rH'_M(t) = \sum_{\bn \in \bN^r} \dim_\bk{M([\bn])} \cdot \frac{\bt^\bn}{\bn!},
\end{displaymath}
is a polynomial in the $t_i$ and the $e^{t_i}$. {\rm (}Here $\bn!=n_1! \cdots n_r!$.{\rm )}
\end{corollary}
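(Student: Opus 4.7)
The plan is to reduce to the single-factor case by products of Gr\"obner and $\mathrm{O}$-lingual structures, then extract the denominator shape from the block form of the lingual injection furnished by Proposition~\ref{prodling}. First, Theorem~\ref{osthm} says $\OS^{\op}$ is Gr\"obner and $\mathrm{O}$-lingual, and by definition (see \S\ref{ss:ordered}) the class of ordered languages is closed under finite unions and concatenations, in particular on disjoint alphabets. Iterated application of Propositions~\ref{grobprod} and~\ref{prodling} then gives that $(\OS^{\op})^r$, normed over $\bN^r$ by $\nu([n_1],\dots,[n_r])=(n_1,\dots,n_r)$, is itself Gr\"obner and $\mathrm{O}$-lingual (the noetherianity hypothesis in Proposition~\ref{prodling} being part of the Gr\"obner property).

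Next I would pass from $(\OS^{\op})^r$ to $(\FS^{\op})^r$. The componentwise forgetful functor $\Phi\colon (\OS^{\op})^r \to (\FS^{\op})^r$ is essentially surjective, and satisfies property~(F) because a product of two functors with property~(F) has property~(F) (a direct factorization argument), combined with Theorem~\ref{thm:FSop-PG}. Hence if $M$ is a finitely generated $(\FS^{\op})^r$-module, then $\Phi^*(M)$ is a finitely generated $(\OS^{\op})^r$-module by Proposition~\ref{propFfg}, and since $\Phi$ preserves the norm, $\rH_M(\bt)=\rH_{\Phi^*(M)}(\bt)$. Applying Theorem~\ref{hilbthm} expresses this Hilbert series as a $\bZ$-linear combination of Hilbert series of ordered languages on the product alphabet.

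The delicate step is extracting the claimed denominator from Theorem~\ref{ordhilb}. By construction in Proposition~\ref{prodling}, the lingual structure at $(\bx)=([n_1],\dots,[n_r])$ lives on a disjoint union $\Sigma = \Sigma_1 \amalg \cdots \amalg \Sigma_r$, where each letter of $\Sigma_i$ carries the norm $t_i$, and the injection $\vert \cC_{(\bx)} \vert \hookrightarrow \Sigma^\star$ sends $(f_1,\dots,f_r)$ to the concatenation $w_1 w_2 \cdots w_r$ of the images $w_i \in \Sigma_i^\star$. Every word in the language therefore has this block form, so any repeatable subset $\Pi\subseteq\Sigma$ must lie inside a single block $\Sigma_i$: otherwise $w\Pi^\star w'$ would contain words in which a letter of some $\Sigma_j$ with $j>i$ precedes a letter of $\Sigma_i$, violating the block order. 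For such $\Pi$, $\sum_{c\in\Pi}\bt^{\nu(c)} = \vert \Pi \vert\, t_i$, so by Theorem~\ref{ordhilb} the denominator is a product of factors of the form $(1-jt_i)$, exactly as claimed. The exponential form follows by routine partial-fraction and Borel manipulations, using that $(1-jt_i)^{-e}$ corresponds to a polynomial in $t_i$ times $e^{jt_i}$, combined with $e^{jt_i}=(e^{t_i})^j$. The main obstacle is the block-confinement of repeatable subsets; the rest is product-stability of the relevant propositions plus routine generating-function algebra.
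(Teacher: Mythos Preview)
Your proof is correct and follows essentially the same route as the paper's: pass to $(\OS^{\op})^r$ via the product of forgetful functors, use Propositions~\ref{grobprod} and~\ref{prodling} to see it is Gr\"obner and $\mathrm{O}$-lingual, and then extract the denominator shape from the block structure of the product lingual injection combined with Theorem~\ref{ordhilb}. Your block-confinement argument for repeatable subsets is in fact more explicit than the paper's own sketch (which just notes that the relevant languages are concatenations $\cL_1\cdots\cL_r$ with $\cL_i$ normed over $\bN^{\{i\}}$ and concludes that each $\lambda_k$ is a nonnegative multiple of a single $t_j$); both arguments come to the same point.
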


\begin{proof}
Proposition~\ref{prodling} shows that $(\OS^{\op})^r$, given the product norm over $I=\{1, \ldots, r\}$, is O-lingual. However, to get the stated result we need to extract slightly more information from the proofs. The languages appearing in the proof of Proposition~\ref{prodling} all have the form $\cL_1 \cdots \cL_r$, where $\cL_i$ is an ordered language normed over $\bN^{\{i\}} \subset \bN^I$. It follows that the repeatable subsets of $\cL$ are singletons. Thus, when applying Theorem~\ref{ordhilb} to the relevant languages, each $\lambda_i$ is a non-negative multiple of a single $t_j$.
\end{proof}

\begin{remark}
The proof of Corollary~\ref{FShilb2} can be seen in the wider context of a theory of O-lingual normed categories in which one keeps track of a partition of the set $I$ that is compatible with the norm in a certain sense.
\end{remark}

\subsection{Proof of Theorem~\ref{osthm}} \label{sec:proof-osthm}

Let $\Sigma$ be a finite set. Let $s \colon [n] \to \Sigma$ and $t \colon [m] \to \Sigma$ be two elements of $\Sigma^{\star}$. We define $s \le t$ if there exists an order-preserving injection $\phi \colon [n] \to [m]$ such that for all $i \in [n]$ we have $s_i=t_{\phi(i)}$, and for all $j \in [m]$ there exists $j' \le j$ in the image of $\phi$ with $t_j=t_{j'}$. This defines a partial order on $\Sigma^{\star}$.

\begin{proposition} \label{prop:secondorder-noeth}
The poset $\Sigma^{\star}$ is noetherian and every ideal is an ordered language.
\end{proposition}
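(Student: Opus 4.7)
The plan is to show that $\Sigma^\star$ decomposes, as a poset, into a finite disjoint union of pieces on each of which our order coincides with a coordinate-wise subword order, so that Higman's lemma applies. First I would analyze what $s \le t$ forces: condition~(2) applied to each position of $t$ shows every letter of $t$ appears in $s$, while every letter of $s$ appears in $t$ by condition~(1); so $s$ and $t$ have the same alphabet. Next, every first-occurrence position of $t$ must lie in $\phi([n])$ (its letter cannot have appeared earlier), and these correspond via $\phi$ to the first-occurrence positions of $s$, in the same order. Thus $s \le t$ forces $s$ and $t$ to share the same ``type'' $\tau = (c_1,\dots,c_k)$, the sequence of distinct letters in order of first occurrence.

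For each type $\tau$, every word of type $\tau$ has a unique block decomposition $c_1 A_1 c_2 A_2 \cdots c_k A_k$ with $A_i \in \Pi_i^\star$ for $\Pi_i = \{c_1,\dots,c_i\}$. The key technical step is to check that, for $s = c_1 A_1 \cdots c_k A_k$ and $t = c_1 B_1 \cdots c_k B_k$ of type $\tau$, one has $s \le t$ in our order if and only if $A_i \le B_i$ in the ordinary subword order for every $i$. The forward direction follows because $\phi$ sends the $c_i$-position of $s$ to the $c_i$-position of $t$, so by order-preservation $\phi$ sends positions of $A_i$ to positions of $B_i$. The backward direction glues subword embeddings $\psi_i \colon A_i \to B_i$ with the forced matches $c_i \mapsto c_i$; condition~(2) is automatic because inside the $i$-th block of $t$ every letter lies in $\Pi_i$ and has been introduced at or before that block in a $\phi$-image position. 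Since words of different types are incomparable, this exhibits $\Sigma^\star$ as the disjoint union $\bigsqcup_\tau \Sigma^\star_\tau$ with $\Sigma^\star_\tau \cong \Pi_1^\star \times \cdots \times \Pi_k^\star$ under the product of subword orders. Higman's lemma (Theorem~\ref{thm:higman}) and Proposition~\ref{noethprod} make each piece noetherian, and the set of types is finite because $\Sigma$ is, so $\Sigma^\star$ is noetherian.

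For the second assertion, noetherianity reduces the claim to showing every principal ideal is ordered, since a finite union of ordered languages is ordered. Writing $w = c_1 A_1 \cdots c_k A_k$ with $A_i = a_{i,1} \cdots a_{i,r_i}$, the characterization above identifies the principal ideal generated by $w$ with
\[
\{c_1\}\,\bigl(\Pi_1^\star a_{1,1} \Pi_1^\star \cdots \Pi_1^\star a_{1,r_1} \Pi_1^\star\bigr)\,\{c_2\}\,\bigl(\Pi_2^\star a_{2,1} \Pi_2^\star \cdots \Pi_2^\star a_{2,r_2} \Pi_2^\star\bigr) \cdots \{c_k\}\,\bigl(\Pi_k^\star a_{k,1} \Pi_k^\star \cdots \Pi_k^\star a_{k,r_k} \Pi_k^\star\bigr),
\]
a concatenation of singletons and languages of the form $\Pi^\star$, hence ordered by definition.

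The main obstacle is the reformulation in the middle paragraph: naively, our order is strictly finer than the Higman subword order on $\Sigma^\star$, so Higman does not apply directly. The point is that once one tracks the block structure carefully, the extra rigidity imposed by condition~(2) is exactly absorbed by forcing types to agree and blocks to line up. After that, noetherianity and the ordered-language description of ideals both fall out of the block decomposition with no further subtlety.
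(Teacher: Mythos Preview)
Your argument is correct and takes a genuinely different route from the paper. The paper proves noetherianity by a direct Nash--Williams minimal bad sequence argument: given a minimal bad sequence, it finds a non-exceptional letter (one appearing at least twice) at a bounded distance from the end, passes to a subsequence where this position and letter are constant, deletes that position, and derives a contradiction. You instead give a structural decomposition: stratify $\Sigma^\star$ by the ``type'' $\tau$ recording the first-occurrence pattern, show that comparability forces equal type, and identify each stratum with a product $\Pi_1^\star \times \cdots \times \Pi_k^\star$ under the ordinary subword order, so Higman's lemma and closure under finite products finish the job. Your approach is more transparent about why the poset is noetherian and makes the ordered-language description of principal ideals fall out of the same block decomposition; the paper's formula $w_1 \Pi_1^\star w_2 \Pi_2^\star \cdots w_n \Pi_n^\star$ (with $\Pi_i = \{w_1,\ldots,w_i\}$) is exactly your expression once one unwinds the blocks. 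The paper's argument is shorter and avoids the bookkeeping of types and blocks, but yours yields a clean isomorphism of posets that could be reused elsewhere.
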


\begin{proof}
Suppose that $\Sigma^{\star}$ is not noetherian. We use the notion of minimal bad sequences from the proof of Theorem~\ref{thm:higman}. Let $x_1, x_2, \dots$ be a minimal bad sequence in $\Sigma^\star$.

Given $x \in \Sigma^{\star}$ call a letter of $x$ {\it exceptional} if it appears exactly once. If $x$ has a non-exceptional letter, then let $m(x)$ denote the index, counting from the end, of the first non-exceptional letter. Note that $m(x) \le \# \Sigma$. Also, $\ell(x) > \# \Sigma$ implies that $x$ has some non-exceptional letter, so $m(x_i)$ is defined for $i \gg 0$. (Note: $\ell(x_i) \to \infty$, as there are only finitely many sequences of a given length.) 

So we can find an infinite subsequence of $x_1, x_2, \dots$ on which $m$ is defined and is constant, say equal to $m_0$. We can find a further infinite subsequence where the letter in the position $m_0$ is constant. Call this subsequence $x_{i_1}, x_{i_2}, \dots$ and let $y_{i_j}$ be the sequence $x_{i_j}$ with the $m_0$th position (counted from the end) deleted. By construction, the sequence $x_1, \dots, x_{i_1-1}, y_{i_1}, y_{i_2}, \dots$ is not bad and so some pair is comparable. Note that $y_{i_j} \le x_{i_j}$, so $x_i$ and $y_{i_j}$ are incomparable, and so we have $j<k$ such that $y_{i_j} \le y_{i_k}$. But this implies $x_{i_j} \le x_{i_k}$, contradicting that $x_1, x_2, \ldots$ forms a bad sequence. Thus $\Sigma^{\star}$ is noetherian.

We now show that a poset ideal of $\Sigma^{\star}$ is an ordered language. It suffices to treat the case of principal ideals; the principal ideal generated by $w = w_1 \cdots w_n$ is the language
\begin{displaymath}
w_1 \Pi_1^{\star} w_2 \Pi_2^{\star} \cdots w_n \Pi_n^{\star},
\end{displaymath}
where $\Pi_i=\{w_1, \ldots, w_i\}$, which is clearly ordered.
\end{proof}

\begin{proof}[Proof of Theorem~\ref{osthm}]
Clearly, $\cC=\OS^{\op}$ is directed. Let $n$ be a non-negative integer, and regard $x=[n]$ as an object of $\cC$. A morphism $f \colon [m] \to [n]$ can be regarded as a word of length $m$ in the alphabet $\Sigma=[n]$. In this way, we have an injective map $i \colon \vert \cC_x \vert \to \Sigma^{\star}$. This map is strictly order-preserving with respect to the order on $\Sigma^{\star}$ defined above, and so Proposition~\ref{prop:secondorder-noeth} implies that $\vert \cC_x \vert$ is noetherian. The lexicographic order on words (with respect to any total order on $\Sigma$) induces an admissible order on $\vert \cC_x \vert$. For $f \in \vert \cC_x \vert$ we have $m=\nu(f)=\ell(i(f))$, and so we have a lingual structure on $\vert \cC_x \vert$ if we norm words in $\Sigma^\star$ by their length. Finally, an ideal of $\vert \cC_x \vert$ gives an ordered language over $\Sigma$ by Proposition~\ref{prop:secondorder-noeth}, and so this is an O-lingual structure.
\end{proof}

\subsection{Linear categories} \label{sec:linear-cat}

Let $R$ be a finite commutative ring. A linear map between free $R$-modules is {\bf splittable} if the image is a direct summand. Let $\VA_R$ (resp.\ $\VI_R$) be the category whose objects are finite rank free $R$-modules and whose morphisms are splittable maps (resp.\ splittable injections).

\begin{theorem} \label{thm:VI-noeth}
The categories $\VI_R$ and $\VA_R$ are quasi-Gr\"obner.
\end{theorem}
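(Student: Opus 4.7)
For both $\VI_R$ and $\VA_R$, the plan is to construct a Gr\"obner category with an essentially surjective functor to the target satisfying property~(F), and then apply Proposition~\ref{potgrob}.

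For $\VI_R$, I construct an ``ordered'' rigidification $\mathbf{OVI}_R$: its objects are natural numbers, and its morphisms $m \to n$ encode a splittable injection $R^m \hookrightarrow R^n$ in a canonical form (for instance, a Schubert-cell-style representative together with an ordered complement basis), rigidified so that automorphism groups become trivial. The resulting poset $|(\mathbf{OVI}_R)_n|$ should carry a lexicographic well-order (well-defined because $R$ is finite, so each matrix entry takes values in a finite set) that is admissible, and should be noetherian via a Higman-style argument (Theorem~\ref{thm:higman}) applied to words in the alphabet encoding matrix entries. This is in direct analogy with the constructions of $\OI$ and $\OS^{\op}$ in Theorems~\ref{oithm} and~\ref{osthm}.

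The forgetful functor $\Phi \colon \mathbf{OVI}_R \to \VI_R$ is essentially surjective, and property~(F) at $R^n$ is verified in the style of Theorem~\ref{fithm}: every splittable injection $f \colon R^n \to R^m$ can be written as $f = \Phi(\tilde f) \circ \sigma$ where $\sigma \in \mathrm{GL}_n(R)$ records the basis change needed to place $f$ into canonical form, and $\tilde f$ is the corresponding $\mathbf{OVI}_R$ morphism. The canonical form $\tilde f$ exists because $R$, being a finite commutative ring, is a product of local rings, and hence every direct summand of a finite rank free $R$-module is free --- in particular, complements of splittable injections admit ordered bases. Since $\mathrm{GL}_n(R)$ is finite, the family $\{f_i = \sigma : \sigma \in \mathrm{GL}_n(R)\}$ is the finite data required for property~(F). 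By Proposition~\ref{potgrob}, $\VI_R$ is quasi-Gr\"obner.

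For $\VA_R$, I use the inclusion $\Psi \colon \VI_R \hookrightarrow \VA_R$, which is essentially surjective since the two categories share objects. Every splittable linear map $g \colon R^n \to R^m$ factors as $\iota \circ \pi$ with $\pi \colon R^n \twoheadrightarrow R^k$ a splittable surjection (the intermediate $R^k$ being free by the same freeness argument as above) and $\iota \colon R^k \hookrightarrow R^m$ a splittable injection. Up to composition with an automorphism of $R^k$, the surjection $\pi$ is determined by its kernel $\ker(g) \subseteq R^n$, a direct summand. Since $R^n$ is a finite set, it contains only finitely many direct summands, so a finite family of surjections $\{f_K\}_K$ (one representative per kernel $K$) provides the data for property~(F); any $g$ factors as $\iota' \circ f_K$ for the appropriate $K$ and some $\iota' \in \VI_R$. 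Proposition~\ref{potgrob} then yields that $\VA_R$ is quasi-Gr\"obner. The main obstacle is the Gr\"obner verification for $\mathbf{OVI}_R$: finding a rigidification that kills the $\mathrm{GL}_n(R)$-symmetry while parameterizing all right-$\mathrm{GL}_n(R)$-orbits of splittable injections, and then exhibiting the admissible well-order and noetherianity --- a combinatorial step parallel to the analysis of $\OS^{\op}$ in \S\ref{sec:proof-osthm}. Once this is in place, property~(F) and the reduction for $\VA_R$ follow by standard arguments exploiting the finiteness of $R$.
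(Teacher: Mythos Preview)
Your treatment of $\VA_R$ via the inclusion $\VI_R \hookrightarrow \VA_R$ matches the paper's. For $\VI_R$, however, your approach diverges substantially from the paper's, and as written it has a genuine gap.

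The paper does \emph{not} build a bespoke ordered category $\mathbf{OVI}_R$. Instead it exhibits a functor $\Phi \colon \FS^{\op} \to \VI_R$, sending a finite set $S$ to $R[S]^*$, and checks property~(F) directly: a splittable injection $U \to R[S]^*$ dualizes to a surjection $R[S] \to U^*$, which factors through $R[T]$ where $T$ is the image of $S$ in $U^*$; since $U^*$ is a finite set, there are only finitely many spanning subsets $T$, giving the required finite family. Since $\FS^{\op}$ is already known to be quasi-Gr\"obner (Theorem~\ref{thm:FSop-PG}), Proposition~\ref{potgrob} finishes. This is a two-line reduction with no new combinatorics.

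Your route---rigidifying $\VI_R$ to kill automorphisms and then proving the result is Gr\"obner---is in fact what the authors did originally (see the remark following the theorem: they worked with ordered bases and upper-triangular maps), and a version of it is carried out in \cite{putman-sam} for the related category $\mathbf{VIC}_R$. So the strategy is sound. But your proposal does not execute it: you say the poset ``should'' admit an admissible order and ``should'' be noetherian by a Higman-style argument, and you explicitly flag this as ``the main obstacle.'' That obstacle \emph{is} the proof. Encoding splittable injections over a product of local rings as words in a finite alphabet, in a way that makes the divisibility order match the subword order needed for Higman, requires real work---choosing the right normal form (e.g., column-echelon representatives) and verifying that post-composition respects the order. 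None of this is written down. Until it is, what you have is an outline of a plausible strategy, not a proof; and once you see the $\FS^{\op}$ trick, there is no reason to pursue it for $\VI_R$ itself.
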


\begin{proof}
Define a functor $\Phi \colon \FS^\op \to \VI_R$ by $S \mapsto \hom_R(R[S], R) = R[S]^*$. It is clear that $\Phi$ is essentially surjective. We claim that $\Phi$ satisfies property~(F). Fix $U \in \VI_R$. Pick a finite set $S$ and a splittable injection $f \colon U \to R[S]^*$. Dualize this to get a surjection $R[S] \to U^*$. Letting $T \subseteq U^*$ be the image of $S$ under this map, the map factorizes as $R[S] \to R[T] \to U^*$ where the first map comes from a surjective function $S \to T$. So we can take $y_1, y_2, \dots$ to be the set of subsets of $U^*$ which span $U^*$ as an $R$-module (there are finitely many of them since $R$ is finite) and $f_i \colon U \to R[T]^*$ to be the dual of the natural map $R[T] \to U^*$. This establishes the claim. Also, as in the proof of Theorem~\ref{thm:set-noeth}, the inclusion functor $\VI_R \to \VA_R$ satisfies property~(F). Since $\FS^\op$ is quasi-Gr\"obner (Theorem~\ref{thm:FSop-PG}), the theorem follows from Proposition~\ref{potgrob}.
\end{proof}

\begin{remark}
The idea of using the functor $\Phi$ in the above proof was communicated to us by Aur\'elien Djament after a first version of these results was circulated. The original version of the above proof involved working with a version of $\VI_R$ consisting of spaces with ordered bases and upper-triangular linear maps. This idea is no longer needed to prove the desired properties for $\VI_R$, but is still needed in \cite{putman-sam} to prove noetherianity of a related category $\mathbf{VIC}_R$ (splittable injections plus a choice of splitting). 
\end{remark}

\begin{corollary} \label{VI-noeth}
If $\bk$ is left-noetherian then $\Rep_\bk(\VI_R)$ and $\Rep_\bk(\VA_R)$ are noetherian.
\end{corollary}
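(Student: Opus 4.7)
The plan is to deduce this corollary as an immediate formal consequence of two already-established results: Theorem~\ref{thm:VI-noeth}, which asserts that both $\VI_R$ and $\VA_R$ are quasi-Gr\"obner, and Theorem~\ref{grobnoeth}, which says that $\Rep_\bk(\cC)$ is noetherian whenever $\cC$ is quasi-Gr\"obner and $\bk$ is left-noetherian. Chaining these gives the result with essentially no further work.

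Concretely, I would first invoke Theorem~\ref{thm:VI-noeth} to get that $\VI_R$ is quasi-Gr\"obner (via the essentially surjective functor $\Phi \colon \FS^{\op} \to \VI_R$ constructed there, which satisfies property~(F)), and similarly for $\VA_R$ (via the composition with the inclusion $\VI_R \to \VA_R$, which was shown to also satisfy property~(F)). Then I would cite Theorem~\ref{grobnoeth} directly on each of these two categories, using the hypothesis that $\bk$ is left-noetherian.

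Since both ingredients have been proved, there is no obstacle here; the corollary is really just a labeled restatement for convenient reference in the applications (notably the Lannes--Schwartz artinian conjecture discussed in \S\ref{sec:intro-schwartz}, which is the case $R=\bF_q$ and $\bk=\bF_q$ of the statement for $\VA_R$). The only thing worth noting in the write-up is to remind the reader which functors are being used to witness the quasi-Gr\"obner property, so that the reduction to Theorem~\ref{grobnoeth} is transparent.
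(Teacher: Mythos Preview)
Your proposal is correct and matches the paper's approach: the corollary is stated without proof in the paper, being an immediate consequence of Theorem~\ref{thm:VI-noeth} (quasi-Gr\"obner) together with Theorem~\ref{grobnoeth}. There is nothing to add.
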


\begin{corollary}
Let $M$ be a finitely generated $\VI_R$-module or $\VA_R$-module over a field $\bk$. Then the Hilbert series
\begin{displaymath}
\rH_M(t) = \sum_{n \ge 0} \dim{M_\bk([n])} \cdot t^n
\end{displaymath}
is a rational function of the form $f(t)/g(t)$ where $g(t)$ factors as $\prod_{j=1}^r (1-jt)^{e_r}$, for some $r$.
\end{corollary}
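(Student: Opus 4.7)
The strategy is to reduce both cases to the Hilbert series result for $\FS^\op$ (Corollary~\ref{cor:FSop-hilbert}) via pullback through a functor that matches norms. Recall from the proof of Theorem~\ref{thm:VI-noeth} the functor $\Phi \colon \FS^\op \to \VI_R$ sending $S \mapsto R[S]^\ast$; composing with the inclusion $\VI_R \hookrightarrow \VA_R$ gives an analogous functor for $\VA_R$. Both of these composed or individual functors are essentially surjective (onto the subcategory of non-zero free modules) and satisfy property~(F).

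First, I would verify that $\Phi$ is compatible with the norms in the sense that $\nu(\Phi(S)) = \nu(S) = \#S$, since $\Phi(S)$ has rank $\#S$. It follows that for any $\VI_R$-module (resp.~$\VA_R$-module) $M$, the pullback $\Phi^\ast(M)$ is a representation of $\FS^\op$ satisfying
\begin{displaymath}
\rH_{\Phi^\ast(M)}(t) = \sum_{n \ge 1} \dim_\bk M(R^n) \cdot t^n = \rH_M(t) - \dim_\bk M(0),
\end{displaymath}
where the discrepancy $\dim_\bk M(0)$ is a constant (and appears only because the objects of $\FS^\op$ are non-empty, while the zero module is an object of $\VI_R$ and $\VA_R$).

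Next, by Proposition~\ref{propFfg}, since $\Phi$ (or its composition with $\VI_R \to \VA_R$) satisfies property~(F), the pullback $\Phi^\ast(M)$ is a finitely generated $\FS^\op$-module whenever $M$ is finitely generated. I can then apply Corollary~\ref{cor:FSop-hilbert} to conclude that $\rH_{\Phi^\ast(M)}(t) = f_0(t)/g(t)$ with $g(t) = \prod_{j=1}^r (1-jt)^{e_j}$ for some $r$ and $e_j \ge 0$. Adding back the constant $\dim_\bk M(0)$ yields
\begin{displaymath}
\rH_M(t) = \frac{f_0(t) + \dim_\bk M(0) \cdot g(t)}{g(t)} = \frac{f(t)}{g(t)},
\end{displaymath}
which is of the asserted form. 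The argument is the same for $\VA_R$-modules, using the composition $\FS^\op \to \VI_R \to \VA_R$, which satisfies property~(F) by Proposition~\ref{propFcomp}.

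There is no real obstacle here; the work was already done when establishing property~(F) in Theorem~\ref{thm:VI-noeth} and the Hilbert series result for $\FS^\op$. The only mild subtlety worth flagging is the bookkeeping around the $n=0$ term caused by $\FS$ having only non-empty sets as objects, which is absorbed harmlessly into the numerator.
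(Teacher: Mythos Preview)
Your proposal is correct and follows essentially the same approach implicit in the paper: pull back along the property~(F) functor $\Phi \colon \FS^{\op} \to \VI_R$ (and its composition with $\VI_R \to \VA_R$) from the proof of Theorem~\ref{thm:VI-noeth}, observe that norms match, and invoke Corollary~\ref{cor:FSop-hilbert}. Your explicit bookkeeping of the $n=0$ term is a nice touch, since $\FS$ omits the empty set while $\VI_R$ contains the zero module.
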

 
\begin{remark}
When $R = \bk = \bF_q$ is a finite field, Corollary~\ref{VI-noeth} proves the Lannes--Schwartz artinian conjecture \cite[Conjecture 3.12]{kuhnII}. This is also a consequence of the results in \cite{putman-sam}, and so a similar, but distinct, proof of the artinian conjecture appears there as well. The ideas for proving this conjecture were only made possible by work from both projects.
\end{remark}

\begin{remark}
While preparing this article, we learned that \cite{ganli} proves that $\Rep_\bk(\VI_R)$ is noetherian in the special case that $R$ is a field and $\bk$ is a field of characteristic $0$.
\end{remark}

\subsection{Additional results} \label{sec:surj-additional}

Recall that $\FA$ is the category of finite sets.

\begin{theorem} \label{thm:FAop-qg}
The category $\FA^\op$ is quasi-Gr\"obner.
\end{theorem}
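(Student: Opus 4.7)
I follow the template of Theorems~\ref{thm:set-noeth} and~\ref{thm:FSop-PG}: the plan is to exhibit an essentially surjective functor from the quasi-Gr\"obner category $\FS^{\op}$ (Theorem~\ref{thm:FSop-PG}) into $\FA^{\op}$ satisfying property~(F), and then invoke Proposition~\ref{potgrob}. The natural candidate is the inclusion $\Phi \colon \FS^{\op} \to \FA^{\op}$, where I adopt the minor convention that $\FS$ also includes the empty set (with only the identity morphism on it); this does not disturb the analysis of $\FS^{\op}$, since the principal projective at $\emptyset$ is one-dimensional and hence trivially noetherian.

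The verification of property~(F) at an object $x$ of $\FA^{\op}$ rests on the standard image factorization in $\FA$: every function $\tilde{f} \colon y \to x$ factors uniquely as a surjection $y \twoheadrightarrow \tilde{f}(y)$ followed by an inclusion $\tilde{f}(y) \hookrightarrow x$. Concretely, I would enumerate the subsets $y_1, \ldots, y_n$ of $x$ (including $\emptyset$) and take $f_i \colon x \to y_i$ in $\FA^{\op}$ to be the morphism corresponding to the inclusion $y_i \hookrightarrow x$ in $\FA$. Given any $f \colon x \to y$ in $\FA^{\op}$, i.e., a function $\tilde{f} \colon y \to x$, set $y_i = \tilde{f}(y)$; then the surjective part $y \twoheadrightarrow y_i$ is a morphism $g \colon y_i \to y$ in $\FS^{\op}$, and since composition in $\FA^{\op}$ reverses that in $\FA$, one reads off that $\Phi(g) \circ f_i = f$. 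This verifies property~(F), and Proposition~\ref{potgrob} then concludes the argument.

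I do not anticipate any serious obstacle. The main care lies in keeping the opposite-category bookkeeping straight (so that $\Phi(g) \circ f_i$ in $\FA^{\op}$ correctly reconstitutes $\tilde{f}$ in $\FA$), and in remembering the empty set, so that the unique morphism $x \to \emptyset$ in $\FA^{\op}$ also factors through an object of $\FS^{\op}$; this is why $\emptyset$ must appear among the $y_i$. The underlying observation powering the whole argument is simply that the image factorization makes $\FS$ a ``final'' piece of $\FA$ in the sense demanded by property~(F).
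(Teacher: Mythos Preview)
Your proof is correct and follows exactly the approach the paper intends: it says only that the proof is ``similar to that of Theorem~\ref{thm:set-noeth}; the role of $\FI$ is played by $\FS^{\op}$,'' which is precisely the image-factorization argument you wrote out. Your explicit handling of the empty set is a genuine detail the paper glosses over (since $\FS$ is defined to consist of non-empty sets, the inclusion $\FS^{\op}\to\FA^{\op}$ as literally defined is not essentially surjective), and your fix---adjoining $\emptyset$ as an isolated object of $\FS^{\op}$---is the right one.
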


\begin{proof}
The proof is similar to that of Theorem~\ref{thm:set-noeth}; the role of $\FI$ is played by $\FS^{\op}$.
\end{proof}

\begin{corollary} \label{setop-noeth}
If $\bk$ is left-noetherian then $\Rep_\bk(\FA^{\op})$ is noetherian.
\end{corollary}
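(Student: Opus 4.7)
The plan is to derive this corollary directly from the two results that immediately precede it in the text. Specifically, Theorem~\ref{thm:FAop-qg} establishes that $\FA^{\op}$ is quasi-Gr\"obner, and Theorem~\ref{grobnoeth} states that the representation category of any quasi-Gr\"obner category over a left-noetherian ring is noetherian. So the proof is essentially a one-line citation: apply Theorem~\ref{grobnoeth} with $\cC = \FA^{\op}$ to conclude that $\Rep_{\bk}(\FA^{\op})$ is noetherian whenever $\bk$ is left-noetherian.

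There is no genuine obstacle here, since all the work has been done upstream. The real content sits in Theorem~\ref{thm:FAop-qg}, whose proof runs parallel to Theorem~\ref{thm:set-noeth}: one replaces the role of $\FI$ by $\FS^{\op}$ and exhibits an essentially surjective functor $\FS^{\op} \to \FA^{\op}$ satisfying property~(F), for instance by factoring an arbitrary map $y \to x$ in $\FA$ through its image. Combining this with the fact that $\FS^{\op}$ is quasi-Gr\"obner (Theorem~\ref{thm:FSop-PG}) and Proposition~\ref{potgrob}, one obtains that $\FA^{\op}$ is quasi-Gr\"obner. With Theorem~\ref{thm:FAop-qg} in hand, the corollary is immediate from Theorem~\ref{grobnoeth}.
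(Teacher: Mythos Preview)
Your proposal is correct and matches the paper's approach exactly: the corollary is stated without proof in the paper, being an immediate consequence of Theorem~\ref{thm:FAop-qg} and Theorem~\ref{grobnoeth}. Your additional sketch of why $\FA^{\op}$ is quasi-Gr\"obner also agrees with the paper's one-line proof of Theorem~\ref{thm:FAop-qg}.
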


\begin{proposition} \label{FA:propD}
The category $\FA$ satisfies property~{\rm (D)} {\rm (}see Definition~\ref{defn:propD}{\rm )}.
\end{proposition}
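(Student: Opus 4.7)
The plan is to exhibit property~(D) with just one auxiliary object per $x$. Given a finite set $x$, I would set $y = x \amalg \{\ast\}$ (so $|y| = |x|+1$) and take $S = \Hom_{\FA}(x,y)$, which is a finite subset of itself since $y$ is finite. The task is then to show that every morphism $f \colon x \to z$ in $\FA$ admits a retraction-like map $g \colon z \to y$ and a $\sigma \in S$ with $g_*^{-1}(\{\sigma\}) = \{f\}$.

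The construction of $g$ is the key point. Since $|f(x)| \le |x|$, I choose an injection $\iota \colon f(x) \hookrightarrow x \subset y$, and define $g \colon z \to y$ by $g|_{f(x)} = \iota$ and $g(z') = \ast$ for $z' \in z \setminus f(x)$. Setting $\sigma = g \circ f \in S$, observe that $\sigma$ has image contained in $x \subset y$ (it avoids $\ast$). Now if $f' \colon x \to z$ satisfies $g \circ f' = \sigma$, then for each $a \in x$ we have $g(f'(a)) = \sigma(a) \in x$, hence $f'(a) \in g^{-1}(x) = f(x)$ by construction of $g$; and on $f(x)$ the map $g$ coincides with the injection $\iota$, so $\iota(f'(a)) = \iota(f(a))$ forces $f'(a) = f(a)$. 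Thus $f' = f$, giving $g_*^{-1}(\{\sigma\}) = \{f\}$ as required.

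There is no real obstacle here: the only thing to notice is that one needs a single ``slack'' element $\ast$ in $y$ to absorb $z \setminus f(x)$, so that $g$ can be injective on $f(x)$ while being defined on all of $z$. Taking $y$ a single element larger than $x$ accomplishes this uniformly in $z$, and the same $y$ and $S$ work for every target $z$. So a single object $y$ and the finite set $S = \Hom(x,y)$ already witness property~(D) at $x$.
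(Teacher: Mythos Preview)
Your proof is correct and takes a genuinely different route from the paper's. The paper sets $y=2^x$, the power set of $x$, with $S$ the set of maps $h \colon x \to 2^x$ satisfying $a \in h(a)$ for all $a$; given $f \colon x \to z$ it defines $g \colon z \to 2^x$ by $g(b)=f^{-1}(b)$, and observes that $g \circ f'$ lies in $S$ if and only if $f'=f$. Your construction instead uses $y=x \amalg \{\ast\}$ and an arbitrary injection $\iota \colon f(x) \hookrightarrow x$, sending the complement to the absorbing point $\ast$. Your argument is more economical in the size of $y$ (linear rather than exponential in $|x|$) and in the size of $S$ (you may as well take $S$ to be the maps $x \to y$ with image in $x$, of which there are $|x|^{|x|}$, versus the paper's $\prod_{a \in x} 2^{|x|-1}$). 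The paper's construction has the minor aesthetic advantage of being completely canonical---no choice of $\iota$ is needed---but both verify property~(D) with a single auxiliary object $y$.
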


\begin{proof}
Let $x$ be a given finite set. Let $y=2^x$ be the power set of $x$, and let $S \subset \Hom(x,y)$ be the set of maps $f$ for which $a \in f(a)$ for all $a \in x$. Let $f \colon x \to z$ be given. Define $g \colon z \to y$ by $g(a)=f^{-1}(a)$. If $f' \colon x \to z$ is an arbitrary map then $g(f'(a))=f^{-1}(f'(a))$ contains $a$ if and only if $f(a)=f'(a)$. Thus $g_*(f')$ belongs to $S$ if and only if $f=f'$. We conclude that $g_*^{-1}(S)=\{f\}$, which establishes the proposition.
\end{proof}

We can now give our improvement of Corollary~\ref{cor:set-noeth}:

\begin{theorem} \label{thm:FA-artinian}
If $\bk$ is a field, then every finitely generated $\FA$-module is artinian, and so has a finite composition series. In other words, $\Rep_\bk(\FA)$ has Krull dimension $0$.
\end{theorem}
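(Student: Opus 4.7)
The plan is to apply Proposition~\ref{dim0} directly, since all four of its hypotheses have effectively been established already in the preceding material. Specifically, Proposition~\ref{dim0} requires that $\cC$ be $\Hom$-finite, that $\cC$ satisfy property~(D), and that both $\cC$ and $\cC^{\op}$ be quasi-Gr\"obner; its conclusion is precisely the statement we are asked to prove. So the proof amounts to verifying these four hypotheses for $\cC = \FA$.

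First, $\FA$ is $\Hom$-finite, since for any finite sets $x, y$ the set $\Hom_{\FA}(x,y) = y^x$ is finite. Next, property~(D) for $\FA$ is the content of Proposition~\ref{FA:propD}. Third, $\FA$ is quasi-Gr\"obner by Theorem~\ref{thm:set-noeth}, and fourth, $\FA^{\op}$ is quasi-Gr\"obner by Theorem~\ref{thm:FAop-qg}.

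With these four items in hand, Proposition~\ref{dim0} immediately yields that every finitely generated object of $\Rep_\bk(\FA)$ has finite length, which is the asserted Krull-dimension-zero statement. The only conceptual step is to notice that the finite-length conclusion is equivalent to both the ACC (which comes from $\FA$ being quasi-Gr\"obner) and the DCC, and that the DCC for a finitely generated $P$ follows from the ACC on $P^{\vee}$, which is why property~(D) together with quasi-Gr\"obnerness of $\FA^{\op}$ is exactly the right hypothesis. There is no real obstacle here: the hard combinatorial and categorical work has been done upstream, and the present proposition is the clean payoff assembling those results.
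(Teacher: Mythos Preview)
Your proof is correct and follows essentially the same approach as the paper: both invoke Proposition~\ref{dim0} after checking that $\FA$ is $\Hom$-finite, satisfies property~(D) (Proposition~\ref{FA:propD}), and that both $\FA$ and $\FA^{\op}$ are quasi-Gr\"obner (Theorems~\ref{thm:set-noeth} and~\ref{thm:FAop-qg}). Your version is slightly more explicit in spelling out the $\Hom$-finiteness check and the intuition behind the DCC argument, but the substance is identical.
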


\begin{proof}
We can use Proposition~\ref{dim0} since $\FA$ is quasi-Gr\"obner (Theorem~\ref{thm:set-noeth}), $\FA^{\op}$ is quasi-Gr\"obner (Theorem~\ref{thm:FAop-qg}), and $\FA$ satisfies property~(D) (Proposition~\ref{FA:propD}).
\end{proof}

\begin{remark} \label{rmk:VA-not-artinian}
Since $\VA_{\bF_q}$ is like a linear version of $\FA$, it is tempting to seek a linear analogue of Theorem~\ref{thm:FA-artinian}. When $q$ is invertible in $\bk$, such an analog is the main result of \cite{kuhn:mixed}. However, when $\bk=\bF_q$, the analogous statement is false: the Krull dimension of $P_{\bF_q}$ is~1, as can be read off from the explicit description of the subrepresentation lattice of $P_{\bF_q}^\vee$ given in \cite[Theorem 6.4]{kuhn:survey}. It is conjectured that the Krull dimension of $P_W$ is $\dim(W)$ in general \cite[Conjecture 6.8]{kuhn:survey}.
\end{remark}

\begin{remark} \label{rmk:FAop-not-artinian}
$\Rep_{\bk}(\FA^{\op})$ is not artinian: the principal projective $P_S$ in $\FA$ grows like a polynomial of degree $|S|$, i.e., $|\hom_\FA(S,T)| = |T|^{|S|}$, but the principal projectives in $\FA^\op$ grow like exponential functions, so the duals of projective objects in $\FA^\op$ cannot be finitely generated $\FA$-modules. In particular, $\FA^{\op}$ does not satisfy property~(D).
\end{remark}

\begin{remark}
In \cite[Theorem 1.3]{dougherty} and \cite[Corollary 4.4]{pare}, the functions $n \mapsto |F([n])|$ coming from functors $F \colon \FA^\op \to \FA$ are classified.
\end{remark}

\begin{proposition}
The category $\FS^{\op}$ satisfies property~{\rm (F)}.
\end{proposition}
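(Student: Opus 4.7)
The plan is to unwind the definition and then exhibit an explicit finite list of morphisms with the required universal factorization property.

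By Definition~\ref{def:catF}, we must show that the diagonal functor $\Delta \colon \FS^{\op} \to \FS^{\op} \times \FS^{\op}$ satisfies property~(F). Translating morphisms from $\FS^{\op}$ back to $\FS$, this amounts to the following: given non-empty finite sets $x$ and $x'$, we must find a finite collection of triples $(y_i, f_i, f'_i)$, where $f_i \colon y_i \twoheadrightarrow x$ and $f'_i \colon y_i \twoheadrightarrow x'$ are surjections, such that for every finite set $y$ and every pair of surjections $f \colon y \twoheadrightarrow x$, $f' \colon y \twoheadrightarrow x'$, there exists a surjection $g \colon y \twoheadrightarrow y_i$ (in $\FS$) with $f = f_i \circ g$ and $f' = f'_i \circ g$.

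The natural candidate is the collection indexed by subsets $T \subseteq x \times x'$ whose two projections $\pi \colon T \to x$ and $\pi' \colon T \to x'$ are both surjective. There are only finitely many such subsets of the finite set $x \times x'$; let $y_i$ range over them, and take $f_i = \pi$ and $f'_i = \pi'$. Each $y_i$ is non-empty because $x, x'$ are. Given any $y, f, f'$ as above, form the combined map $(f,f') \colon y \to x \times x'$. Its image $T$ is a subset of $x \times x'$, and the surjectivity of $f$ and of $f'$ forces both projections of $T$ onto $x$ and $x'$ to be surjective. Hence $T$ appears in our list as some $y_i$, the map $y \to T$ induced by $(f,f')$ is a surjection $g$ in $\FS$, and by construction $\pi \circ g = f$ and $\pi' \circ g = f'$, which gives precisely the required factorization in $\FS^{\op} \times \FS^{\op}$.

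There is no real obstacle here; the only point worth noting is the direction reversal coming from $\FS^{\op}$, which is what turns the naive ``coproduct'' argument (which would work for $\FS$ itself) into the ``subset-of-product'' construction above. Once this translation is made, the proof is a one-paragraph verification.
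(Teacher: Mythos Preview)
Your proof is correct and follows essentially the same argument as the paper: both take the image of the combined map $(f,f') \colon y \to x \times x'$ and use the restricted projections as the finite family $(y_i,f_i,f'_i)$. Your write-up is somewhat more explicit about the direction reversal and the translation of property~(F), but the underlying idea is identical.
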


\begin{proof}
Let $x_1$ and $x_2$ be given finite sets. Given surjections $f_1 \colon y \to x_1$ and $f_2 \colon y \to x_2$, let $y'$ be the image of $y$ in $x_1 \times x_2$, and let $g \colon y \to y'$ be the quotient map. Then $f_i = p_i \circ g$, where $p_i \colon x_1 \times x_2 \to x_i$ is the projection map. Since there are only finitely many choices for $(y', p_1, p_2)$ up to isomorphism, the result follows. 
\end{proof}

\begin{corollary} \label{FSpointwise}
For any commutative ring $\bk$, the pointwise tensor product of finitely generated representations of $\FS^{\op}$ is finitely generated.
\end{corollary}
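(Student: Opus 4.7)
The plan is to deduce this as an immediate consequence of the preceding proposition together with Proposition~\ref{pointwise}. The preceding proposition establishes that $\FS^{\op}$ satisfies property~(F), which by Definition~\ref{def:catF} means that the diagonal functor $\Delta \colon \FS^{\op} \to \FS^{\op} \times \FS^{\op}$ satisfies property~(F) in the sense of Definition~\ref{def:propF}. Proposition~\ref{pointwise} then directly asserts that in any category with property~(F), the pointwise tensor product of finitely generated representations is finitely generated.

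So there is essentially nothing new to prove: one simply invokes Proposition~\ref{pointwise} with $\cC = \FS^{\op}$. For completeness, I would unpack what this gives: given finitely generated $\FS^{\op}$-modules $M$ and $N$, the external tensor product $M \boxtimes N$ is a finitely generated representation of $\FS^{\op} \times \FS^{\op}$ (as noted in \S\ref{sec:tensor-prod}), and $M \odot N = \Delta^*(M \boxtimes N)$. Since $\Delta$ satisfies property~(F), Proposition~\ref{propFfg} implies that $\Delta^*$ preserves finite generation, and hence $M \odot N$ is finitely generated.

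There is no real obstacle here; the work has all been done in the preceding proposition, whose proof verifies property~(F) by taking the image $y'$ of $y$ in the product $x_1 \times x_2$ and factoring through the projections, using the fact that there are only finitely many surjective quotients of $x_1 \times x_2$ up to isomorphism.
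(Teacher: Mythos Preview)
Your proposal is correct and matches the paper's approach exactly: the paper states this corollary immediately after the proposition establishing that $\FS^{\op}$ satisfies property~(F), with no proof given, since it follows directly from Proposition~\ref{pointwise}. Your unpacking of the argument via $M \odot N = \Delta^*(M \boxtimes N)$ and Proposition~\ref{propFfg} is precisely the content of Proposition~\ref{pointwise}.
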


\begin{proposition} \label{FAhomrep}
Let $S$ be a finite set and let $M$ be the $\FS^{\op}$-module defined by $M(T)=\bk[\Hom_{\FA}(T, S)]$. Then $M$ is finitely generated, and hence noetherian.
\end{proposition}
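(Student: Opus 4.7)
The plan is to write down explicit finite generators for $M$ and then invoke the noetherianity of $\Rep_\bk(\FS^\op)$. Specifically, for each subset $U \subseteq S$, let $\iota_U \colon U \hookrightarrow S$ denote the inclusion function, and consider $\iota_U$ as an element of $M(U) = \bk[\Hom_{\FA}(U,S)]$. Since $S$ is finite, the collection $\{\iota_U\}_{U \subseteq S}$ is a finite set of elements of $M$.

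Next I would verify that these elements generate $M$. Fix a finite set $T$ and a function $f \colon T \to S$; let $U = f(T) \subseteq S$. The function $f$ factors canonically as $T \stackrel{p_f}{\to} U \stackrel{\iota_U}{\to} S$, where $p_f$ is a surjection onto $U$. Unwinding the definition of the $\FS^{\op}$-action on $M$: the surjection $p_f \colon T \to U$ is a morphism $U \to T$ in $\FS^{\op}$, and under the induced map $M(U) \to M(T)$ the generator $e_{\iota_U}$ is sent to $e_{\iota_U \circ p_f} = e_f$. Hence every basis element $e_f$ of $M(T)$ lies in the subrepresentation generated by $\{\iota_U\}_{U \subseteq S}$, so $M$ is finitely generated.

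Noetherianity then follows immediately from Corollary~\ref{fs-noeth}, which asserts that $\Rep_\bk(\FS^{\op})$ is a noetherian category when $\bk$ is left-noetherian, so every finitely generated object is noetherian. There is no real obstacle here; the only thing requiring care is tracking the variance (since $M$ is a functor on $\FS^{\op}$, a surjection $T \twoheadrightarrow U$ in $\FS$ gives a $\bk$-linear map $M(U) \to M(T)$ by precomposition), which is exactly what makes the canonical factorization through the image work in the right direction.
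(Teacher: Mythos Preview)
Your proof is correct and follows essentially the same idea as the paper's: both exploit the factorization of any function $f \colon T \to S$ through its image $U = f(T)$. The paper phrases this slightly more structurally, observing that $M(T) = \bigoplus_{U \subseteq S} \bk[\Hom_{\FS}(T, U)]$ exhibits $M$ as a finite direct sum of principal projectives $P_U$ in $\Rep_\bk(\FS^{\op})$, whereas you directly exhibit the generators $\iota_U$; these are two ways of saying the same thing.
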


\begin{proof}
Let $S_1, \ldots, S_n$ be the subsets of $S$. Then $M(T)=\bigoplus_{i=1}^n \bk[\Hom_{\FS}(T, S_i)]$. The $i$th summand is exactly the principal projective at $S_i$.
\end{proof}

\section{\texorpdfstring{$\Delta$}{Delta}-modules}
\label{sec:Delta-mod}

We now apply our methods to the theory of $\Delta$-modules, originally introduced by the second author in \cite{delta-mod}. In \S \ref{ss:deltabg}, we recall the definitions, the results of \cite{delta-mod}, and state our new results. In \S \ref{delta-example}, we explain how our results yield new information on syzygies of Segre embeddings. The remainder of \S \ref{sec:Delta-mod} is devoted to the proofs.

Throughout, $\bk$ is a field and $\Vec$ is the category of finite-dimensional $\bk$-vector spaces. We write $\Vec^n$ for the $n$-fold direct product of $\Vec$ with itself.

\subsection{Polynomial functors} \label{ss:polyfunc}

We review some background on polynomial functors. This material is standard; for example, see \cite[\S I.A]{macdonald}, \cite{friedlander-suslin}. See \cite[\S 1]{FFSS} for a comparison with the functors of finite degree in \cite{eilenberg-maclane}, which is a weaker notion than what we need.

A {\bf (strict) polynomial functor} is a functor $F \colon \Vec \to \Vec$ such that for all $V$ and $W$, the map
\[
\hom_\bk(V,W) \to \hom_\bk(F(V), F(W))
\]
is given by polynomial functions (these polynomials are unique if $\bk$ is infinite, but otherwise are part of the data) such that the degrees of the functions are bounded as we vary over all $V,W$ (the bounded condition is usually called ``finite degree'' but we only deal with such functors). A polynomial functor is {\bf homogeneous} of degree $d$ if the map above is given by degree $d$ homogeneous polynomials. Every polynomial functor is a finite direct sum of homogeneous functors \cite[Lemma 2.5]{friedlander-suslin}. 

It will be convenient to have an alternative description of homogeneous polynomial functors. Given an integer $d \ge 0$, let $\rD^d(\Vec)$ be the category whose objects are finite-dimensional $\bk$-vector spaces and 
\[
\hom_{\rD^d(\Vec)}(U,U') = \rD^d\hom_\bk(U,U').
\]
(Recall that D is the divided power functor.) Composition is defined using the natural maps
\begin{align*}
\rD^d(\hom_\bk(U, U')) \otimes \rD^d(\hom_\bk(U', U'')) 
&\to \rD^d(\hom_\bk(U,U') \otimes \hom_\bk(U', U''))\\
& \to \rD^d(\hom_\bk(U,U'')).
\end{align*}
A homogeneous polynomial functor of degree $d$ is the same as a linear functor $\rD^d(\Vec) \to \Vec$.

We extend this definition to multivariate functors: a {\bf (strict) polynomial functor} is a functor $F \colon \Vec^n \to \Vec$ such that for all $(V_1,\dots,V_n)$ and $(W_1,\dots,W_n)$, the map
\[
\hom_\bk(V_1,W_1) \times \cdots \times \hom_\bk(V_n,W_n) \to \hom_\bk(F(V_1,\dots,V_n), F(W_1,\dots,W_n))
\]
is given by polynomial functions (again, these functions are unique if $\bk$ is infinite, but otherwise are part of the data) whose degrees are bounded as we vary over $V_i,W_i$. A polynomial functor is homogeneous of multidegree $\bd = (d_1,\dots,d_n)$ if these functions are homogeneous polynomials of multidegree $\bd$. Again, every polynomial functor is a finite direct sum of homogeneous functors. A homogeneous polynomial functor of multidegree $\bd$ is the same as a linear functor $\rD^{\bd}(\Vec) \to \Vec$, where $\rD^{\bd}(\Vec)$ is the category whose objects are $n$-tuples of vector spaces $\ul{V} = (V_1, \dots, V_n)$ and whose morphisms are 
\[
\hom_{\rD^{\bd}(\Vec)}(\ul{V}, \ul{W}) = \rD^{d_1}(\hom_\bk(V_1, W_1)) \otimes \cdots \otimes \rD^{d_n}(\hom_\bk(V_n, W_n)).
\]

Given vector spaces $\ul{U}=(U_1, \dots, U_n)$, define a homogeneous polynomial functor $P_{\ul{U}}$ by 
\[
P_{\ul{U}}(\ul{V}) = \hom_{\rD^{\bd}(\Vec)}(\ul{U}, \ul{V}).
\]
For a partition $\lambda = (\lambda_1, \dots, \lambda_s)$, set $\rD^\lambda(V) = \rD^{\lambda_1}(V) \otimes \cdots \otimes \rD^{\lambda_s}(V)$.  For a sequence of partitions $\Lambda = (\lambda^1, \dots, \lambda^n)$, define $\rD^\Lambda \colon \Vec^n \to \Vec$ by
\begin{displaymath}
\rD^\Lambda(V_1, \ldots, V_r) = \rD^{\lambda^1}(V_1) \otimes \cdots \otimes \rD^{\lambda^n}(V_n).
\end{displaymath}

\begin{proposition} \label{prop:Dlambda-projgen}
The $\rD^{\Lambda}$ are projective generators for the category of polynomial functors $\Vec^n \to \Vec$.
\end{proposition}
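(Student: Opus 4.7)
The plan is to reduce to homogeneous functors, use Yoneda to identify projective generators, and then apply the polarization identity for divided powers to recognize the $\rD^\Lambda$'s as direct summands of the representables.

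First, I would reduce to a fixed multidegree. Since any polynomial functor is a finite direct sum of homogeneous ones (as stated in \S\ref{ss:polyfunc}), the ambient category decomposes as a product over $\bd \in \bN^n$ of the categories of homogeneous polynomial functors of multidegree $\bd$, and each such category is the category of linear functors $\rD^{\bd}(\Vec) \to \Vec$. It therefore suffices to exhibit, for each $\bd$, a set of projective generators among the $\rD^\Lambda$ with $|\lambda^i| = d_i$.

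Second, by Yoneda's lemma applied to this linear category, the representable functor $P_{\ul{U}}$ is projective and satisfies $\hom(P_{\ul{U}}, F) = F(\ul{U})$ for every homogeneous polynomial functor $F$ of multidegree $\bd$. Letting $\ul{U}$ range over finite-dimensional tuples, the collection $\{P_{\ul{U}}\}$ is therefore a family of projective generators: for any nonzero $F$ there is some $\ul{U}$ with $F(\ul{U}) \ne 0$, yielding a surjection $\bigoplus P_{\ul{U}} \twoheadrightarrow F$ from a large enough direct sum.

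Third, I would decompose each $P_{\ul{U}}$ into $\rD^\Lambda$-summands using the polarization identity
\[
\rD^d(W_1 \oplus \cdots \oplus W_k) \cong \bigoplus_{a_1 + \cdots + a_k = d} \rD^{a_1}(W_1) \otimes \cdots \otimes \rD^{a_k}(W_k),
\]
which holds over any base ring. Taking $U_i = \bk^{s_i}$ gives $\hom_\bk(U_i,V_i) \cong V_i^{\oplus s_i}$, so applying the identity in each tensor factor of $P_{\ul{U}}(\ul{V}) = \bigotimes_i \rD^{d_i}(\hom_\bk(U_i,V_i))$ and distributing produces a direct-sum decomposition of $P_{\ul U}$ whose summands have the form $\bigotimes_i (\rD^{c^i_1} \otimes \cdots \otimes \rD^{c^i_{s_i}})(V_i)$, indexed by $n$-tuples of compositions $c^i$ of $d_i$ into $s_i$ nonnegative parts. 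Up to the obvious reordering of tensor factors in each $i$, every such summand is a $\rD^\Lambda$, and every $\Lambda$ with $|\lambda^i| = d_i$ does appear once the $s_i$ are chosen at least as large as the lengths of the $\lambda^i$ (for instance $s_i = d_i$). Consequently each $\rD^\Lambda$ is a direct summand of some $P_{\ul{U}}$ and is therefore projective, and the $\rD^\Lambda$'s generate since the $P_{\ul U}$'s do.

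The main obstacle is verifying the polarization identity for $\rD^d$ over a general field, in particular in positive characteristic; this is classical (following from the Hopf-algebra structure of the divided-power functor, as in Roby's work or the appendix to Macdonald), but should be cited explicitly. The remainder is essentially combinatorial book-keeping between compositions and partitions together with the Yoneda generation argument.
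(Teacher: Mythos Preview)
Your proof is correct and shares its core ingredient with the paper's: both rely on Yoneda for the projectivity of the representables $P_{\ul U}$ and on the polarization decomposition $\rD^d(W_1\oplus\cdots\oplus W_k)\cong\bigoplus\rD^{a_1}(W_1)\otimes\cdots\otimes\rD^{a_k}(W_k)$. The routes differ in how generation is established. You work directly in multidegree $\bd$ and decompose each $P_{\ul U}$ (with $U_i=\bk^{s_i}$) as a direct sum of $\rD^\Lambda$'s, which simultaneously yields projectivity of the $\rD^\Lambda$ and shows they generate. The paper instead reduces the generation problem to the univariate case: given $F\colon\Vec^n\to\Vec$, it forms the diagonal functor $V\mapsto F(V,\ldots,V)$, cites \cite[Theorem~2.10]{friedlander-suslin} to cover it by a sum of $\rD^k$'s, and then applies polarization to $\rD^k(V_1\oplus\cdots\oplus V_n)$ to obtain a surjection from a sum of $\rD^\Lambda$'s onto $F$. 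Your argument is more self-contained (it does not invoke the $n=1$ result from the literature) and makes the projectivity of $\rD^\Lambda$ explicit as a summand of a representable; the paper's argument is shorter by outsourcing the univariate case. One small remark: your bookkeeping between compositions and partitions is fine, but you should note that distinct compositions refining the same partition contribute isomorphic (not identical) summands, so the $\rD^\Lambda$ occur with multiplicities in $P_{\ul U}$---this does not affect the argument.
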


\begin{proof}
Yoneda's lemma implies that the $P_{\ul{U}}$ are projective, so it remains to show that they generate. For $n=1$, this statement is \cite[Theorem 2.10]{friedlander-suslin}.

For general $n$, let $F$ be a polynomial functor. Given vector spaces $V_1, \dots, V_n$, set $\bV = V_1 \oplus \cdots \oplus V_n$ and define $G \colon \Vec^n \to \Vec$ by $G(V_1,\dots,V_n) = F(\bV,\dots,\bV)$. Since the surjections $\bV\to V_i$ split in $\Vec$, we have a surjection $G \to F$. Then $V \mapsto F(V,\dots,V)$ is a polynomial functor in one variable and hence is a quotient of $P$ which is a direct sum of $\rD^{k}$ for various integers $k \ge 0$. So we have a surjection $P(V_1 \oplus \cdots \oplus V_n) \to G(V_1, \dots, V_n)$ which is natural for linear maps $V_i \to V'_i$. Finally, $\rD^k(\bV) = \bigoplus_{\Lambda} \rD^\Lambda(V_1,\dots,V_n)$, where the sum is over all $\Lambda = (\lambda^1,\dots,\lambda^n)$ such that $\sum \lambda^i = d$, so the $\rD^\Lambda$ generate.
\end{proof}

For a polynomial functor $F$, we define $F(\bk^{\infty})$ to be the direct limit of the $F(\bk^n)$, with respect to the standard inclusions $\bk^n \to \bk^{n+1}$. The space $F(\bk^{\infty})$ is a representation of $\GL(\infty)=\bigcup \GL(n)$. Representations arising in this fashion are called polynomial representations. Similar comments apply to $r$-variate polynomial functors, which yield polynomial representations of $\GL(\infty)^r$. 

The action of the subgroup of invertible diagonal matrices on a polynomial representation decomposes as a sum of $1$-dimensional representations corresponding to characters of the subgroup, whose action is of the form $\diag(x_1, x_2, \dots) v = (\prod_i x_i^{\alpha_i}) v$ for some $\alpha_i \in \bZ$. We call $\alpha$ the {\bf weight} of $v$, and also say it is a weight of the ambient representation.

\subsection{Statement of results}
\label{ss:deltabg}

Define a category $\Vec^\Delta$ as follows. The objects are finite collections of vector spaces $\{V_i\}_{i \in I}$. A morphism $\{V_i\}_{i \in I} \to \{W_j\}_{j \in J}$ is a surjection $f \colon J \to I$ together with, for each $i \in S$, a linear map $\eta_i \colon V_i \to \bigotimes_{f(j)=i} W_j$. A {\bf $\Delta$-module} is a polynomial functor $F \colon \Vec^{\Delta} \to \Vec$, where polynomial means that the functor $F_n \colon \Vec^n \to \Vec$ given by $(V_1, \ldots, V_n) \mapsto F(\{V_i\}_{i \in [n]})$ is polynomial for each $n$. We write $\Mod_{\Delta}$ for the category of $\Delta$-modules.

More concretely, a $\Delta$-module is a sequence $(F_n)_{n \ge 0}$, where $F_n$ is an $S_n$-equivariant polynomial functor $\Vec^n \to \Vec$, equipped with transition maps
\begin{equation}
\label{delta-trans}
F_n(V_1, \ldots, V_{n-1}, V_n \otimes V_{n+1}) \to F_{n+1}(V_1, \ldots, V_{n+1})
\end{equation}
satisfying certain compatibilities. 

Let $F$ be a $\Delta$-module. An {\bf element} of $F$ is an element of $F(\{V_i\})$ for some object $\{V_i\}$ of $\Vec^{\Delta}$. Given a set $S$ of elements of $F$, the subrepresentation of $F$ {\bf generated} by $S$ is the smallest $\Delta$-subrepresentation of $F$ containing $S$. Then $F$ is {\bf finitely generated} if it is generated by a finite set of elements, and $F$ is {\bf noetherian} if every $\Delta$-subrepresentation of $F$ is finitely generated.

Let $\Lambda$ be the ring of symmetric functions (see \cite[Ch. 7]{stanley-EC2}). Given a finite length polynomial representation $V$ of $\GL(\infty)^n$ over $\bk$, we define its character $\wt{\ch}(V) \in \Lambda^{\otimes n}$ by
\begin{displaymath}
\wt{\ch}(V) = \sum_{(\lambda_1, \ldots, \lambda_n)} \dim_{\bk}(V_{\lambda_1, \ldots, \lambda_n}) \cdot m_{\lambda_1} \otimes \cdots \otimes m_{\lambda_n},
\end{displaymath}
where the sum is over all $n$-tuples of partitions, $V_{\lambda_1,\ldots,\lambda_n}$ is the $(\lambda_1, \ldots, \lambda_n)$-weight space of $V$, and $m_{\lambda}$ is the monomial symmetric function indexed by $\lambda$. We let $\ch(V)$ be the image of $\wt{\ch}(V)$ in $\Sym^n(\Lambda)$, a polynomial of degree $n$ in the $m_{\lambda}$. If $V$ is an $S_n$-equivariant representation then $\wt{\ch}(V)$ belongs to $(\Lambda^{\otimes n})^{S_n}$, and so passing to $\ch(V)$ does not lose information. If $F \colon \Vec^n \to \Vec$ is a finite length polynomial functor, define $\ch(F):=\ch(F(\bk^{\infty}, \ldots, \bk^{\infty}))$.

Suppose now that $F=(F_n)_{n \ge 0}$ is a $\Delta$-module, and each $F_n$ has finite length. We define two notions of {\bf Hilbert series} of $F$:
\begin{displaymath}
\rH_F'(\bm) = \sum_{n \ge 0} \frac{\ch(F_n)}{n!}, \qquad \rH_F(\bm) = \sum_{n \ge 0} \ch(F_n).
\end{displaymath}
These are elements of $\SYM(\Lambda_{\bQ}) \cong \bQ \lbb \bm \rbb$. If $F$ is finitely generated, then each $F_n$ is finite length; moreover, only finitely many of the $m_{\lambda}$ appear in $\rH'_F(\bm)$ and $\rH_F(\bm)$.

In \cite{delta-mod}, a $\Delta$-module is called ``small'' if it appears as a subrepresentation of a $\Delta$-module $(F_n)$ generated by $F_1$, and the following abstract result is proved:

\begin{theorem}[Snowden]
\label{thm:delta-old}
Suppose $\bk$ has characteristic $0$ and $F$ is a finitely generated small $\Delta$-module. Then $F$ is noetherian and $\rH_F(\bm)$ is a rational function of the $m_{\lambda}$.
\end{theorem}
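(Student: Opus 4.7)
The plan is to deduce Theorem~\ref{thm:delta-old} from the Gr\"obner machinery of Part~I by translating small $\Delta$-modules, in characteristic zero, into finitely generated representations of a category built from $\FS^{\op}$, which is quasi-Gr\"obner by Theorem~\ref{thm:FSop-PG} and Proposition~\ref{grobprod}.

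The first step is to exploit the smallness assumption to bound multidegrees. The universal small $\Delta$-module on a single Schur component $S_{\lambda}$ placed in degree one is $F_n(V_1,\ldots,V_n) = S_{\lambda}(V_1 \otimes \cdots \otimes V_n)$, and its decomposition in each slot via iterated Kronecker expansion is a sum of Schur functors of size exactly $|\lambda|$. Since $F$ embeds into a $\Delta$-module generated by a finitely generated $G_1$ (a finite sum of Schur functors), every Schur functor appearing in any slot of any $F_n$ has size bounded by some uniform integer $r$; in particular each $F_n$ has finite length, justifying the definition of $\rH_F$.

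The second step is to repackage the data combinatorially. By Schur--Weyl duality, the multidegree $(r_1,\ldots,r_n)$ isotypic piece of $F_n$ corresponds to a representation of $S_{r_1} \times \cdots \times S_{r_n}$, with $S_n$-equivariance coming from the $\Delta$-structure. Decomposing the transition map \eqref{delta-trans} along multidegrees, the Kronecker decomposition of $S_{\mu}(V_n \otimes V_{n+1})$ shows that this map is governed by surjections of indexing sets together with label compatibility, i.e., exactly an $\FS^{\op}$-type morphism. Assembling the isotypic pieces indexed by shape vectors bounded by $r$ exhibits $F$ as a finitely generated representation of a category $\cC$ built from finitely many copies of $\FS^{\op}$ coupled with coefficients in representations of finite symmetric groups; Propositions~\ref{grobprod} and~\ref{potgrob} then show $\cC$ is quasi-Gr\"obner.

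Noetherianity of $F$ follows from Theorem~\ref{grobnoeth}. For the Hilbert series, Corollary~\ref{FShilb2} gives the exponential Hilbert series of $F$, viewed as a $\cC$-module, as a polynomial in the $t_i$ and $e^{t_i}$; the Schur--Weyl dictionary then converts this into a rational function of the monomial symmetric functions $m_\lambda$ appearing in $\ch(F_n)$, giving the claimed rationality of $\rH_F(\bm)$. The main obstacle is the second step: making the translation between small $\Delta$-modules and $\cC$-modules fully precise and verifying it is an equivalence respecting finite generation. The smallness hypothesis is essential precisely because it provides the uniform bound on multidegrees needed to land in a \emph{single} quasi-Gr\"obner category, and this is why removing smallness (treated later in the paper) requires genuinely new input beyond the Schur--Weyl translation.
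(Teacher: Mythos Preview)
This theorem is not proved in the present paper: it is stated as a citation of the main result of \cite{delta-mod}, attributed to Snowden, with no proof given here. The paper instead proves the strictly stronger Theorem~\ref{thm:delta-new}, which drops both the characteristic-zero and smallness hypotheses and sharpens the Hilbert-series conclusion. So there is no ``paper's own proof'' of Theorem~\ref{thm:delta-old} to compare against.

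That said, your sketch is close enough in spirit to the paper's proof of Theorem~\ref{thm:delta-new} that a comparison is worthwhile. The paper does not use Schur--Weyl duality or the smallness hypothesis at all. Instead it (i) shows that every finitely generated $\Delta$-module is a subquotient of a finite direct sum of the explicit $\Delta$-modules $\cT^{\bd}$ (Corollary~\ref{Trsubq}), with no bound on multidegrees needed; (ii) proves each $\cT^{\bd}$ is noetherian by restricting along a concrete functor $\Phi_N \colon \FS^{\op} \to \Vec^{\Delta}$ and invoking Proposition~\ref{FAhomrep} (see Lemma~\ref{lem:Tbd-FS-noeth} and Proposition~\ref{Tdnoeth}), rather than by constructing an auxiliary category $\cC$ from copies of $\FS^{\op}$; and (iii) handles the Hilbert series by passing to weight spaces to obtain a finitely generated $(\FS^{\op})^n$-module and applying Corollary~\ref{FShilb2}. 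Your step~(ii)---building $\cC$ and checking an equivalence respecting finite generation---is left entirely unspecified, and that is exactly where the substance lies; the paper avoids this by the more direct $\Phi_N^*$ route.

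Your final claim, that removing smallness ``requires genuinely new input beyond the Schur--Weyl translation,'' is contradicted by the paper: the very $\FS^{\op}$ machinery you invoke already handles the general case in arbitrary characteristic, once one organizes the argument around $\cT^{\bd}$ rather than around a Schur--Weyl translation that depends on a multidegree bound. Smallness is not what makes the argument go through; it is an artifact of the original approach in \cite{delta-mod}.
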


Our main result, which we prove below, and ultimately follows from Proposition~\ref{FAhomrep} and Corollary~\ref{FShilb2}, is the following:

\begin{theorem}
\label{thm:delta-new}
Let $F$ be a finitely generated $\Delta$-module. Then $F$ is noetherian and $\rH_F'(\bm)$ is a polynomial in the $m_{\lambda}$ and the $e^{m_{\lambda}}$.
\end{theorem}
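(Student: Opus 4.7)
The plan is to reduce both statements to the corresponding facts for $(\FS^{\op})^r$-modules---namely Corollary~\ref{fs-noeth} (combined with Proposition~\ref{grobprod}) and Corollary~\ref{FShilb2}---by evaluating the vector-space inputs of $F$ at permutation representations $\bk[S]$ of finite sets. The motivating observation is that for the tensor $\Delta$-module $(\{V_i\}_{i\in I})\mapsto\bigotimes_{i\in I}V_i$, setting all $V_i=\bk[S]$ produces exactly the $\FS^{\op}$-module $T\mapsto\bk[\Hom_{\FA}(T,S)]$ from Proposition~\ref{FAhomrep}; the linear maps $\eta\colon\bk[S]\to\bk[S]^{\otimes k}$ required by the $\Vec^{\Delta}$-morphism data are supplied by the coalgebra diagonal $e_s\mapsto e_s^{\otimes k}$, and the resulting $\FS^{\op}$-module is precisely the one shown there to be finitely generated and noetherian.

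First I would note that the transition maps preserve total polynomial degree, so finite generation of $F$ forces a uniform bound on the polynomial degree across the $F_n$; splitting into multihomogeneous pieces and invoking Proposition~\ref{prop:Dlambda-projgen} lets us work with $\Delta$-modules whose generators have the form $\rD^\Lambda$ for finitely many sequences of partitions $\Lambda=(\lambda^1,\dots,\lambda^n)$ of bounded total size. Given auxiliary finite sets $S_1,\dots,S_r$, I would then define an $(\FS^{\op})^r$-module
\begin{displaymath}
M^{\underline S}(T_1,\dots,T_r)=F\bigl(\{V_i\}_{i\in T_1\sqcup\cdots\sqcup T_r}\bigr),\quad V_i=\bk[S_k]\text{ for }i\in T_k,
\end{displaymath}
with the $k$-th $\FS^{\op}$-functoriality built from the coalgebra diagonal of $\bk[S_k]$. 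For the principal-projective $\Delta$-modules generated by a $\rD^\Lambda$-input, $M^{\underline S}$ decomposes (by image inside each $S_k$, exactly as in the proof of Proposition~\ref{FAhomrep}) as a finite direct sum of principal projective $(\FS^{\op})^r$-modules, hence is finitely generated; noetherianity then follows from Corollary~\ref{fs-noeth} together with Proposition~\ref{grobprod}.

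To recover noetherianity of $F$ itself, I would use that a polynomial representation of $\GL(\infty)^n$ is determined by its values on vector spaces $\bk[S_k]$ with $|S_k|$ exceeding the polynomial degree, so a subrepresentation $G\subseteq F$ can be recovered from the family of $(\FS^{\op})^r$-subrepresentations $M^{\underline S}_G\subseteq M^{\underline S}_F$; finite generation of $G$ then lifts from finite generation of these sub-modules. For the Hilbert series, dimensions of $M^{\underline S}$ encode the $\GL(\infty)^n$-weight multiplicities that compute $\ch(F_n)$, with the variable $t_k$ of Corollary~\ref{FShilb2} playing the role dual to the monomial symmetric function $m_{\lambda^k}$. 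Applying that corollary gives each relevant $M^{\underline S}$ a rational Hilbert series with denominators of the form $\prod_{k,j}(1-jt_k)^{e_{k,j}}$; the standard conversion to exponential generating series absorbs the $1/n!$ in the definition of $\rH'_F$, and after identifying $t_k$ with $m_{\lambda^k}$-data in the large-$|S_k|$ limit, the finitely many shapes $\Lambda$ produce $\rH_F'(\bm)$ as a polynomial in $m_\lambda$ and $e^{m_\lambda}$. The hardest part will be making precise the dictionary between $(\FS^{\op})^r$-module Hilbert series in the $t_k$ and intrinsic symmetric-function characters $\ch(F_n)$, and verifying that evaluation at the $\bk[S_k]$ detects all subrepresentations of $F$---this relies on combining the coalgebra structure on $\bk[S_k]$ with the uniform degree bound established at the start.
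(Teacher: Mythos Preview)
Your proposal has the right skeleton: evaluate the vector-space arguments at permutation modules via the coalgebra diagonal $e_s\mapsto e_s^{\otimes k}$, land in $\FS^{\op}$-modules, and invoke Proposition~\ref{FAhomrep} and Corollary~\ref{FShilb2}. This is exactly the engine the paper uses. Two points of comparison are worth making.

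For noetherianity, the paper does not work with $\cD^\Lambda$ directly nor with an $(\FS^{\op})^r$-module. It first passes from $\cD^\Lambda$ to the tensor $\Delta$-modules $\cT^{\bd}=\Gamma(\rT^{\bd})$ (Corollary~\ref{Trsubq}), and then uses a \emph{single} functor $\Phi_N\colon\FS^{\op}\to\Vec^\Delta$, $S\mapsto\{\bk^N\}_{i\in S}$, for $N=\max(d_i)$. The crucial step you flag as ``verifying that evaluation detects all subrepresentations'' is precisely Proposition~\ref{Tdnoeth}: a weight-vector argument shows any element of a sub-$\Delta$-module of $\cT^{\bd}$ can be moved by $\prod\GL(s_i)$ into the image of $\Phi_N$, so the restriction along $\Phi_N$ is faithful on the subobject lattice. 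Your multicolored version would also work here, but the single-color reduction is simpler.

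For the Hilbert series, your construction $M^{\underline S}(T_1,\dots,T_r)$ with \emph{different} evaluation spaces $\bk[S_k]$ in each slot has a genuine gap: the dimension of such an evaluation is a weighted sum over all weight spaces, not an individual weight multiplicity, so there is no direct identification of $t_k$ with $m_{\lambda^k}$. The paper's fix is to evaluate at a \emph{single} $\bk^N$ throughout and then split by torus weight type: if $\lambda_1,\dots,\lambda_n$ are the partitions of size $\le N$, one defines $\Psi(M)(S_1,\dots,S_n)$ to be the subspace of $M(\{\bk^N\}_{i\in S_1\sqcup\cdots\sqcup S_n})$ on which the $i$th torus acts with weight $\lambda_j$ for $i\in S_j$. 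This is an $(\FS^{\op})^n$-module, finitely generated because it sits inside the noetherian $\Phi_N^*(\cT^{\bd})$, and one reads off $\rH'_M(\bm)=\rH'_{\Psi(M)}(\bt)$ with $t_j\leftrightarrow m_{\lambda_j}$ on the nose. Corollary~\ref{FShilb2} then finishes. Once you replace ``different evaluation spaces'' by ``single evaluation space plus weight decomposition,'' your dictionary difficulty evaporates.
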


This result improves Theorem~\ref{thm:delta-old} in three ways: 
\begin{enumerate}[\indent (1)]
\item there is no restriction on the characteristic of $\bk$; 
\item there is no restriction to small $\Delta$-modules; and 
\item the Hilbert series statement is significantly stronger. 
\end{enumerate}
To elaborate on (3), our result implies that the denominator of $\rH_F(\bm)$ factors into linear pieces of the form $1-\alpha$, where $\alpha$ is an $\bN$-linear combination of the $m_{\lambda}$. This stronger result on Hilbert series answers \cite[Question~5]{delta-mod} affirmatively, and even goes beyond what is asked there. (Note that the question in \cite{delta-mod} is phrased in terms of Schur functions rather than monomial symmetric functions, but they are related by a change of basis with integer coefficients \cite[Corollary 7.10.6]{stanley-EC2}.)

\subsection{Applications}
\label{delta-example}

The motivating example of a $\Delta$-module comes from the study of syzygies of the Segre embedding. Define the {\bf Segre product} of two graded $\bk$-algebras $A$ and $B$ to be the graded $\bk$-algebra $A \boxtimes B$ given by
\begin{displaymath}
(A \boxtimes B)_n = A_n \otimes_{\bk} B_n.
\end{displaymath}
Given finite-dimensional $\bk$-vector spaces $V_1, \ldots, V_n$, put
\begin{displaymath}
R(V_1, \ldots, V_n) = \Sym(V_1) \boxtimes \cdots \boxtimes \Sym(V_n), \qquad
S(V_1, \ldots, V_n) = \Sym(V_1 \otimes \cdots \otimes V_n).
\end{displaymath}
Then $R$ is an $S$-algebra, and applying $\rm Proj$ gives the Segre embedding
\begin{displaymath}
i(V_1, \ldots, V_n) \colon \bP(V_1) \times \cdots \times \bP(V_n) \to \bP(V_1 \otimes \cdots \otimes V_n).
\end{displaymath}
Fix an integer $p \ge 0$, and define
\begin{align} \label{eqn:segre-syz}
F_n(V_1, \ldots, V_n)=\Tor_p^{S(V_1, \ldots, V_n)}(R(V_1, \ldots, V_n), \bk).
\end{align}
This is the space of $p$-syzygies of the Segre. The factorization
\begin{displaymath}
i(V_1, \ldots, V_{n+1})=i(V_1, \ldots, V_{n-1}, V_n \otimes V_{n+1}) \circ (\id \times i(V_n, V_{n+1})),
\end{displaymath}
combined with general properties of $\Tor$, yield transition maps as in \eqref{delta-trans}. In this way, the sequence $F=(F_n)_{n \ge 0}$ naturally has the structure of a $\Delta$-module.

\begin{theorem} \label{thm:segre-delta-fg}
For every $p \ge 0$ and field $\bk$, the $\Delta$-module $F$ defined by \eqref{eqn:segre-syz} is finitely presented and $\rH'_F(\bm)$ is a polynomial in the $m_{\lambda}$ and the $e^{m_{\lambda}}$. 
\end{theorem}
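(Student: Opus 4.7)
The plan is to reduce Theorem~\ref{thm:segre-delta-fg} to Theorem~\ref{thm:delta-new}. Since that theorem gives both noetherianity and the desired Hilbert-series form for any finitely generated $\Delta$-module, the core task is to show that $F$ is a finitely generated $\Delta$-module. Finite presentation then follows automatically from noetherianity by applying it to the kernel of a surjection from a finitely generated projective onto $F$, and the statement about $\rH'_F$ is immediate.

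To prove finite generation, I would compute $F_n=\Tor_p^{S}(R,\bk)$ via the Koszul resolution. Writing $V=V_1\otimes\cdots\otimes V_n$, the Koszul complex has $q$-th term $K_q=\Lambda^q V \otimes_\bk R(V_1,\dots,V_n)$ with the usual differential. The entire complex carries a natural $\Delta$-module structure: the transitions on $\Lambda^q V$ are the identifications coming from associativity of tensor product; the transitions on $R$ arise from the projections $\Sym^e(V_n\otimes V_{n+1})\to\Sym^e V_n\otimes\Sym^e V_{n+1}$; and the Koszul differential is compatible with both. Thus $F$ is the $p$-th homology of a complex of $\Delta$-modules.

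The main step is to show that for each fixed internal degree $e$, the Koszul term $K_q^{(e)} = \Lambda^q V \otimes R_{e-q}$ is a finitely generated $\Delta$-module. I would show: (a) $\Lambda^q V$ is generated, as a $\Delta$-module, by $\Lambda^q V_1$ at $n=1$, since the transitions are identifications; (b) $R_{e-q}$ is generated by $\Sym^{e-q}V_1$ at $n=1$, using surjectivity of the natural projections; and (c) pointwise tensor products of finitely generated $\Delta$-modules are finitely generated, which I would deduce from an analog of property~(F) for the diagonal functor on $\Vec^\Delta$. Applying the noetherianity half of Theorem~\ref{thm:delta-new} to the Koszul complex in internal degree $e$ then yields that the subquotient $F^{(e)}$ is finitely generated.

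The hardest step will be aggregating these per-degree results into finite generation of $F=\bigoplus_e F^{(e)}$. Because the $\Delta$-module transitions preserve internal grading, this decomposition is one of $\Delta$-submodules, and $F$ is finitely generated if and only if all but finitely many $F^{(e)}$ vanish. This amounts to a Castelnuovo--Mumford-type bound: $\Tor_p^{S}(R,\bk)$ must be concentrated in a range of internal degrees that is uniform in $n$ and in the $V_i$. Such a uniform bound is a classical feature of Segre varieties, reflecting that the defining ideal is quadratic with a controlled syzygy pattern. Once in hand, $F$ is a finite direct sum of finitely generated $\Delta$-modules, hence finitely generated, and Theorem~\ref{thm:delta-new} completes the proof.
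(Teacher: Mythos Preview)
Your overall strategy matches the paper's: realize $F$ as the homology of the Koszul complex, show each internal degree of that complex is a finitely generated $\Delta$-module, bound the relevant internal degrees uniformly, and invoke Theorem~\ref{thm:delta-new}. Two steps need sharpening.

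Step~(c) is both unnecessary and problematic as stated. The category $\Vec^\Delta$ is not a combinatorial category in the sense of \S\ref{sec:tensor-prod}---its morphisms carry linear data---so property~(F) for the diagonal is not even defined there, and formulating and proving the right enriched analog would be genuine new work. Fortunately you do not need it. Since both $\Lambda^q V$ and $R_{e-q}$ are generated at level $n=1$ (your (a) and (b)), the same holds directly for their tensor product: $K_q^{(e)}$ is a quotient of $\Gamma$ applied to the single-variable polynomial functor $V \mapsto \Lambda^q V \otimes \Sym^{e-q} V$, because the transition maps $\Sym^{e-q}(V_n \otimes V_{n+1}) \to \Sym^{e-q} V_n \otimes \Sym^{e-q} V_{n+1}$ are surjective. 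That single-variable functor has finite length, hence is a quotient of a finite sum of $\rD^\lambda$'s, so $K_q^{(e)}$ is a quotient of a finite sum of $\cD^\lambda$'s by Proposition~\ref{lem:Phi-exact}. The paper simply asserts that each graded Koszul piece is finitely generated; this is the argument underlying that assertion.

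For the uniform degree bound in your final step, ``a classical feature of Segre varieties'' is not a proof, and this is the one place where something substantive must be supplied. The paper's argument is concrete: the Segre ideal has a quadratic Gr\"obner basis, so $\dim_\bk \Tor_p^S(R,\bk)_e$ is bounded above by the corresponding dimension for the initial monomial ideal, and the Taylor resolution of a quadratic monomial ideal forces $\Tor_p$ to be concentrated in internal degrees $p \le e \le 2p$. This bound is uniform in $n$ and in the $V_i$, which is exactly what you need to conclude that $F=\bigoplus_e F^{(e)}$ has only finitely many nonzero summands.
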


\begin{proof}
The Tor modules are naturally $\bZ_{\ge 0}$-graded and are concentrated between degrees $p$ and $2p$: to see this, we first note that the Segre variety has a quadratic Gr\"obner basis and the dimensions of these Tor modules are bounded from above by those of the initial ideal. The Taylor resolution of a monomial ideal gives the desired bounds \cite[Exercise 17.11]{eisenbud}.

Using the Koszul resolution of $\bk$ as an $S$-module, each graded piece of $F$ can be realized as the homology of a complex of finitely generated $\Delta$-modules; now use Theorem~\ref{thm:delta-new}.
\end{proof}

Finite generation means that there are finitely many $p$-syzygies of Segres that generate all $p$-syzygies of all Segres under the action of general linear groups, symmetric groups, and the maps \eqref{delta-trans}, i.e., pullbacks along Segre embeddings. The statement about Hilbert series means that, with a single polynomial, one can store all of the characters of the $\GL$ actions on spaces of $p$-syzygies. We refer to the introduction of \cite{delta-mod} for a more detailed account.

\begin{remark}
The syzygies of many varieties related to the Segre, such as secant and tangential varieties of the Segre, also admit the structure of a $\Delta$-module. See \cite[\S 4]{delta-mod}. In fact, the argument in Theorem~\ref{thm:segre-delta-fg} almost goes through: one can show that each graded piece of each Tor module is finitely generated and has a rational Hilbert series, but there are few general results about these Tor modules being concentrated in finitely many degrees.

The results that we know of are in characteristic $0$: \cite{raicu} proves that the ideal of the secant variety of the Segre variety is generated by cubic equations, and \cite{oeding-raicu} proves that the tangential variety of the Segre variety is generated by equations of degree $\le 4$.
\end{remark}

\subsection{Proof of Theorem~\ref{thm:delta-new}: noetherianity} \label{ss:delta-noeth}

Let $\Mod_{\Delta, 0}$ be the category whose objects are sequences $F=(F_n)_{n \ge 0}$ where $F_n$ is a polynomial functor $\Vec^n \to \Vec$ (the subscript $0$ is to emphasize that there are no $S_n$-actions and no transition maps). Given an object $F$ of this category, we define a $\Delta$-module $\Gamma(F)$ by
\begin{displaymath}
\Gamma(F)(\{V_i\}_{i \in I}) = \bigoplus_{0 \le n \le |I|} \bigoplus_{\alpha \colon I \surj [n]} F_n(U_1, \ldots, U_n),
\end{displaymath}
where $U_j=\bigotimes_{\alpha(i)=j} V_i$ and the second sum is over all surjective functions $\alpha \colon I \to [n]$.

\begin{proposition} \label{lem:Phi-exact}
\begin{enumerate}[\indent \rm (a)]
\item $\Gamma$ is the left adjoint of the forgetful functor $\Mod_\Delta \to \Mod_{\Delta, 0}$.
\item $\Gamma$ is an exact functor.
\end{enumerate}
\end{proposition}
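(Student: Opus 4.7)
The plan is to exhibit the adjunction in part (a) by writing down the unit and counit explicitly, then to deduce the exactness in part (b) directly from the defining formula. Before anything else, I would verify that $\Gamma(F)$ is genuinely a $\Delta$-module. Given a morphism $(f,\eta)\colon \{V_i\}_{i\in I}\to\{W_j\}_{j\in J}$ in $\Vec^\Delta$ (with $f\colon J\twoheadrightarrow I$ and $\eta_i\colon V_i\to\bigotimes_{f(j)=i}W_j$), a summand of $\Gamma(F)(\{V_i\})$ indexed by a surjection $\alpha\colon I\twoheadrightarrow[n]$ should be sent to the summand of $\Gamma(F)(\{W_j\})$ indexed by $\beta=\alpha\circ f$, using the map $U_k=\bigotimes_{\alpha(i)=k}V_i\to\bigotimes_{\alpha(f(j))=k}W_j=U'_k$ obtained by tensoring the $\eta_i$. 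Functoriality under composition is routine.

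For part (a), I would define the unit $\eta_F\colon F\to U(\Gamma(F))$ to be the inclusion into the summand indexed by $\alpha=\id_{[n]}$, and the counit $\varepsilon_G\colon \Gamma(U(G))\to G$ on the $(n,\alpha)$-summand at $\{V_i\}_{i\in I}$ to be $G$ applied to the morphism $\{U_j\}_{j\in[n]}\to\{V_i\}_{i\in I}$ in $\Vec^\Delta$ given by $\alpha$ together with the identity maps $U_j\to\bigotimes_{\alpha(i)=j}V_i$. Given $\phi\colon F\to U(G)$, one obtains $\tilde\phi=\varepsilon_G\circ\Gamma(\phi)\colon\Gamma(F)\to G$, which on the $(n,\alpha)$-summand is $G(\alpha,\id)\circ\phi_n(U_1,\dots,U_n)$; and conversely any $\psi\colon\Gamma(F)\to G$ restricts to a map $\phi=U(\psi)\circ\eta_F$. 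That $\phi\mapsto\tilde\phi\mapsto\phi$ is the identity is immediate from the definitions. The key observation for the opposite round trip is that naturality of $\psi$ with respect to the morphism $(\alpha,\id)\colon\{U_j\}_{j\in[n]}\to\{V_i\}_{i\in I}$ forces $\psi$ on the $(n,\alpha)$-summand to equal $G(\alpha,\id)$ composed with $\psi$ on the $(n,\id)$-summand. Naturality in $F$ and $G$ is straightforward bookkeeping.

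Part (b) is then essentially free. A sequence in $\Mod_{\Delta,0}$ is exact iff it is pointwise exact (since polynomial functors $\Vec^n\to\Vec$ form an abelian category with pointwise exactness), and the same is true in $\Mod_\Delta$. The formula
\[
\Gamma(F)(\{V_i\}_{i\in I})=\bigoplus_{n\le|I|}\bigoplus_{\alpha\colon I\twoheadrightarrow[n]}F_n(U_1,\dots,U_n)
\]
expresses $\Gamma(F)(\{V_i\})$ as a direct sum of evaluations of the $F_n$ at fixed tuples of vector spaces. Both evaluation and direct sum are exact operations on $\bk$-vector spaces, so $\Gamma$ preserves short exact sequences pointwise and hence is exact.

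The main obstacle is really just part (a): the bulk of the work lies in the naturality checks and in being careful about the $\Delta$-structure on $\Gamma(F)$, since the reindexing $\alpha\mapsto\alpha\circ f$ and the tensoring of the $\eta_i$ must be compatible with both the identification of summands and the polynomial-functor structure on each $F_n$. Once that is in hand, the hom-set bijection is dictated by naturality and the fact that every summand of $\Gamma(F)(\{V_i\})$ is the image of the $(n,\id)$-summand of $\Gamma(F)(\{U_j\}_{j\in[n]})$ under a $\Delta$-morphism.
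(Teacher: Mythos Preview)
Your proposal is correct and follows essentially the same approach as the paper. For part (a), the paper only sketches one direction---constructing $\Gamma(F)\to G$ from $F\to U(G)$ via $F_n(U_1,\dots,U_n)\to G_n(U_1,\dots,U_n)\to G(\{V_i\})$---without verifying the bijection or checking that $\Gamma(F)$ carries a $\Delta$-module structure; your version supplies these details via the unit/counit formalism, and your key observation (that naturality of $\psi$ with respect to $(\alpha,\id)$ determines $\psi$ on every summand from the $(n,\id)$-summand) is exactly what makes the inverse direction work. For part (b), your argument and the paper's are identical: exactness is pointwise on both sides, and the defining formula is a finite direct sum of evaluations.
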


\begin{proof}
(a) Pick $G \in \Mod_\Delta$ and $F \in \Mod_{\Delta, 0}$ and $f \in \hom_{\Mod_{\Delta, 0}}(F,G)$. Suppose that $\{V_i\}_{i \in I}$ is an object of $\Mod_{\Delta}$ and $\alpha \colon I \to [n]$ is a surjection; let $U_j$ be as above. We have maps
\begin{displaymath}
F_n(U_1, \ldots, U_n) \to G_n(U_1, \ldots, U_n) \to G(\{V_i\}),
\end{displaymath}
where the first is $f$ and the second uses functoriality of $G$ with respect to the morphism $\{U_j\}_{j \in [n]} \to \{V_i\}_{i \in I}$ in $\Vec^{\Delta}$ corresponding to $\alpha$ (and the identity maps). Summing over all choices of $\alpha$ gives a map $(\Gamma F)(\{V_i\}) \to G(\{V_i\})$, and thus a map of $\Delta$-modules $\Gamma(F) \to G$.

(b) A $\Delta$-module is a functor $\Vec^\Delta \to \Vec$, so exactness can be checked pointwise. Similarly, exactness of a sequence of objects in $\Mod_{\Delta, 0}$ can be checked pointwise. So let $0 \to F_1 \to F_2 \to F_3 \to 0$ be an exact sequence in $\Mod_{\Delta, 0}$ and pick $\{V_i\}_{i \in I} \in \Vec^{\Delta}$. The sequence
\[
(\Gamma F_1)(\{V_i\}) \to (\Gamma F_2)(\{V_i\}) \to (\Gamma F_3)(\{V_i\})
\]
is the direct sum (over all surjective functions $\alpha \colon I \surj [n]$) of sequences of the form 
\[
F_1(U_1, \ldots, U_n) \to F_2(U_1, \ldots, U_n) \to F_3(U_1, \ldots, U_n).
\]
The latter are exact by assumption, so our sequence of interest is also exact.
\end{proof}

Define $\cD^{\Lambda}$ to be the $\Delta$-module $\Gamma(\rD^{\Lambda})$, where $\rD^{\Lambda}$ (defined in \S\ref{ss:polyfunc}) is regarded as an object of $\Mod_{\Delta, 0}$ concentrated in degree $r$.

\begin{proposition} \label{prop:delta-proj}
The $\cD^{\Lambda}$ are projective generators for $\Mod_{\Delta}$.
\end{proposition}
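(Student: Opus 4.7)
The plan is to combine the adjunction of Proposition~\ref{lem:Phi-exact}(a) with Proposition~\ref{prop:Dlambda-projgen} (which gives that the $\rD^\Lambda$ are projective generators of $\Mod_{\Delta,0}$), using exactness in the appropriate direction.

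First I would verify that the forgetful functor $U \colon \Mod_\Delta \to \Mod_{\Delta,0}$ (the right adjoint to $\Gamma$) is exact. This is immediate: exact sequences in both $\Mod_\Delta$ and $\Mod_{\Delta,0}$ are tested pointwise on the relevant objects, and $U$ does not change pointwise values, merely discarding the symmetric group actions and transition maps. A standard adjunction argument then shows $\Gamma$ preserves projectives, since for projective $P \in \Mod_{\Delta,0}$ the functor
\[
\Hom_{\Mod_\Delta}(\Gamma(P), -) \;=\; \Hom_{\Mod_{\Delta,0}}(P, U(-))
\]
is exact in $-$. Applied to $P = \rD^\Lambda$, this gives that $\cD^\Lambda = \Gamma(\rD^\Lambda)$ is projective in $\Mod_\Delta$.

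For the generation claim, given $G \in \Mod_\Delta$, I would apply Proposition~\ref{prop:Dlambda-projgen} to each polynomial functor $U(G)_n \colon \Vec^n \to \Vec$ and take the direct sum over $n$ to obtain a (pointwise) surjection
\[
h \colon \bigoplus_i \rD^{\Lambda_i} \twoheadrightarrow U(G)
\]
in $\Mod_{\Delta,0}$. The adjunction converts $h$ to a morphism $\varphi \colon \bigoplus_i \cD^{\Lambda_i} \to G$ in $\Mod_\Delta$, and what remains is to verify $\varphi$ is surjective. To check this pointwise at $\{V_i\}_{i \in I}$, I pick $g \in G(\{V_i\})$, view it as an element of $U(G)_{|I|}(V_1,\dots,V_{|I|})$, and lift it via $h$ to some $\tilde g \in \bigoplus_i \rD^{\Lambda_i}(V_1,\dots,V_{|I|})$. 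The unit $\eta \colon \rD^\Lambda \to U\Gamma(\rD^\Lambda) = U(\cD^\Lambda)$ is (by the defining formula for $\Gamma$) the inclusion of $\rD^\Lambda$ as the summand of $(U\cD^\Lambda)_{|\Lambda|}$ corresponding to the identity surjection $[|\Lambda|] \surj [|\Lambda|]$, so $\eta(\tilde g)$ is a well-defined element of $\bigl(\bigoplus_i \cD^{\Lambda_i}\bigr)(\{V_i\})$, and the triangle identity $U(\varphi) \circ \eta = h$ gives $\varphi(\eta(\tilde g)) = g$.

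The main point of care is the bookkeeping in the surjectivity argument, specifically the identification of $\rD^\Lambda$ as a specific summand of $U(\cD^\Lambda)$ via the unit, which makes the lifted element $\eta(\tilde g)$ explicit. The projectivity half is essentially formal once exactness of $U$ is noted. I would then remark that arbitrary direct sums exist in the functor category $\Mod_\Delta$ (computed pointwise), so the possibly-infinite direct sum $\bigoplus_i \cD^{\Lambda_i}$ used above is legitimate.
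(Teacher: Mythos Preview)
Your proposal is correct and follows essentially the same approach as the paper. The paper's proof is the two-line version of yours: it invokes the general principle that a left adjoint of an exact (and faithful) forgetful functor sends projective generators to projective generators, whereas you unpack this principle explicitly, in particular carrying out the surjectivity check via the unit and the triangle identity that the paper leaves implicit.
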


\begin{proof}
By Proposition~\ref{prop:Dlambda-projgen}, the functors $\rD^\Lambda$ are projective generators for $\Mod_{\Delta, 0}$. Since $\Gamma$ is the left adjoint of an exact functor, it follows that $\cD^{\Lambda}$ are projective generators. 
\end{proof}

Let $\bd=(d_1, \ldots, d_r)$ be a tuple of positive integers. Let $\rT^{\bd} \colon \Vec^r \to \Vec$ be the functor given by
\begin{displaymath}
\rT^{\bd}(V_1, \ldots, V_r) = V_{1}^{\otimes d_1} \otimes \cdots \otimes V_{r}^{\otimes d_r},
\end{displaymath}
thought of as an object of $\cP^\bN$ by extension by $0$. Define a $\Delta$-module $\cT^{\bd} = \Gamma(\rT^\bd)$. Explicitly, we have
\begin{displaymath}
\cT^{\bd}(\{V_i\}_{i \in I}) = \bigoplus_{\alpha \colon I \surj [r]} \bigotimes_{i \in I} V_i^{\otimes d_{\alpha(i)}},
\end{displaymath}
where the sum is over all surjections $\alpha$ from $I$ to $[r]$.

\begin{corollary} \label{Trsubq}
\mbox{A finitely generated $\Delta$-module is a subquotient of a finite direct sum of $\cT^{\bd}$.}
\end{corollary}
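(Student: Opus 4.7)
The plan is to deduce this corollary from the projective generation result Proposition~\ref{prop:delta-proj} by showing that each projective generator $\cD^{\Lambda}$ embeds into some $\cT^{\bd}$. Concretely, by Proposition~\ref{prop:delta-proj}, a finitely generated $\Delta$-module $F$ admits a surjection from a finite direct sum $\bigoplus_{i=1}^k \cD^{\Lambda_i}$, so it suffices to exhibit, for each sequence of partitions $\Lambda = (\lambda^1, \ldots, \lambda^n)$, a monomorphism $\cD^{\Lambda} \hookrightarrow \cT^{\bd(\Lambda)}$ where $\bd(\Lambda) = (|\lambda^1|, \ldots, |\lambda^n|)$. Granted this, $\bigoplus_i \cD^{\Lambda_i}$ is a subrepresentation of $\bigoplus_i \cT^{\bd(\Lambda_i)}$, and therefore $F$, being a quotient of a subobject of $\bigoplus_i \cT^{\bd(\Lambda_i)}$, is a subquotient of a finite direct sum of $\cT^{\bd}$'s.

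To construct the desired embedding, first I would work in $\Mod_{\Delta,0}$. The divided power $\rD^k(V) = (V^{\otimes k})^{S_k}$ is a subfunctor of the $k$th tensor power functor via inclusion of invariants, and this is natural in $V$. Taking tensor products over the parts of each partition $\lambda^i$ gives a natural inclusion $\rD^{\lambda^i}(V_i) \hookrightarrow V_i^{\otimes |\lambda^i|}$, and tensoring these across $i = 1, \ldots, n$ yields a monomorphism $\rD^{\Lambda} \hookrightarrow \rT^{\bd(\Lambda)}$ in $\Mod_{\Delta,0}$.

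Now applying the functor $\Gamma$ and using its exactness (Proposition~\ref{lem:Phi-exact}(b)), I obtain a monomorphism $\cD^{\Lambda} = \Gamma(\rD^{\Lambda}) \hookrightarrow \Gamma(\rT^{\bd(\Lambda)}) = \cT^{\bd(\Lambda)}$ of $\Delta$-modules, which is what was needed. Combining with the first paragraph completes the proof.

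There is no real obstacle here; the argument is a formal assembly of the functorial tools already developed. The only point that requires a moment's care is the embedding $\rD^k(V) \hookrightarrow V^{\otimes k}$ and its naturality, which is standard and works in arbitrary characteristic (this is essentially the definition of divided powers as symmetric-group invariants in the tensor algebra, cf.\ \cite{friedlander-suslin}).
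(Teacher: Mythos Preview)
Your proof is correct and follows essentially the same approach as the paper: embed $\rD^{\Lambda}$ into $\rT^{\bd}$ with $d_i=|\lambda^i|$, apply the exact functor $\Gamma$ to obtain $\cD^{\Lambda}\hookrightarrow\cT^{\bd}$, and then invoke Proposition~\ref{prop:delta-proj}. The paper's version is simply more terse, leaving the exactness of $\Gamma$ and the details of the divided-power-into-tensor-power inclusion implicit.
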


\begin{proof}
Since $\rD^{\Lambda}$ is a subobject of $\rT^{\bd}$, where $d_i=\vert \lambda^i \vert$, it follows that $\cD^{\Lambda} = \Gamma(\rD^\Lambda)$ is a subrepresentation of $\cT^{\bd} = \Gamma(\rT^\bd)$. Now use Proposition~\ref{prop:delta-proj}.
\end{proof}

We now proceed to show that $\cT^{\bd}$ is a noetherian $\Delta$-module and that every finitely generated $\Delta$-module is a subquotient of a finite direct sum of $\cT^{\bd}$'s. This will show that finitely generated $\Delta$-modules are noetherian.

Let $N$ be a positive integer and define a functor $\Phi_N \colon \FS^{\op} \to \Vec^{\Delta}$ as follows. Given a finite set $S$, we let $\Phi_N(S)$ be the family $\{V_i\}_{i \in S}$ where $V_i=\bk^N$ for all $i$. Given a surjection $f \colon S \to T$, we let $\Phi_N(f)$ be the map $\Phi_N(T) \to \Phi_N(S)$ that is $f$ on index sets, and where the linear map $\eta_t \colon \bk^N \to \bigotimes_{f(s)=t} \bk^N$ takes the basis vector $e_i$ to $\bigotimes_{f(s)=t} e_i$. We thus obtain a functor $\Phi_N^* \colon \Mod_{\Delta} \to \Rep_{\bk}(\FS^{\op})$.

\begin{lemma} \label{lem:Tbd-FS-noeth}
For all $N$, the representation $\Phi_N^*(\cT^{\bd})$ of $\FS^\op$ is noetherian.
\end{lemma}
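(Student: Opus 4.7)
The plan is to exhibit $\Phi_N^*(\cT^{\bd})$ as a subrepresentation of a $\FS^{\op}$-module already known to be noetherian, namely the one appearing in Proposition~\ref{FAhomrep}. Let $X = \bigsqcup_{j=1}^{r} [N]^{d_j}$ with the natural projection $\pi \colon X \to [r]$, and consider the $\FS^{\op}$-module $M$ defined by $M(S) = \bk[\Hom_\FA(S, X)]$, with transition maps given by precomposition: a surjection $f \colon S \to T$ in $\FS$ (i.e., a morphism $T \to S$ in $\FS^{\op}$) acts by $F \mapsto F \circ f$. By Proposition~\ref{FAhomrep} this $M$ is finitely generated, hence noetherian by Corollary~\ref{fs-noeth}.

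Next I would identify a natural basis for $\Phi_N^*(\cT^{\bd})(S)$. Unwinding the definition of $\cT^{\bd} = \Gamma(\rT^{\bd})$ and substituting $V_i = \bk^N$ for all $i \in S$, one obtains
\[
\Phi_N^*(\cT^{\bd})(S) \;=\; \bigoplus_{\alpha \colon S \surj [r]} \, \bigotimes_{i \in S} (\bk^N)^{\otimes d_{\alpha(i)}},
\]
so a basis is indexed by pairs $(\alpha,(f_i)_{i\in S})$ with $\alpha \colon S \surj [r]$ and $f_i \in [N]^{d_{\alpha(i)}}$. Such a pair is precisely the same datum as a function $F \colon S \to X$ with $\pi \circ F = \alpha$ surjective, by setting $F(i) = (\alpha(i), f_i)$. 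This identifies $\Phi_N^*(\cT^{\bd})(S)$ with the subspace of $M(S) = \bk[\Hom_\FA(S,X)]$ spanned by those $F$ for which $\pi \circ F$ is surjective onto $[r]$.

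The main (and essentially only) computational step is to verify that this identification is $\FS^{\op}$-equivariant: for a surjection $f \colon S \to T$ in $\FS$, the map $\Phi_N^*(\cT^{\bd})(T) \to \Phi_N^*(\cT^{\bd})(S)$ induced by the $\Vec^{\Delta}$-morphism $\Phi_N(T) \to \Phi_N(S)$ sends the basis element labelled by $F \colon T \to X$ to the one labelled by $F \circ f \colon S \to X$. This is a direct calculation: the only nontrivial ingredient in the $\Vec^{\Delta}$-morphism $\Phi_N(f)$ is the diagonal embedding $\eta_t \colon \bk^N \to (\bk^N)^{\otimes f^{-1}(t)}$, $e_k \mapsto e_k^{\otimes f^{-1}(t)}$, and applying the formula for $\Gamma(\rT^{\bd})$ on this morphism sends the $\alpha$-component of $\cT^{\bd}(\Phi_N(T))$ to the $(\alpha\circ f)$-component of $\cT^{\bd}(\Phi_N(S))$ via the evident reindexing of basis vectors. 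Because composition of surjections is a surjection, the surjectivity condition on $\pi \circ F$ is preserved, so the image of the identification is closed under the $\FS^{\op}$-action.

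Combining these steps, $\Phi_N^*(\cT^{\bd})$ is a subrepresentation of the noetherian $\FS^{\op}$-module $M$, and is therefore itself noetherian. I expect the principal obstacle to be bookkeeping in the equivariance check; once the parameterization of the basis of $\cT^{\bd}(\Phi_N(S))$ by maps $S \to X$ is set up correctly, the rest is routine.
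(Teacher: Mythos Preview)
Your proof is correct and follows essentially the same approach as the paper: both exhibit $\Phi_N^*(\cT^{\bd})$ as a subrepresentation of an $\FS^{\op}$-module of the form $S \mapsto \bk[\Hom_{\FA}(S,X)]$ for a suitable finite set $X$, and then invoke Proposition~\ref{FAhomrep}. Your target $X = \bigsqcup_{j=1}^r [N]^{d_j}$ is the same set the paper calls $\cF = \cF(d_1) \amalg \cdots \amalg \cF(d_r)$ (the paper embeds into $[r] \times \cF$, which is slightly larger but makes no difference), and your equivariance check is exactly the observation the paper records as $f^*(e_{(\alpha,\beta)}) = e_{(f^*(\alpha), f^*(\beta))}$.
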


\begin{proof}
Put $M=\Phi^*_N(\cT^{\bd})$. We have
\begin{displaymath}
M(S) = \bigoplus_{\alpha \colon S \surj [r]} \bigotimes_{i \in S} (\bk^N)^{\otimes d_{\alpha(i)}}.
\end{displaymath}
Let $\cF(d)$ be the set of all functions $[d] \to [N]$, which naturally indexes a basis of $(\bk^N)^{\otimes d}$. The space $M(S)$ has a basis indexed by pairs $(\alpha,\beta)$, where $\alpha$ is a surjection $S \to [r]$ and $\beta$ is a function assigning to each element $i \in S$ an element of $\cF(d_{\alpha(i)})$. Write $e_{(\alpha, \beta)}$ for the basis vector corresponding to $(\alpha, \beta)$. If $f \colon T \to S$ is a surjection and $f^* \colon M(S) \to M(T)$ is the induced map then $f^*(e_{(\alpha,\beta)})=e_{(f^*(\alpha), f^*(\beta))}$. A pair $(\alpha,\beta)$ defines a function $S \to [r] \times \cF$, where $\cF = \cF(d_1) \amalg \cdots \amalg \cF(d_r)$. The previous remark shows that $M$ is a subrepresentation of the $\FS^\op$-module defined by $T \mapsto \bk[\Hom_{\FA}(T, [r] \times \cF)]$, which is noetherian (Proposition~\ref{FAhomrep}), so the result follows.
\end{proof}

\begin{proposition} \label{Tdnoeth}
The $\Delta$-module $\cT^{\bd}$ is noetherian.
\end{proposition}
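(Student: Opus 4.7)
The plan is to leverage Lemma~\ref{lem:Tbd-FS-noeth} by proving that, for $N$ sufficiently large, the pullback functor $\Phi_N^*$ reflects equality of $\Delta$-submodules of $\cT^{\bd}$. Set $N = \max_j d_j$. The first step is to observe that for each finite set $I$, the assignment $(V_i)_{i \in I} \mapsto \cT^{\bd}(\{V_i\}_{i \in I})$ defines a strict polynomial functor $\Vec^I \to \Vec$ of multidegree bounded by $(N, \dots, N)$: each summand indexed by a surjection $\alpha \colon I \surj [r]$ has multidegree $d_{\alpha(i)} \le N$ in the variable $V_i$.

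Next, I would invoke the standard Friedlander--Suslin equivalence: the category of strict polynomial functors $\Vec^I \to \Vec$ of multidegree $\le (N, \dots, N)$ is equivalent, via evaluation at $(\bk^N, \dots, \bk^N)$, to modules over the tensor product of Schur algebras $\bigotimes_{i \in I} S(N,N)$. Consequently, a strict polynomial subfunctor is uniquely recovered from its value at this single tuple. It follows that if $M \subseteq M' \subseteq \cT^{\bd}$ are $\Delta$-submodules with $\Phi_N^*(M) = \Phi_N^*(M')$, then for every $I$ the subfunctors $M(\{\,\cdot\,\}_{i \in I})$ and $M'(\{\,\cdot\,\}_{i \in I})$ of $\cT^{\bd}(\{\,\cdot\,\}_{i \in I})$ agree at $(\bk^N, \dots, \bk^N)$, hence agree as subfunctors, and so $M = M'$.

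With this reflection property in hand, noetherianity of $\cT^{\bd}$ is immediate: any ascending chain $M_1 \subseteq M_2 \subseteq \cdots$ of $\Delta$-submodules pulls back to an ascending chain of $\FS^{\op}$-submodules of $\Phi_N^*(\cT^{\bd})$, which stabilizes by Lemma~\ref{lem:Tbd-FS-noeth}; the reflection property then forces the original chain to stabilize as well.

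The main obstacle lies in the second step: one must carefully verify that subfunctors (in the naive functor-category sense) of a strict polynomial functor are themselves strict polynomial, and that the equivalence with Schur algebra modules is faithful on subobjects in a manner that applies uniformly in $I$ and is compatible with the maps coming from morphisms in $\Vec^{\Delta}$. Once these categorical preliminaries are settled, the remainder of the argument is purely formal.
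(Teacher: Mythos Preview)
Your proposal is correct and shares the paper's overall strategy: both establish that $\Phi_N^*$ with $N = \max_j d_j$ reflects strict containment of $\Delta$-submodules of $\cT^{\bd}$, and then conclude via Lemma~\ref{lem:Tbd-FS-noeth}. The only difference is in how the reflection step is justified. The paper gives a direct weight-vector argument: any weight vector in $\cT^{\bd}(\bk^{s_1},\ldots,\bk^{s_n})$ involves at most $N$ basis vectors from each factor, so a suitable element of $\prod_i \GL(s_i)$ carries it into $\cT^{\bd}(\bk^N,\ldots,\bk^N)$, whence every $\Delta$-submodule is generated by its values along $\Phi_N(\FS^{\op})$. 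You instead appeal to the Friedlander--Suslin equivalence between bounded-degree strict polynomial functors and Schur-algebra modules. Your flagged ``main obstacle'' is not a genuine concern: a $\Delta$-submodule is \emph{by definition} an object of $\Mod_\Delta$, hence a polynomial functor, so its restriction to each $\Vec^I$ is already strict polynomial; and an equivalence of abelian categories automatically reflects equality of subobjects. (One small imprecision: since $\cT^{\bd}\vert_{\Vec^I}$ is not homogeneous of a single multidegree, the relevant algebra is $\bigotimes_{i\in I}\bigoplus_{0\le e\le N} S(N,e)$ rather than $\bigotimes_{i\in I} S(N,N)$.) The paper's argument has the advantage of being elementary and self-contained, avoiding any external citation; yours is more conceptual but imports the machinery of~\cite{friedlander-suslin}.
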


\begin{proof}
Let $M$ be a subrepresentation of $\cT^{\bd}$. We claim that $M$ is generated by its restriction to the subcategory $\Phi_N(\FS^\op)$, where $N=\max(d_i)$. To see this, pick an element $v \in M_n(\bk^{s_1}, \ldots, \bk^{s_n})$ for some $s_i$. Then $v$ is a linear combination of weight vectors, so we may assume that $v$ is a weight vector. A weight vector in $\cT^{\bd}(\bk^{s_1}, \ldots, \bk^{s_n})$ uses at most $N$ basis vectors from each space. Thus there is an element $g \in \prod_{i=1}^n \GL(s_i)$ such that $gv \in \cT^{\bd}(\bk^N, \ldots, \bk^N)$. This proves the claim. It follows that if $M \subsetneqq M' \subseteq \cT^{\bd}$ then $\Phi^*_N(M) \subsetneqq \Phi^*_N(M')$. Now use Lemma~\ref{lem:Tbd-FS-noeth}.
\end{proof}

Corollary~\ref{Trsubq} and Proposition~\ref{Tdnoeth} prove the noetherianity statement in Theorem~\ref{thm:delta-new}.

\subsection{Proof of Theorem~\ref{thm:delta-new}: Hilbert series} \label{sec:proof-delta-new}

By Corollary~\ref{Trsubq}, the Grothendieck group of finitely generated $\Delta$-modules is spanned by the classes of subrepresentations of $\cT^{\bd}$. Therefore, it suffices to analyze the Hilbert series of a subrepresentation $M$ of $\cT^{\bd}$.

Let $N=\max(d_i)$, and write $\Phi = \Phi_N$ (\S\ref{ss:delta-noeth}). The space $\Phi^*(M)(S)$ has an action of the group $\GL(N)^S$ that we exploit. Let $\lambda_1, \ldots, \lambda_n$ be the partitions of size at most $N$. These are the only partitions that appear as weights in $(\bk^N)^{\otimes d_i}$. Let $(S_1, \ldots, S_n)$ be a tuple of finite (possibly empty) sets and let $S$ be their disjoint union. Define $\Psi(M)(S_1, \ldots, S_n)$ to be the subspace of $\Phi^*(M)(S)$ where the torus in the $i$th copy of $\GL(N)$ acts by weight $\lambda_j$, where $i \in S_j$. If $f_i \colon S_i \to T_i$ are surjections, and $f \colon S \to T$ is their disjoint union, then the map $f^* \colon \Phi^*(M)(T) \to \Phi^*(M)(S)$ carries $\Psi(M)(T_1, \ldots, T_n)$ into $\Psi(M)(S_1, \ldots, S_n)$. Thus $\Psi(M)$ is an $(\FS^{\op})^n$-module. If $M'$ is a subrepresentation of $\Psi(M)$ then 
\[
S \mapsto \bigoplus_{S=S_1 \amalg \cdots \amalg S_n} M'(S_1, \ldots, S_n)
\]
is an $\FS^{\op}$-subrepresentation of $\Phi^*(M)$. Since $\Phi^*(M)$ is noetherian, it follows that $\Psi(M)$ is noetherian. In particular, $\Psi(M)$ is finitely generated.

Given an integer $e \ge 0$, set $V=M_e(\bk^N, \ldots, \bk^N)$. The weight space $V_{\mu_1,\ldots,\mu_e}$ is isomorphic to $\Psi(M)([e_1], \ldots, [e_n])$ where $e_i = \#\{j \mid \mu_j = \lambda_i\}$. The coefficient of $m_{\lambda_1}^{e_1} \cdots m_{\lambda_n}^{e_n}$ in $\rH'_M(\bm)$ is then
\begin{displaymath}
\frac{1}{e!} \sum_{\substack{\mu_1, \ldots, \mu_e\\ \#\{j \mid \mu_j = \lambda_i\} = e_i}} \dim_{\bk}(V_{\mu_1,\ldots,\mu_e}) = \frac{1}{e_1! \cdots e_n!} \dim_{\bk}(\Psi(M)([e_1], \ldots, [e_n])).
\end{displaymath}
It follows that $\rH'_M(\bm)$ is equal to $\rH'_{\Psi(M)}(\bt)$, and so the theorem follows from Corollary~\ref{FShilb2}.

\section{Additional examples} \label{sec:additional}

\subsection{Categories of $G$-sets} \label{ss:G-sets}

The proofs of the results in this section appear in \cite{catgb-other}.

Let $G$ be a group. A {\bf $G$-map} $S \to T$ between finite sets $S$ and $T$ is a pair $(f, \sigma)$ consisting functions $f \colon S \to T$ and $\sigma \colon S \to G$. Given $G$-maps $(f, \sigma) \colon S \to T$ and $(g, \tau) \colon T \to U$, their composition is the $G$-map $(h, \eta) \colon S \to U$ with $h=g \circ f$ and $\eta(x)=\sigma(x) \tau(f(x))$, the product taken in $G$. In this way, we have a category $\FA_G$ whose objects are finite sets and whose morphisms are $G$-maps. The automorphism group of $[n]$ in $\FA_G$ is $S_n \wr G = S_n \ltimes G^n$. Let $\FS_G$ (resp.\ $\FI_G$) denote the subcategory of $\FA_G$ containing all objects but only those morphisms $(f,\sigma)$ with $f$ surjective (resp.\ injective).

\begin{remark}
The category $\FI_{\bZ/2\bZ}$ is equivalent to the category $\FI_{\rm BC}$ defined in \cite[Defn.~1.2]{wilson}. Representations of $\prod_{n=0}^{\infty} \FS^{\op}_{S_n}$ are equivalent to $\Delta$-modules (in good characteristic). Thus the representation theory $\FI_G$ and $\FS_G^{\op}$ generalizes many examples of interest. This motivates the results stated below.
\end{remark}

Recall that a group $G$ is {\bf polycyclic} if it has a finite composition series $1 = G_0 \subseteq G_1 \subseteq \cdots \subseteq G_r = G$ such that $G_i / G_{i-1}$ is cyclic for $i=1,\dots,r$, and it is {\bf polycyclic-by-finite} if it contains a polycyclic subgroup of finite index. 

\begin{theorem}
If $G$ is finite then $\FI_G$, $\FS^\op_G$, $\FA_G$, and $\FA_G^\op$ are quasi-Gr\"obner. Finitely generated modules over any of these categories have rational Hilbert series.

If $G$ is polycyclic-by-finite and $\bk$ is left-noetherian, then $\Rep_\bk(\FI_G)$ and $\Rep_\bk(\FA_G)$ are noetherian.
\end{theorem}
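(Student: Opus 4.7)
The plan for the finite-$G$ assertion is to produce essentially surjective functors satisfying property~(F) from the Gr\"obner categories $\OI$ and $\OS^\op$, in direct parallel with the treatments of $\FI_d$ and $\FS^\op$. For $\FI_G$, I would take $\Phi \colon \OI \to \FI_G$ that forgets the order and assigns trivial labels $f \mapsto (f, e)$. Essential surjectivity is immediate, and for property~(F) at $x \in \FI_G$, fix a total order on $x$ and factor any $(f, \sigma) \colon x \to y$ uniquely as $(g, e) \circ (\pi, \sigma)$, where $g$ is the order-preserving injection $x \to y$ with image $f(x)$ and $\pi = g^{-1} \circ f$ is a permutation of $x$; the finite set $\{(\pi, \sigma) : \pi \in S_{|x|},\ \sigma \in G^x\}$ witnesses the condition. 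For $\FS_G^\op$ I would analogously use $\OS^\op \to \FS_G^\op$: any morphism $(f, \sigma) \colon y \to x$ in $\FS_G$ factors through its image $y_i \subseteq x \times G$ under $a \mapsto (f(a), \sigma(a))$, yielding $f = \pi_x \circ g'$ and $\sigma = \pi_G \circ g'$, and $g'$ becomes an ordered surjection once $y_i$ carries the first-appearance order induced by $y$; since $x \times G$ has only finitely many subsets and each finitely many orderings, property~(F) holds. For $\FA_G$ and $\FA_G^\op$ I would compose with the inclusions $\FI_G \hookrightarrow \FA_G$ and $\FS_G^\op \hookrightarrow \FA_G^\op$, each of which satisfies property~(F) by the same image--surjection factorization used in Theorem~\ref{thm:set-noeth}, and invoke Proposition~\ref{propFcomp}.

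Rationality of Hilbert series is then a direct consequence: $\OI$ and $\OS^\op$ are $\rO$-lingual (Theorems~\ref{oithm} and~\ref{osthm}), property-(F) functors preserve finite generation (Proposition~\ref{propFfg}), and the forgetful functors preserve the cardinality norm, so Theorem~\ref{hilbthm} delivers a Hilbert series of the form $f(t)/g(t)$ with $g(t)$ a product of factors $1 - jt$.

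The noetherianity statement for $G$ polycyclic-by-finite and possibly infinite is the main obstacle: in that regime $\FI_G$ and $\FA_G$ fail to be $\Hom$-finite, so no functor from a $\Hom$-finite Gr\"obner category can satisfy property~(F), and Theorem~\ref{grobnoeth} does not apply directly. The essential input is Hall's theorem: for polycyclic-by-finite $G$ and left-noetherian $\bk$, the group algebra $\bk[G]$, and hence $\bk[G^n]$ for every $n$, is left-noetherian. The plan is to reinterpret an $\FI_G$-module $M$ as an $\FI$-module $M|_{\FI}$ together with a compatible module structure over the $\FI$-algebra $S \mapsto \bk[G^S]$ (an injection $S \hookrightarrow T$ extending labels by the identity on the complement), and then develop a relative form of the Gr\"obner theory of Section~\ref{sec:noeth-grob} in which the monomial analysis of Section~\ref{ss:monomial} is replaced by one classifying monomial subrepresentations via coherent families of left ideals in the varying noetherian rings $\bk[G^{|S|}]$. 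Noetherianity of these coefficient rings is what compensates for the failure of Hom-finiteness; once the relative theory is set up, the orderings and poset arguments from the finite-$G$ case carry over, and $\FA_G$ then follows from $\FI_G$ via the property-(F) inclusion.
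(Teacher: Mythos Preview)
The paper does not prove this theorem: the opening line of \S\ref{ss:G-sets} explicitly defers all proofs in that section to \cite{catgb-other}. There is therefore no in-paper argument to compare against.

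On the merits, your finite-$G$ argument is correct and runs parallel to Theorems~\ref{fithm} and~\ref{thm:FSop-PG}. The factorization $(f,\sigma)=(g,e)\circ(\pi,\sigma)$ for $\FI_G$ is valid under the composition law of $\FA_G$, and your image-in-$x\times G$ factorization for $\FS_G^{\op}$ works because $\pi_x$ restricted to that image remains surjective. The passage to $\FA_G$ and $\FA_G^{\op}$ via the inclusions, and the Hilbert series deduction from Theorems~\ref{oithm}, \ref{osthm}, and~\ref{hilbthm}, are all fine.

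The polycyclic-by-finite part, however, is a plan rather than a proof. You have correctly isolated the obstruction (no $\Hom$-finite Gr\"obner category can map to $\FI_G$ with property~(F) when $G$ is infinite) and the essential algebraic input (Hall's theorem that $\bk[G^n]$ is left-noetherian). But the ``relative Gr\"obner theory'' you invoke is real work: one must replace the poset $\cI(\bk)$ in Corollary~\ref{lem:mon-noeth} by a system of submodule lattices over the varying rings $\bk[G^S]$, prove an appropriate ACC statement for order-preserving families of such submodules indexed by $\vert\cC_x\vert$, and verify that the initial-term machinery of \S\ref{ss:grobner} survives the change. None of this is carried out here, and the details are not routine. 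As written, this portion is a correct outline of where the difficulty lies and what should resolve it, but it is not a proof.
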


Let $M$ be a finitely generated $\FS_G^{\op}$-module over an algebraically closed field $\bk$. Then $M([n])$ is a finite-dimensional representation of $G^{n}$. Let $[M]_{n}$ be the image of the class of this representation under the map $\cR_{\bk}(G^n)= \cR_{\bk}(G)^{\otimes n} \to \Sym^{n}(\cR_{\bk}(G))$ ($\cR_\bk(H)$ is the Grothendieck group of all finitely generated $\bk[H]$-modules). One can recover the isomorphism class of $M([n])$ as a representation of $G^{n}$ from $[M]_{n}$ due to the $S_{n}$-equivariance. If $\{L_{j}\}$ are the irreducible representations of the $G$, then $[M]_{n}$ is a polynomial in corresponding variables $t_j = [L_j]$. Define the {\bf enhanced Hilbert series} of $M$ (see \S\ref{enhanced}) by 
\begin{displaymath}
\bH_M(t) = \sum_{n \ge 0} [M]_{n} \in \SYM(\cR_{\bk}(G)_{\bQ}) \cong \bQ \lbb t_{j} \rbb.
\end{displaymath}
Let $N$ be the exponent of $G$ (LCM of all orders of elements) and fix a primitive $N$th root of unity $\zeta_N$. The following is a simplified version of our main theorem on Hilbert series. 

\begin{theorem} \label{FSGhilb}
Notation as above. Then $\bH_M(\bt) = f(\bt) / g(\bt)$ where $f(\bt) \in \bQ(\zeta_N)[\bt]$ and $g(\bt) = \prod_k (1-\lambda_k)$ where each $\lambda_k$ is a $\bZ[\zeta_N]$-linear combination of the $t_i$.
\end{theorem}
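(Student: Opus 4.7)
The plan begins with an additivity reduction. The enhanced Hilbert series $\bH_M$ is additive on short exact sequences, since $V \mapsto [V] \in \cR_\bk(G^n)$ is additive and the projection $\cR_\bk(G^n) \to \Sym^n(\cR_\bk(G))$ is $\bZ$-linear. Combined with the noetherianity of $\Rep_\bk(\FS_G^\op)$ asserted above, writing any finitely generated $M$ as a quotient $\bigoplus_i P_{x_i} \twoheadrightarrow M$ with noetherian kernel and filtering the kernel by its intersections with partial sums (each graded piece embedding into some $P_{x_i}$) reduces the claim to the case $N \subseteq P_{[n]}$.

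For $P_{[n]}$ itself, direct computation suffices. Since $\Hom_{\FS_G^\op}([n], [m]) = \mathrm{Surj}([m], [n]) \times G^m$ with the $G^m$-automorphism of $[m]$ acting by left translation on the second factor, $P_{[n]}([m])$ decomposes as $|\mathrm{Surj}([m], [n])|$ copies of the regular $G^m$-representation $\bk[G^m]$. With $\lambda := [\bk[G]] = \sum_j (\dim L_j)\, t_j \in \cR_\bk(G)$, this gives $[P_{[n]}([m])]_m = |\mathrm{Surj}([m], [n])|\, \lambda^m$, and Corollary \ref{cor:FSop-hilbert} yields $\bH_{P_{[n]}}(\bt) = f(\lambda)/\prod_{k=1}^n (1 - k\lambda)$ of the required form, each $k\lambda$ being an $\bN$-linear combination of the $t_j$. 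For a subrepresentation $N \subseteq P_{[n]}$, I would pull back along the essentially-surjective forgetful functor $\OS_G^\op \to \FS_G^\op$ (which has property~(F) by the same argument as Theorem \ref{thm:FSop-PG}) to obtain $\wt N$ with the same $G^m$-character in each degree, and then combine the O-lingual Gr\"obner theory of $\OS^\op$ (Theorem \ref{osthm}) with a coarse $G^m$-equivariant filtration of $P_{[n]}([m])$ indexed by the underlying ordered surjection.

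The hard part is preserving the $G^m$-character through the Gr\"obner reduction. No admissible total well-order on monomials in $|S_x|$ for $\OS_G^\op$ can be $G^m$-equivariant, since $G^m$ acts freely transitively on the $G$-labels of each fixed underlying ordered surjection; the standard initial-ideal argument therefore only yields a dimension identity and loses the $G^m$-character. The resolution is to use the coarser filtration by the underlying ordered surjection alone (which is $G^m$-invariant and compatible with $\OS_G^\op$-morphisms, since the admissible order of Theorem \ref{osthm} is compatible with post-composition), whose associated graded pieces are $G^m$-subrepresentations of $\bk[G^m]$ rather than monomial sublattices. Their $G^m$-characters are extracted via the central idempotents of $\bk[G]$, whose coefficients lie in $\bZ[\zeta_N]$ by Schur orthogonality (character values of $G$ being algebraic integers in $\bZ[\zeta_N]$); this is where the roots of unity enter. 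Combining this character-level decomposition with the ordered-language rationality statement of Theorem \ref{ordhilb} applied to the poset $|\OS_x^\op|$ then yields the claimed form $\bH_M(\bt) = f(\bt)/\prod_k (1 - \lambda_k)$ with $f \in \bQ(\zeta_N)[\bt]$ and each $\lambda_k$ a $\bZ[\zeta_N]$-linear combination of the $t_i$.
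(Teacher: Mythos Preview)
The paper does not actually prove this theorem: the opening line of \S\ref{ss:G-sets} states that the proofs of all results in that section appear in \cite{catgb-other}. There is therefore no proof here to compare against.

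On the merits of your sketch: the additivity reduction and the explicit computation for $P_{[n]}$ are correct, and you have correctly identified the central difficulty --- a full monomial order on $\vert S_x \vert$ for $\OS_G^{\op}$ cannot be $G^m$-equivariant, so the standard initial-ideal argument only recovers dimensions, not characters. Your proposed fix, filtering by the underlying ordered surjection alone, is sound: this coarse filtration is indeed $G^m$-equivariant and compatible with $\OS_G^{\op}$-morphisms, so passing to the associated graded preserves the enhanced Hilbert series and reduces to the case where $N([m]) = \bigoplus_f N_f$ with each $N_f$ a $G^m$-submodule of $\bk[G^m]$.

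The genuine gap is the last sentence. After this reduction you have, for each ordered surjection $f \colon [m] \to [n]$, a left ideal $N_f \subseteq \bk[G^m]$, and these vary with $f$ in a way governed by the $\OS_G^{\op}$-structure (pullback along $g$ combined with left translation by $\tau$). This is \emph{not} the data of a poset ideal in $\vert \OS^{\op}_{[n]} \vert$, so Theorem~\ref{ordhilb} does not apply directly; you have a family of subspaces indexed by the poset, not a characteristic function. Saying ``combine the character-level decomposition with Theorem~\ref{ordhilb}'' skips the real work: one must explain how the isotypic multiplicities of the $N_f$ organise into finitely many ordered languages, and why the resulting $\lambda_k$ land in $\bZ[\zeta_N]$ rather than merely $\bQ(\zeta_N)$. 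Your invocation of central idempotents explains where $\bQ(\zeta_N)$ enters the numerator, but gives no mechanism producing the specific denominator shape $\prod_k (1-\lambda_k)$ with $\lambda_k$ a $\bZ[\zeta_N]$-linear combination of the $t_i$. That step needs an argument.
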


\subsection{An example with non-regular languages} \label{sec:nonregular-lang}

Let $\OI_d^=$ be the subcategory of $\OI_d$ containing all objects but only those morphisms $(f,g)$ for which all fibers of $g$ have the same size. The inclusion $\OI^=_d \subset \OI_d$ satisfies property~(S) (Definition~\ref{propS}), so $\OI_d^=$ is Gr\"obner by Proposition~\ref{grobsub} and Theorem~\ref{oithm}. Endow $\OI_d^=$ with the restricted norm from $\OI_d$. Obviously, $\OI_1^==\OI$, which is O-lingual (Theorem~\ref{oithm}). We now examine $d>1$.

\begin{proposition}
The normed category $\OI^=_2$ is $\mathrm{UCF}$-lingual.
\end{proposition}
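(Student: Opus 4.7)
The plan is to reuse the O-lingual encoding built for $\OI_2$ in Theorem~\ref{oithm}. Fix an object $x = [n]$ and write $\cC = \OI_2$, $\cC' = \OI_2^=$. Since the inclusion $\cC' \hookrightarrow \cC$ is faithful and satisfies property~(S), the proof of Proposition~\ref{grobsub} shows that the induced map $\vert \cC'_x \vert \to \vert \cC_x \vert$ is a strict embedding of posets. I take the lingual alphabet to be $\Sigma = \{0,1,2\}$ and let $i' \colon \vert \cC'_x \vert \to \Sigma^\star$ be the restriction of the injection $i$ constructed in the proof of Theorem~\ref{oithm}; its image is exactly the set of words with $n$ zeros and equal numbers of $1$'s and $2$'s, and norm-compatibility is inherited from $i$. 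It remains to show, for every poset ideal $I'$ of $\vert \cC'_x \vert$, that $i'(I')$ is UCF.

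Let $I \subseteq \vert \cC_x \vert$ be the upward closure of $I'$ and set $\cL_{\mathrm{eq}} = \{w \in \Sigma^\star : \#_1(w) = \#_2(w)\}$. I will establish that $i'(I') = i(I) \cap \cL_{\mathrm{eq}}$. The inclusion ``$\subseteq$'' is clear. For the reverse, any word $w \in i(I) \cap \cL_{\mathrm{eq}}$ has $n$ zeros and equal $1$-$2$ counts, so $w = i'(y)$ for some $y \in \vert \cC'_x \vert$; by the definition of $I$ the element $y$ dominates some generator $g \in I'$ inside $\vert \cC_x \vert$, and strict order-preservation lifts this to $y \ge g$ in $\vert \cC'_x \vert$, whence $y \in I'$ by upward closure.

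By Theorem~\ref{oithm}, $i(I)$ is an ordered language, and hence regular. The language $\cL_{\mathrm{eq}}$ is a standard unambiguous context-free language: the grammar
\[
S \to \varepsilon \mid 0 S \mid 1 A 2 S \mid 2 B 1 S, \quad A \to \varepsilon \mid 0 A \mid 1 A 2 A, \quad B \to \varepsilon \mid 0 B \mid 2 B 1 B
\]
is unambiguous, with each binary split forced by the first return of the signed $1$-minus-$2$ count to zero. To conclude I invoke the standard closure property that the intersection of a UCF language with a regular language is again UCF: the product of an unambiguous PDA for $\cL_{\mathrm{eq}}$ with a DFA for $i(I)$ is an unambiguous PDA, since the deterministic component cannot multiply accepting computations. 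This closure property is the only nontrivial technical step in the argument; everything preceding it is a direct transfer from the already-established O-lingual structure on $\OI_2$ via property~(S).
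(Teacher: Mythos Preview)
Your argument is correct and follows essentially the same route as the paper: restrict the $\OI_2$ encoding, show that an ideal in $\vert(\OI_2^=)_x\vert$ maps to the intersection of a regular language with the ``balanced $1$/$2$'' language, and invoke closure under intersection with regular sets. The only difference is that the paper reaches UCF via deterministic context-free languages (citing the textbook facts that DCF $\cap$ regular is DCF and DCF $\subseteq$ UCF), whereas you exhibit an explicit unambiguous grammar for $\cL_{\mathrm{eq}}$ and argue the product-PDA construction directly; both are standard and yield the same conclusion.
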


\begin{proof}
Let $\cC=\OI_2^=$ and let $\cC'=\OI_2$. Let $x=[n]$ be an object of $\cC$ and let $\Sigma=\{0,1,2\}$. We regard $\vert \cC_x \vert$ as a subset of $\vert \cC_x' \vert$ with the induced order, which is admissible by Proposition~\ref{grobsub}. Define the map $i \colon \vert \cC'_x \vert \to \Sigma^{\star}$ as in the proof of Theorem~\ref{oithm}. 

If $T$ is an ideal of $\vert \cC_x \vert$ and $S$ is the ideal of $\vert \cC_x' \vert$ it generates, then $T = S \cap \vert \cC_x \vert$. Thus $i(T)$ is the intersection of the regular language $i(S)$ with the language $\cL=i(\vert \cC_x \vert)$. The language $\cL$ consists of words in $\Sigma$ that contain exactly $n$ 0's and use the symbols $1$ and $2$ the same number of times. This is a deterministic context-free (DCF) language (the proof is similar to \cite[Exercise~5.1]{hopcroftullman}), so the same is true for $i(T)$ \cite[Thm.~10.4]{hopcroftullman}. Finally, DCF implies UCF (see the proof of \cite[Thm.~5.4]{hopcroftullman}).
\end{proof}

\begin{corollary}
If $M$ is a finitely-generated representation of $\OI^=_2$ then $\rH_M(t)$ is an algebraic function of $t$.
\end{corollary}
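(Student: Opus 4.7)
The plan is to apply Theorem~\ref{hilbthm} directly, since all the necessary ingredients are already in hand. Specifically, I need to verify two hypotheses of that theorem for $\cC = \OI_2^=$: (i) $\cC$ is a Gr\"obner category, and (ii) $\cC$ is $\cP$-lingual for $\cP = \mathrm{UCF}$.

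For (i), I would recall the observation made at the start of this subsection: the inclusion $\OI_2^= \hookrightarrow \OI_2$ satisfies property~(S), so by Proposition~\ref{grobsub} together with Theorem~\ref{oithm} (which asserts $\OI_2$ is Gr\"obner), $\OI_2^=$ inherits the Gr\"obner property. For (ii), I would simply cite the preceding proposition, which establishes that $\OI_2^=$ (with the norm inherited from $\OI_2$) is UCF-lingual. There is nothing further to check here, since both statements appear immediately above the corollary.

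With both hypotheses verified, Theorem~\ref{hilbthm} applies to a finitely generated representation $M$ of $\OI_2^=$ and yields that $\rH_M(t)$ is a $\bZ$-linear combination of Hilbert series of unambiguous context-free languages. By the UCF bullet of that theorem (whose underlying input is the Chomsky--Sch\"utzenberger theorem, Theorem~\ref{thm:CS-UCF}), each such Hilbert series is an algebraic function of $t$, and finite $\bZ$-linear combinations of algebraic functions are algebraic. Hence $\rH_M(t)$ is algebraic.

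The only subtlety — which is really a non-obstacle here — is just making sure that the norm on $\OI_2^=$ is indeed the restricted norm from $\OI_2$ (so that the Hilbert series computed in $\OI_2^=$ agrees with what the lingual structure records); this was fixed by convention at the start of the subsection. Since all substantive work has already been done (noetherianity of $|S_x|$, the admissible order, and the verification that poset ideals map to UCF languages), the proof is essentially a one-line invocation of Theorem~\ref{hilbthm}.
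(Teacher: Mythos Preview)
Your proposal is correct and matches the paper's approach: the corollary is stated without proof because it is an immediate consequence of the preceding proposition (UCF-linguality of $\OI_2^=$), the Gr\"obner property established at the start of the subsection, and Theorem~\ref{hilbthm}. You have simply spelled out this one-line invocation in full.
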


\begin{example}
Let $d \ge 1$ be arbitrary. Let $M_d \in \Rep_{\bk}(\OI_d^=)$ be the principal projective at $[0]$, and let $\rH_d$ be its Hilbert series. The space $M_d([n])$ has for a basis the set of all strings in $\{1,\ldots,d\}$ of length $n$ in which the numbers $1, \ldots, d$ occur equally. The number of strings in which $i$ occurs exactly $n_i$ times is the multinomial coefficient $\binom{n_1+\cdots+n_d}{n_1, \dots, n_d}$.

It follows that $\rH_d(t) = \sum_{n \ge 0} \frac{(dn)!}{n!^d} t^{dn}$.
So $\rH_1(t) = (1-t)^{-1}$ and $\rH_2(t) = (1-4t^2)^{-1/2}$ but $\rH_d(t)$ is not algebraic for $d>2$ \cite[Thm.~3.8]{woodcocksharif}.
\end{example}

\begin{corollary}
If $d>2$ then $\OI_d^=$ is not UCF-lingual.
\end{corollary}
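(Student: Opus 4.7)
The plan is to derive a contradiction from the assumption that $\OI_d^=$ is UCF-lingual, using the combination of Theorem~\ref{hilbthm} (the UCF case) with the explicit Hilbert series computation in the example that immediately precedes the corollary. Suppose for contradiction that $\OI_d^=$ admits a UCF-lingual structure at every object. Since $\OI_d^=$ was already shown to be Gr\"obner (via Proposition~\ref{grobsub}, applied to the inclusion $\OI_d^= \subset \OI_d$, which satisfies property~(S), combined with Theorem~\ref{oithm}), Theorem~\ref{hilbthm} in the UCF case would apply. This would force $\rH_M(t)$ to be an algebraic function of $t$ for every finitely generated $M \in \Rep_{\bk}(\OI_d^=)$.

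Next I would apply this to a specific finitely generated representation, namely the principal projective $M_d$ of $\OI_d^=$ at the object $[0]$, which is clearly finitely generated (it is cyclic). The preceding example has already identified the corresponding Hilbert series: a basis for $M_d([n])$ is given by the $\Hom$-set from $[0]$ to $[n]$ in $\OI_d^=$, which consists of strings of length $n$ in the alphabet $\{1,\ldots,d\}$ in which each letter appears the same number of times. Counting such strings by multinomial coefficients yields
\begin{displaymath}
\rH_{M_d}(t) \;=\; \sum_{k \ge 0} \frac{(dk)!}{(k!)^d}\, t^{dk}.
\end{displaymath}

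The final step is to invoke the theorem of Woodcock--Sharif \cite[Thm.~3.8]{woodcocksharif}, cited in the example, which states that this power series is not algebraic over $\bQ(t)$ when $d > 2$. This contradicts the conclusion of the previous paragraph, so $\OI_d^=$ cannot be UCF-lingual for $d > 2$.

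There is no serious obstacle here: the three ingredients (the general classification of Hilbert series for $\cP$-lingual Gr\"obner categories; the explicit identification of $\rH_{M_d}$ with a known hypergeometric-type series; and the external transcendence result of Woodcock--Sharif) are already in place, so the proof is essentially a one-line assembly. The only minor point worth double-checking is that $M_d$ is genuinely finitely generated as an $\OI_d^=$-module (it is, being the principal projective $P_{[0]}$), so that Theorem~\ref{hilbthm} applies as stated.
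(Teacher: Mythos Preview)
Your argument is correct and is exactly the intended one: the paper states this as an immediate corollary of the preceding example, and your write-up simply spells out the contrapositive via Theorem~\ref{hilbthm} applied to the principal projective $M_d=P_{[0]}$ together with the Woodcock--Sharif non-algebraicity of $\sum_{k\ge 0}\frac{(dk)!}{(k!)^d}t^{dk}$ for $d>2$.
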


Let $\FI_d^=$ be the subcategory of $\FI_d$ defined in a way similar to $\OI_d^=$ with the induced norm. Then $\FI_d^=$ is quasi-Gr\"obner. The above results and examples show that the Hilbert series of a finitely generated $\FI_2^=$-module is algebraic, but that this need not be the case for $\FI_d^=$-modules with $d>2$. We have shown, by very different means, that for $d>2$ the Hilbert series are always D-finite (in characteristic 0).

The tca $\Sym(\Sym^2(\bC^{\infty}))$ is the coordinate ring of infinite size symmetric matrices, so has a natural rank stratification, and a module is {\bf bounded} if it lives in one of the finite strata. The above results on $\FI_2^=$ imply that a module supported in rank $\le 2$ have algebraic Hilbert series. This motivates the following conjecture, which we have verified for rank $\le 3$:

\begin{conjecture}
A bounded $\Sym(\Sym^2(\bC^\infty))$-module has an algebraic Hilbert series.
\end{conjecture}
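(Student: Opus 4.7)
The plan is to proceed by induction on the smallest $r$ for which $M$ is supported on the rank-$\le r$ locus of symmetric matrices. I would first reduce to the case of a module generically supported on rank exactly $r$: letting $I_{r-1} \subset \Sym(\Sym^2(\bC^\infty))$ denote the prime ideal of the rank-$\le r-1$ locus and $M' \subset M$ the submodule of elements annihilated by some power of $I_{r-1}$, both $M'$ (supported on rank $\le r-1$, handled by the inductive hypothesis) and $M/M'$ (generically of rank $r$) contributing algebraic Hilbert series would imply the same for $M$, since algebraic power series over $\bQ$ form a field and Hilbert series are additive on short exact sequences. Noetherianity of $\Sym(\Sym^2(\bC^\infty))$-modules on each bounded rank locus, which is standard, ensures the filtration is finite.

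For the generic-rank-$r$ case, I would exploit the classical parameterization of the rank-$r$ stratum by the $O(r)$-equivariant map
\begin{displaymath}
\pi_r \colon (\bC^\infty)^{\oplus r} \to \Sym^2(\bC^\infty), \qquad (v_1,\ldots,v_r) \mapsto \sum_{i=1}^{r} v_i \otimes v_i,
\end{displaymath}
whose fiber over a rank-$r$ matrix is an $O(r)$-torsor. Pullback along $\pi_r$ turns $M/M'$ into a module over the tca $\Sym((\bC^\infty)^{\oplus r}) = \bk\langle y_1,\ldots,y_r\rangle$, i.e.\ a finitely generated $\FI_r$-module, carrying a compatible $O(r)$-action; the Hilbert series of $M/M'$ is recovered as the $O(r)$-invariant part of the Hilbert series of the pullback. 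This reduces the conjecture to showing that $O(r)$-invariants of a finitely generated $O(r)$-equivariant $\FI_r$-module have algebraic Hilbert series.

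The case $r = 2$ is essentially the $\FI_2^=$ result of \S\ref{sec:nonregular-lang}: the equal-fibers constraint on morphisms in $\FI_2^=$ encodes precisely the degree-by-degree $O(2)$-invariance condition, and UCF-linguality yields algebraicity via Theorem~\ref{hilbthm}. For $r \ge 3$ the category $\FI_r^=$ alone is not enough, since its principal projectives have D-finite but non-algebraic Hilbert series such as $\sum_n \tfrac{(3n)!}{n!^3}t^{3n}$. The essential new input is that $O(r)$-invariance cuts out a strictly smaller subobject, and the natural combinatorial model for it is the first fundamental theorem of invariant theory: $O(r)$-invariants in tensor powers are spanned by Brauer diagrams, i.e.\ perfect matchings on the index set. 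One would hope to construct a UCF-lingual structure (in the sense of \S\ref{sec:ling}) on a suitable subcategory of $\FI_r^=$ whose monomials are exactly such Brauer contractions, so that Theorem~\ref{hilbthm} again applies.

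The hard part will be constructing this UCF-lingual structure uniformly in $r$. The rank-$\le 3$ verification alluded to in the text is presumably an ad hoc grammar exhibition, and the central obstacle in general is that the combinatorics of $O(r)$-contractions (growing in $r$) makes it unclear whether a uniform unambiguous context-free grammar recognizing all principal ideals exists; it might even genuinely fail. A complementary route, in the spirit of the Chomsky--Sch\"utzenberger theorem itself, would be to derive an algebraic functional equation for the generating function directly from an $O(r)$-equivariant Schreyer-type resolution of principal projectives (Remark~\ref{rmk:Spairs}), thereby bypassing the explicit grammar; but this would require new techniques to bound the degree of the algebraic relation so produced, and it is the genuine input beyond what the lingual formalism of \S\ref{sec:ling} currently supplies.
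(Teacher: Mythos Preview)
The statement you are attempting to prove is a \emph{conjecture} in the paper, not a theorem: the paper gives no proof. All that is asserted there is that the rank $\le 2$ case follows from the UCF-linguality of $\OI_2^=$ (your observation about $\FI_2^=$ is exactly this), and that the authors have verified rank $\le 3$, with no details supplied. So there is no ``paper's own proof'' to compare against.

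Your proposal is a reasonable research outline and matches the spirit in which the paper treats the low-rank cases, but it is not a proof, and you say so yourself. The reduction to $O(r)$-invariants of a finitely generated $\FI_r$-module is a sensible strategy, and you correctly identify that for $r \ge 3$ the category $\FI_r^=$ is too coarse (its principal projectives already have non-algebraic Hilbert series). But the decisive step --- producing a UCF-lingual structure on a Brauer-type subcategory, or otherwise deriving an algebraic functional equation for the $O(r)$-invariant generating function --- is left entirely open. You write that ``it might even genuinely fail'' and that the alternative route ``would require new techniques''; those admissions are accurate, and they mean the conjecture remains unproven by your proposal. A minor point: the claim that the Hilbert series of $M/M'$ is ``the $O(r)$-invariant part of the Hilbert series of the pullback'' needs care, since $\pi_r$ is only an $O(r)$-torsor over the open rank-$r$ locus and the pullback need not be finitely generated over $\bk\langle y_1,\ldots,y_r\rangle$ without further argument.
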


\addtocontents{toc}{\vskip.3\baselineskip}

\section{Open problems} \label{sec:problems}

\subsection{Krull dimension}
There is a notion of Krull dimension for abelian categories \cite[Ch.~IV]{gabriel} generalizing that for commutative rings. One would like a combinatorial method to compute the Krull dimension of $\Rep_{\bk}(\cC)$. In Proposition~\ref{dim0} we give a criterion for dimension 0, but it is probably far from optimal. 

If $\bk$ is a field of characteristic~$0$, then $\Rep_{\bk}(\FA)$ has Krull dimension~$0$ \cite{wiltshire}, $\Rep_{\bk}(\FI)$ has Krull dimension~$1$ \cite[Corollary 2.2.6]{symc1}, and $\Rep_{\bk}(\OI)$ has infinite Krull dimension (easy). These results hold in positive characteristic as well: $\Rep_\bk(\FA)$ is handled in Theorem~\ref{thm:FA-artinian}, $\Rep_\bk(\OI)$ remains easy, and $\Rep_\bk(\FI)$ is an unpublished result of ours.

\subsection{Enhanced Hilbert series} \label{enhanced}
Our definition of the Hilbert series of $M \in \Rep_{\bk}(\cC)$ only records the dimension of $M(x)$, for each object $x$. One could attempt to improve this by recording the representation of $\Aut(x)$ on $M(x)$. Suppose $R$ is a ring and for each $x \in \vert \cC \vert$ we have an additive function $\mu_x \colon \cR_{\bk}(\Aut(x)) \to R$ ($\cR_\bk$ denotes the Grothendieck group). We define the {\bf enhanced Hilbert series} of $M$ by
\begin{displaymath}
\wt{\rH}_M = \sum_{x \in \vert \cC \vert} \mu_x([M(x)]),
\end{displaymath}
where $[M(x)]$ is the class of $M(x)$ in $\cR_{\bk}(\Aut(x))$, when this sum makes sense. For example, if $\cC=\FI$ and $\bk$ has characteristic $0$, we define maps $\mu_x$ to $R=\bQ \lbb t_1,t_2,\ldots \rbb$ in \cite[\S 5.1]{symc1}, and prove a sort of rationality result there. We can now prove the analogous result for $\FI_d$-modules as well. A rationality result for  enhanced Hilbert series for $\FS_G^{\op}$-modules is given in \S\ref{ss:G-sets}. Is it possible to prove a general rationality result?

\subsection{Minimal resolutions and Poincar\'e series}
Suppose $\cC$ is a weakly directed (i.e., any self-map is invertible, and also known as an ``EI-category'') normed category. For a $\cC$-module $M$, let $\Psi(M)$ be the $\cC$-module defined as follows: $\Psi(M)(x)$ is the quotient of $M(x)$ by the images of all maps $M(y) \to M(x)$ induced by non-isomorphisms $y \to x$ in $\cC$. One can think of $\Psi(M)$ as analogous to tensoring $M$ with the residue field in the case of modules over an augmented algebra. The left-derived functors of $\Psi$ exist, and if the group algebras $\bk[\Aut(x)]$ are semisimple, one can read off from $\rL^i \Psi(M)$ the $i$th projective in the minimal resolution of $M$. Define the {\bf Poincar\'e series} of $M$ by
\begin{displaymath}
\rP_M(\bt, q) = \sum_{i \ge 0} \rH_{\rL^i \Psi(M)}(\bt) (-q)^i.
\end{displaymath}
This contains strictly more information than the Hilbert series, but is much more subtle since it does not factor through the Grothendieck group. What can one say about the form of this series? We proved a rationality theorem for Poincar\'e series of $\FI$-modules in \cite[\S 6.7]{symc1} and can generalize this result to $\FI_d$-modules. Preliminary computations with $\VI_{\bF_q}$-modules (defined in \S\ref{sec:linear-cat}) have seen theta series come into play; it is not clear yet if there is a deeper meaning to this.

\subsection{Noetherianity results for other categories} \label{open-noeth}
There are combinatorial categories which are expected to be noetherian, but do not fall into our framework. For example:
Let $\cC$ be the category whose objects are finite sets, and where a morphism $S \to T$ is an injection $f \colon S \to T$ together with a perfect matching on $T \setminus f(S)$. When $\bk$ has characteristic $0$, $\Rep_{\bk}(\cC)$ is equivalent to the category of $\Sym(\Sym^2(\bk^{\infty}))$-modules with a compatible polynomial action of $\GL_{\infty}(\bk)$. See also \cite[\S 4.2]{infrank} for a connection between $\Rep_\bk(\cC)$ (where $\cC$ is called (db)) and the stable representation theory of the orthogonal group.

When $\bk$ is a field of characteristic $0$, the main result of \cite{sym2noeth} shows that $\Rep_\bk(\cC)$ is noetherian. We expect $\Rep_{\bk}(\cC)$ is noetherian for general noetherian $\bk$, but cannot prove it. There is an ordered version $\cD$ where the objects are ordered finite sets and the maps $f \colon S \to T$ are order-preserving, but the principal projective $P_\emptyset$ is not noetherian: if $M_n \colon \emptyset \to [2n]$ is the perfect matching on $\{1, \dots, 2n\}$ consisting of the edges $(i,i+3)$ where $i=1,3,\dots,2n-3$ and the edge $(2n-1,2)$, then $M_3, M_4, M_5, \dots$ are pairwise incomparable. Is there a way to extend the scope of the methods of this paper to include these categories?

\subsection{Coherence}
The category $\Rep_{\bk}(\cC)$ is {\bf coherent} if the kernel of any map of finitely generated projective representations is finitely generated. This is a weaker property than noetherianity, and should therefore be easier to prove. We have some partial combinatorial results on coherence, but none that apply in cases of interest. We would be especially interested in a criterion that applies to the category mentioned in \S\ref{open-noeth}.

\subsection{Hilbert series of ideals in permutation posets} \label{q:permutations}
There are many algebraic structures not mentioned in this paper which lead to interesting combinatorial problems. As an example, let $\fS$  be the disjoint union of all finite symmetric groups. Represent a permutation $\sigma$ in one-line notation: $\sigma(1) \sigma(2) \cdots \sigma(n)$. Say that $\tau \le \sigma$ if there is a consecutive subword $\sigma(i) \sigma(i+1) \cdots \sigma(i+r-1) \sigma(i+r)$ which gives the same permutation as $\tau$, i.e., $\sigma(j) > \sigma(j')$ if and only if $\tau(j) > \tau(j')$ for all $j \ne j'$. (In the literature, $\tau$ is a {\bf consecutive pattern} in $\sigma$.) If we drop the condition ``consecutive,'' then this is the poset of pattern containment (see \cite[\S 7.2.3]{bona}) which has infinite anti-chains (\cite[Theorem 7.35]{bona}), so the same is true for consecutive pattern containment. 

What is the behavior of Hilbert series of finitely generated ideals in this poset? By \cite{garrabrant-pak}, there are examples for the pattern containment poset which are not D-finite. The relevant algebra is monomial ideals in the free shuffle algebra (see \cite[Example 2.2(b)]{ronco} or \cite[\S 2.2, Example 2]{DK}).

\addtocontents{toc}{\vskip.5\baselineskip}

\end{document}